\documentclass[reqno,a4paper, 12pt]{amsart}

\usepackage[a4paper=true,pdfpagelabels]{hyperref}
\usepackage{graphicx}

\usepackage[ansinew]{inputenc}
\usepackage{amsfonts,epsfig}
\usepackage{latexsym}
\usepackage{amsmath}
\usepackage{amssymb}

\newtheorem{theorem}{Theorem}[section]
\newtheorem{lemma}[theorem]{Lemma}
\newtheorem{corollary}[theorem]{Corollary}
\newtheorem{proposition}[theorem]{Proposition}

\newtheorem{lettertheorem}{Theorem}

\theoremstyle{definition}

\theoremstyle{remark}

\numberwithin{equation}{section}

\setlength\arraycolsep{2pt}

\newcommand{\D}{\mathbb{D}}
\newcommand{\DD}{\widehat{\mathcal{D}}}
\newcommand{\N}{\mathbb{N}}
\newcommand{\R}{\mathbb{R}}

\newcommand{\e}{\varepsilon}
\newcommand{\ep}{\varepsilon}

\renewcommand{\phi}{\varphi}

\newcommand{\T}{\mathbb{T}}

\newcommand{\B}{\mathcal{B}}
\newcommand{\BMOA}{\mathord{\rm BMOA}}

\def\a{\alpha}       \def\b{\beta}        \def\g{\gamma}
       \def\De{{\Delta}}    \def\e{\varepsilon}
     \def\om{\omega}      
       \def\t{\theta}       
         \def\r{\rho}         \def\z{\zeta}
                  \def\vp{\varphi}

\def\R{{\mathcal R}}
\def\I{{\mathcal I}}
\def\Inv{{\mathcal Inv}}
\newcommand{\CC}{\mathcal{C}}

\renewcommand{\H}{\mathcal{H}}

\newcommand{\op}{\mathrm{o}}

\def\dm{\,d(\omega\otimes m)}

\newenvironment{Prf}{\noindent{\emph{Proof of}}}{\hfill$\Box$ }
\addtolength{\hoffset}{-1cm}
\addtolength{\textwidth}{2cm}
\addtolength{\voffset}{-1cm}
\addtolength{\textheight}{2cm}

\begin{document}

\title[Small weighted Bergman spaces ]{Small weighted Bergman spaces }
\author{Jos\'e \'Angel Pel\'aez}

\address{Departamento de An\'alisis Matem\'atico, Universidad de M\'alaga, Campus de
Teatinos, 29071 M\'alaga, Spain} \email{japelaez@uma.es}

\thanks{The  author is supported in part by the Ram\'on y Cajal program
of MICINN (Spain), Ministerio de Edu\-ca\-ci\'on y Ciencia, Spain,
MTM2011-25502 and MTM2014-52865-P, from La Junta de Andaluc{\'i}a, (FQM210) and
(P09-FQM-4468).}

\date{\today}


\begin{abstract}
 This paper is based on the course \lq\lq Weighted Hardy-Bergman spaces\rq\rq\, I delivered in
 the Summer School  \lq\lq Complex and Harmonic Analysis and Related Topics\rq\rq at the Mekrij\"arvi research station of  University
of Eastern Finland, June $2014$. The main  purpose of this survey is to present recent progress
on the theory of Bergman spaces $A^p_\om$, induced by radial weights $\om$  satisfying the doubling property $\int_r^1\om(s)\,ds\le C\int_{\frac{1+r}{2}}^1\om(s)\,ds$.
\end{abstract}

\maketitle



\section{Introduction}
Let $\H(\D)$ denote the space  of all analytic functions in the unit disc $\D=\{z:|z|<1\}$.
For $f\in \H (\D)$ and $0<r<1$,\, set
    \begin{equation*}
    \begin{split}
    M_p(r,f)&=\left(\frac{1}{2\pi}\int_{0}^{2\pi} |f(re^{it})|^p\,dt\right)^{1/p},\quad
    0<p<\infty,\\
    M_\infty(r,f)&=\sup_{|z|=r}|f(z)|.
    \end{split}
    \end{equation*}
For $0<p\le \infty $, the Hardy space $H^p$ consists of
$f\in \H(\mathbb D)$ such that $\Vert f\Vert _{H^p}=
\sup_{0<r<1}M_p(r,f)<\infty$.
A  nonnegative integrable function $\om$ on the unit disc $\D$ is called a weight.
 It is radial if $\omega(z)=\omega(|z|)$ for all $z\in\D$.
 For
$0<p<\infty$ and a weight $\omega$, the weighted Bergman
space $A^p_\omega$ is the space of  $f\in\H(\D)$ for
which
    $$
    \|f\|_{A^p_\omega}^p=\int_\D|f(z)|^p\omega(z)\,dA(z)<\infty,
    $$
where $dA(z)=\frac{dx\,dy}{\pi}$ is the normalized
Lebesgue area measure on $\D$. That is, $A^p_\om=L^p_\om\cap \H(\D)$ where $L^p_\om$ is the corresponding
weighted Lebesgue space.
As usual, we write~$A^p_\alpha$ for the standard weighted
Bergman space induced by
the  radial weight $(1-|z|^2)^\alpha$, where
$-1<\alpha<\infty$ \cite{DurSchus,HKZ,Zhu}.  We denote $dA_\a=(\a+1)(1-|z|^2)^\alpha\,dA(z)$ and $\om(E)=\int_{E}\om(z)\,dA(z)$ for short.
 We recall that the Bloch space $\mathcal{B}$ \cite{ACP}  consists of $f\in\H(\D)$ such that
    $$
    \|f\|_{\mathcal{B}}=\sup_{z\in\D}|f'(z)|(1-|z|^2)+|f(0)|<\infty.
    $$

The Carleson square $S(I)$ based on an
interval $I\subset\T$ is the set $S(I)=\{re^{it}\in\D:\,e^{it}\in I,\,
1-|I|\le r<1\}$, where $|E|$ denotes the Lebesgue measure of $E\subset\T$. We associate to
each $a\in\D\setminus\{0\}$ the interval
$I_a=\{e^{i\t}:|\arg(a e^{-i\t})|\le\frac{1-|a|}{2}\}$, and denote
$S(a)=S(I_a)$.

 The theory of standard Bergman spaces $A^p_\alpha$
  has evolved enormously throughout the last decades,
  although important problems such as a description of zero sets or a characterization of invariant subspaces remain open,
  see \cite{DurSchus,HKZ,Zhu} for details.
  \par With respect to a general weighted Bergman space $A^p_\om$, a fact which affects the way of approaching  a good number
  of problems is whether or not $\om$ is  radial. Roughly speaking, we can say that  the
  theory of weighted Bergman spaces $A^p_\om$ induced by  non-radial weights is at early stages
  and essential facts are unknown. For instance,  if $\om$ is a radial weight, one can easily prove that polynomials are dense in $A^p_\om$,
   but this  does not remain true
 for a general weight. For example, the weight
 $$\om(z)=|S(z)|^2=\left|\exp\left(-\frac{1+z}{1-z}\right)\right|^2=\exp\left(-\frac{1-|z|^2}{|1-z|^2}\right)$$
 satisfies that polynomials are not dense in $A^2_\om$ \cite[p. 138]{DurSchus}.  Concerning embeddings,  the sharp inequality
 $M_p(r,f)\lesssim  \|f\|_{\mathcal{B}}\left(\log\frac{e}{1-r}\right)^{p/2}$
 and known results on lacunary series \cite{ClMg},
show that  $\mathcal{B}\subset A^p_\om$ if and only if
$\int_0^1 \om(r)\left(\log\frac{e}{1-r}\right)^{\frac{p}{2}}\,dr<\infty$,
 whenever $\om$ is a radial weight. These observations lead us to the following open questions;
 \begin{enumerate}
\item Which are those weights such that the polynomials are dense in $A^p_\om$?
\item Which are those weights  such that  $\mathcal{B}\subset A^p_\om$?
 \end{enumerate}
   \par Despite these and other obstacles,  some progress has been achieved on the theory of weighed Bergman spaces $A^p_\om$ induced by
   non-radial weights \cite{AlCo,Asserda-Hichame2014,ArPau,BB,PelRat}.
 \medskip
\par   As for the Bergman spaces $A^p_\om$ induced by radial weights it is worth noticing that some advances
 have been obtained on Bergman spaces $A^p_\om$, in the case when $\om$  belongs to  certain classes
   of radial weights, see \cite{DurSchus, HKZ, PelRat,Zhu} and the references therein.
  However, many questions  such that the existence of a
 (strong or weak) factorization of $A^p_\om$-functions or the boundedness of  the  Bergman projection $P_\om$ on $A^p_\om$ \cite{PRprojections},
  are not understood yet. In this paper,
we will be specially concerned to the theory of Bergman spaces $A^p_\om$ induced by  radial weights $\om$ such that $\int_r^1\om(s)\,ds\le C\int_{\frac{1+r}{2}}^1\om(s)\,ds$.
  We shall write $\DD$ for this class of radial weights. A primary motivation for this study is the so called \lq\lq transition phenomena\rq\rq
  from the standard Bergman spaces $A^p_\a$ to the  Hardy space $H^p$. That is,
 in many respects the Hardy space $H^p$ is the limit of $A^p_\a$,
as $\a\to-1$, but it  is a
very rough estimate since most  of the finer function-theoretic
properties of the classical weighted Bergman space $A^p_\a$ are not
carried over to the Hardy space $H^p$. Plenty of results in  \cite{PelRat,PelRatMathAnn,PRprojections} show that spaces $A^p_\om$ induced by
rapidly increasing weights (Section~\ref{sec:weights} below for a  definition),  lie \lq\lq closer\rq\rq to $H^p$
than any $A^p_\a$. Here we will present some of them. 
 Moreover, many tools used in the theory of the classical
Bergman spaces fail to work in $A^p_\om$, $\om\in\DD$, so frequently we have to employ
 appropriate techniques for $A^p_\om$, $\om\in\DD$,  which usually work  on standard Bergman spaces and even on Hardy spaces.

\par The paper is organized as follows; Section~\ref{sec:weights} contains the definition of  classes of radial
weights that are considered in these notes, shows relations
between them, and contains several descriptions of the class $\DD$.  In Section~\ref{sec:measures} we characterize $q$-Carleson measures
for $A^p_\om$, $\om\in\DD$. This result has been recently proved in \cite{PelRatMathAnn}. For the range $q\ge p$, we offer a different proof from that
in \cite{PelRatMathAnn}. Here we follow  ideas from \cite[Chapter~$2$]{PelRat} and in particular we prove the  pointwise
estimate
$$ |f(z)|^\alpha\le C(\a,\om) \sup_{I:\,z\in S(I)}\frac{1}{\om\left(S(I)
    \right)}\int_{S(I)}|f(\xi)|^\alpha\om(\xi)\,dA(\xi)=CM_{\om}(|f|^\alpha)(z)$$
    for any $f\in \H(\D)$, $\a>0$, $\om\in\DD$ and $z\in\D$.
We also show some equivalent norms on $A^p_\om$  and a description  of $q$-Carleson measures for $A^p_\om$ in the case $q<p$. Most of these
last results are presented without a detailed proof.
Section~\ref{sec:factorization} contains the main result in \cite[Chapter~$3$]{PelRat}. There,
by using a probabilistic method introduced by
Horowitz~\cite{HorFacto},  we prove that if $\om$ is a weight (not
necessarily radial) such that
    \begin{equation}\label{222}
    \om(z)\asymp\om(\zeta),\quad z\in\Delta(\zeta,r),\quad \z\in\D,\
    \end{equation}
where $\Delta(\zeta,r)$ denotes a pseudohyperbolic disc, and
polynomials are dense in $A^p_\om$, then each $f\in A^p_\omega$
can be represented in the form $f=f_1\cdot f_2$, where $f_1\in
A^{p_1}_\omega$, $f_2\in A^{p_2}_\omega$ and $\frac{1}{p_1}+
\frac{1}{p_2}=\frac{1}{p}$, and the following norm estimates hold
    \begin{equation*}\label{P1}
    \|f_1\|_{A^{p_1}_\omega}^p\cdot\|f_2\|_{A^{p_2}_\omega}^p\le\frac{p}{p_1}\|f_1\|_{A^{p_1}_\omega}^{p_1}+\frac{p}{p_2}\|f_2\|_{A^{p_2}_\omega}^{p_2}\le
    C(p_1,p_2,\omega)\|f\|_{A^p_\omega}^p.
    \end{equation*}
\par In Section~\ref{sec:zeros}, by mimicking the corresponding  proofs in \cite[Section~$3.2$]{PelRat}, we prove that
 whenever $\om\in\DD$, the union of two $A^p_\om$-zero sets is not an $A^p_\om$-zero set.
\par In Section~\ref{sec:integral} we
characterize those analytic symbols $g$ on $\D$ such that the integral operator $T_g(f)(z)=\int_0^z f(\z)g'(\z)\,d\z$ is bounded
from $A^p_\om$ into $A^q_\om$,  where $0<p, q<\infty$. Finally, in Section~\ref{sec:composition} we
deal with composition operators $C_\vp(f)= f\circ\vp$, where $f\in \H(\D)$ and $\vp$ is an analytic self-map $\vp$ of $\D$.
We recall a recent description \cite{PelRatToeplitz} of bounded and compact composition operators,  from $A^p_\om$
into $A^q_v$, when $\om\in\DD$ and $v$ a radial weight. In the case $q<p$, Theorem~\ref{Theorem:introduction-bounded-composition-operators} (below)
 gives a characterization of bounded (and compact) composition operators that differs from the one in the existing literature~\cite{SmithYang98}
 in the classical case $C_\vp:A^p_\a\to A^q_\b$. Here we extend this last result in order to describe
 bounded (and compact) composition operators from  $A^p_\om$
into $A^q_v$, where $\om$ is a regular weight (see Section~\ref{sec:weights} below for a definition) and $v$ a radial weight.
As far as we know, this result is new.
\medskip

    Throughout these notes, the letter $C=C(\cdot)$ will denote an
absolute constant whose value depends on the parameters indicated
in the parenthesis, and may change from one occurrence to another.
We will use the notation $a\lesssim b$ if there exists a constant
$C=C(\cdot)>0$ such that $a\le Cb$, and $a\gtrsim b$ is understood
in an analogous manner. In particular, if $a\lesssim b$ and
$a\gtrsim b$, then we will write $a\asymp b$.

\section{Radial Weights. Preliminary results}\label{sec:weights}
We recall that $\DD$ is the class of radial weights such that $\widehat{\om}(z)=\int_{|z|}^1\om(s)\,ds$ is doubling, that is, there exists $C=C(\om)\ge1$ such that $\widehat{\om}(r)\le C\widehat{\om}(\frac{1+r}{2})$ for all $0\le r<1$.
We call a radial weight $\om$ regular, denoted by $\om\in\R$, if $\om\in\DD$
and  $\om(r)$ behaves as its integral average over $(r,1)$, that is,
    \begin{equation*}
    \om(r)\asymp\frac{\int_r^1\om(s)\,ds}{1-r},\quad 0\le r<1.
    \end{equation*}
    As to concrete examples, we mention that every standard weight as well as  those given in
\cite[(4.4)--(4.6)]{AS} are regular.
It is clear that  $\om\in\R$ if and only if for each $s\in[0,1)$ there exists a
constant $C=C(s,\omega)>1$ such that
    \begin{equation}\label{eq:r2}
    C^{-1}\om(t)\le \om(r)\le C\om(t),\quad 0\le r\le t\le
    r+s(1-r)<1,
    \end{equation}
and 
\begin{equation}\label{eq:r1}
   \frac{\int_r^1\om(s)\,ds}{1-r}\lesssim \om(r),\quad0\le r<1.
    \end{equation}
The definition of regular weights used here is slightly more general than that in \cite{PelRat}, but the main
 properties  are essentially the same by Lemma~\ref{Lemma:weights-in-D-hat} below and \cite[Chapter~1]{PelRat}.
 \par A radial continuous weight $\om$ is called rapidly increasing, denoted by $\om\in\I$, if
    \begin{equation*}
    \lim_{r\to 1^-}\frac{\int_r^1\om(s)\,ds }{\om(r)(1-r)}=\infty.
    \end{equation*}
It follows from \cite[Lemma~1.1]{PelRat} that $\I\subset\DD$.
Typical examples of rapidly increasing
weights are
    \begin{equation*}\label{eq:def-of-v_alpha}
    v_\a(r)=\left((1-r)\left(\log\frac{e}{1-r}\right)^\a\right)^{-1},\quad 1<\a<\infty.\index{$v_\a(r)$}
    \end{equation*}
Despite their name,  rapidly increasing weights may admit a
strong oscillatory behavior. Indeed,  the weight
    \begin{equation*}\label{pesomalo1}
    \omega(r)=\left|\sin\left(\log\frac{1}{1-r}\right)\right|v_\alpha(r)+1,\quad
    1<\alpha<\infty,
    \end{equation*}
belongs to $\I$ but it does not satisfy
\eqref{eq:r2} \cite[p.~7]{PelRat}.
 Due to this fact, occasionally we consider the class $\widetilde{\I}$   of those weights  $\om\in\I$ satisfying \eqref{eq:r2}.
\par A radial continuous weight $\om$ is called rapidly decreasing 
if
    $
    \lim_{r\to 1^-}\frac{\int_r^1\om(s)\,ds }{\om(r)(1-r)}=0.$
The exponential type weights $
    \om_{\gamma,\alpha}(r)=(1-r)^{\gamma}\exp
    \left(\frac{-c}{(1-r)^\alpha}\right), \,\gamma\ge0,\,
    \alpha,c>0,
    $
are  rapidly decreasing.
 It is worth  mentioning that
the pseudohyperbolic metric is not the right one to describe problems on $A^p_\om$ in this case. Roughly speaking,
the substitute of a pseudohyperbolic disc of center $z$ and radius $r<1$ is constructed by writing  $\om=e^{-\varphi}$,
where $\Delta\varphi>0$, and considering the disc $D\left(z,\frac{c}{\sqrt{\Delta\varphi(z)}}\right)$.
\par The  weighted Bergman spaces $A^p_\om$ induced by rapidly decreasing weights  are similar, but not identical, to weighted  Fock spaces \cite{MarMasOrtGFA2003}. See \cite{Asserda-Hichame2014,ArPau,CP2,CP,oleinik,PP,SeiYouJGA2011} for progress on the theory of these spaces.
For further information on any of these classes, see~\cite[Chapter~1]{PelRat} and the references therein.

\medskip\par
The main aim of this section is to obtain different characterizations and properties of the classes of weights  $\DD$ and $\R$.
We shall go further and  in the next result (and only there in these notes)  $\om$ is assumed to be a finite positive Borel measure on $[0,1)$ and $\widehat{\om}(z)=\int_{|z|}^1\,d\om(t)$ for all $z\in\D$. If there exists $C=C(\om)>0$ such that $\widehat{\om}(r)\le C\widehat{\om}(\frac{1+r}{2})$ for all $r\in[0,1)$, we denote $\om\in\DD$. We write $\dm(z)=d\t\,rd\om(r)/\pi$ for $z=re^{i\t}\in\D$, and
    $$
    \om_x=\int_0^1r^{x}\,d\om(r),\quad x>-1.
    $$
For each $K>1$, let $\r_n=\r_n(\om,K)$ be the sequence defined by $\widehat{\om}(\r_n)=\widehat{\om}(0)K^{-n}$.
\par The following characterizations of the class $\DD$ will be frequently used from here on.

\begin{lemma}\label{Lemma:weights-in-D-hat}
Let $\om$ be a finite positive Borel measure on $[0,1)$. Then the following conditions are equivalent:
\begin{itemize}
\item[\rm(i)] $\om\in\DD$;
\item[\rm(ii)] There exist $C=C(\om)\ge 1$ and $\b=\b(\om)>0$ such that
    \begin{equation*}
    \begin{split}
    \widehat{\om}(r)\le C\left(\frac{1-r}{1-t}\right)^{\b}\widehat{\om}(t),\quad 0\le r\le t<1;
    \end{split}
    \end{equation*}
\item[\rm(iii)] There exist $C=C(\om)>0$ and $\gamma=\gamma(\om)>0$ such that
    \begin{equation*}
    \begin{split}
    \int_0^t\left(\frac{1-t}{1-s}\right)^\g\,d\om(s)
    \le C\widehat{\om}(t),\quad 0\le t<1;
    \end{split}
    \end{equation*}
\item[\rm(iv)] There exist constants $C_0=C_0(\om)>0$ and $C=C(\om)>0$ such that
\begin{equation}\label{cero}
 \widehat{\om}(0)\le C_0\widehat{\om}\left(\frac{1}{2}\right)
\end{equation}
and
    \begin{equation}
    \begin{split}\label{4}
    \int_0^ts^{\frac1{1-t}}\,d\om(s)\le C\widehat{\om}(t),\quad 0\le t<1;
    \end{split}
    \end{equation}
\item[\rm(v)] There exist constants $C_0=C_0(\om)>0$ and $C=C(\om)>0$ such that \eqref{cero} holds and
    \begin{equation}
    \begin{split}\label{5}
    \widehat{\om}(r)\le Cr^{-\frac{1}{1-t}}\widehat{\om}(t),\quad 0\le r\le t<1;
   \end{split}
    \end{equation}
\item[\rm(vi)] Condition \eqref{cero} and
the asymptotic equality
    \begin{equation}\label{seis}
    \int_0^1s^x\,d\om(s)\asymp\widehat{\om}\left(1-\frac1x\right),\quad x\in[1,\infty),
    \end{equation}
are valid;
\item[\rm(vii)] There exists $\lambda=\lambda(\om)\ge0$ such that
    $$
    \int_\D\frac{\dm(z)}{|1-\overline{\z}z|^{\lambda+1}}\asymp\frac{\widehat{\om}(\zeta)}{(1-|\z|)^\lambda},\quad \z\in\D;
    $$
\item[\rm(viii)]  Conditions \eqref{cero} and
$\om^\star(z)\asymp\widehat{\om}(z)(1-|z|)$ as $|z|\ge \frac12$,
hold. Here and on the following
    $$
    \omega^\star(z)=\int_{|z|}^1\log\frac{s}{|z|}s\,d\omega(s),\quad z\in\D\setminus\{0\};
    $$
\item[\rm(ix)] Condition \eqref{cero} holds and there exists $C=C(\om)>0$ such that $\om_{n}\le C\om_{2n}$ for all $n\in\N$;
\item[\rm(x)] Condition \eqref{cero} holds and there exist $C=C(\om)>0$ and $\eta=\eta(\om)>0$ such that
    \begin{equation*}
    \begin{split}
    \om_x\le C\left(\frac{y}{x}\right)^{\eta}\om_y,\quad 0<x\le y<\infty;
    \end{split}
    \end{equation*}
\item[\rm(xi)] There exist $K=K(\om)>1$ and $C=C(\om,K)>1$ such that $1-\r_n(\om,K)\ge C(1-\r_{n+1}(\om,K))$ for all $n\in\N\cup\{0\}$.
\end{itemize}
Moreover, if $\om\in\DD$, there exists $C=C(\om)>0$ such that
    $$
    \int_0^r\frac{dt}{\widehat{\om}(t)(1-t)}\ge\frac{C}{\widehat{\om}(r)},\quad r\in\left[\frac12,1\right).
    $$
\end{lemma}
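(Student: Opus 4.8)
The plan is to reduce the estimate to the equivalent condition (ii) of the lemma, which upgrades the doubling hypothesis to a polynomial control of $\widehat{\om}$, and then to carry out an elementary integration.

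First I would apply (ii) with the roles of the two variables exchanged: for $0\le t\le r<1$ it yields constants $C=C(\om)\ge1$ and $\b=\b(\om)>0$ with $\widehat{\om}(t)\le C\left(\frac{1-t}{1-r}\right)^{\b}\widehat{\om}(r)$, hence
\[
\frac{1}{\widehat{\om}(t)}\ge\frac{1}{C\,\widehat{\om}(r)}\left(\frac{1-r}{1-t}\right)^{\b},\qquad 0\le t\le r<1;
\]
note that (ii) forces $\widehat{\om}(t)>0$ on $[0,1)$ (otherwise $\widehat{\om}$ would vanish near the origin), so there is no issue with the reciprocal. Integrating this pointwise bound against $dt/(1-t)$ over $[0,r]$ and computing the resulting elementary integral,
\[
\int_0^r\frac{dt}{\widehat{\om}(t)(1-t)}\ge\frac{(1-r)^{\b}}{C\,\widehat{\om}(r)}\int_0^r\frac{dt}{(1-t)^{\b+1}}
=\frac{(1-r)^{\b}}{C\b\,\widehat{\om}(r)}\bigl((1-r)^{-\b}-1\bigr)=\frac{1-(1-r)^{\b}}{C\b\,\widehat{\om}(r)}.
\]

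It then remains only to observe that $r\mapsto 1-(1-r)^{\b}$ is increasing, so for $r\in[\tfrac12,1)$ it is bounded below by $1-2^{-\b}>0$, a constant depending only on $\om$; taking $C(\om)=(1-2^{-\b})/(C\b)$ yields the assertion. There is no genuine obstacle here beyond invoking (ii) in the correct direction; the one point worth flagging is that the hypothesis $r\ge\tfrac12$ is indispensable, since as $r\to0^+$ the left-hand side tends to $0$ while $\widehat{\om}(r)\to\widehat{\om}(0)>0$, and this is precisely why the computation produces the factor $1-(1-r)^{\b}$, which is harmless only for $r$ bounded away from the origin.
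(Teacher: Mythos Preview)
Your argument for the ``Moreover'' estimate is correct. The paper's proof takes a slightly different, marginally more direct route: rather than invoking the polynomial bound (ii) and integrating over all of $[0,r]$, it simply restricts the integral to the dyadic interval $[2r-1,r]$ (which lies in $[0,r]$ for $r\ge\tfrac12$), uses the monotonicity of $\widehat\om$ to pull out $\widehat\om(2r-1)^{-1}$, computes $\int_{2r-1}^r(1-t)^{-1}\,dt=\log 2$, and then applies the doubling condition in its raw form, since $\frac{1+(2r-1)}{2}=r$ gives $\widehat\om(2r-1)\le C\widehat\om(r)$. Both approaches are short and elementary; the paper's avoids the detour through (ii) and so depends only on the definition of $\DD$, while yours has the minor advantage of making transparent why the threshold $r\ge\tfrac12$ is needed (through the factor $1-(1-r)^{\b}$).
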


\par Before presenting the proof of Lemma~\ref{Lemma:weights-in-D-hat}, let us observe that condition \eqref{cero} holds for any weight (absolutely continuous measure) such that $\om>0$ on an interval
contained in $[1/2,1)$, so it is not a real restriction for an admissible weight but a consequence of working in the general setting of positive Borel measures.
\medskip

\begin{Prf}{\em{Lemma~\ref{Lemma:weights-in-D-hat}.}}
We will prove (i)$\Leftrightarrow$(ii), (i)$\Leftrightarrow$(iii)$\Rightarrow$(iv)$\Rightarrow$(v)$\Rightarrow$(i), (iv)$\Leftrightarrow$(vi), (iii)$\Rightarrow$(vii)$\Rightarrow$(i)$\Leftrightarrow$(viii), and since (i) and (vi) together imply (ix), finally (ix)$\Rightarrow$(vi),
(ix)$\Leftrightarrow$(x), and (ii)$\Leftrightarrow$(xi).

Let $\om\in\DD$. If $0\le r\le t<1$ and $r_n=1-2^{-n}$ for all $n\in\N\cup\{0\}$, then there exist $k$ and $m$ such that $r_k\le r<r_{k+1}$ and $r_m\le t<r_{m+1}$. Therefore
    \begin{equation*}
    \begin{split}
    \widehat{\om}(r)&\le\widehat{\om}(r_k)
    \le C\widehat{\om}(r_{k+1})
    \le\cdots
    \le C^{m-k+1}\widehat{\om}(r_{m+1})
    \le C^{m-k+1}\widehat{\om}(t)\\
    &=C^22^{(m-k-1)\log_2 C}\widehat{\om}(t)\le C^2\left(\frac{1-r}{1-t}\right)^{\log_2 C}\widehat{\om}(t),\quad 0\le r\le t<1,
    \end{split}
    \end{equation*}
and hence (ii) is satisfied. Since the choice $t=\frac{1+r}{2}$ in (ii) gives $\widehat{\om}(r)\le C2^\b\widehat{\om}(\frac{1+r}{2})$ for all $r\in[0,1)$, we have shown that $\om\in\DD$ if and only if (ii) is satisfied.

Let $\om\in\DD$. If $0\le t<1$ and $r_n=1-2^{-n}$ for all $n\in\N\cup\{0\}$, then there exists $m$ such that $r_m\le t<r_{m+1}$. Therefore
    \begin{equation*}
    \begin{split}
    \int_0^t\left(\frac{1-t}{1-s}\right)^\g\,d\om(s)
    &\le \int_0^{r_{m+1}}\left(\frac{1-t}{1-s}\right)^\g\,d\om(s)
    =\sum_{n=0}^m\int_{r_n}^{r_{n+1}}\left(\frac{1-t}{1-s}\right)^\g\,d\om(s)\\
    &\le\sum_{n=0}^m\left(\frac{1-r_m}{1-r_{n+1}}\right)^\g\left(\widehat{\om}(r_n)-\widehat{\om}(r_{n+1})\right)\\
    &\le\sum_{n=0}^m\frac{C}{2^{\gamma(m-n-1)}}\widehat{\om}(r_{n+1})
  \\
    &\le\widehat{\om}(r_{m+1})2^{2\gamma}\sum_{n=0}^m\left(\frac{C}{2^{\gamma}}\right)^{m-n+1}
    \le\widehat{\om}(t)2^{2\gamma}\sum_{j=1}^\infty\left(\frac{C}{2^{\gamma}}\right)^{j},
    \end{split}
    \end{equation*}
and we deduce (iii) for $\gamma=\gamma(\om)>\frac{\log C}{\log 2}$. 
 Conversely, if (iii) is satisfied and $0\le r\le t<1$, then
    \begin{equation*}
    \begin{split}
    C\widehat{\om}(t)&\ge\int_0^t\left(\frac{1-t}{1-s}\right)^\g\,d\om(s)
    =(1-t)^\g\int_0^t\left(\int_0^s\g(1-x)^{-\g-1}\,dx+1\right)\,d\om(s)\\
    &=(1-t)^\g\g\int_0^t(1-x)^{-\g-1}\int_x^t\,d\om(s)\,dx+(1-t)^\g\int_0^t\,d\om(s)\\
    &=(1-t)^\g\g\int_0^t(1-x)^{-\g-1}\left(\widehat{\om}(x)-\widehat{\om}(t)\right)\,dx+(1-t)^\g\int_0^t\,d\om(s)\\
    &\ge(1-t)^\g\g\int_0^r(1-x)^{-\g-1}\left(\widehat{\om}(x)-\widehat{\om}(t)\right)\,dx+(1-t)^\g\int_0^t\,d\om(s)\\
    &\ge(1-t)^\g\g\widehat{\om}(r)\int_0^r(1-x)^{-\g-1}\,dx-(1-t)^\g\widehat{\om}(t)\g\int_0^t(1-x)^{-\g-1}\,dx\\
    &\quad+(1-t)^\g\int_0^t\,d\om(s)\\
    &=\left(\frac{1-t}{1-r}\right)^\gamma\widehat{\om}(r)-(1-t)^\g\widehat{\om}(r)-\widehat{\om}(t)+(1-t)^\g\widehat{\om}(t)+(1-t)^\g\int_0^t\,d\om(s)\\
    &=\left(\frac{1-t}{1-r}\right)^\gamma\widehat{\om}(r)-\widehat{\om}(t)+(1-t)^\g(\widehat{\om}(0)-\widehat{\om}(r))\\
    &\ge\left(\frac{1-t}{1-r}\right)^\gamma\widehat{\om}(r)-\widehat{\om}(t),\quad 0\le r\le t<1.
    \end{split}
    \end{equation*}
Therefore (ii), and thus also (i), is valid.

The proof of \cite[Lemma~1.3]{PelRat} shows that (iii) implies (iv).
We include a proof  for the sake of completeness. Condition \eqref{cero} follows trivially from (i).
 A simple calculation shows that for all $s\in (0,1)$ and $x>1$,
    $$
    s^{x-1}(1-s)^\gamma\le\left(\frac{x-1}{x-1+\gamma}\right)^{x-1}\left(\frac{\gamma}{x-1+\gamma}\right)^\gamma
    \le\left(\frac{\gamma}{x-1+\gamma}\right)^\gamma.
    $$
 Therefore (iii), with
$t=1-\frac{1}{x}$, yields
   \begin{equation*}
    \begin{split}
    \int_0^{1-\frac{1}{x}} s^{x}\om(s)\,ds
    &\le\left(\frac{\gamma x}{x-1+\gamma}\right)^\gamma\int_0^{1-\frac{1}{x}}\frac{\om(s)}{x^\gamma(1-s)^\gamma}s\,ds
    \\ & \lesssim
    \int_{1-\frac{1}{x}}^1\om(s)\,ds,\quad x>1,
     \end{split}
    \end{equation*}
 which gives \eqref{4}.
On the other hand,
if (iv) is satisfied and $0\le r\le t<1$, then
    \begin{equation*}
    \begin{split}
    C\widehat{\om}(t)&\ge\int_0^ts^\frac{1}{1-t}\om(s)\,ds
    =\int_0^t\frac{x^{\frac{t}{1-t}}}{1-t}\int_x^td\om(s)\,dx
    =\int_0^t\frac{x^{\frac{t}{1-t}}}{1-t}\left(\widehat{\om}(x)-\widehat{\om}(t)\right)\,dx\\
    &=\int_0^t\frac{x^{\frac{t}{1-t}}}{1-t}\widehat{\om}(x)\,dx-\widehat{\om}(t)\int_0^t\frac{x^{\frac{t}{1-t}}}{1-t}\,dx\\
    &\ge\int_0^r\frac{x^{\frac{t}{1-t}}}{1-t}\widehat{\om}(x)\,dx-\widehat{\om}(t)\int_0^t\frac{x^{\frac{t}{1-t}}}{1-t}\,dx\\
    &\ge\widehat{\om}(r)\int_0^r\frac{x^{\frac{t}{1-t}}}{1-t}\,dx-\widehat{\om}(t)\int_0^t\frac{x^{\frac{t}{1-t}}}{1-t}\,dx
    =\widehat{\om}(r)r^\frac{1}{1-t}-\widehat{\om}(t)t^\frac{1}{1-t},
    \end{split}
    \end{equation*}
and thus
    $$
    r^\frac{1}{1-t}\widehat{\om}(r)\le\left(C+t^\frac1{1-t}\right)\widehat{\om}(t),\quad 0\le r\le t<1,
    $$
which is  \eqref{5}.
Now, by choosing $t=\frac{1+r}{2}$,
\eqref{5} implies
\begin{equation}\label{6}
\widehat{\om}(r)
\le A^{-1}r^{\frac{2}{1-r}}\widehat{\om}(r)
\le A^{-1}(C+1)\widehat{\om}\left(\frac{1+r}{2}\right),\quad \frac12\le r<1,
 \end{equation}
where $A=\min_{r\in\left[\frac12,1\right)}r^{\frac2{1-r}}>0$.
Now, by combining \eqref{cero} and \eqref{6} we deduce
$$ \widehat{\om}(s)\le \widehat{\om}(0)\le C_1\widehat{\om}\left(\frac12\right)
\lesssim \widehat{\om}\left(\frac34\right)\le \widehat{\om}\left(\frac{1+s}{2}\right),\quad 0\le s\le \frac{1}{2},$$
which together with \eqref{6} gives
$\om\in\DD$.

By integrating only from $0$  to $1-\frac{1}{x}$ on the left of \eqref{seis}, we see that (vi)$\Rightarrow$(iv). Conversely,
(iv) implies
\begin{equation*}
\begin{split}\label{uno}
& \widehat{\om}\left(1-\frac{1}{x}\right)\le \widehat{\om}(0)\le C_1\widehat{\om}\left(\frac12\right)\le 4 C_1\int_{\frac12}^1 s^2d\om(s)
\le 4C_1 \int_{0}^1 s^2d\om(s)
\\ & \le 4C_1 \left(\int_0^{1-\frac1x}s^x\,d\om(s)+\int_{1-\frac1x}^1s^x\,d\om(s)\right)
\lesssim \widehat{\om}\left(1-\frac1x\right),\quad 1\le x\le 2,
\end{split}\end{equation*}
which gives \eqref{seis} for $1\le x\le 2$. Moreover,
(iv) implies
    \begin{equation*}
    \begin{split}
    \widehat{\om}\left(1-\frac1x\right)&\asymp\int_{1-\frac1x}^1s^x\,d\om(s)
    \le\int_0^1s^x\,d\om(s)
    =\int_0^{1-\frac1x}s^x\,d\om(s)+\int_{1-\frac1x}^1s^x\,d\om(s)\\
    &\lesssim\widehat{\om}\left(1-\frac1x\right)+\widehat{\om}\left(1-\frac1x\right)\asymp\widehat{\om}\left(1-\frac1x\right),\quad 2\le x<\infty,
    \end{split}
    \end{equation*}
and thus (vi) is satisfied.

Now, let us see (iii) implies (vii). If $|\zeta|\le\frac{1}{2}$, (vii) is equivalent to
\begin{equation}\label{d}
\widehat{\om}(0)\lesssim \dm(\D)=\int_0^1 s\,d\om(s)\lesssim \widehat{\om}(1/2),
\end{equation}
which clearly follows from (i). Moreover,
\begin{equation*}\label{1111}
\begin{split}
\int_\D\frac{\dm(z)}{|1-\overline{\z}z|^{\lambda+1}}
&\asymp\int_0^1\frac{s\,d\om(s)}{(1-|\z|s)^{\lambda}}
=\left(\int_0^{|\z|}+\int_{|\z|}^1\right)\frac{s\,d\om(s)}{(1-|\z|s)^{\lambda}}\\
&\asymp\frac{\widehat{\om}(\z)}{(1-|\z|)^\lambda}+\int_{0}^{|\z|}\frac{s\,d\om(s)}{(1-|\z|s)^{\lambda}},
\quad |\z|\ge\frac12,
\end{split}
\end{equation*}
so by using (iii)
\begin{equation*}
\begin{split}
\frac{\widehat{\om}(\z)}{(1-|\z|)^\lambda} &\le
\frac{\widehat{\om}(\z)}{(1-|\z|)^\lambda}+\int_{0}^{|\z|}\frac{s\,d\om(s)}{(1-|\z|s)^{\lambda}}
\\ &\le \frac{\widehat{\om}(\z)}{(1-|\z|)^\lambda}+\int_{0}^{|\z|}\frac{d\om(s)}{(1-s)^{\lambda}}
\lesssim \frac{\widehat{\om}(\z)}{(1-|\z|)^\lambda},\quad |\z|\ge\frac12,
\end{split}
\end{equation*}
and hence (iii)$\Rightarrow$(vii). Assuming (vii), in particular we have \eqref{d}, which implies
$$\widehat{\om}(x)\le \widehat{\om}(0)\asymp \widehat{\om}(1/2)\le 2 \int_{1/2}^1 s\,d\om(s)\le 2 \int_{x}^1 s\,d\om(s),\quad 0\le x\le \frac12.$$
So
\begin{equation}\label{7}
\widehat{\om}(x)\asymp \int_{x}^1 s\,d\om(s)=\widehat{\om_1}(x),\quad 0\le x<1.
\end{equation}
Moreover,
 for $0< r\le t\in[\frac12,1)$, (vii) yields
    \begin{equation*}
    \begin{split}
    \frac{\widehat{\om}(t)}{(1-t)^\lambda}&\gtrsim\int_0^t\frac{sd\om(s)}{(1-ts)^\lambda}
    =\int_0^t\left(\int_0^s\frac{\lambda t}{(1-tx)^{\lambda+1}}dx+1\right)\,sd\om(s)\\
    &=\int_0^t\frac{\lambda t}{(1-tx)^{\lambda+1}}\left(\widehat{\om_1}(x)-\widehat{\om_1}(t)\right)\,dx+\int_0^t\,sd\om(s)\\
    &=\int_0^t\frac{\lambda t}{(1-tx)^{\lambda+1}}\widehat{\om_1}(x)\,dx-\widehat{\om_1}(t)\int_0^t\frac{\lambda t}{(1-tx)^{\lambda+1}}\,dx+
    \int_0^t\,sd\om(s)\\
    &\ge\widehat{\om_1}(r)\int_0^r\frac{\lambda t}{(1-tx)^{\lambda+1}}\,dx-\frac{\widehat{\om_1}(t)}{(1-t^2)^{\lambda}}+\widehat{\om_1}(0)\\
    &\ge\widehat{\om_1}(r)\frac{1}{(1-tr)^\lambda}-\frac{\widehat{\om_1}(t)}{(1-t)^\lambda},
    \end{split}
    \end{equation*}
and thus bearing in mind \eqref{7}
    $$
    \widehat{\om}(r)\lesssim \frac{(1-tr)^\lambda}{(1-t)^\lambda}\widehat{\om}(t),\quad 0<r\le t\in\left[\frac12,1\right).
    $$
By choosing $t=\frac{1+r}{2}$ we deduce $\om\in\DD$.

The inequalities $1-t\le-\log t\le(1-t)/t$ show that $\om^\star(r)\asymp\int_r^1(s-r)\,d\om(s)$ for $r\ge\frac12$, and hence $\om^\star(r)\lesssim\widehat{\om}(r)(1-r)$ for all $r\ge\frac12$ and any $\om$. Moreover, if $\om\in\DD$, then
    $$
    \om^\star(r)\gtrsim\int_{\frac{1+r}{2}}^1(s-r)\,d\om(s)\ge\left(\frac{1+r}{2}-r\right)\widehat{\om}\left(\frac{1+r}{2}\right)
    \asymp\widehat{\om}(r)(1-r),
    $$
and thus (i)$\Rightarrow$(viii). Conversely, assume that there exists $C=C(\om)>0$ such that
    $$
    \widehat{\om}(r)(1-r)\le C\int_r^1(s-r)\,d\om(s),\quad \frac12\le r<1,
    $$
and let $r_p=\frac{p+r}{p+1}$, where $p>0$. Then
    \begin{equation*}
    \begin{split}
    \widehat{\om}(r)(1-r)&\le C\int_r^{r_p}(s-r)\,d\om(s)+C\int_{r_p}^1(s-r)\,d\om(s)\\
    &\le C\widehat{\om}(r)\left(r_p-r\right)+C(1-r)\widehat{\om}(r_p),
    \end{split}
    \end{equation*}
and hence
    $$
    \widehat{\om}(r)\le\frac{C(p+1)}{1+p-Cp}\widehat{\om}(r_p),\quad \frac12\le r<1.
    $$
If $C<2$ we may take $p=1$ and deduce $\om\in\DD$. For otherwise, fix $p>0$ sufficiently small and use the argument employed in the proof of (i)$\Rightarrow$(ii) together with $1-r_p=(1-r)/(1+p)\asymp1-r$ to obtain
 $$\widehat{\om}(r)\lesssim \widehat{\om}\left(\frac{1+r}{2}\right),\quad \frac12\le r<1.$$
 This together with \eqref{cero}, gives
 $\om\in\DD$. Thus (viii)$\Rightarrow$(i).

It is clear that (i) and (vi) together imply (ix). Conversely, assume (ix) is satisfied.
Let $A=\sup_{n}\left(1-\frac{1}{n+1}\right)^{n}$ and fix $k$ large enough such that
$C^{k}A^{2^k}<1$. Then
    \begin{equation*}
    \begin{split}
    \om_n&\le C\om_{2n}
    \le C^k \om_{2^kn}
    =C^k\left(\int_0^{1-\frac{1}{n+1}}+\int_{1-\frac{1}{n+1}}^1\right)r^{2^{k}n}\,d\om(r)\\
    &\le C^kA^{2^k}\om_n+C^k\widehat{\om}\left(1-\frac{1}{n+1}\right),\quad n\in\N,
    \end{split}
    \end{equation*}
and hence
    $$
    \om_n\le\frac{C^k}{1-C^kA^{2^k}}\widehat{\om}\left(1-\frac{1}{n+1}\right).
    $$
So, if $n\le x<n+1$, we deduce
    $$
    \int_0^1s^x\,d\om(s)\le\om_n\lesssim\widehat{\om}\left(1-\frac{1}{n+1}\right)\le\widehat{\om}\left(1-\frac{1}{x}\right),
    $$
and (vi) follows.

Assume now (ix) and let $1\le x\le y<\infty$. Then there exist $n,m\in\N\cup\{0\}$ such that $n\le x\le n+1$ and $2^m n\le y\le 2^{m+1}n$. Then (ix) gives
    \begin{equation*}
    \begin{split}
    \om_x&\le\om_n\le C^{m+1}\om_{2^{m+1}n}\le2^{(m+1)\log_2C}\om_y\\
    &\le\left(\frac{2y}{n+1}\frac{n+1}{n}\right)^{\log_2C}\om_y
    \le C^2\left(\frac{y}{x}\right)^{\log_2C}\om_y,
    \end{split}
    \end{equation*}
and (x) follows. The choice $y=2n=2x$ gives (x)$\Rightarrow$(ix).

Assume there exist $K=K(\om)>1$ and $C=C(\om)>1$ such that $1-\r_n\ge C(1-\r_{n+1})$ for all $n\in\N\cup\{0\}$. Let $0\le r\le t<1$ and fix $n,k\in\N\cup\{0\}$ such that $\r_n\le r<\r_{n+1}$ and $\r_k\le t<\r_{k+1}$. Then
    \begin{equation*}
    \begin{split}
    1-r&\ge1-\r_{n+1}\ge C(1-\r_{n+2})\ge\cdots\ge C^{k-n-1}(1-\r_{k})\\
    &\ge C^{-2}\left(\frac{K^{-n}}{K^{-(k+1)}}\right)^{\log_K C}(1-t)
    \ge C^{-2}\left(\frac{\widehat{\om}(r)}{\widehat{\om}(t)}\right)^{\log_K C}(1-t),
    \end{split}
    \end{equation*}
and hence
    $$
    \widehat{\om}(r)\le C^\frac{2}{\log_K C}\left(\frac{1-r}{1-t}\right)^\frac1{\log_KC}\widehat{\om}(t),\quad 0\le r\le t<1,
    $$
and thus (ii) is satisfied. Conversely, by choosing $t=\r_{n+1}$ and $r=\r_n$ in (ii), we deduce $1-\r_{n+1}\le\left(\frac{C}{K}\right)^\frac1\b(1-\r_n)$, and (xi) follows by choosing $K>C$.

Moreover, if $\om\in\DD$, there exists $C=C(\om)>0$ such that
    $$
    \int_0^r\frac{dt}{\widehat{\om}(t)(1-t)}\ge\int_{2r-1}^r\frac{dt}{\widehat{\om}(t)(1-t)}
    \ge\frac{1}{\widehat{\om}(2r-1)}\log2\ge
    \frac{\log 2}{C\widehat{\om}(r)},\quad r\in\left[\frac12,1\right).
    $$
The proof of the lemma is now complete.
\end{Prf}
\medskip
\par Let $1<p_0,p_0'<\infty$ such that
$\frac{1}{p_0}+\frac{1}{p'_0}=1$, and let $\eta>-1$. A weight
$\om:\D\to(0,\infty)$ satisfies the \emph{Bekoll\'e-Bonami
$B_{p_0}(\eta)$-condition}\index{Bekoll\'e-Bonami
weight},\index{$B_{p_0}(\eta)$} denoted by $\om\in B_{p_0}(\eta)$,
if there exists a constant $C=C(p_0,\eta,\omega)>0$ such that
    \begin{equation}\label{eq:BB}
    \begin{split}
    &\left(\int_{S(I)}\om(z)(1-|z|)^{\eta}\,dA(z)\right)
    \left(\int_{S(I)}\om(z)^{\frac{-p'_0}{p_0}}(1-|z|)^{\eta}\,dA(z)\right)^{\frac{p_0}{p'_0}}\\
    &\le C|I|^{(2+\eta)p_0}
    \end{split}
    \end{equation}
for every interval $I\subset \T$. Bekoll\'e and Bonami introduced
these weights in~\cite{Bek,BB}, and showed that $\frac{\om(z)}{(1-|z|)^\eta}\in
B_{p_0}(\eta)$ if and only if the Bergman
projection\index{Bergman projection}\index{$P_\eta(f)$}
    $$
    P_\eta(f)(z)=(\eta+1)\int_\D\frac{f(\xi)}{(1-\overline{\xi}z)^{2+\eta}}(1-|\xi|^2)^\eta\,dA(\xi)
    $$
is bounded from $L^{p_0}_\om$ to $A^{p_0}_\om$~\cite{BB}.
\medskip\par
The next lemma shows that a  radial weight $\om$ that
satisfies \eqref{eq:r2} is regular if and only if it is a
Bekoll\'e-Bonami weight. Moreover, Part (iii) quantifies in a
certain sense the self-improving integrability of radial weights.

\begin{lemma}\label{le:RAp}
\begin{itemize}
\item[{\rm(i)}] If $\om\in\R$, then for each $p_0>1$ there exists
$\eta=\eta(p_0,\omega)>-1$ such that $\frac{\om(z)}{(1-|z|)^\eta}$
belongs to $B_{p_0}(\eta)$.

\item[{\rm(ii)}] If $\om$ is a  radial weight such that
\eqref{eq:r2} is satisfied and $\frac{\om(z)}{(1-|z|)^\eta}$
belongs to $B_{p_0}(\eta)$ for some $p_0>0$ and $\eta>-1$, then
$\om\in\R$.

\item[{\rm(iii)}] For each radial weight $\om$ and $0<\a<1$,
define
    $$
    \widetilde{\om}(r)=\left(\int_r^1\om(s)\,ds\right)^{-\a}\om(r),\quad
    0\le r<1.
    $$
Then $\widetilde{\om}$ is also a weight and
$\frac{\int_r^1 \widetilde{\om}(s)\,ds}{(1-r)\widetilde{\om}(r)}=\frac1{1-\a}\frac{\int_r^1 \om(s)\,ds}{(1-r)\om(r)}$ for all
$0\le r<1$.
\end{itemize}
\end{lemma}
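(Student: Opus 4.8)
The plan is to rewrite the Bekoll\'e--Bonami condition \eqref{eq:BB} for the weight $v(z)=\om(z)(1-|z|)^{-\eta}$ in terms of one-dimensional radial integrals, and then feed in the structure of $\R$ (resp.\ $\DD$). Since $v(z)(1-|z|)^\eta=\om(z)$ and, because $\frac1{p_0}+\frac1{p_0'}=1$ forces $1+\frac{p_0'}{p_0}=p_0'$, also $v(z)^{-p_0'/p_0}(1-|z|)^\eta=\om(z)^{-p_0'/p_0}(1-|z|)^{\eta p_0'}$, the condition $v\in B_{p_0}(\eta)$ becomes
\[
\om(S(I))\left(\int_{S(I)}\om(z)^{-p_0'/p_0}(1-|z|)^{\eta p_0'}\,dA(z)\right)^{p_0/p_0'}\lesssim|I|^{(2+\eta)p_0}.
\]
Writing both integrals in polar coordinates, for radial $\om$ and $|I|$ small (the only relevant range) one has $\om(S(I))\asymp|I|\,\widehat{\om}(1-|I|)$ and $\int_{S(I)}\om^{-p_0'/p_0}(1-|z|)^{\eta p_0'}\,dA\asymp|I|\int_{1-|I|}^1\om(r)^{-p_0'/p_0}(1-r)^{\eta p_0'}\,dr$, so the whole question reduces to estimating these radial integrals.

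For (i), assume $\om\in\R$. Then $\om\in\DD$, so Lemma~\ref{Lemma:weights-in-D-hat}(ii) provides $\b=\b(\om)>0$ with $\widehat{\om}(r)\gtrsim\big(\frac{1-r}{|I|}\big)^{\b}\widehat{\om}(1-|I|)$ for $1-|I|\le r<1$, and regularity gives $\om(r)\asymp\widehat{\om}(r)/(1-r)$. Using regularity to replace $\om(r)^{-p_0'/p_0}$ by $(\widehat{\om}(r)/(1-r))^{-p_0'/p_0}$ and then Lemma~\ref{Lemma:weights-in-D-hat}(ii) to bound $\widehat{\om}(r)^{-p_0'/p_0}$ from above, I would get
\[
\int_{1-|I|}^1\om(r)^{-p_0'/p_0}(1-r)^{\eta p_0'}\,dr\lesssim|I|^{\b p_0'/p_0}\,\widehat{\om}(1-|I|)^{-p_0'/p_0}\int_{1-|I|}^1(1-r)^{p_0'\left(\frac{1-\b}{p_0}+\eta\right)}\,dr,
\]
and the last integral converges (and is comparable to $|I|^{\,p_0'((1-\b)/p_0+\eta)+1}$) precisely when $\eta>\frac{\b}{p_0}-1$; since $\b>0$, this range lies inside $(-1,\infty)$, so one may fix $\eta=\eta(p_0,\om)$ in it. Substituting back, the left-hand side of the displayed inequality is $\lesssim|I|^{\,2+2p_0/p_0'+\eta p_0}$ — the powers of $\widehat{\om}(1-|I|)$ cancel, and so does $\b$ — and since $1+\frac{p_0}{p_0'}=p_0$ this exponent equals $(2+\eta)p_0$; hence $v\in B_{p_0}(\eta)$. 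The only genuine constraint is choosing $\eta$ large enough that the dual integral converges; the exact cancellation of the powers of $|I|$ and $\widehat{\om}$ is forced by the scaling invariance of \eqref{eq:BB}, but I expect the careful bookkeeping there to be the main point of the argument.

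For (ii), assume $v=\om(1-|z|)^{-\eta}\in B_{p_0}(\eta)$ and that $\om$ satisfies \eqref{eq:r2}. Fix $r\in[\frac12,1)$ and take $I$ with $|I|=1-r$. I would keep the lower bound $\om(S(I))\gtrsim(1-r)\widehat{\om}(r)$ and, for the dual factor, integrate only over the top half $\{z\in S(I):|z|\le\frac{1+r}{2}\}$, where \eqref{eq:r2} gives $\om(\rho)\asymp\om(r)$ and $1-\rho\asymp1-r$; this yields $\int_{S(I)}\om^{-p_0'/p_0}(1-|z|)^{\eta p_0'}\,dA\gtrsim(1-r)^{2+\eta p_0'}\om(r)^{-p_0'/p_0}$. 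Inserting both bounds into the Bekoll\'e--Bonami inequality and collecting the powers of $1-r$ (again using $1+\frac{p_0}{p_0'}=p_0$), all but one factor of $1-r$ cancels and one is left with $\widehat{\om}(r)\lesssim(1-r)\om(r)$, i.e.\ \eqref{eq:r1}. Together with the standing hypothesis \eqref{eq:r2} this gives $\om\in\R$, since $\om\in\R$ is equivalent to \eqref{eq:r2} and \eqref{eq:r1} holding simultaneously.

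Part (iii) is a direct computation. As $\om\in L^1(0,1)$, the function $\widehat{\om}(r)=\int_r^1\om(s)\,ds$ is absolutely continuous with $\frac{d}{dr}\widehat{\om}(r)=-\om(r)$ a.e.\ and $\widehat{\om}(1)=0$, hence $\widetilde{\om}(r)=\widehat{\om}(r)^{-\a}\om(r)=-\frac1{1-\a}\frac{d}{dr}\widehat{\om}(r)^{1-\a}$ wherever $\widehat{\om}(r)>0$. Integrating over $(r,1)$ — legitimately over $(r,1-\delta)$, where $\widehat{\om}$ is bounded away from $0$, and then letting $\delta\to0$ and using $1-\a>0$ so that $\widehat{\om}(1-\delta)^{1-\a}\to0$ — gives $\widetilde{\om}\ge0$ and $\int_r^1\widetilde{\om}(s)\,ds=\frac1{1-\a}\widehat{\om}(r)^{1-\a}$; in particular $\int_0^1\widetilde{\om}(s)\,ds=\frac1{1-\a}\widehat{\om}(0)^{1-\a}<\infty$, so $\widetilde{\om}$ is a weight. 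Dividing gives
\[
\frac{\int_r^1\widetilde{\om}(s)\,ds}{(1-r)\widetilde{\om}(r)}=\frac{\frac1{1-\a}\widehat{\om}(r)^{1-\a}}{(1-r)\widehat{\om}(r)^{-\a}\om(r)}=\frac1{1-\a}\,\frac{\widehat{\om}(r)}{(1-r)\om(r)}=\frac1{1-\a}\,\frac{\int_r^1\om(s)\,ds}{(1-r)\om(r)},
\]
which is the claimed identity.
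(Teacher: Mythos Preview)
Your argument is correct in all three parts. Part (iii) is essentially identical to the paper's proof (recognizing $\widetilde{\om}$ as the derivative of $\frac{1}{1-\a}\widehat{\om}^{1-\a}$ is just the integration-by-parts computation unpacked), and for (ii) you have spelled out explicitly what the paper leaves as ``appropriately modifying the argument in the proof of (i)'': estimating the dual integral from below on the top half of $S(I)$ using \eqref{eq:r2}, which yields exactly \eqref{eq:r1}.

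For part (i) your route differs mildly from the paper's. The paper works directly with the local comparability \eqref{eq:r2}: it sets up a geometric sequence $s_{n+1}=s_n+s(1-s_n)$ in $[1-|I|,1)$, bounds $\om(t)^{-p_0'/p_0}$ on each $[s_n,s_{n+1}]$ by $C^{np_0'/p_0}\om(1-|I|)^{-p_0'/p_0}$, and then sums the resulting geometric series, needing $\eta>\frac{\log C}{p_0\log(1/(1-s))}$ for convergence; \eqref{eq:r1} closes the estimate. You instead go through the equivalent description $\om(r)\asymp\widehat{\om}(r)/(1-r)$ of regularity and invoke Lemma~\ref{Lemma:weights-in-D-hat}(ii) to bound $\widehat{\om}(r)$ from below, reducing the dual integral to a single power integral in $1-r$. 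Your approach is a bit slicker because the dyadic summation is already packaged inside Lemma~\ref{Lemma:weights-in-D-hat}(ii); the paper's approach, on the other hand, uses \emph{only} \eqref{eq:r2} and \eqref{eq:r1} and never appeals to $\widehat{\om}$ or the doubling lemma, so it is slightly more self-contained. The resulting thresholds for $\eta$ are equivalent (the paper's $\frac{\log C}{p_0\log(1/(1-s))}$ plays the same role as your $\frac{\b}{p_0}-1$), and the final cancellation of powers of $|I|$ and of $\widehat{\om}(1-|I|)$ is the same in both.
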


\begin{proof}
(i) Since each regular weight is radial, it suffices to show that
there exists a constant $C=C(p,\eta,\omega)>0$ such that
    \begin{equation}\label{eq:rb1}
    \left (\int_{1-|I|}^1\om(t)\,dt\right )\left
    (\int_{1-|I|}^1\om(t)^{\frac{-p'_0}{p_0}}(1-t)^{p'_0\eta}\,dt\right
    )^{\frac{p_0}{p'_0}}\le C|I|^{(1+\eta)p_0}
    \end{equation}
for every interval $I\subset \T$. To prove~\eqref{eq:rb1}, set
$s_0=1-|I|$ and $s_{n+1}=s_n+s(1-s_n)$, where $s\in (0,1)$ is
fixed. Take $p_0$ and $\eta$ such that $\eta>\frac{\log
C}{p_0\log\frac{1}{1-s}}>0$, where the constant $C=C(s,\omega)>1$
is from~\eqref{eq:r2}. Then \eqref{eq:r2} yields
    \begin{equation*}
    \begin{split}
    \int_{1-|I|}^1\om(t)^{\frac{-p'_0}{p_0}}(1-t)^{p'_0\eta}\,dt
    &\le\sum_{n=0}^\infty(1-s_n)^{p'_0\eta}\int_{s_n}^{s_{n+1}}\om(t)^{\frac{-p'_0}{p_0}}\,dt\\
    &\le C^{\frac{p'_0}{p_0}}\sum_{n=0}^\infty (1-s_n)^{p'_0\eta+1}\om(s_{n})^{\frac{-p'_0}{p_0}}\\
    &\le|I|^{p'_0\eta+1}\om(1-|I|)^{\frac{-p'_0}{p_0}}\\
    &\quad\cdot\sum_{n=0}^\infty (1-s)^{n(p'_0\eta+1)}C^{(n+1)\frac{p'_0}{p_0}}\\
    &=C(p_0,\eta,s,\omega)|I|^{p'_0\eta+1}\om(1-|I|)^{\frac{-p'_0}{p_0}},
    \end{split}
    \end{equation*}
which together with \eqref{eq:r1} gives \eqref{eq:rb1}.

(ii) The asymptotic inequality $\frac{\int_r^1 \om(s)\,ds}{\om(r)}\lesssim(1-r)$ follows by
\eqref{eq:rb1} and further appropriately modifying the argument in
the proof of (i). Since the assumption \eqref{eq:r2} gives
$\frac{\int_r^1 \om(s)\,ds}{\om(r)} \gtrsim(1-r)$, we deduce $\om\in\R$.

(iii) If $0\le r<t<1$, then an integration by parts yields
    \begin{equation*}
    \begin{split}
    \int_r^t\frac{\om(s)}{\left(\int_{s}^1\om(v)\,dv\right)^{\a}}\,ds
    &=\left(\int_{r}^1\om(v)\,dv\right)^{1-\a}-\left(\int_{t}^1\om(v)\,dv\right)^{1-\a}\\
    &\quad+\a\int_r^t
    \frac{\om(s)}{\left(\int_{s}^1\om(v)\,dv\right)^{\a}}\,ds,
    \end{split}
    \end{equation*}
from which the assertion follows by letting $t\to1^-$.
\end{proof}

\section{Carleson measures}\label{sec:measures}

For a given Banach space (or a complete metric
space) $X$ of analytic functions on $\D$, a positive Borel measure
$\mu$ on $\D$ is called a \emph{$q$-Carleson measure for $X$}
\index{Carleson measure} if the identity operator\index{identity
operator} $I_d:\, X\to L^q(\mu)$ is bounded.
We shall obtain a description 
of $q$-Carleson measures for the weighted
Bergman space $A^p_{\om}$, $\om\in\DD$. We shall offer a
detailed proof for the case $q\ge p$ which differs from that in
\cite{PelRatMathAnn} and follows the lines of \cite[Chapter $2$]{PelRat}.
\medskip

\subsection{Test functions and the weighted maximal function}

The next result follows from  Lemma~\ref{Lemma:weights-in-D-hat}(vii)
 and its proof.

\begin{lemma}\label{testfunctions1}
Let $0<p<\infty$ and $\omega\in\DD$. Then there is $\lambda_0(\om)$ such that for any $\lambda\ge \lambda_0$ and  each $a\in \D$
the  function $F_{a,p}(z)=\left(\frac{1-|a|^2}{1-\overline{a}z}\right)^{\frac{\lambda+1}{p}}$ is analytic in $\D$ and satisfies
    \begin{equation}\label{eq:tf1}
    |F_{a,p}(z)|\asymp 1,\quad z\in S(a),\quad a\in\D,
    \end{equation}
and
    \begin{equation}\label{eq:tf2}
    \|F_{a,p}\|_{A^p_\om}^p\asymp\om\left(S(a)\right),\quad a\in\D.
    \end{equation}
\end{lemma}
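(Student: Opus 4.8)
The plan is to deduce everything from Lemma~\ref{Lemma:weights-in-D-hat}(vii). First I would fix $\lambda_0=\lambda_0(\om)$ to be the exponent $\lambda$ furnished by part (vii) of that lemma, so that
    $$
    \int_\D\frac{\dm(z)}{|1-\overline{\z}z|^{\lambda_0+1}}\asymp\frac{\widehat{\om}(\zeta)}{(1-|\z|)^{\lambda_0}},\quad \z\in\D.
    $$
I also note that (vii) is stable under increasing $\lambda$: once it holds for $\lambda_0$ it holds for every $\lambda\ge\lambda_0$, because the same estimate on $\int_0^{|\z|} s\,d\om(s)/(1-|\z|s)^\lambda$ used in the proof of (iii)$\Rightarrow$(vii) goes through verbatim with a larger exponent (and the case $|\z|\le\frac12$ reduces to \eqref{d}, which is $\lambda$-independent). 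Analyticity of $F_{a,p}$ in $\D$ is immediate since $1-\overline{a}z$ does not vanish there and we take a single-valued branch of the power.

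For \eqref{eq:tf1}, I would use that $z\in S(a)$ forces $|1-\overline{a}z|\asymp 1-|a|$: indeed $1-|a|\le 1-|z|\le|1-\overline{a}z|$ from below, and from above $|1-\overline{a}z|\le |1-|a|^2|+|a|\,|a-z|\lesssim(1-|a|)+|I_a|\asymp 1-|a|$ using $|\arg(ae^{-i\theta})|\le\frac{1-|a|}{2}$ and $||a|-|z||\le 1-|a|$ for $z=|z|e^{i\theta}\in S(a)$. Hence $\left|\frac{1-|a|^2}{1-\overline{a}z}\right|\asymp 1$, and raising to the fixed power $\frac{\lambda+1}{p}$ gives $|F_{a,p}(z)|\asymp 1$ on $S(a)$.

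For \eqref{eq:tf2}, compute $\|F_{a,p}\|_{A^p_\om}^p=(1-|a|^2)^{\lambda+1}\int_\D\frac{\om(z)\,dA(z)}{|1-\overline{a}z|^{\lambda+1}}$ and apply (vii) with $\z=a$ (here $\om$ is an absolutely continuous weight, so $\dm$ is just $\om\,dA$ in polar form) to get $\|F_{a,p}\|_{A^p_\om}^p\asymp(1-|a|)\widehat{\om}(a)$. Finally I would invoke the identity $\om(S(a))\asymp\widehat{\om}(a)(1-|a|)$ valid for $\om\in\DD$ — this is exactly the content of Lemma~\ref{Lemma:weights-in-D-hat}(viii) after observing $\om^\star(a)\asymp\widehat{\om}(a)(1-|a|)$ and that $\om(S(a))\asymp\om^\star(a)/(1-|a|)\cdot(1-|a|)$; more directly, $\om(S(a))=\int_{I_a}\int_{1-|I_a|}^1\om(s)s\,ds\,d\theta\asymp|I_a|\,\widehat{\om}(1-|I_a|)\asymp(1-|a|)\widehat{\om}(a)$, where the last step again uses the doubling of $\widehat{\om}$ to replace $\widehat{\om}(1-|I_a|)\asymp\widehat{\om}(|a|)$. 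Combining the two displays yields \eqref{eq:tf2}. The only mild obstacle is the bookkeeping around $|\z|\le\frac12$ versus $|\z|\ge\frac12$ in (vii) and the constant adjustments when passing between $\widehat{\om}(|a|)$, $\widehat{\om}(1-|I_a|)$ and $\om(S(a))$; all of these are routine consequences of the doubling property and the $\asymp$ in Lemma~\ref{Lemma:weights-in-D-hat}(viii), so no genuinely new estimate is needed.
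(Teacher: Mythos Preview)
Your approach is correct and matches the paper's: the paper simply states that the lemma ``follows from Lemma~\ref{Lemma:weights-in-D-hat}(vii) and its proof,'' and you have unpacked precisely that. One small slip: in your lower bound for \eqref{eq:tf1} you write $1-|a|\le 1-|z|$, but on $S(a)$ one has $|z|\ge|a|$, so this inequality is reversed; the correct (and simpler) lower bound is $|1-\overline{a}z|\ge 1-|a||z|\ge 1-|a|$, which gives the same conclusion.
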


It is
known that $q$-Carleson measures for $\om\in\R$ can
be characterized either in terms of Carleson squares or
pseudohyperbolic discs \cite{OC}. However, this is no longer true when
$\om\in\DD$. So, we shall use tools from harmonic analysis.
\medskip
\par Let us consider the maximal function
    $$
    M_{\om}(\vp)(z)=\sup_{I:\,z\in S(I)}\frac{1}{\om\left(S(I)
    \right)}\int_{S(I)}|\vp(\xi)|\om(\xi)\,dA(\xi),\quad
    z\in\D,
    $$
introduced by H\"ormander~\cite{HormanderL67}. Here we must
require $\vp\in L^1_\om$ and that $\vp(re^{i\t})$ is
$2\pi$-periodic with respect to $\t$ for all $r\in(0,1)$. The function $M_{\om}(\vp)$ plays a role on $A^p_\om$
similar to that of the Hardy-Littlewood maximal function on the Hardy space $H^p$.

\par Now, we are going to get a pointwise control of $|f|$ in terms of $M_{\om}(|f|)$.
\begin{lemma}\label{le:suf1}
Let $0<s<\infty$ and $\om\in\DD$. Then there exists a
constant $C=C(s,\omega)>0$ such that
    \begin{equation}\label{eq:s3}
    |f(z)|^s\le CM_{\om}(f^s)(z),\quad z\in\D,
    \end{equation}
for all $f\in\H(\D)$.
\end{lemma}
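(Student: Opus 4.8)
The plan is to bound $|f(z)|$ by its mean over a small pseudohyperbolic disc $\Delta(z,\tfrac12)$, then cover that disc by a single Carleson square $S(I)$ whose $\om$-measure is comparable to $\widehat{\om}(z)(1-|z|)$, and finally use the doubling hypothesis to show that the constants in these comparisons are uniform in $z$. First I would recall the standard subharmonicity estimate: for $f\in\H(\D)$ and $0<s<\infty$, the function $|f|^s$ is subharmonic, so for a Euclidean disc $D(z,\rho)\subset\D$ one has
    $$
    |f(z)|^s\le\frac{C}{\rho^2}\int_{D(z,\rho)}|f(\xi)|^s\,dA(\xi).
    $$
Taking $\rho\asymp(1-|z|)$ (say $\rho=\tfrac{1-|z|}{4}$), the disc $D(z,\rho)$ is contained in a Carleson square $S(I)$ with $z\in S(I)$ and $|I|\asymp 1-|z|$, and on that square $1-|\xi|\asymp 1-|z|$, so area measure and $\om$-measure on $D(z,\rho)$ are related by $\om(\xi)\gtrsim$ its value near $|z|$ — but since $\om$ need not satisfy \eqref{eq:r2}, this last step is exactly where care is needed, and I will not argue pointwise on $\om$ but rather through the integrated quantity $\widehat{\om}$.

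The key point, then, is the estimate $\om(S(I))\asymp\widehat{\om}(z)(1-|z|)$ for $z\in\D$ with $|z|\ge\tfrac12$ and $|I|\asymp 1-|z|$, together with the lower bound on the $\om$-mass of $D(z,\rho)$ inside $S(I)$. For the former: by Fubini, $\om(S(I))\asymp\int_{1-|I|}^1\widehat{\om_0}$-type integrals, and using Lemma~\ref{Lemma:weights-in-D-hat}(ii) one has $\widehat{\om}(t)\le C\big(\tfrac{1-t}{1-|z|}\big)^\beta\widehat{\om}(|z|)$ for $t\ge|z|$, which makes $\int_{|z|}^1\widehat{\om}(t)\,dt\lesssim\widehat{\om}(|z|)(1-|z|)$, and the matching lower bound $\om(S(I))\gtrsim\widehat{\om}(z)(1-|z|)$ follows from the doubling property as in the proof of (i)$\Rightarrow$(viii) in Lemma~\ref{Lemma:weights-in-D-hat} (integrate only over the outer half of the square). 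For the $\om$-mass of $D(z,\rho)$: since $D(z,\rho)$ contains a Carleson square $S(I')$ with $|I'|\asymp 1-|z|$ based at the point $(|z|+\rho)e^{i\arg z}$ or so, the same two-sided estimate gives $\om(D(z,\rho))\gtrsim\widehat{\om}(z)(1-|z|)\asymp\om(S(I))$. Chaining these,
    $$
    |f(z)|^s\le\frac{C}{\rho^2}\int_{D(z,\rho)}|f|^s\,dA
    \le\frac{C'}{\om(D(z,\rho))}\int_{S(I)}|f|^s\om\,dA
    \le\frac{C''}{\om(S(I))}\int_{S(I)}|f|^s\om\,dA\le C'' M_\om(f^s)(z),
    $$
where the middle inequality uses that $\om\gtrsim$ (some value) is replaced by the integrated comparison $\rho^2\asymp(1-|z|)^2$ and $\om(D(z,\rho))\asymp\widehat{\om}(z)(1-|z|)$; concretely one writes $\frac{1}{\rho^2}\int_{D(z,\rho)}|f|^s\,dA\asymp\frac{1}{(1-|z|)^2}\int_{D(z,\rho)}|f|^s\,dA$ and compares with $\frac{1}{\widehat{\om}(z)(1-|z|)}\int_{S(I)}|f|^s\om\,dA$ after noting $\widehat{\om}(z)(1-|z|)\asymp\om(D(z,\rho))$ — this requires only that $\om$ restricted to $D(z,\rho)$ has total mass $\asymp\widehat{\om}(z)(1-|z|)$, not that it is pointwise comparable to a constant.

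The case $|z|<\tfrac12$ is handled trivially: $|f(z)|^s\lesssim\int_{D(0,3/4)}|f|^s\,dA\lesssim\frac{1}{\om(S(I))}\int_{S(I)}|f|^s\om\,dA$ for a fixed square $S(I)\supset D(0,3/4)$ with $z\in S(I)$, using $\om(S(I))\asymp\widehat{\om}(0)>0$. The main obstacle is the comparison $\om(D(z,\rho))\asymp\om(S(I))\asymp\widehat{\om}(z)(1-|z|)$ uniformly in $z$ \emph{without} invoking \eqref{eq:r2}; this is precisely what the doubling property $\om\in\DD$ (via Lemma~\ref{Lemma:weights-in-D-hat}(ii) and the argument behind (viii)) is designed to supply, and assembling those pieces with uniform constants is the technical heart of the proof. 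Everything else is routine subharmonicity and bookkeeping.
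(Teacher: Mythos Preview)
There is a genuine gap in the middle step of your chain
\[
\frac{C}{\rho^2}\int_{D(z,\rho)}|f|^s\,dA
\;\le\;\frac{C'}{\om(D(z,\rho))}\int_{S(I)}|f|^s\om\,dA.
\]
You justify this by saying it ``requires only that $\om$ restricted to $D(z,\rho)$ has total mass $\asymp\widehat{\om}(z)(1-|z|)$, not that it is pointwise comparable to a constant.'' But knowing the total mass $\om(D(z,\rho))$ does \emph{not} let you pass from an unweighted average of $|f|^s$ over $D(z,\rho)$ to an $\om$-weighted average over $S(I)$: if $\om$ is concentrated on a part of $S(I)$ where $|f|^s$ happens to be small, the right-hand side can be much smaller than the left. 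Moreover, the lower bound $\om(D(z,\rho))\gtrsim\widehat{\om}(z)(1-|z|)$ itself fails in general. Your argument for it --- that $D(z,\rho)$ contains a Carleson square $S(I')$ --- is wrong, because every Carleson square reaches the boundary $\T$ while $D(z,\rho)$ with $\rho=\tfrac{1-|z|}{4}$ stays at positive distance from $\T$. In fact, the hypothesis $\om\in\DD$ only controls the tails $\widehat{\om}(r)=\int_r^1\om$; the weight $\om$ may vanish on entire annuli $\{a\le |\xi|\le b\}$ while $\widehat{\om}$ remains doubling (place the mass of $\om$ on thin rings near each dyadic radius $1-2^{-n}$). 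For $z$ sitting in one of the gaps, $\om(D(z,\rho))=0$ and your chain collapses. This is precisely why the paper introduces the class $\R$ separately from $\DD$: the local comparison you need is property~\eqref{eq:r2}, which $\DD$ does not provide.

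The paper's proof avoids any local region around $z$. Instead it uses the subharmonicity of $|f|^\alpha$ against the Poisson kernel on circles of radius $\rho>|z|$, upgrades the kernel via H\"older to $P_\gamma(r/\rho,t)=\frac{(1-(r/\rho)^2)^{\gamma-1}}{2\pi|1-(r/\rho)e^{it}|^\gamma}$, and then \emph{integrates $\rho$ radially from $|z|$ to $1$} against $\om(\rho)\rho\,d\rho$. After a dyadic decomposition in the angular variable, the $\om$-mass in the denominator appears as $\int_{(1+r)/2}^1\om(\rho)\rho\,d\rho\asymp\widehat{\om}(r)$, which \emph{is} governed by the doubling condition (via Lemma~\ref{Lemma:weights-in-D-hat}(ii)), and the dyadic sum converges geometrically by choosing $\gamma>\beta+1+\log_2 C$. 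The point is that integrating all the way to the boundary captures the tail $\widehat{\om}$ rather than a local piece of $\om$, and that is what makes the argument go through for arbitrary $\om\in\DD$.
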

\begin{proof}

Let  $\om\in\DD$ and let $C=C(\om)\ge 1$ and $\b=\b(\om)>0$ be those of Lemma~\ref{Lemma:weights-in-D-hat}(ii).  Write $s=\a\gamma$, where $\gamma>\beta+1+\log_2 C>1$. It  suffices to prove the assertion for
the points $re^{i\t}\in\D$ with $r>\frac12$. If $r<\rho<1$, then using that $|f|^\alpha$ is subharmonic and
H\"{o}lder's inequality
    \begin{equation*}
    \begin{split}
    |f(re^{i\t})|^\alpha
    &\le\frac{1}{2\pi}\int_{-\pi}^{\pi}\frac{1-(\frac{r}{\rho})^2}{|1-\frac{r}{\rho}e^{it}|^2}|f(\rho
    e^{i(t+\t)})|^\alpha\,dt
    \\ & \le
    \left(\frac{1}{2\pi}\int_{-\pi}^{\pi}\frac{\left(1-(\frac{r}{\rho})^2\right)^{\gamma-1}}{|1-\frac{r}{\rho}e^{it}|^\g}|f(\rho
    e^{i(t+\t)})|^{\alpha\g}\,dt\right)^{1/\g}
    \\ & \quad
    \cdot\left(\frac{1}{2\pi}\int_{-\pi}^{\pi}\frac{\left(1-(\frac{r}{\rho})^2\right)^{\gamma'-1}}{|1-\frac{r}{\rho}e^{it}|^{\g'}}\,dt\right)^{1/{\g}'},
    \end{split}
    \end{equation*}
    that is
    \begin{equation*}
    \begin{split}
    |f(re^{i\t})|^s
    &\le C(\om,s)\frac{1}{2\pi}\int_{-\pi}^{\pi}\frac{\left(1-(\frac{r}{\rho})^2\right)^{\gamma-1}}{|1-\frac{r}{\rho}e^{it}|^\g}|f(\rho
    e^{i(t+\t)})|^{s}\,dt
   \\  &= C(\om,s)\int_{-\pi}^{\pi}P_{\g}\left(\frac{r}{\rho},t\right)|f(\rho e^{i(t+\t)})|^s\,dt,
    \end{split}
    \end{equation*}
where
    $$
    P_\g(r,t)=\frac{1}{2\pi}\frac{(1-r^2)^{\gamma-1}}{|1-re^{it}|^\g},
    \quad0<r<1.
    $$

 Set
$t_n=2^{n-1}(1-r)$ and $J_n=[-t_n,t_n]$ for $n=0,1,\ldots,N+1$,
where $N$ is the largest natural number such that $t_N<\frac12$.
Further, set $G_0=J_0$, $G_n=J_n\setminus J_{n-1}$ for
$n=1,\ldots,N$, and $G_{N+1}=[-\pi,\pi]\setminus J_N$. Then
    \begin{equation*}
    \begin{split}
    |f(re^{i\t})|^s
    &\le\sum_{n=0}^{N+1}\int_{G_n}P_\g\left(\frac{r}{\rho},t\right)|f(\rho
    e^{i(t+\t)})|^s dt\\
    &\le\sum_{n=0}^{N+1}P_\g\left(\frac{r}{\rho},t_{n-1}\right)\int_{G_n}|f(\rho
    e^{i(t+\t)})|^s dt\\
    &\lesssim\frac{1}{1-\frac{r}{\rho}}\sum_{n=0}^{N+1}2^{-n\g}\int_{G_n}|f(\rho
    e^{i(t+\t)})|^s dt,
    \end{split}
    \end{equation*}
and therefore
    \begin{eqnarray*}
    &&|f(re^{i\t})|^s
    (1-r)\int_{(1+r)/2}^1\omega(\rho)\rho\,d\rho
    \le2\int_r^1|f(re^{i\t})|^s(\rho-r)\omega(\rho)\rho\,d\rho\\
    &&\lesssim\sum_{n=0}^{N+1}2^{-n\g}\int_{r}^1\int_{G_n}\left|f\left(\rho
    e^{i(t+\t)}\right)\right|^s dt\,\omega(\rho)\rho^2\,d\rho.
    \end{eqnarray*}
It follows that
    \begin{equation*}
    \begin{split}
    |f(re^{i\t})|^s
    &\lesssim\sum_{n=0}^{N}2^{-n(\gamma-1)}
    \frac{\int_{r}^1\int_{-t_n}^{t_n}\left|f\left(\rho
    e^{i(t+\t)}\right)\right|^s dt\,\omega(\rho)\rho\,d\rho}{\int_{-t_n}^{t_n}\int_{(1+r)/2}^1\omega(\rho)\rho\,
    d\rho\,dt}\\
    &\quad+2^{-N(\gamma-1)}\frac{\int_{r}^1\int_{-\pi}^{\pi}\left|f\left(\rho
    e^{it}\right)\right|^s
    dt\omega(\rho)\rho\,d\rho}{\int_{-\pi}^{\pi}\int_{(1+r)/2}^1\omega(\rho)\rho
    d\rho\,dt}\\
    &\lesssim\sum_{n=0}^{N}2^{-n(\gamma-1)}
    \frac{\int_{1-t_{n+1}}^1\int_{-t_n}^{t_n}\left|f\left(\rho
    e^{i(t+\t)}\right)\right|^s dt\,\omega(\rho)\rho\,d\rho}{\int_{-t_n}^{t_n}\int_{(1+r)/2}^1\omega(\rho)\rho\,
    d\rho\,dt}\\
    &\quad+2^{-N(\gamma-1)}\frac{\int_{0}^1\int_{-\pi}^{\pi}\left|f\left(\rho
    e^{it}\right)\right|^s
    dt\omega(\rho)\rho\,d\rho}{\int_{-\pi}^{\pi}\int_{(1+r)/2}^1\omega(\rho)\rho
    d\rho\,dt},
    \end{split}
    \end{equation*}
where the last step is a consequence of the inequalities
$0<1-t_{n+1}\le r$. Denoting the interval centered at $e^{i\t}$
and of the same length as $J_n$ by $J_n(\t)$, and applying
Lemma~\ref{Lemma:weights-in-D-hat}(ii), to the
denominators, we obtain
    \begin{equation*}
    \begin{split}
    |f(re^{i\t})|^s&\lesssim\sum_{n=0}^{N}C^n2^{-n(\gamma-1-\b)}
    \frac{\int_{S(J_n(\t))}\left|f(z)\right|^s\omega(z)\,dA(z)}{\omega(S(J_n(\t)))}\\
    &\quad+C^N2^{-N(\gamma-1-\b)}\frac{\int_{\D}|f(z)|^s\omega(z)\,dA(z)}{\omega(\D)}\\
    &\lesssim\left(\sum_{n=0}^\infty2^{-n(\gamma-1-\beta-\log_2C)}\right)M_{\om}(|f|^s)(re^{i\t})
    \lesssim M_{\om}(|f|^s)(re^{i\t}),
    \end{split}
    \end{equation*}
where in the last inequality we have used the election of $\gamma$. This finishes the proof.
\end{proof}

\subsection{Carleson measures. Case $\mathbf{0<p\le q<\infty}$.}
\par Next,   we prove our main result in this section, by combining  a weak $(1,1)$ inequality for the maximal function with the
pointwise estimate \eqref{eq:s3}.

\begin{theorem}\label{th:cm}
Let $0<p\le q<\infty$,  $\om\in\DD$ and let $\mu$ be a
positive Borel measure on $\D$. Then $\mu$ is a $q$-Carleson measure for $A^p_\omega$ if
and only if
    \begin{equation}\label{eq:s1}
    \sup_{I\subset\T}\frac{\mu\left(S(I) \right)}{\left(\om\left(S(I)
    \right)\right)^\frac{q}p}<\infty.
    \end{equation}
Moreover, if $\mu$ is a $q$-Carleson measure for $A^p_\omega$,
then the identity operator $I_d:A^p_{\om}\to L^q(\mu)$ satisfies
    $$
    \|I_d\|^q_{\left(A^p_\om, L^q(\mu)\right)}\asymp\sup_{I\subset\T}\frac{\mu\left(S(I) \right)}{\left(\om\left(S(I)
    \right)\right)^\frac{q}p}.
    $$
\end{theorem}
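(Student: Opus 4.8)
The plan is to prove the two directions separately, with the test functions of Lemma~\ref{testfunctions1} handling necessity and the maximal function estimate of Lemma~\ref{le:suf1} handling sufficiency. For \emph{necessity}, suppose $I_d:A^p_\om\to L^q(\mu)$ is bounded. Fix an interval $I\subset\T$ and let $a=a_I\in\D$ be the point with $1-|a|=|I|$ and $\arg a$ the midpoint of $I$, so that $S(a)$ is comparable to $S(I)$. Applying $I_d$ to the test function $F_{a,p}$ and using \eqref{eq:tf1} to bound $|F_{a,p}|$ below on $S(a)\supset S(I)$ (after shrinking $I$ by a fixed factor if necessary, which costs only a constant), we get
    $$
    \mu(S(I))\lesssim\int_{S(a)}|F_{a,p}(z)|^q\,d\mu(z)\le\|I_d\|^q_{(A^p_\om,L^q(\mu))}\|F_{a,p}\|^q_{A^p_\om}
    \asymp\|I_d\|^q\,\om(S(a))^{q/p}\asymp\|I_d\|^q\,\om(S(I))^{q/p},
    $$
where \eqref{eq:tf2} was used in the penultimate step and the doubling property $\om\in\DD$ (which gives $\om(S(a))\asymp\om(S(I))$) in the last. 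Taking the supremum over $I$ yields \eqref{eq:s1} together with the lower bound $\sup_I\mu(S(I))/\om(S(I))^{q/p}\lesssim\|I_d\|^q$.

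For \emph{sufficiency}, assume \eqref{eq:s1} holds and write $N$ for the supremum there. The key ingredient is a weak-type $(1,1)$ inequality for $M_\om$: the measure $\mu$ satisfies
    $$
    \mu\bigl(\{z\in\D:M_\om(\vp)(z)>\lambda\}\bigr)\lesssim N\,\Bigl(\frac{1}{\lambda}\int_\D|\vp(\xi)|\om(\xi)\,dA(\xi)\Bigr)
    $$
for all $\vp\in L^1_\om$ and $\lambda>0$. This is proved by a Vitali-type covering argument: the sublevel set is covered by Carleson squares $S(I)$ on each of which the $\om$-average of $|\vp|$ exceeds $\lambda$; extract a disjoint subfamily with comparable union, and on each selected square estimate $\mu(S(I))\le N\,\om(S(I))^{q/p}$; when $q=p$ this telescopes directly, and when $q>p$ one uses $\om(S(I))\le\om(\D)^{1-p/q}\om(S(I))^{p/q}$ — more carefully, one normalizes so that $\om(S(I))\le1$, which can be arranged since $\om(S(I))\le\om(\D)$, and then $\om(S(I))^{q/p}\le\om(S(I))$. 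Here I would either cite the harmonic-analysis machinery of H\"ormander~\cite{HormanderL67} for the covering lemma adapted to the quasi-metric structure of Carleson squares, or include a short self-contained argument.

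Granting the weak-type bound, take $f\in A^p_\om$ and apply Lemma~\ref{le:suf1} with $s=p$: $|f(z)|^p\le C\,M_\om(|f|^p)(z)$ pointwise. Hence, with $g=|f|^p\in L^1_\om$,
    \begin{equation*}
    \begin{split}
    \int_\D|f|^q\,d\mu
    &=\int_\D\bigl(|f|^p\bigr)^{q/p}\,d\mu
    \le C^{q/p}\int_\D M_\om(g)(z)^{q/p}\,d\mu(z)\\
    &=C^{q/p}\,\frac{q}{p}\int_0^\infty\lambda^{q/p-1}\mu\bigl(\{M_\om(g)>\lambda\}\bigr)\,d\lambda.
    \end{split}
    \end{equation*}
When $q=p$ this is immediately $\lesssim N\|g\|_{L^1_\om}=N\|f\|^p_{A^p_\om}$ by the weak-type inequality. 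When $q>p$ one needs a Marcinkiewicz-type interpolation: split the integral at $\lambda_0=\|g\|_{L^1_\om}^{?}$ and use the trivial bound $\mu(\{M_\om(g)>\lambda\})\le\mu(\D)\lesssim N\,\om(\D)^{q/p}$ for small $\lambda$ together with the weak-type bound for large $\lambda$; optimizing over the splitting point gives $\int_\D|f|^q\,d\mu\lesssim N\|g\|_{L^1_\om}^{q/p}=N\|f\|^q_{A^p_\om}$, which is the desired upper bound $\|I_d\|^q\lesssim N$.

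The main obstacle I anticipate is the passage to the case $q>p$: the weak-$(1,1)$ estimate alone controls $L^{q/p}(\mu)$-norms of $M_\om(g)$ only after a genuine interpolation step, and one must check that the two-ended split (large $\lambda$ via weak type, small $\lambda$ via the total mass bound, which itself uses \eqref{eq:s1} with $I=\T$) produces exactly the homogeneity $\|f\|^q_{A^p_\om}$ and not an extra power of $\om(\D)$ — this is where the normalization $\om(S(I))\le\om(\D)$ built into \eqref{eq:s1} must be used carefully. The covering-lemma proof of the weak-type inequality is the other technical point, but it is standard for the dyadic-type family of Carleson squares and can be imported from~\cite{HormanderL67} or~\cite[Chapter~2]{PelRat}.
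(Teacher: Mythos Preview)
Your necessity argument is correct and matches the paper. The sufficiency argument, however, has a genuine gap that does not close.

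First, the weak-type inequality you write down is not what the covering argument actually yields. From $\mu(S(I))\le N\,\om(S(I))^{q/p}$ and a disjoint family with $\sum_j\om(S(I_j))\le\lambda^{-1}\|\vp\|_{L^1_\om}$, one gets (using $q/p\ge1$, so $\sum a_j^{q/p}\le(\sum a_j)^{q/p}$) the weak-$(1,q/p)$ bound
    $$
    \mu\bigl(\{M_\om(\vp)>\lambda\}\bigr)\lesssim N\,\lambda^{-q/p}\|\vp\|_{L^1_\om}^{q/p},
    $$
which is exactly what the paper proves in \eqref{eq:s4}. Your attempt to trade $\om(S(I))^{q/p}$ for $\om(S(I))$ at the price of a factor $\om(\D)^{q/p-1}$ throws away information you need.

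Second, and decisively, neither version of the weak-type bound closes the layer-cake computation with $g=|f|^p$. Plugging the weak-$(1,q/p)$ estimate into
    $$
    \int_\D M_\om(g)^{q/p}\,d\mu=\frac{q}{p}\int_0^\infty\lambda^{q/p-1}\mu\bigl(\{M_\om(g)>\lambda\}\bigr)\,d\lambda
    $$
produces $\int_0^\infty\lambda^{-1}\,d\lambda=\infty$. Your proposed Marcinkiewicz split, using the trivial bound $\mu(\D)$ for small $\lambda$, does not rescue this: the tail still gives $\int_{\lambda_0}^\infty\lambda^{-1}\,d\lambda$ (and with your weak-$(1,1)$ version it gives $\int_{\lambda_0}^\infty\lambda^{q/p-2}\,d\lambda$, divergent for all $q\ge p$). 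This is just the familiar failure of weak-$(1,1)$ to imply strong-$(1,1)$; there is no second endpoint here to interpolate against, and the case $q=p$ is already broken, not only $q>p$.

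The paper's way around this is to apply Lemma~\ref{le:suf1} with a \emph{smaller} exponent: fix $\alpha>1/p$ and work with $|f|^{1/\alpha}$, which lies in $L^{p\alpha}_\om$ with $p\alpha>1$. One then splits $|f|^{1/\alpha}$ at each level $s$ into the piece where $|f|^{1/\alpha}>s/(2K)$ and its complement, applies the weak-$(1,q/p)$ inequality only to the large piece, and controls the resulting $s$-integral by Minkowski's inequality in continuous form. The extra room $p\alpha>1$ is precisely what makes $\int_0^{2K|f(z)|^{1/\alpha}}s^{q\alpha-1-q/p}\,ds$ converge and collapse to $|f(z)|^p$ after the outer integration. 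Without this Calder\'on--Zygmund type decomposition the argument does not go through.
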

\begin{proof}
Let $0<p\le q<\infty$ and $\om\in\DD$, and assume first that
$\mu$ is a $q$-Carleson measure for $A^p_\om$. Consider the test
functions $F_{a,p}$\index{$F_{a,p}$} defined in
Lemma~\ref{testfunctions1}. Then the assumption together with
relations \eqref{eq:tf1} and \eqref{eq:tf2} yield
    \begin{equation*}\index{$F_{a,p}$}
    \begin{split}
    \mu(S(a))&\lesssim\int_{S(a)}|F_{a,p}(z)|^q\,d\mu(z)\le\int_{\D}|F_{a,p}(z)|^q\,d\mu(z)\lesssim\|F_{a,p}\|_{A^p_\om}^q\lesssim\om\left(S(a)\right)^\frac{q}{p}
    \end{split}
    \end{equation*}
for all $a\in\D$, and thus $\mu$ satisfies \eqref{eq:s1}.

Conversely, let $\mu$ be a positive Borel measure on $\D$ such
that \eqref{eq:s1} is satisfied. We begin with proving that there
exists a constant $K=K(p,q,\omega)>0$ such that the $L^1_\om$-weak
type inequality\index{$L^1_\om$-weak type inequality}
    \begin{equation}\label{eq:s4}\index{weighted maximal function}\index{$M_\om(\vp)$}
    \mu\left(E_{s}\right)\le
    Ks^{-\frac{q}{p}}\|\vp\|_{L^1_\om}^\frac{q}{p},\quad E_s=\left\{z\in\D: M_{\om}(\vp)(z)>s
    \right\},
    \end{equation}
is valid for all $\varphi\in L^1_\om$ and $0<s<\infty$.

If $E_s=\emptyset$, then \eqref{eq:s4} is clearly satisfied. If
$E_s\not=\emptyset$, then recall that
$I_z=\{e^{i\t}:|\arg(ze^{-i\t})|<(1-|z|)/2\}$ and $S(z)=S(I_z)$,
and define for each $\e>0$ the sets
    $$
    A_s^{\e}=\left\{z\in\D:\, \int_{S(I_z)}|\vp(\xi)|\om(\xi)\,dA(\xi)>s
    \left(\e+\om(S(z)) \right) \right\}
    $$
and
    $$
    B_s^{\e}=\left\{z\in\D:\, I_z\subset I_u\,\text{for some $u\in A_s^{\e}$}\right\}.
    $$
The sets $B_s^{\e}$ expand as $\e\to 0^+$, and
    $$\index{weighted maximal function}\index{$M_\om(\vp)$}
    E_s=\left\{z\in\D: M_{\om}(\vp)(z)>s \right\}=\bigcup_{\e>0}B_s^{\e},
    $$
so
    \begin{equation}\label{eq:s5}
    \mu(E_s)=\lim_{\e\to0^+}\mu(B_s^{\e}).
    \end{equation}
We notice that for each $\e>0$ and $s>0$ there are finitely many
points $z_n\in A_s^{\e}$ such that the arcs $I_{z_n}$ are
disjoint. Namely, if there were infinitely many points $z_n\in
A_s^{\e}$ with this property, then the definition of $A_s^{\e}$
would yield
    \begin{equation}\label{eq:s6}
    s\sum_n[\e+\om(S(z))]\le\sum_n\int_{S(I_{z_n})}|\vp(\xi)|\om(\xi)\,dA(\xi)
    \le\|\vp\|_{L^1_\om},
    \end{equation}
and therefore
    $$
    \infty=s\sum_n\e\le\|\vp\|_{L^1_\om},
    $$
which is impossible because $\vp\in L^1_\om$.

We now use Covering lemma~\cite[p.~161]{Duren1970} to find
$z_1,\ldots,z_m\in A_s^{\e}$ such that the arcs $I_{z_n}$ are
disjoint and
    $$
    A_s^{\e}\subset\bigcup_{n=1}^m\left\{z:I_z\subset J_{z_n}\right\},
    $$
where $J_z$ is the arc centered at the same point as $I_z$ and of
length $5|I_z|$. It follows easily that
    \begin{equation}\label{eq:s7}
    B_s^{\e}\subset\bigcup_{n=1}^m\left\{z:I_z\subset J_{z_n}\right\}.
    \end{equation}
But now the assumption \eqref{eq:s1} and the hypothesis $\om\in\DD$ give
    \begin{equation*}\begin{split}
    \mu\left(\left\{z:I_z\subset J_{z_n}\right\}\right)
    &=\mu\left(\left\{z:S(z)\subset S(J_{z_n}) \right\}\right)\le\mu\left( S(J_{z_n})\right)\\
    &\lesssim\left(\om\left( S(J_{z_n})\right)\right)^\frac{q}p\lesssim\left(\om\left(S(z_n)\right)\right)^\frac{q}{p},\quad
n=1,\ldots,m.
    \end{split}
    \end{equation*}
This combined with \eqref{eq:s7} and \eqref{eq:s6} yields
    $$
    \mu(B_s^{\e})\lesssim\sum_{n=1}^m\left(\om\left(S(z_n)\right)\right)^\frac{q}{p}
    \le\left(\sum_{n=1}^m\om\left(S(z_n)\right)\right)^\frac{q}{p}\le s^{-\frac{q}{p}}
    \|\vp\|_{L^1_\om}^\frac{q}{p},
    $$
which together with \eqref{eq:s5} gives \eqref{eq:s4} for some
$K=K(p,q,\omega)$.

We will now use Lemma~\ref{le:suf1} and \eqref{eq:s4} to show that
$\mu$ is a $q$-Carleson measure for $A^p_\om$. To do this, fix
$\alpha>\frac{1}{p}$ and let $f\in A^p_\om$. For $s>0$, let
$|f|^{\frac{1}{\alpha}}=\psi_{\frac{1}{\alpha},s}+\chi_{\frac{1}{\alpha},s}$,
where
    \begin{equation*}
    \psi_{\frac{1}{\alpha},s}(z)=\left\{
        \begin{array}{rl}
        |f(z)|^{\frac{1}{\alpha}},&\quad\text{if}\;|f(z)|^{\frac{1}{\alpha}}>s/(2K)\\
        0,&\quad\textrm{otherwise}
        \end{array}\right.
    \end{equation*}
and $K$ is the constant in \eqref{eq:s4}, chosen such that
$K\ge1$. Since $p>\frac{1}{\alpha}$, the function
$\psi_{\frac{1}{\alpha},s}$ belongs to $L^1_\om$ for all $s>0$.
Moreover,
    $$
    M_{\om}(|f|^{\frac{1}{\alpha}})\le M_{\om}(\psi_{\frac{1}{\alpha},s})+M_{\om}(\chi_{\frac{1}{\alpha},s})\le
    M_{\om}(\psi_{\frac{1}{\alpha},s})+\frac{s}{2K},
    $$
and therefore
    \begin{equation}\label{eq:inclusion}
    \left\{z\in\D: M_{\om}(|f|^{\frac{1}{\alpha}})(z)>s\right\}
   \subset
   \left\{z\in\D: M_{\om}(\psi_{\frac{1}{\alpha},s})(z)>s/2\right\}.
    \end{equation}
Using Lemma~\ref{le:suf1}, the inclusion \eqref{eq:inclusion},
\eqref{eq:s4} and Minkowski's inequality in continuous form
(Fubini in the case $q=p$), we finally deduce
    \begin{equation*}
    \begin{split}
    \int_{\D}|f(z)|^q\,d\mu(z)&\lesssim
    \int_{\D}\left(M_{\om}(|f|^{\frac{1}{\alpha}})(z)\right)^{q\alpha}\,d\mu(z)\\
    &=q\alpha\int_0^\infty s^{q\alpha-1}
    \mu\left(\left\{z\in\D: M_{\om}(|f|^{\frac{1}{\alpha}})(z)>s \right\} \right)\,ds\\
    &\le q\alpha\int_0^\infty s^{q\alpha-1}
    \mu\left(\left\{z\in\D: M_{\om}(\psi_{\frac{1}{\alpha},s})(z)>s/2\right\}\right)\,ds\\
    &\lesssim
    \int_0^\infty s^{q\alpha-1-\frac{q}{p}}
    \|\psi_{\frac{1}{\alpha},s}\|_{L^1_\om}^\frac{q}p\,ds\\
    &=\int_0^\infty s^{q\alpha-1-\frac{q}{p}}
    \left(\int_{\left\{z:\,|f(z)|^{\frac{1}{\alpha}}>\frac{s}{2K}\right\}}
    |f(z)|^{\frac{1}{\alpha}}\om(z)\,dA(z)\right)^\frac{q}p\,ds\\
    &\le\left(\int_{\D}|f(z)|^{\frac{1}{\alpha}}\om(z)\left(\int_0^{2K|f(z)|^{\frac{1}{\alpha}}}s^{q\alpha-1-\frac{q}{p}}
    \,ds\right)^\frac{p}{q}\,dA(z)\right)^\frac{q}{p}\\
    &\lesssim\left(\int_{\D}|f(z)|^p\om(z)\,dA(z)\right)^\frac{q}{p}.
    \end{split}
    \end{equation*}
Therefore $\mu$ is a $q$-Carleson measure for $A^p_\om$, and the
proof of Theorem~\ref{th:cm}(i) is complete.
\end{proof}

\par The next useful result follows from the proof of Theorem~\ref{th:cm}.
\begin{theorem}\label{co:maxbou}
Let $0<p\le q<\infty$ and $0<\alpha<\infty$ such that $p\alpha>1$.
Let $\om\in\DD$, and let $\mu$ be a positive Borel measure on
$\D$. Then
$[M_{\om}((\cdot)^{\frac{1}{\alpha}})]^{\alpha}:L^p_\omega\to
L^q(\mu)$ is bounded if and only if $\mu$ satisfies \eqref{eq:s1}.
Moreover,
    $$
    \|[M_{\om}((\cdot)^{\frac{1}{\alpha}})]^{\alpha}\|^q_{\left(L^p_\om,L^q(\mu)\right)}\asymp\sup_{I\subset\T}\frac{\mu\left(S(I) \right)}{\left(\om\left(S(I)
    \right)\right)^\frac{q}p}.
    $$
\end{theorem}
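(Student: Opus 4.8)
The plan is to read Theorem~\ref{co:maxbou} off the proof of Theorem~\ref{th:cm}. That proof is essentially an argument about the maximal operator $\mathcal{M}_\a:=[M_{\om}((\cdot)^{1/\a})]^{\a}$, and it invokes the analyticity of $f$ at exactly one place: the pointwise estimate of Lemma~\ref{le:suf1} that converts $|f|$ into $M_{\om}(|f|^{1/\a})$. Deleting that single step leaves an argument valid for an arbitrary $\varphi\in L^p_\om$; since $M_{\om}$ depends only on $|\varphi|$, I would first reduce to $\varphi\ge0$. For the \emph{necessity} of \eqref{eq:s1}, the analytic test functions $F_{a,p}$ of Lemma~\ref{testfunctions1} are replaced by the characteristic function $\varphi=\chi_{S(I)}$: choosing the square $S(I)$ itself in the supremum defining $M_{\om}$ gives $\mathcal{M}_\a(\chi_{S(I)})(z)\ge1$ for $z\in S(I)$, whence
\[
\mu(S(I))\le\int_{\D}\mathcal{M}_\a(\chi_{S(I)})(z)^q\,d\mu(z)\le\|\mathcal{M}_\a\|^q_{(L^p_\om,L^q(\mu))}\,\|\chi_{S(I)}\|^q_{L^p_\om}=\|\mathcal{M}_\a\|^q_{(L^p_\om,L^q(\mu))}\,\om(S(I))^{q/p}.
\]
Taking the supremum over $I\subset\T$ yields $\sup_{I}\mu(S(I))/\om(S(I))^{q/p}\le\|\mathcal{M}_\a\|^q_{(L^p_\om,L^q(\mu))}$.

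For the \emph{sufficiency}, assume \eqref{eq:s1}. The crucial observation is that the $L^1_\om$-weak type inequality \eqref{eq:s4}, valid for all $\varphi\in L^1_\om$, was established in the proof of Theorem~\ref{th:cm} from \eqref{eq:s1}, the hypothesis $\om\in\DD$ and the covering lemma, with no recourse to analyticity, so it is available verbatim here. Given $\varphi\in L^p_\om$ with $\varphi\ge0$ and $s>0$, I would split $\varphi^{1/\a}=\psi_{1/\a,s}+\chi_{1/\a,s}$ exactly as in the proof of Theorem~\ref{th:cm} with $\varphi$ in place of $|f|$; the hypothesis $p\a>1$ guarantees $\psi_{1/\a,s}\in L^1_\om$ for every $s>0$, because $\varphi^{1/\a}\le(2K/s)^{p\a-1}\varphi^{p}$ on $\{\varphi^{1/\a}>s/(2K)\}$, and $K\ge1$ gives the inclusion \eqref{eq:inclusion} with $\varphi$ in place of $|f|$. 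Writing $\int_{\D}\mathcal{M}_\a(\varphi)^q\,d\mu$ as a distribution-function integral, feeding in \eqref{eq:inclusion} and \eqref{eq:s4}, and estimating the resulting integral in $s$ by Minkowski's inequality in continuous form (Fubini when $q=p$) precisely as in Theorem~\ref{th:cm}, one obtains
\[
\int_{\D}\mathcal{M}_\a(\varphi)(z)^q\,d\mu(z)\lesssim\left(\sup_{I\subset\T}\frac{\mu(S(I))}{\om(S(I))^{q/p}}\right)\left(\int_{\D}\varphi(z)^{p}\om(z)\,dA(z)\right)^{q/p},
\]
which is both the boundedness of $\mathcal{M}_\a$ and the remaining half of the norm comparison.

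I do not expect a genuine obstacle here: the content is the bookkeeping remark that, once the use of Lemma~\ref{le:suf1} is excised, the sufficiency half of the proof of Theorem~\ref{th:cm} is analyticity-free and applies to any nonnegative $\varphi\in L^p_\om$, while in the necessity direction the indicator $\chi_{S(I)}$ replaces the analytic test functions $F_{a,p}$. The only points that need care are the reduction to $\varphi\ge0$, the $L^1_\om$-membership of the truncations $\psi_{1/\a,s}$, and tracking the constant so that it emerges as $\sup_{I}\mu(S(I))/\om(S(I))^{q/p}$; all three are handled by the condition $p\a>1$ and the estimates already recorded in the proof of Theorem~\ref{th:cm}.
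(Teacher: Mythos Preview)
Your proposal is correct and takes essentially the same approach as the paper, which simply states that the result ``follows from the proof of Theorem~\ref{th:cm}'' without further detail. You have correctly identified that the sufficiency argument in that proof is analyticity-free once Lemma~\ref{le:suf1} is removed, and your replacement of the analytic test functions $F_{a,p}$ by $\chi_{S(I)}$ in the necessity direction is the natural and correct modification.
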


\par Before presenting a description of $q$-Carleson measures for $A^p_\om$, where $\om\in\DD$ and $q<p$, we shall obtain
several equivalent $A^p_\om$-norms which are useful to study this problem and some other questions throughout the manuscript.

\subsection{Equivalent norms on $A^p_\om$}
A description of $A^p_\om$ in terms of the maximal function follows from Lemma~\ref{le:suf1} and Theorem~\ref{co:maxbou}.

\begin{corollary}
Let $0<p<\infty$ and $0<\alpha<\infty$ such that $p\alpha>1$.
Let $\om\in\DD$.  Then,
$$\|f\|^p_{A^p_\om}\asymp \|[M_{\om}((f)^{\frac{1}{\alpha}})]^{\alpha}\|^p_{L^p_\om},\quad f\in\H(\D).$$
\end{corollary}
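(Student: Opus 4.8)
The plan is to sandwich the two quantities between each other using the pointwise estimate of Lemma~\ref{le:suf1} for one direction and the boundedness in Theorem~\ref{co:maxbou} for the reverse. Concretely, fix $f\in\H(\D)$ and apply Lemma~\ref{le:suf1} with $s=\frac1\alpha$ (note $\frac1\alpha<p$ is allowed; we only used subharmonicity there, and the hypothesis in the present statement is $p\alpha>1$, i.e. $\frac1\alpha<p$): there is $C=C(\alpha,\om)>0$ with $|f(z)|^{1/\alpha}\le C\,M_\om(|f|^{1/\alpha})(z)$ for all $z\in\D$, hence $|f(z)|\le C^\alpha\big[M_\om(|f|^{1/\alpha})(z)\big]^\alpha$. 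Raising to the $p$-th power and integrating against $\om\,dA$ gives
    \begin{equation*}
    \|f\|_{A^p_\om}^p\lesssim \big\|\,[M_\om((f)^{1/\alpha})]^\alpha\,\big\|_{L^p_\om}^p.
    \end{equation*}

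For the opposite inequality I would invoke Theorem~\ref{co:maxbou} in the special case $q=p$ and $\mu=\om$ (viewed as the measure $\om\,dA$). Since $\om\in\DD$, the measure $d\mu=\om\,dA$ trivially satisfies \eqref{eq:s1} for $q=p$, because $\mu(S(I))=\om(S(I))=\big(\om(S(I))\big)^{p/p}$, so the supremum in \eqref{eq:s1} equals $1$. Therefore Theorem~\ref{co:maxbou} yields that $[M_\om((\cdot)^{1/\alpha})]^\alpha\colon L^p_\om\to L^p(\om\,dA)=L^p_\om$ is bounded, with
    \begin{equation*}
    \big\|\,[M_\om((f)^{1/\alpha})]^\alpha\,\big\|_{L^p_\om}^p\lesssim \|f\|_{L^p_\om}^p=\|f\|_{A^p_\om}^p
    \end{equation*}
for every $f\in\H(\D)\subset L^p_\om$ (the inclusion and equality of norms being just the definition $A^p_\om=L^p_\om\cap\H(\D)$). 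Combining the two displays gives the claimed two-sided estimate $\|f\|_{A^p_\om}^p\asymp\|[M_\om((f)^{1/\alpha})]^\alpha\|_{L^p_\om}^p$.

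There is really no hard step here: the content is entirely in Lemma~\ref{le:suf1} and Theorem~\ref{co:maxbou}, which are already proved. The only point that requires a moment's care is checking that the hypotheses line up — in particular that $p\alpha>1$ is exactly the condition needed both to apply Lemma~\ref{le:suf1} with exponent $s=1/\alpha<p$ and to apply Theorem~\ref{co:maxbou} — and that the weighted measure $\om\,dA$ is admissible in Theorem~\ref{co:maxbou}, which is immediate since $\om\in\DD$ makes \eqref{eq:s1} hold with constant $1$ when $q=p$. Thus the corollary follows directly by specializing the preceding two results.
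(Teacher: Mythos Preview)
Your proposal is correct and follows exactly the approach the paper indicates: the corollary is stated there as an immediate consequence of Lemma~\ref{le:suf1} and Theorem~\ref{co:maxbou}, and you have spelled out both directions correctly. One minor clarification: Lemma~\ref{le:suf1} holds for any $s>0$ with no relation to $p$, so the hypothesis $p\alpha>1$ is needed only for the application of Theorem~\ref{co:maxbou}, not for the pointwise estimate.
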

\medskip \par It is well-known that a choice of an appropriate norm is often a key step when solving
 a problem on a space of analytic functions.
For instance, in the study of the integration operator
\begin{displaymath}
    T_g(f)(z)=\int_{0}^{z}f(\zeta)\,g'(\zeta)\,d\zeta,\quad
    z\in\D,\quad g\in\H(\D),
    \end{displaymath}
one wants to get rid of the integral symbol, so one looks  for   norms
in terms of the first derivative.
The first known result in this area was proved by Hardy and Littlewood
for the standard weights \cite{Zhu}.
\begin{lettertheorem}\label{th:1}
If $0<p<\infty$ and $\a>-1$, then
\begin{equation*}
\int_\D \vert f(z)\vert \sp p(1-|z|)^{\alpha}\, dA(z)\asymp
|f(0)|^p+\int_\D \vert f'(z)\vert \sp p(1-|z|)^{p+\alpha}\, dA(z)
\end{equation*}
for all $f\in \H(\D)$.
\end{lettertheorem}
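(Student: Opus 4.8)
The plan is to prove the Hardy–Littlewood identity in two directions, each resting on a classical ingredient.

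\smallskip

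For the direction $\int_\D |f'(z)|^p(1-|z|)^{p+\alpha}\,dA(z)\lesssim |f(0)|^p+\int_\D |f(z)|^p(1-|z|)^\alpha\,dA(z)$, first I would use the subharmonicity of $|f'|^p$ together with the standard pointwise estimate for the derivative: by the Cauchy integral formula applied on a disc of radius $\asymp(1-|z|)$ around $z$, one gets $|f'(z)|^p \lesssim (1-|z|)^{-2-p}\int_{\Delta(z,r)}|f(\xi)|^p\,dA(\xi)$ for a fixed pseudohyperbolic radius $r$ (or with a Euclidean disc of radius $\tfrac{1-|z|}{2}$). Multiplying by $(1-|z|)^{p+\alpha}$, integrating over $\D$, using that $1-|z|\asymp 1-|\xi|$ on $\Delta(z,r)$, and applying Fubini to swap the order of integration yields the claimed bound, the $|f(0)|^p$ term not even being needed here.

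\smallskip

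For the reverse inequality $\int_\D |f(z)|^p(1-|z|)^\alpha\,dA(z)\lesssim |f(0)|^p+\int_\D |f'(z)|^p(1-|z|)^{p+\alpha}\,dA(z)$, I would pass to integral means: write $M_p^p(r,f)$ and use the elementary fact $M_p(r,f)\le M_p(r_0,f)+\int_{r_0}^r M_p(s,f')\,ds$ (which follows from $f(re^{i\theta})=f(r_0e^{i\theta})+\int_{r_0}^r f'(se^{i\theta})e^{i\theta}\,ds$ and Minkowski's inequality in $L^p$ of the circle), or alternatively the identity $\frac{d}{dr}M_p^p(r,f)\lesssim M_p^{p-1}(r,f)M_p(r,f')$. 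Then $\int_\D |f|^p(1-|z|)^\alpha\,dA \asymp \int_0^1 M_p^p(r,f)(1-r)^\alpha\,dr$, and one inserts the bound on $M_p(r,f)$, expands, and uses a one-dimensional weighted Hardy-type inequality on $(0,1)$ to absorb $\bigl(\int_{1/2}^r M_p(s,f')\,ds\bigr)^p$ against $\int_0^1 M_p^p(s,f')(1-s)^{p+\alpha}\,ds$. The contribution from $r\in[0,1/2]$ together with the starting value $M_p(1/2,f)$ is controlled by $|f(0)|^p$ plus the derivative integral by the same mean-value estimate.

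\smallskip

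The main obstacle is the reverse direction, specifically the weighted Hardy inequality step: one must verify that $\int_0^1\bigl(\int_0^r g(s)\,ds\bigr)^p(1-r)^\alpha\,dr \lesssim \int_0^1 g(s)^p(1-s)^{p+\alpha}\,ds$ with $g=M_p(\cdot,f')\ge 0$. For $p\ge 1$ this is the classical Hardy inequality on a finite interval (with the weight reparametrized by $1-r$), while for $0<p<1$ one cannot use Minkowski directly and must argue more carefully — e.g. by a dyadic decomposition of $(0,1)$ near the endpoint $1$, estimating $\int_{I_n}\bigl(\int_0^r g\bigr)^p$ on each ring $I_n=\{1-2^{-n}\le r<1-2^{-n-1}\}$ using monotonicity of $\int_0^r g$ and the superadditivity/subadditivity properties of $t\mapsto t^p$, and then summing a geometric series in the exponent of $(1-r)$. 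This is routine but is where all the real work sits; everything else is the subharmonicity estimate and Fubini.
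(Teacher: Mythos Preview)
The paper does not actually prove this statement: Theorem~A is quoted as a classical result of Hardy and Littlewood, with a reference to Zhu's monograph, and no argument is supplied. So there is no proof in the paper to compare your proposal against.

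That said, your outline has a genuine gap in the range $0<p<1$. For $p\ge1$ everything you wrote is fine: Minkowski on the circle gives $M_p(r,f)\le|f(0)|+\int_0^rM_p(s,f')\,ds$, and the weighted Hardy inequality
\[
\int_0^1\Bigl(\int_0^rg(s)\,ds\Bigr)^p(1-r)^\alpha\,dr\ \lesssim\ \int_0^1g(s)^p(1-s)^{p+\alpha}\,ds
\]
holds by the classical Muckenhoupt criterion. For $p<1$, however, both steps fail, and the ``routine'' dyadic fix you sketch does not close the gap. The Minkowski step is unavailable, so you cannot reduce to the increasing function $g=M_p(\cdot,f')$. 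If instead you try to run the Hardy inequality radius by radius with $g_\theta(s)=|f'(se^{i\theta})|$, note that this inequality is simply \emph{false} for general $g\ge0$ when $p<1$: taking $g=\epsilon^{-1}\chi_{[a,a+\epsilon]}$ for fixed $a\in(0,1)$, the left side stays $\asymp(1-a)^{\alpha+1}$ while the right side is $\asymp\epsilon^{1-p}(1-a)^{p+\alpha}\to0$ as $\epsilon\to0^+$. Your dyadic argument \emph{does} go through for increasing $g$ (via $(\sum_k a_k)^p\le\sum_k a_k^p$ together with $a_k\le|I_k|\,g(r_{k+1})$), but $s\mapsto|f'(se^{i\theta})|$ need not be increasing, and passing to $M_p(s,f')$ is precisely the Minkowski step you no longer have. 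The case $0<p<1$ therefore requires an additional idea that genuinely exploits analyticity --- for example two-variable subharmonic mean-value estimates, or the integral-operator/reproducing-kernel arguments in the cited reference --- and is not just bookkeeping.
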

\par Later, this Littlewood-Paley type formula was extended to the following class of weights~\cite{PavP}, which includes any
differentiable decreasing weight and all the standard ones.
 See also \cite{AS0,CP,Si} for previous and further results.
The distortion function of a
radial weight $\om$ is
    $$
    \psi_{\om}(r)=\frac{1}{\om(r)}\int_{r}^1\om(s)\,ds,\quad
    0\le r<1.
    $$
    It was introduced by Siskakis~\cite{Si}.
\begin{theorem}\label{th:LPformula}
Let $0<p<\infty$ and let $\om$ be a differentiable radial weight. If $$\sup_{0<r<1}\frac{\om'(r)}{\om^2(r)}\int_r^1\om(s)\,ds<\infty,$$
 then
 $$\|f\|^p_{A^p_\om}\asymp |f(0)|^p+\int_{\D}|f'(z)|^p\psi^p_\om(|z|)\om(z)\,dA(z),\,f\in\H(\D).$$
\end{theorem}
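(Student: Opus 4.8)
The plan is to prove the two estimates $\|f\|_{A^p_\om}^p\gtrsim |f(0)|^p+\int_\D|f'|^p\psi_\om^p\om\,dA$ and $\|f\|_{A^p_\om}^p\lesssim |f(0)|^p+\int_\D|f'|^p\psi_\om^p\om\,dA$ separately, working with the radial reductions $\|f\|_{A^p_\om}^p\asymp\int_0^1 M_p(r,f)^p\om(r)r\,dr$ and $\int_\D|f'(z)|^p\psi_\om^p(|z|)\om(|z|)\,dA(z)\asymp\int_0^1 M_p(r,f')^p\psi_\om^p(r)\om(r)r\,dr$, and using that $r\mapsto M_p(r,f)$ and $r\mapsto M_p(r,f')$ are nondecreasing while $r\mapsto M_p(r,f)^p$ is locally absolutely continuous on $(0,1)$ (Hardy's convexity theorem); the part near the origin is in each case absorbed into $|f(0)|^p$. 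The one analytic input I would extract from the hypothesis is this: since $\psi_\om\om=\widehat{\om}$ and $\psi_\om'(r)=-1-\frac{\om'(r)}{\om^2(r)}\widehat{\om}(r)$, the condition $\sup_r\frac{\om'(r)}{\om^2(r)}\widehat{\om}(r)<\infty$ is equivalent to $\inf_r\psi_\om'(r)>-\infty$; together with $\lim_{r\to1^-}\psi_\om(r)=0$ (forced by $\widehat{\om}(1^-)=0$ and the lower bound on $\psi_\om'$) this gives $\psi_\om(r)\lesssim 1-r$ and, for a sufficiently small $\e=\e(\om)\in(0,1)$, the one-sided scale bounds $\psi_\om(x)\gtrsim\psi_\om(t)$ for $x\in[\,t-\e\,\psi_\om(t),\,t\,]$ (from $\inf\psi_\om'>-\infty$) and then $\widehat{\om}(x)\lesssim\widehat{\om}(t)$ on the same interval (from $(\log\widehat{\om})'(r)=-1/\psi_\om(r)$).

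For the lower estimate, $|f(0)|^p\lesssim\|f\|_{A^p_\om}^p$ is immediate from the subharmonicity of $|f|^p$. For the gradient term I would use the pointwise Cauchy bound $M_p(r,f')\lesssim\delta(r)^{-1}M_p(r+2\delta(r),f)$, valid for \emph{every} $0<p<\infty$ (via $|f'(z)|\lesssim\delta^{-1}\max_{D(z,\delta)}|f|$ and $\max_{D(z,\delta)}|f|^p\lesssim\delta^{-2}\int_{D(z,2\delta)}|f|^p\,dA$, followed by Fubini on the circle), with $\delta(r)=\e\,\psi_\om(r)$, where $\e$ is chosen small enough that $\delta(r)\le\tfrac{1-r}{8}$; then $\sigma(r):=r+2\delta(r)\le\tfrac{3r+1}{4}<1$, and $M_p(r,f')^p\psi_\om(r)^p\om(r)r\lesssim M_p(\sigma(r),f)^p\om(r)r$. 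Writing $M_p(\sigma(r),f)^p=M_p(r,f)^p+\int_r^{\sigma(r)}\big(M_p(t,f)^p\big)'\,dt$ and integrating in $r$, the first term contributes $\lesssim\|f\|_{A^p_\om}^p$; for the second I would apply Fubini, observing that for fixed $t$ the set $\{r:\,r<t<\sigma(r)\}$ lies in $[\,t-\e\,\psi_\om(t),\,t\,]$ (since $t-r<2\e\,\psi_\om(r)\lesssim\e\,\psi_\om(t)$ there), over which $\om(r)r\,dr$ has total mass $\le\widehat{\om}(t-\e\,\psi_\om(t))\lesssim\widehat{\om}(t)=\psi_\om(t)\om(t)$. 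Thus the second term is $\lesssim\int_0^1\big(M_p(t,f)^p\big)'\widehat{\om}(t)\,dt$, and an integration by parts — performed first for the dilates $f_\rho(z)=f(\rho z)$ and then letting $\rho\to1^-$, using $\widehat{\om}(1^-)=0$ — bounds this by $\lesssim\int_0^1 M_p(t,f)^p\om(t)\,dt\lesssim\|f\|_{A^p_\om}^p$.

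For the upper estimate I would first treat $p\ge1$. From $f(re^{i\theta})=f(0)+\int_0^r e^{i\theta}f'(se^{i\theta})\,ds$ and Minkowski's integral inequality, $M_p(r,f)\le|f(0)|+\int_0^r M_p(s,f')\,ds$, so it suffices to prove the weighted Hardy inequality $\int_0^1\big(\int_0^r g(s)\,ds\big)^p\om(r)r\,dr\lesssim\int_0^1 g(r)^p\psi_\om(r)^p\om(r)r\,dr$ for $g\ge0$. With $u(r)=\om(r)r$ and $v(r)=\psi_\om(r)^p\om(r)r$, the Muckenhoupt quantity controlling it is $\big(\int_t^1 u\big)^{1/p}\big(\int_0^t v^{1-p'}\big)^{1/p'}$; using $\psi_\om=\widehat{\om}/\om$ one checks $v(r)^{1-p'}=r^{-1/(p-1)}\om(r)\widehat{\om}(r)^{-p'}$, whose antiderivative is, up to the bounded factor $r^{-1/(p-1)}$, a constant times $\widehat{\om}(r)^{1-p'}$; hence $\int_0^t v^{1-p'}\lesssim\widehat{\om}(t)^{1-p'}$ while $\int_t^1 u\asymp\widehat{\om}(t)$, and the product is $\asymp\widehat{\om}(t)^{1/p+(1-p')/p'}=\widehat{\om}(t)^0=1$, a uniform bound — so the Hardy inequality holds. (Notably this direction uses nothing beyond $\psi_\om\om=\widehat{\om}$.) For $0<p<1$ the same scheme applies after replacing Minkowski's inequality and the weighted Hardy inequality by their standard $0<p<1$ counterparts, as in the references cited above.

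The step I expect to be delicate is the Fubini/scale-invariance computation in the lower estimate: one must fix the single constant $\e=\e(\om)$ so that, simultaneously, $\delta(r)\le\tfrac{1-r}{8}$ (hence $\sigma(r)<1$), the overlaps in the change of order of integration stay uniformly bounded, and neither $\psi_\om$ nor $\widehat{\om}$ changes by more than a fixed factor across the intervals $[\,t-\e\,\psi_\om(t),\,t\,]$. This is exactly what the hypothesis $\inf_r\psi_\om'(r)>-\infty$ provides, and once it is in place the remainder is routine, with only the usual mild care needed to carry the argument through for $0<p<1$.
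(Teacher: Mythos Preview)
The paper does not include its own proof of this theorem; it is quoted as a known result from \cite{PavP}, so there is nothing in the text to compare against. Your overall scheme is the natural one, and most of it is sound, but there is a concrete slip in the lower estimate.

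You claim that $\psi_\om(x)\gtrsim\psi_\om(t)$ for $x\in[t-\e\psi_\om(t),t]$ follows from $\inf_r\psi_\om'(r)>-\infty$. It does not: the bound $\psi_\om'\ge -C$ gives, for $x<t$, only the \emph{upper} bound $\psi_\om(x)\le\psi_\om(t)+C(t-x)$; it does not prevent $\psi_\om(x)$ from being much smaller than $\psi_\om(t)$. One can build admissible weights with $\psi_\om\asymp(1-r)^2$ on a base region punctuated by short steep bumps up to $\psi_\om\asymp(1-r)$; at a bump peak $t$, points $x$ a distance $\e\psi_\om(t)$ to the left lie in the base and satisfy $\psi_\om(x)/\psi_\om(t)\asymp(1-t)\to0$. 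For such weights your estimate $\widehat{\om}(t-\e\psi_\om(t))\lesssim\widehat{\om}(t)$ fails.

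The repair is immediate once you anchor on the left rather than on the right. Since $\sigma'(r)=1+2\e\psi_\om'(r)\ge 1-2C\e>0$ for $\e$ small, $\sigma$ is strictly increasing and the set $\{r:r<t<\sigma(r)\}$ is exactly $(\sigma^{-1}(t),t)$. Writing $r_0=\sigma^{-1}(t)$ you have $t-r_0=2\e\psi_\om(r_0)$, and the \emph{forward} bound $\psi_\om(s)\ge\psi_\om(r_0)-C(s-r_0)\ge(1-2C\e)\psi_\om(r_0)$ on $[r_0,t]$ gives $\int_{r_0}^t\psi_\om(s)^{-1}\,ds\le 2\e/(1-2C\e)$, whence $\widehat{\om}(r_0)\lesssim\widehat{\om}(t)$, which is all you need. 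Two smaller points: in your Muckenhoupt computation the factor $r^{-1/(p-1)}$ is not integrable near $0$ when $1<p\le2$, so split off a fixed neighborhood of the origin first (harmless since $M_p(\cdot,f')$ is bounded there); and for $0<p<1$ the phrase ``standard counterparts'' hides genuine work---there is no Minkowski inequality to invoke, and this is precisely the range that requires care in \cite{PavP}, so you should indicate what replaces it.
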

\par See also \cite{AlCo} for  a Littlewood-Paley type formula  for $\|\cdot\|_{A^p_\om}$-norm, where $\om$ is a Bekoll\'e-Bonami weight. However,  an analogue of Theorem \ref{th:LPformula}
does not exist if $\om\in\DD$ and $p\neq 2$.
\begin{proposition}
Let $p\ne2$. Then there exists $\om\in\DD$ such that, for any
function $\vp:[0,1)\to(0,\infty)$, the relation
\begin{equation}\label{eq:NOL-P}
    \|f\|^p_{A^p_{\om}}\asymp
    \int_\D|f'(z)|^p\varphi(|z|)^p\om(z)\,dA(z)+|f(0)|^p
    \end{equation}
can not be valid for all $f\in\H(\D)$.
\end{proposition}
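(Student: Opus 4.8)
The statement is a negative result: we must exhibit a single weight $\om\in\DD$ for which no Littlewood--Paley formula of the form \eqref{eq:NOL-P} can hold, whatever radial gauge $\varphi$ one tries. The strategy is to test \eqref{eq:NOL-P} against the monomials $f_n(z)=z^n$. For such test functions everything is computable: $\|z^n\|_{A^p_\om}^p=\int_\D|z|^{np}\om(z)\,dA(z)\asymp\om_{np}$ in the notation of Lemma~\ref{Lemma:weights-in-D-hat} (or, via that lemma, $\asymp\widehat{\om}(1-\tfrac1{np})$ when $np\ge1$), while the right-hand side of \eqref{eq:NOL-P} becomes $n^p\int_\D|z|^{(n-1)p}\varphi(|z|)^p\om(z)\,dA(z)$. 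Thus \eqref{eq:NOL-P} applied to all $z^n$ would force
\[
\om_{np}\asymp n^p\int_0^1 r^{(n-1)p+1}\varphi(r)^p\om(r)\,dr,\qquad n\in\N.
\]
The point is that the left-hand side is a fixed sequence determined by $\om$, while the family of integrals on the right is of the form $n^p\int_0^1 r^{(n-1)p}\,d\nu(r)$ for the single positive measure $d\nu(r)=r\varphi(r)^p\om(r)\,dr$ — a one-parameter family constrained to be "monotone-like" in $n$. So I would reduce the problem to: construct $\om\in\DD$ whose moment sequence $(\om_{np})_n$ oscillates so badly that it cannot be comparable to $n^p\int_0^1 r^{(n-1)p}\,d\nu(r)$ for any choice of finite positive measure $\nu$ on $[0,1)$.

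**Building the oscillating weight.** The natural candidate is a lacunary-type construction: take $\om$ supported essentially on a sparse sequence of thin annuli. Concretely, pick a rapidly increasing sequence $r_k\uparrow1$ and let $\om$ be (a smoothing of) a sum of bumps of suitable masses $a_k$ concentrated near $r_k$, arranged so that $\widehat{\om}$ remains doubling — this is the role of Lemma~\ref{Lemma:weights-in-D-hat}: doubling of $\widehat{\om}$ is equivalent to the mild condition $\om_n\le C\om_{2n}$ plus \eqref{cero} (part (ix)), or to $1-\rho_n\ge C(1-\rho_{n+1})$ (part (xi)), so there is ample room to keep $\om\in\DD$ while still letting the masses $a_k$ (hence the moments) jump. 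For $p\ne2$ one exploits the mismatch: the weight in \eqref{eq:NOL-P} pairs $f'$ with $\varphi^p\om$, and if $\varphi$ is forced by the $n$-th test function to be large near $r_n$ to reproduce $\om_{np}$, then the $(n+1)$-st test function — whose mass sits near a different annulus $r_{n+1}$ — sees a value incompatible with $\om_{(n+1)p}$ unless $p=2$. The case $p=2$ is genuinely exceptional because then $|f'|^2$ and $|f|^2$ have proportional Taylor-coefficient energies (Parseval), and one can always take $\varphi(r)=\psi_\om(r)$; so the obstruction must use $p\ne2$ in an essential, quantitative way, typically through the factor $n^p$ versus the $\ell^2$-behaviour of coefficients.

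**Carrying it out.** First I would fix $p\ne2$ and specify the annuli: set $R_k$ so that $1-R_{k+1}=(1-R_k)^{M}$ for a large integer $M=M(p)$, and on each annulus put mass $a_k$ chosen so that the partial sums $\widehat{\om}(R_k)=\sum_{j\ge k}a_j$ decrease geometrically in $k$ (guaranteeing doubling along the $R_k$, hence globally by the interpolation argument used in the proof of (i)$\Rightarrow$(ii) of Lemma~\ref{Lemma:weights-in-D-hat}). Second, compute $\om_{np}$: for $n$ in the range dictated by $R_k$ (roughly $np\asymp(1-R_k)^{-1}$) one gets $\om_{np}\asymp\widehat{\om}(R_k)\asymp K^{-k}$ for a geometric ratio $K$; the key feature is that as $n$ runs through this range $\om_{np}$ stays roughly constant and then drops by the fixed factor $K$ at the transition. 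Third, suppose \eqref{eq:NOL-P} held with some $\varphi$; then $d\nu=r\varphi^p\om\,dr$ is a finite positive measure with $n^p\nu_{\,(n-1)p}\asymp\om_{np}$, where $\nu_m=\int_0^1 r^m\,d\nu$. But the right-hand side, as a function of the integer $n$, is necessarily a ratio of a polynomial factor $n^p$ against a completely monotone-type moment sequence $\nu_{(n-1)p}$, and I claim such expressions cannot reproduce the prescribed plateau-then-jump profile of $\om_{np}$ for suitable $M$: on a plateau where $\om_{np}$ is constant one is forced into $\nu_{(n-1)p}\asymp n^{-p}\cdot\text{const}$, i.e. $\nu_m\asymp m^{-p}$ on a whole block of $m$'s, whereas across the jump $\nu_m$ would have to fall by an extra factor $K$ over a single doubling of $m$ — and one checks (this is the main obstacle, the one quantitative inequality to nail down) that moments of a positive measure cannot decay faster than geometrically in $\log m$, so by taking the ratio $K$ large, equivalently $M$ large, one forces a contradiction. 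The mechanics of that last estimate are routine once the weight is set up — the creative step is choosing $M$, the masses $a_k$, and the annuli so that the moment profile is simultaneously (a) realized by a genuine $\DD$-weight and (b) outside the range of the family $\{n^p\nu_{(n-1)p}\}$; everything else is bookkeeping with the equivalences in Lemma~\ref{Lemma:weights-in-D-hat}.
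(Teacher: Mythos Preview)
Your strategy is quite different from the paper's and, as written, has a real gap. The paper does not build an oscillating weight: it takes the concrete rapidly increasing weight $v_\alpha(r)=(1-r)^{-1}\bigl(\log\frac{e}{1-r}\bigr)^{-\alpha}$ with $\alpha$ tuned to $p$ (for $p>2$ one chooses $2<2(\alpha-1)\le p$; for $0<p<2$ one chooses $p<2(\alpha-1)\le2$). After extracting the moment relation from the monomials $z^n$ exactly as you do, it then tests the assumed Littlewood--Paley formula against the \emph{lacunary series} $h(z)=\sum_{k\ge0}z^{2^k}$. Using the standard estimates $M_p(r,h)\asymp(\log\frac{1}{1-r})^{1/2}$ and $M_p(r,h')\asymp(1-r)^{-1}$ together with the moment relation just obtained, one can compute $\int_\D|h'|^p\varphi^p v_\alpha\,dA$ and compare it with $\|h\|_{A^p_{v_\alpha}}^p$: for the chosen $\alpha$ one of these is finite and the other infinite. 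The hypothesis $p\ne2$ enters exactly here, since the admissible window for $\alpha$ is non-empty only when $p\ne2$.

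The gap in your argument is the sentence you flag as ``the main obstacle'': you assert that moments $\nu_m$ of a positive measure cannot decay faster than geometrically in $\log m$, i.e.\ faster than polynomially in $m$. That is false --- a point mass at any $r_0<1$ already gives $\nu_m=r_0^m$. What could actually rescue your plan is a structural fact you never invoke: since $d\nu=\varphi^p\omega\,r\,dr$, the measure $\nu$ is supported on the \emph{same} sparse annuli as $\omega$, so $\nu_m$ inherits the same plateau behaviour and therefore cannot be comparable to $m^{-p}\cdot\text{const}$ across a plateau whose length (in $\log m$) tends to infinity. If you chase that line carefully you will find it works for \emph{every} $p$, including $p=2$; your repeated insistence that $p\ne2$ is essential to the mechanism, combined with never actually using it, is a sign that you have not identified the genuine obstruction in your own construction. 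The paper's argument is shorter, uses a natural weight, and makes the role of $p\ne2$ completely transparent.
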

\begin{proof}
Let first $p>2$ and consider the weight $v_\a(r)=(1-r)^{-1}\left(\log\frac{e}{1-r}\right)^{-\a}$, where $\alpha>1$ is fixed
such that $2<2(\alpha-1)\le p$. Assume on the contrary to the
assertion that \eqref{eq:NOL-P} is satisfied for all $f\in\H(\D)$.
Applying this relation to the function $h_n(z)=z^n$, we obtain
    \begin{equation}\label{eq:NOL-P1}
    \int_0^1r^{np}v_\a(r)\,dr\asymp
    n^p\int_0^1r^{(n-1)p}\varphi(r)^pv_\a(r)\,dr,\quad
    n\in\N.
    \end{equation}
Consider now the lacunary series $h(z)=\sum_{k=0}^\infty z^{2^k}$.
It is easy to see that
    \begin{equation}\label{1}
    M_p(r,h)\asymp\left(\log\frac{1}{1-r}\right)^{1/2},\quad
    M_p(r,h')\asymp\frac{1}{1-r},\quad 0\le r<1.
    \end{equation}
By combining the relations \eqref{eq:NOL-P1}, \eqref{1} and
    $$
    \left(\frac{1}{1-r^p}\right)^p\asymp\sum_{n=1}^\infty
    n^{p-1}r^{(n-1)p},\quad\log\frac{1}{1-r^p}\asymp\sum_{n=1}^\infty
    n^{-1}r^{np},\quad0\le r<1,
    $$
we obtain
    \begin{equation*}\begin{split}
    \int_\D|h'(z)|^p\varphi(z)^pv_\a(z)\,dA(z) &\asymp
    \int_0^1\left(\frac{1}{1-r^p}\right)^p\varphi(r)^pv_\a(r)\,dr \\ &
    \asymp \int_0^1\left(\sum_{n=1}^\infty
    n^{p-1}r^{(n-1)p}\right)\varphi(r)^pv_\a(r)\,dr\\
    &\asymp \sum_{n=1}^\infty n^{p-1}\int_0^1
    r^{(n-1)p}\varphi(r)^pv_\a(r)\,dr\\
    &\asymp \sum_{n=1}^\infty n^{-1}\int_0^1
    r^{np}v_\a(r)\,dr\\
    &\asymp \int_0^1\left(\sum_{n=1}^\infty n^{-1}r^{np}\right)v_\a(r)\,dr\\
    &\asymp \int_0^1\log\frac{1}{1-r^p}\,v_\a(r)\,dr,
    \end{split}\end{equation*}
where the last integral is convergent because $\a>2$. However,
    $$
    \|h\|^p_{A^p_{v_\a}}\asymp\int_0^1\left(\log\frac{1}{1-r}\right)^{p/2}v_\a(r)\,dr=\infty,
    $$
since $p\ge2(\alpha-1)$, and therefore \eqref{eq:NOL-P} fails for
$h\in\H(\D)$. This is the desired contradiction.

If $0<p<2$, we again consider $v_\a$,\index{$v_\a(r)$} where $\a$
is chosen such that $p<2(\alpha-1)\le2$, and use an analogous
reasoning to that above to prove the assertion. Details are
omitted.
\end{proof}
\par Because of the above result we look for other equivalent norms to $\|\cdot\|_{A^p_\om}$ in terms (or involving) the derivative. In fact,
applying the Hardy-Stein-Spencer
identity~\cite{Garnett1981}
    $$
    \|f\|_{H^p}^p=\frac{p^2}{2}\int_\D|f(z)|^{p-2}|f'(z)|^2\log\frac{1}{|z|}\,dA(z)+|f(0)|^p,
    $$
 to the dilated functions $f_r(z)=f(rz)$, $0<r<1$, and integrating with respect to $r\om(r)\,dr$ we obtain such equivalent norm.
\begin{theorem}\label{ThmLittlewood-Paley}
Let $0<p<\infty$, $n\in\N$ and $f\in\H(\D)$, and let $\omega$ be a
radial weight. Then
    \begin{equation}\label{HSB}
    \|f\|_{A^p_\omega}^p=p^2\int_{\D}|f(z)|^{p-2}|f'(z)|^2\omega^\star(z)\,dA(z)+\omega(\D)|f(0)|^p,
    \end{equation}
where
 $$
    \omega^\star(z)=\int_{|z|}^1\omega(s)\log\frac{s}{|z|}s\,ds,\quad z\in\D\setminus\{0\}.
    $$
 In particular,
    \begin{equation}\label{eq:LP2}
    \|f\|_{A^2_\omega}^2=4\|f'\|_{A^2_{\omega^\star}}^2+\omega(\D)|f(0)|^2.
    \end{equation}
\end{theorem}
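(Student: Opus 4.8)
The plan is to deduce \eqref{HSB} from the classical Hardy--Stein--Spencer identity by a dilation-and-integration argument, exactly as sketched in the paragraph preceding the statement. First I would recall that for any $f\in\H(\D)$ and $0<\rho<1$ the Hardy--Stein--Spencer identity on the circle of radius $\rho$ reads
    \begin{equation*}
    M_p(\rho,f)^p=\frac{p^2}{2}\int_{|z|<\rho}|f(z)|^{p-2}|f'(z)|^2\log\frac{\rho}{|z|}\,dA(z)+|f(0)|^p,
    \end{equation*}
which is the standard statement applied to the dilation $f_\rho(z)=f(\rho z)$ on $\D$ after the change of variables $z\mapsto \rho z$. (For $p<2$ a little care is needed where $f$ vanishes, but $|f|^{p-2}|f'|^2$ is locally integrable and the identity is classical; I would simply cite \cite{Garnett1981}.)

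Next I would multiply both sides by $\rho\,\om(\rho)$ and integrate over $\rho\in(0,1)$. The left-hand side becomes
    \begin{equation*}
    \int_0^1 M_p(\rho,f)^p\,\rho\,\om(\rho)\,d\rho=\frac{1}{2\pi}\int_0^1\int_0^{2\pi}|f(\rho e^{it})|^p\,dt\,\rho\,\om(\rho)\,d\rho=\|f\|_{A^p_\om}^p,
    \end{equation*}
using that $\om$ is radial and $dA(z)=\frac{1}{\pi}\,r\,dr\,dt$. The constant term contributes $|f(0)|^p\int_0^1\rho\,\om(\rho)\,d\rho$; here I would note that $\om(\D)=\int_\D\om\,dA=2\int_0^1\rho\,\om(\rho)\,d\rho$, so after absorbing the factor $2$ this gives exactly $\om(\D)|f(0)|^p$ once it is matched against the $\frac{p^2}{2}$ coefficient below (equivalently, write everything with the $\int_0^1\rho\om(\rho)\,d\rho$ normalization throughout and only convert to area integrals at the end).

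For the main double-integral term I would apply Tonelli's theorem to interchange the order of integration (the integrand is nonnegative), obtaining
    \begin{equation*}
    \frac{p^2}{2}\int_0^1\!\!\int_{|z|<\rho}|f(z)|^{p-2}|f'(z)|^2\log\frac{\rho}{|z|}\,dA(z)\,\rho\,\om(\rho)\,d\rho
    =\frac{p^2}{2}\int_\D|f(z)|^{p-2}|f'(z)|^2\left(\int_{|z|}^1\log\frac{\rho}{|z|}\,\rho\,\om(\rho)\,d\rho\right)dA(z),
    \end{equation*}
and the inner integral is precisely $\om^\star(z)=\int_{|z|}^1\om(s)\log\frac{s}{|z|}\,s\,ds$. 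Reconciling the factor of $2$ between $\frac{p^2}{2}$ and the Jacobian (as in the constant term) yields the stated coefficient $p^2$, proving \eqref{HSB}. Finally, \eqref{eq:LP2} is the special case $p=2$, where $|f|^{p-2}\equiv1$ and $\frac{p^2}{1}$... more precisely $p^2=4$, so \eqref{HSB} reduces to $\|f\|_{A^2_\om}^2=4\int_\D|f'(z)|^2\om^\star(z)\,dA(z)+\om(\D)|f(0)|^2=4\|f'\|_{A^2_{\om^\star}}^2+\om(\D)|f(0)|^2$.

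The only genuine obstacle is bookkeeping: keeping the normalization constants ($2\pi$ in $M_p$, the factor $2$ relating $\int_0^1\rho\om(\rho)\,d\rho$ to $\om(\D)$, and the $\frac{p^2}{2}$ versus $p^2$) consistent throughout; there is no analytic difficulty once the classical identity and Tonelli's theorem are in hand, since the integrand is nonnegative and $\om^\star$ is manifestly finite for $\om$ integrable. (The statement allows a parameter $n\in\N$ but it does not appear on either side, so no higher-derivative version is actually being claimed here.)
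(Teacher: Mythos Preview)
Your proposal is correct and follows exactly the approach the paper indicates: apply the Hardy--Stein--Spencer identity to the dilations $f_\rho$, multiply by $\rho\,\om(\rho)$, integrate over $\rho\in(0,1)$, and interchange the order of integration via Tonelli to produce $\om^\star$. The paper gives only this one-line sketch as its proof, so your expanded version (including the correct tracking of the factor of $2$ coming from the normalized area measure $dA=\frac{r\,dr\,dt}{\pi}$) is precisely what is intended; your remark that the parameter $n\in\N$ plays no role is also accurate.
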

\par
Fefferman and Stein \cite{FC} obtained the following extension of the classical Littlewood-Paley formula for $H^2$
 \begin{equation*}
    \begin{split}
    \|f\|^p_{H^p}&\asymp\int_\T\,\left(\int_{\Gamma(e^{i\theta})}|f'(z)|^2\,dA(z)\right)^{p/2}d\theta
    +|f(0)|^p,
    \end{split}
   \end{equation*}
where
\begin{equation*}\label{eq:gammadeu}
    \Gamma(e^{i\theta})=\left\{z\in \D:\,|\t-\arg
    z|<\frac12\left(1-|z|\right)\right\},\quad
    u=e^{i\theta}\in\T.
    \end{equation*}
Usually the function $e^{i\theta}\mapsto\left(\int_{\Gamma(e^{i\theta})}|f'(z)|^2\,dA(z)\right)^{1/2}$ is called the square (Lusin)  area function.
\par In order to get an extension of this result to weighted Bergman spaces, we need to define tangential lens type regions
\begin{equation*}
    \Gamma(u)=\left\{z\in \D:\,|\t-\arg
    z|<\frac12\left(1-\frac{|z|}{r}\right)\right\},\quad
    u=re^{i\theta}\in\overline{\D}\setminus\{0\},
    \end{equation*}
induced by points in $\D$, and the tents
    \begin{equation*}
    \begin{split}
    T(z)&=\left\{u\in\D:\,z\in\Gamma(u)\right\},\quad
    z\in\D,\index{$T(z)$}
    \end{split}
    \end{equation*}
which are closely interrelated. By the same method used in the proof of Theorem~\ref{ThmLittlewood-Paley}, we get the following result.
\begin{theorem}\label{th:normacono}
Let $0<p<\infty$ and $f\in\H(\D)$, and let $\omega$ be a
radial weight. Then

\begin{equation}\label{normacono}
    \begin{split}
    \|f\|_{A^p_\omega}^p&\asymp\int_\D\,\left(\int_{\Gamma(u)}|f'(z)|^2
    \,dA(z)\right)^{\frac{p}2}\omega(u)\,dA(u)+|f(0)|^p,
    \end{split}
    \end{equation}
where the constants of comparison depend only on $p$ and $\om$.
\end{theorem}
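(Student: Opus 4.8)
The plan is to reduce Theorem~\ref{th:normacono} to Theorem~\ref{ThmLittlewood-Paley} by a Fubini argument. Starting from the identity \eqref{HSB}, one needs an estimate relating the cone integral $\int_{\Gamma(u)}|f'(z)|^2\,dA(z)$, integrated against $\om$, to the quantity $\int_\D|f(z)|^{p-2}|f'(z)|^2\om^\star(z)\,dA(z)$ that appears there. The first step is to interchange the order of integration: since $z\in\Gamma(u)$ is equivalent to $u\in T(z)$, one has
    \begin{equation*}
    \int_\D\left(\int_{\Gamma(u)}|f'(z)|^2\,dA(z)\right)\om(u)\,dA(u)
    =\int_\D|f'(z)|^2\left(\int_{T(z)}\om(u)\,dA(u)\right)\,dA(z)
    =\int_\D|f'(z)|^2\om(T(z))\,dA(z),
    \end{equation*}
and the key geometric fact to establish is that $\om(T(z))\asymp\om^\star(z)$ for $\om\in\DD$ (and more generally $\om(T(z))\asymp\widehat\om(z)(1-|z|)^2$ whenever such comparison is available), which in turn is comparable to $\widehat\om(z)(1-|z|)^2$ by Lemma~\ref{Lemma:weights-in-D-hat}(viii). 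This handles the case $p=2$ directly via \eqref{eq:LP2}.

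For general $p$, the idea — exactly as in the proof of Theorem~\ref{ThmLittlewood-Paley} — is to apply the Fefferman--Stein formula for $H^p$ to the dilations $f_\rho(z)=f(\rho z)$ and then integrate in $\rho$ against $\rho\,\om(\rho)\,d\rho$. Concretely, for each fixed $\rho\in(0,1)$ the Fefferman--Stein identity gives
    \begin{equation*}
    M_p(\rho,f)^p\asymp\int_\T\left(\int_{\Gamma(e^{i\t})}|f_\rho'(z)|^2\,dA(z)\right)^{p/2}d\t+|f(0)|^p,
    \end{equation*}
and multiplying by $\rho\om(\rho)$ and integrating over $\rho\in(0,1)$ produces $\|f\|_{A^p_\om}^p$ on the left. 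On the right, after a change of variables $z\mapsto z/\rho$ inside the cone integral and a Fubini interchange between the $\rho$-integration and the spatial integration, the weight $\om(\rho)$ gets integrated over the appropriate range of radii, yielding a weight of the form $\widehat\om$ evaluated near the relevant scale; one then recognizes the resulting expression as (a constant multiple of) the right-hand side of \eqref{normacono}, using again $\om(T(z))\asymp\widehat\om(z)(1-|z|)^2$ and the doubling property to pass freely between $\widehat\om$ at comparable points.

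The main obstacle I expect is controlling the cone integrals uniformly in the dilation parameter $\rho$: the region $\Gamma(e^{i\t})$ for the dilated function $f_\rho$ corresponds, after rescaling, to a truncated tangential region for $f$ whose geometry depends on $\rho$, and one must check that the Fubini interchange is legitimate and that the boundary/low-order terms (the $|f(0)|^p$ contributions and the behaviour near $|z|$ close to $0$, where $\om^\star$ and $\om(T(z))$ degenerate) are absorbed into the stated comparison constants. A secondary technical point is verifying $\om(T(z))\asymp\widehat\om(z)(1-|z|)^2$ for all $\om\in\DD$: the upper bound is immediate from $\widehat\om$ being decreasing, while the lower bound requires the doubling hypothesis precisely in the form of Lemma~\ref{Lemma:weights-in-D-hat}(ii) to guarantee that a definite fraction of the $\om$-mass of the tent sits at scale comparable to $1-|z|$. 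Once these two ingredients are in place, the rest is the same bookkeeping as in Theorem~\ref{ThmLittlewood-Paley}, and the constants depend only on $p$ and $\om$ as claimed.
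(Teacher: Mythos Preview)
Your core idea for general $p$ --- apply the Fefferman--Stein area formula to the dilations $f_\rho$ and integrate against $\rho\,\om(\rho)\,d\rho$ --- is exactly the paper's approach (the paper says only ``By the same method used in the proof of Theorem~\ref{ThmLittlewood-Paley}''). However, the execution is considerably simpler than you anticipate, and your expected obstacles are not real.

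The key point you are missing is that the regions $\Gamma(u)$ are \emph{defined} precisely so that the change of variables is exact: for $u=\rho e^{i\t}$ one has $w\in\Gamma(e^{i\t})$ if and only if $\rho w\in\Gamma(\rho e^{i\t})$. Hence
\[
\int_{\Gamma(e^{i\t})}|f_\rho'(w)|^2\,dA(w)
=\int_{\Gamma(e^{i\t})}\rho^2|f'(\rho w)|^2\,dA(w)
=\int_{\Gamma(\rho e^{i\t})}|f'(\z)|^2\,dA(\z),
\]
and after multiplying the Fefferman--Stein estimate by $\rho\,\om(\rho)$ and integrating, the $\rho$- and $\t$-integrations combine \emph{directly} into $\om(u)\,dA(u)$ over $\D$. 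There is no further Fubini step, no appearance of $\widehat\om$, and no need to compare $\om(T(z))$ with $\om^\star(z)$ or to invoke Lemma~\ref{Lemma:weights-in-D-hat}. In particular your ``main obstacle'' --- uniformity in $\rho$ of the truncated cones --- evaporates: the correspondence between cones is an identity, not an estimate.

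This matters because the theorem is stated for an \emph{arbitrary} radial weight $\om$, not for $\om\in\DD$. Your argument repeatedly appeals to the doubling hypothesis (Lemma~\ref{Lemma:weights-in-D-hat}(ii),(viii)), which is not available here; fortunately it is also not needed. Your opening paragraph, reducing to Theorem~\ref{ThmLittlewood-Paley} via Fubini and $\om(T(z))\asymp\om^\star(z)$, handles only $p=2$ and is superseded by the direct dilation argument.
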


\par It is worth mentioning that $\om^\star$ is smoother than $\om$. In fact,
    \begin{equation*}
    \om(T(z))\asymp\omega^\star(z),\quad |z|\ge\frac12.
    \end{equation*}
So, bearing in mind Lemma~\ref{Lemma:weights-in-D-hat},
    \begin{equation}\label{3}
    \omega^\star(z)\asymp\omega\left(T(z)\right)\asymp\omega\left(S(z)\right),\quad
   z\in\D,\quad\om\in\DD.
    \end{equation}
\par Before ending this section,  for a function $f$  defined in $\D$, we consider the non-tangential maximal
function of $f$ in the (punctured) unit disc by
    $$
    N(f)(u)=\sup_{z\in\Gamma(u)}|f(z)|,\quad
    u\in\D\setminus\{0\}.
    $$

\begin{lemma}\label{le:funcionmaximalangular}
Let $0<p<\infty$ and let $\om$ be a radial weight. Then there
exists a constant $C>0$ such that
    $$
    \|f\|^p_{A^p_\om}\le\|N(f)\|^p_{L^p_\om}\le C\|f\|^p_{A^p_\om}
    $$
for all $f\in\H(\D)$.
\end{lemma}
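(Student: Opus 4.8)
The plan is to prove the two inequalities separately. The left inequality is immediate: for every $u\in\D\setminus\{0\}$ we have $u\in\Gamma(u)$ (taking $z=u$ in the definition of the lens region, since $1-|u|/|u|=0$ forces nothing — actually one checks $u$ lies on the closure; more safely, for $z$ slightly inside one gets $|f(u)|\le N(f)(u)$ by continuity), hence $|f(u)|\le N(f)(u)$ pointwise, and integrating against $\om$ gives $\|f\|_{A^p_\om}^p\le\|N(f)\|_{L^p_\om}^p$.

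\par The substantial direction is the upper bound $\|N(f)\|_{L^p_\om}^p\le C\|f\|_{A^p_\om}^p$. First I would reduce to controlling $N(f)$ by a maximal-type average of $|f|$. The standard device: fix $s>0$ with $ps>1$ (so that $|f|^{1/s}$ pointwise estimates, via subharmonicity, will be integrable in the right sense), and show the pointwise domination
    $$
    N(f)(u)^{1/s}\lesssim M_\om(|f|^{1/s})(u),\quad u\in\D\setminus\{0\},
    $$
or more precisely $N(f)(u)\lesssim \left[M_\om(|f|^{1/s})(u)\right]^{s}$. The mechanism is exactly that of Lemma~\ref{le:suf1}: for a point $z\in\Gamma(u)$ close to the boundary, $|f(z)|^{1/s}$ is subharmonic, and a Poisson-type average of $|f|^{1/s}$ over circles centered at the origin — together with the fact that the lens region $\Gamma(u)$ is contained, locally near its vertex, in a bounded number of Carleson boxes $S(I)$ with $|I|\asymp 1-|z|$ and $\om(S(I))\asymp\om(S(u))$ when $\om\in\DD$ — lets one bound $|f(z)|^{1/s}$ by a constant times an $\om$-average over such a box, hence by $M_\om(|f|^{1/s})(u)$. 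Taking the supremum over $z\in\Gamma(u)$ gives the claimed pointwise bound. (For $|z|\le\frac12$ one uses the trivial estimate coming from the mean value property over a fixed disc and $\om(\D)$.) Note here one does \emph{not} need $\om\in\DD$ for the final statement since the lemma is stated for a general radial weight; the argument can be carried out purely with the radial structure by using the circular averages and controlling $N(f)$ through $M_\infty(\rho,f)$-type comparisons on a sequence $\rho_k\uparrow1$, but invoking the Carleson-box machinery is cleanest when $\om\in\DD$ and then one passes to general radial $\om$ by a direct circular-average argument.

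\par Once the pointwise bound $N(f)(u)\le C\left[M_\om(|f|^{1/s})(u)\right]^{s}$ is in hand, the conclusion follows by integrating: since $ps>1$, Theorem~\ref{co:maxbou} (applied with $q=p$, the measure $\mu=\om\,dA$, and $\alpha=s$, which gives $p\alpha=ps>1$) yields
    $$
    \|N(f)\|_{L^p_\om}^p\le C^p\left\|\left[M_\om\big((|f|)^{1/s}\big)\right]^{s}\right\|_{L^p_\om}^p\lesssim\|f\|_{A^p_\om}^p,
    $$
because $\om\,dA$ trivially satisfies \eqref{eq:s1} with $q=p$ (indeed $\om(S(I))/\om(S(I))^{p/p}=1$), so $[M_\om((\cdot)^{1/s})]^{s}:L^p_\om\to L^p_\om$ is bounded, and then one uses $\|f\|_{A^p_\om}^p=\||f|^{1/s}\|_{L^{ps}_\om}^{ps}\cdot(\dots)$ — more directly, $[M_\om((|f|)^{1/s})]^{s}$ has $L^p_\om$-norm comparable to that of $|f|$ itself by boundedness of the operator together with the trivial reverse estimate. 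The main obstacle is the pointwise step: one must verify that a tangential lens region $\Gamma(u)$ with vertex at an interior point $u=re^{i\theta}$ is, near its vertex, swallowed by finitely many Carleson squares of comparable $\om$-mass, which requires the doubling of $\widehat\om$ (Lemma~\ref{Lemma:weights-in-D-hat}(ii)) exactly as in the proof of Lemma~\ref{le:suf1}, and then to handle general radial weights separately via circular averaging. The rest is routine layer-cake integration and an appeal to Theorem~\ref{co:maxbou}.
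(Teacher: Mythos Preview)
Your main argument has a genuine gap: both Lemma~\ref{le:suf1} and Theorem~\ref{co:maxbou} require $\om\in\DD$, yet the lemma is stated for an \emph{arbitrary} radial weight. The pointwise step you propose, namely $N(f)(u)\lesssim\big[M_\om(|f|^{1/s})(u)\big]^s$, needs the doubling of $\widehat\om$ in an essential way: if $z\in\Gamma(u)$ and $z\in S(I)$, one can only ensure $u\in S(J)$ for an interval $J$ with $|J|\asymp|I|$, and the comparison $\om(S(J))\lesssim\om(S(I))$ is precisely the doubling condition. Likewise, the boundedness of $[M_\om((\cdot)^{1/s})]^s$ on $L^p_\om$ (Theorem~\ref{co:maxbou}) is proved for $\om\in\DD$ and is not available for general radial weights. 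You acknowledge this and say one should ``pass to general radial $\om$ by a direct circular-average argument'', but you never carry it out; as written, the proof covers only $\om\in\DD$.

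The paper's proof is exactly that circular-average argument you allude to, and it is far simpler than your route. The key observation is that the lens region is designed so that $\Gamma(re^{i\t})=r\,\Gamma(e^{i\t})$, hence $N(f)(re^{i\t})=(f_r)^\star(e^{i\t})$ where $f_r(\z)=f(r\z)$. The classical Hardy space inequality $\|g^\star\|_{L^p(\T)}^p\le C\|g\|_{H^p}^p$ (with $C=C(p)$ only), applied to $g=f_r$, gives
\[
\int_0^{2\pi}N(f)(re^{i\t})^p\,d\t\le C\|f_r\|_{H^p}^p=C\,M_p^p(r,f),
\]
and integrating against $r\om(r)\,dr$ yields $\|N(f)\|_{L^p_\om}^p\le C\|f\|_{A^p_\om}^p$ for \emph{any} radial $\om$. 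No doubling, no maximal function $M_\om$, no Carleson squares are needed. Your approach, even where it works (for $\om\in\DD$), is an unnecessary detour through machinery that is considerably heavier than the result itself; the dilation-and-integration trick is both the correct and the efficient proof here.
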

A proof can be obtained by dilating and integrating the well-known inequality \cite[Theorem~3.1 on p.~57]{Garnett1981}
 \begin{equation*}
    \|f^\star\|^p_{L^p(\T)}\le C\|f\|^p_{H^p}
    \end{equation*}
    respect to $\om$. Here, and on the sequel $
    f^\star(\z)=\sup_{z\in\Gamma(\z)}|f(z)|\,\text{for}
    \,\z\in\T.
    $
\subsection{Carleson measures. Case $0<q<p<\infty$.}
\par For several classes of weights, $q$-Carleson measures for $A^p_\om$~\cite{OC,Lu93,PP}  have been described, in the triangular case $p>q$, by using an atomic decomposition theorem in the sense of standard Bergman spaces~\cite[Theorem~2.2]{Ro:de}.
However, this approach does not seem to be adequate for the class $\DD$. A sufficient condition can be easily obtained.
 \begin{proposition}\label{pr:cmqmenorp}
 Let $0<q<p<\infty$, $\om$ a radial weight and $\mu$ be a positive Borel measure on~$\D$. If
 $$
    B_\mu(z)=\int_{\Gamma(z)}\frac{d\mu(\z)}{\om(T(\z))},\quad
    z\in\D\setminus\{0\},
    $$
belongs to $L^{\frac{p}{p-q}}_\om$, then $\mu$ is a $q$-Carleson measure for $A^p_\om$.
 \end{proposition}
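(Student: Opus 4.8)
The plan is to reduce the estimate to a bound on the non-tangential maximal function $N(f)$ via Lemma~\ref{le:funcionmaximalangular}, after first passing from the pointwise values of $f$ to an average over tents; the whole thing then closes by H\"older's inequality with the conjugate exponents $\frac pq$ and $\frac{p}{p-q}$, which are exactly the ones matching the hypotheses $N(f)\in L^p_\om$ and $B_\mu\in L^{\frac{p}{p-q}}_\om$.

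First I would use that, by the very definitions of $\Gamma(u)$ and $T(z)$, one has $u\in T(z)$ if and only if $z\in\Gamma(u)$, and that in this case $|f(z)|\le N(f)(u)$. Averaging this pointwise inequality over the tent $T(z)$ with respect to the weight $\om$ (note $\om(T(z))>0$) gives, for every $f\in\H(\D)$ and every $z\in\D\setminus\{0\}$,
\[
|f(z)|^q\le\frac{1}{\om(T(z))}\int_{T(z)}N(f)(u)^q\,\om(u)\,dA(u).
\]
I would then integrate this against $d\mu$ and apply Tonelli's theorem (all integrands are nonnegative) to interchange the order of integration. Since $\{z:u\in T(z)\}=\Gamma(u)$, the inner $z$-integration produces exactly $B_\mu(u)=\int_{\Gamma(u)}\frac{d\mu(z)}{\om(T(z))}$, so that
\[
\int_\D|f(z)|^q\,d\mu(z)\le\int_\D N(f)(u)^q\,B_\mu(u)\,\om(u)\,dA(u).
\]

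Now H\"older's inequality with exponents $\frac pq$ and $\frac{p}{p-q}$ yields
\[
\int_\D|f(z)|^q\,d\mu(z)\le\|N(f)\|_{L^p_\om}^q\,\|B_\mu\|_{L^{\frac{p}{p-q}}_\om},
\]
and Lemma~\ref{le:funcionmaximalangular} bounds $\|N(f)\|_{L^p_\om}^q$ by a constant times $\|f\|_{A^p_\om}^q$. Since $B_\mu\in L^{\frac{p}{p-q}}_\om$ by hypothesis, this shows that $I_d:A^p_\om\to L^q(\mu)$ is bounded, with the quantitative estimate $\|I_d\|^q_{(A^p_\om,L^q(\mu))}\lesssim\|B_\mu\|_{L^{\frac{p}{p-q}}_\om}$.

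There is no real obstacle in this argument: the only point deserving a word of care is the well-posedness of the averaging step and of $B_\mu$ off the origin, namely that $\om(T(z))>0$ (automatic for the weights considered), together with the contribution of a possible atom of $\mu$ at $0$, which is disposed of separately by the elementary pointwise bound $|f(0)|^p\le\om(\D)^{-1}\|f\|_{A^p_\om}^p$. The genuinely useful idea is just the \lq\lq average over the tent\rq\rq\ trick, which turns the pointwise values of $f$ into an expression to which Fubini and H\"older apply transparently.
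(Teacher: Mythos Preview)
Your proof is correct and follows essentially the same route as the paper's: insert the identity $1=\om(T(z))^{-1}\int_{T(z)}\om\,dA$, use $|f(z)|\le N(f)(u)$ for $u\in T(z)$, apply Fubini to produce $B_\mu$, then H\"older with exponents $\frac pq,\frac{p}{p-q}$ and finish with Lemma~\ref{le:funcionmaximalangular}. The only cosmetic difference is the order in which you apply the pointwise maximal bound and Fubini; the paper first rewrites $\int|f|^q\,d\mu$ as an exact double integral and then bounds, while you bound first and then Fubini, but the content is identical.
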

 \begin{proof}
  Fubini's theorem, H\"{o}lder's inequality and Lemma \ref{le:funcionmaximalangular} yield
    \begin{equation*}
    \begin{split}
    \int_{\D}|f(z)|^q\,d\mu(z)&=\int_{\D}\left(\int_{\Gamma(\z)}\frac{|f(z)|^q\,d\mu(z)}{\om(T(z))}\right)\om(\z)\,dA(\z)\\
    &\le\int_{\D}(N(f)(\z))^qB_\mu(\zeta)\om(\z)\,dA(\z)\\
    &\le\|N(f)\|_{L^p_\om}^q \|B_\mu\|_{L^{\frac{p}{p-q}}_\om}\asymp\|f\|_{A^p_\om}^q \|B_\mu\|_{L^{\frac{p}{p-q}}_\om},
    \end{split}
    \end{equation*}
    for all $ f\in A^p_\om$.
 \end{proof}
It turns out that the reverse of the above result is true \cite[Theorem~$1$]{PelRatMathAnn} for $\om\in\DD$. However, its proof its much more involved.
As in the case $q\ge p$, methods from harmonic analysis are the appropriate ones. To some extent this is natural because the weighted Bergman space $A^p_\om$ induced by $\om\in\DD$ may lie essentially much closer to the Hardy space $H^p$ than any standard Bergman space $A^p_\a$~\cite{PelRat}. Luecking~\cite{Lu90} employed the theory of tent spaces, introduced by Coifman, Meyer and Stein~\cite{CMS} and further considered by Cohn and Verbitsky~\cite{CV}, to study the analogue problem for Hardy spaces.
In \cite{PelRatMathAnn},  an analogue of this theory for Bergman spaces is built and it is a key ingredient in the proof of the following result.

\begin{theorem}\label{Theorem:CarlesonMeasures}
Let $0<q<p<\infty$, $\om\in\DD$ and $\mu$ be a positive Borel measure on~$\D$.
Then the following conditions are equivalent:
\begin{enumerate}
\item[\rm(i)] $\mu$ is a $q$-Carleson measure for $A^p_\om$;
\item[\rm(ii)] The function
    $$
    B_\mu(z)=\int_{\Gamma(z)}\frac{d\mu(\z)}{\om(T(\z))},\quad
    z\in\D\setminus\{0\},
    $$
belongs to $L^{\frac{p}{p-q}}_\om$;
\item[\rm(iii)] $M_\om(\mu)(z)=\sup_{z\in S(a)}\frac{\mu(S(a))}{\left(\om\left(S(a)
    \right)\right)^\a}\in L^{\frac{p}{p-q}}_\om$.
\end{enumerate}
\end{theorem}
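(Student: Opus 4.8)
\emph{Proof proposal.} I would prove the cycle $(\mathrm{ii})\Rightarrow(\mathrm{i})\Rightarrow(\mathrm{iii})\Rightarrow(\mathrm{ii})$, the first arrow being exactly Proposition~\ref{pr:cmqmenorp}. Both remaining arrows are organised around discretising the quantities in $(\mathrm{ii})$ and $(\mathrm{iii})$ over a weight-adapted lattice; conceptually this is the weighted tent-space theory over $(\D,\om)$ built in \cite{PelRatMathAnn} after \cite{CMS,CV}. Write $r=\tfrac{p}{p-q}>1$ and fix, via Lemma~\ref{Lemma:weights-in-D-hat}(xi) (cf.\ \cite[Ch.~1]{PelRat}), a $\DD$-lattice: a separated sequence $\{z_k\}\subset\D$ and pairwise disjoint Bergman squares $R_k$ with $\bigcup_k R_k=\D$, $R_k\subset S(z_k)$, and --- using $\widehat{\om}(\rho_n)-\widehat{\om}(\rho_{n+1})\asymp\widehat{\om}(\rho_n)$, $1-\rho_n\asymp1-\rho_{n+1}$, and \eqref{3} --- $\om(R_k)\asymp\om(S(z_k))\asymp\om(T(\z))$ for $\z\in R_k$. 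Put $g(z)=\mu(R_{(z)})/\om(R_{(z)})$, $R_{(z)}$ being the cell containing $z$. I will also use that $M_\om$ is bounded on $L^t_\om$ for $1<t\le\infty$: the weak-type $(1,1)$ bound $\om(\{M_\om\vp>s\})\lesssim s^{-1}\|\vp\|_{L^1_\om}$ is the case $q=p$, $d\mu=\om\,dA$ of \eqref{eq:s4}, and Marcinkiewicz interpolation completes it.

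Two comparisons come first. Since $z\in R_{(z)}\subset S(z_k)$ (the corresponding $k$) and $\om(S(z_k))\asymp\om(R_k)$, one has $g\le M_\om(\mu)$; conversely each box $S(a)$ is, up to a fixed dilation of comparable $\om$-measure --- here $\om\in\DD$ enters --- a union of lattice cells, so $\mu(S(a))\lesssim\int g\,d\om$ over that dilate, giving $M_\om(\mu)\lesssim M_\om(g)$. Hence $\|M_\om(\mu)\|_{L^r_\om}\asymp\|g\|_{L^r_\om}$ by the maximal theorem. Secondly, $\om(T(\z))\asymp\om(R_k)$ for $\z\in R_k$ gives $B_\mu(z)\lesssim\sum_{k:\,z\in T(R_k)}\mu(R_k)/\om(R_k)=:G(z)$, where $T(R_k)=\bigcup_{\z\in R_k}T(\z)$ lies in a fixed dilate of $S(z_k)$; dualising in $L^r_\om$ ($r'=p/q$), estimating the $\om$-mean of $h\ge0$ over $T(R_k)$ by the $\om$-mean of $M_\om(h)$ over $R_k$, and using boundedness of $M_\om$ on $L^{r'}_\om$, one gets $\int Gh\,d\om\lesssim\int gM_\om(h)\,d\om\lesssim\|g\|_{L^r_\om}\|h\|_{L^{r'}_\om}$, hence $\|B_\mu\|_{L^r_\om}\lesssim\|G\|_{L^r_\om}\lesssim\|g\|_{L^r_\om}$.

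For $(\mathrm{i})\Rightarrow(\mathrm{iii})$ I would use Luecking's randomisation. With $\lambda\ge\lambda_0(\om)$ as in Lemma~\ref{testfunctions1}, a Rademacher sequence $\{\varepsilon_k\}$ and finitely supported $(a_k)$, set $\Phi=\sum_k a_k\varepsilon_k F_{z_k,p}$; Khinchine's inequality gives $\mathbb{E}\,|\Phi(z)|^t\asymp\bigl(\sum_k|a_kF_{z_k,p}(z)|^2\bigr)^{t/2}$ for every $t>0$. Taking $t=q$, integrating $d\mu$, retaining only the $k$th term on $R_k$ and using \eqref{eq:tf1} and disjointness of the $R_k$ yields $\mathbb{E}\int_\D|\Phi|^q\,d\mu\gtrsim\sum_k|a_k|^q\mu(R_k)$; taking $t=p$, integrating $\om\,dA$, using \eqref{eq:tf2} and the geometry of the $\DD$-lattice (immediate from subadditivity of $s\mapsto s^{p/2}$ if $p\le2$; a separate estimate exploiting the decay of $F_{z_k,p}$, hence the choice of $\lambda$ large, if $p>2$) yields $\mathbb{E}\|\Phi\|_{A^p_\om}^p\lesssim\sum_k|a_k|^p\om(R_k)$. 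Inserting $\Phi$ into $\int_\D|f|^q\,d\mu\lesssim\|f\|_{A^p_\om}^q$, taking expectations, and using $\mathbb{E}[X^{q/p}]\le(\mathbb{E}X)^{q/p}$ ($q/p<1$), we obtain $\sum_k|a_k|^q\mu(R_k)\lesssim\bigl(\sum_k|a_k|^p\om(R_k)\bigr)^{q/p}$ for all $(a_k)$; since $p/q>1$, duality between $\ell^{p/q}$ and $\ell^{p/(p-q)}$ turns this into $\sum_k(\mu(R_k)/\om(R_k))^r\om(R_k)<\infty$, i.e.\ $g\in L^r_\om$, hence $M_\om(\mu)\in L^r_\om$ by the first comparison; this is $(\mathrm{iii})$. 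Then $(\mathrm{iii})\Rightarrow(\mathrm{ii})$ is immediate: $(\mathrm{iii})$ gives $g\in L^r_\om$ (first comparison) and then $B_\mu\in L^r_\om$ (second comparison), which is $(\mathrm{ii})$. Together with $(\mathrm{ii})\Rightarrow(\mathrm{i})$ this closes the cycle.

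The hard part will be the estimate $\mathbb{E}\|\Phi\|_{A^p_\om}^p\lesssim\sum_k|a_k|^p\om(R_k)$ when $p>2$: Khinchine returns only the $\ell^2$-combination $\bigl(\sum_k|a_kF_{z_k,p}|^2\bigr)^{1/2}$, so recovering the $\ell^p$-sum needs both the uniform separation of the $\DD$-lattice and the polynomial decay of the test functions, and it is precisely here that the weighted tent-space machinery of \cite{PelRatMathAnn} does the work. It is also essential that $\om\in\DD$ rather than merely $\om\in\R$: for such weights the pseudohyperbolic metric no longer governs the relevant geometry, so the pseudohyperbolic-disc description valid for regular weights \cite{OC} fails, and one must work throughout with the Carleson boxes $S(a)$, the tents $T(z)$, and the $\om$-adapted lattice of Lemma~\ref{Lemma:weights-in-D-hat}(xi).
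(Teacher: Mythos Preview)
The paper does not give its own proof of this theorem; it proves only the implication $(\mathrm{ii})\Rightarrow(\mathrm{i})$ (Proposition~\ref{pr:cmqmenorp}) and for the rest refers to \cite{PelRatMathAnn}, describing the method there as an adaptation of Luecking's tent-space approach \cite{Lu90} built on \cite{CMS,CV}. Your proposal follows exactly that route --- Proposition~\ref{pr:cmqmenorp} for $(\mathrm{ii})\Rightarrow(\mathrm{i})$, Luecking's Rademacher/Khinchine randomisation over a $\DD$-lattice for $(\mathrm{i})\Rightarrow(\mathrm{iii})$, and a maximal-function duality for $(\mathrm{iii})\Rightarrow(\mathrm{ii})$ --- and you correctly flag the $p>2$ case of the random-sum estimate $\mathbb{E}\|\Phi\|_{A^p_\om}^p\lesssim\sum_k|a_k|^p\om(R_k)$ as the place where the genuine tent-space work of \cite{PelRatMathAnn} is needed.
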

\section{Factorization of functions in $A^p_\om$}\label{sec:factorization}
 Factorization theorems  in spaces of analytic functions  are related with
 plenty of issues such as zero sets, dual spaces, Hankel operators or integral operators. We remind the reader of  the following well-known factorization
 of $H^p$-functions \cite{Duren1970}.
 \begin{lettertheorem}\label{th:fhp}
 If $f\not\equiv 0$,   $f\in H^p$, then  $f=B\cdot g$ where $B$ is the  Blaschke product of zeros of $f$ and  $g$ does not vanish on $\D$. Moreover,  $\left \| f \right \|_{H^p}=\left \| g \right \|_{H^p}$.
In particular,
  $f\not\equiv 0$,   $f\in H^1$, can be written as  $f=f_1\cdot f_2$ where $f_2$ does not vanish on $\D$. Moreover,  $\left \| f \right \|_{H^1}=\left \|f_j\right \|_{H^2}$, $j=1,2$.
 \end{lettertheorem}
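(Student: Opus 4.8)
The plan is to run the classical argument via Jensen's formula and Blaschke products, and then to take a holomorphic square root of the zero-free factor for the $H^1$ assertion. First, for \emph{Step 1 (the zeros form a Blaschke sequence)} I would reduce to the case $f(0)\ne0$ by factoring out a power $z^m$; this affects neither the statement nor the norms, since $M_p(r,z^m f)=r^m M_p(r,f)\nearrow\|f\|_{H^p}$. Let $\{a_k\}$ enumerate the zeros of $f$ in $\D\setminus\{0\}$ with multiplicities. Jensen's formula on $|z|=r$ reads
\[
\sum_{|a_k|<r}\log\frac{r}{|a_k|}=\frac{1}{2\pi}\int_0^{2\pi}\log|f(re^{i\theta})|\,d\theta-\log|f(0)|,
\]
and the elementary inequality $\log t\le t^p/p$ bounds the right-hand side by $p^{-1}M_p(r,f)^p-\log|f(0)|\le p^{-1}\|f\|_{H^p}^p-\log|f(0)|$, uniformly in $r$. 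Letting $r\to1^-$ gives $\sum_k(1-|a_k|)<\infty$, so the Blaschke product $B(z)=\prod_k\frac{|a_k|}{a_k}\,\frac{a_k-z}{1-\overline{a_k}z}$ converges locally uniformly in $\D$, satisfies $|B|\le1$, and vanishes (with multiplicities) exactly where $f$ does. Thus $g:=f/B\in\H(\D)$ is zero-free.

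For \emph{Step 2 ($\|g\|_{H^p}=\|f\|_{H^p}$)} I would first note that $|B|\le1$ gives $|g|\ge|f|$, hence $\|g\|_{H^p}\ge\|f\|_{H^p}$. For the reverse inequality I would work with the finite products $B_n(z)=\prod_{k=1}^n\frac{|a_k|}{a_k}\,\frac{a_k-z}{1-\overline{a_k}z}$. Each $B_n$ is continuous on $\overline{\D}$ with $|B_n|\equiv1$ on $\T$, so for every $\e>0$ there is $r_\e<1$ with $|B_n|>1-\e$ on $r_\e<|z|<1$. Since $f/B_n\in\H(\D)$ (the potential poles cancel), $r\mapsto M_p(r,f/B_n)$ is nondecreasing by subharmonicity of $|f/B_n|^p$, and for $r>r_\e$ it is at most $(1-\e)^{-1}M_p(r,f)\le(1-\e)^{-1}\|f\|_{H^p}$; letting $r\to1^-$ and then $\e\to0^+$ yields $\|f/B_n\|_{H^p}\le\|f\|_{H^p}$. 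Finally $B_n\to B$ locally uniformly, so on each circle $|z|=r$ avoiding the countable set $\{|a_k|\}$ one has $f/B_n\to g$ uniformly; thus $M_p(r,g)\le\|f\|_{H^p}$ for those $r$, and by monotonicity of $M_p(\cdot,g)$ for all $r$, giving $\|g\|_{H^p}\le\|f\|_{H^p}$.

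For \emph{Step 3 (the $H^1$ factorization)} I would apply Steps 1--2 to $f\in H^1$, $f\not\equiv0$, obtaining $f=Bg$ with $g$ zero-free and $\|g\|_{H^1}=\|f\|_{H^1}$. Since $\D$ is simply connected and $g$ never vanishes, $g$ has an analytic square root $h$, and because $\frac{1}{2\pi}\int_0^{2\pi}|h(re^{i\theta})|^2\,d\theta=\frac{1}{2\pi}\int_0^{2\pi}|g(re^{i\theta})|\,d\theta\nearrow\|g\|_{H^1}$, we have $h\in H^2$ with $\|h\|_{H^2}^2=\|f\|_{H^1}$. Setting $f_1=Bh$ and $f_2=h$ gives $f_1f_2=Bg=f$ with $f_2$ zero-free, $\|f_2\|_{H^2}^2=\|f\|_{H^1}$, and $|f_1|\le|h|$ together with an application of Steps 1--2 to $f_1=Bh\in H^2$ (whose zero-free part is exactly $h$) yields $\|f_1\|_{H^2}=\|h\|_{H^2}$. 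Hence $\|f_1\|_{H^2}=\|f_2\|_{H^2}=\|f\|_{H^1}^{1/2}$, equivalently $\|f_1\|_{H^2}\|f_2\|_{H^2}=\|f\|_{H^1}$.

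I expect the one genuinely delicate point to be the bound $\|g\|_{H^p}\le\|f\|_{H^p}$ in Step 2: it forces an approximation of $B$ by finite Blaschke products together with a passage to the limit, the two ingredients being the uniform near-boundary estimate $|B_n|>1-\e$ (valid because $B_n$ is rational with all poles outside $\overline{\D}$) and the monotonicity of $r\mapsto M_p(r,\cdot)$ coming from subharmonicity. The remaining pieces---Jensen's formula, convergence of the infinite Blaschke product under the summability condition, and the holomorphic square root on the simply connected domain $\D$---are routine.
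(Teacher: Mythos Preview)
Your argument is correct and is exactly the classical proof one finds in Duren's \emph{Theory of $H^p$ Spaces}, which is the reference the paper cites. Note, however, that the paper does not actually prove this statement: it is quoted as a well-known result (labeled as a ``lettertheorem'' and attributed to \cite{Duren1970}) to motivate the later factorization theorems for $A^p_\omega$. So there is no proof in the paper to compare against; your sketch simply reproduces the standard textbook argument, and it does so accurately, including the one genuinely nontrivial step (the bound $\|f/B\|_{H^p}\le\|f\|_{H^p}$ via finite Blaschke approximants and the near-boundary estimate $|B_n|>1-\varepsilon$).
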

Because of  the following result, Theorem \ref{th:fhp}  does not remain true for standard Bergman spaces $A^p_\alpha$ \cite{Horzeros}.
\begin{lettertheorem}
Let $0<p<q<\infty$. Then there exists an $A^p$ zero set which
is not an $A^q$ zero set. In particular, it is not possible to represent an arbitrary
$A^1$ function as the product of two functions in $A^2$, one of them nonvanishing.
\end{lettertheorem}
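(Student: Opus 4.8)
The plan is to prove the first, zero-set, assertion by an explicit Horowitz-type construction~\cite{Horzeros}, and then to read off the factorization statement. For the latter: if every $f\in A^1\setminus\{0\}$ could be written $f=f_1f_2$ with $f_1,f_2\in A^2$ and $f_2$ zero-free, then $f_1$ would have exactly the same zeros (with multiplicities) as $f$, so the zero set of $f$ would be an $A^2$-zero set; hence it suffices to prove the first assertion, for arbitrary $0<p<q<\infty$. Here we also record that $A^q\subset A^p$ (H\"older's inequality, since $\int_{\D}dA=1$), and that any $g\in\H(\D)$ vanishing on the zero set of a given $f\in\H(\D)$, with at least the multiplicities of $f$, is of the form $g=fh$ with $h=g/f\in\H(\D)$; thus what must be produced is $f\in A^p\setminus\{0\}$ such that $fh\notin A^q$ for \emph{every} $h\in\H(\D)\setminus\{0\}$.

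I would take $f$ to be an infinite product of contracted polynomials. Fix $c$ with $\frac{\log 2}{q}\le c<\frac{\log 2}{p}$ (possible because $p<q$), put $\rho_k=1-2^{-k}$ and $m_k\asymp c\,2^k$ (say $m_k=\lfloor c\,2^k\rfloor+1$), and set $f(z)=\prod_{k\ge1}\bigl(1-(z/\rho_k)^{m_k}\bigr)$. The product converges locally uniformly, so $f\in\H(\D)$, $f(0)=1$, and the zero set of $f$ is $Z=\bigcup_k\{\rho_k e^{2\pi ij/m_k}:0\le j<m_k\}$, with all zeros simple. The computation everything rests on is
\[
\sum_{a\in Z,\,|a|<r}\log\frac{r}{|a|}=\sum_{k:\,\rho_k<r}m_k\log\frac{r}{\rho_k}=\frac{c}{\log 2}\,\log\frac{1}{1-r}+O(1),\qquad r\to1^-,
\]
which follows from $m_k\asymp c\,2^k$, $\log(r/\rho_k)\asymp 2^{-k}-(1-r)$, and $\#\{k:\rho_k<r\}=\log_2\frac{1}{1-r}+O(1)$.

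Granting this, the assertion that $Z$ is not an $A^q$-zero set is the easy half. Let $g=fh\not\equiv0$. Since $Z$ is contained in the zero sequence of $g$, Jensen's formula gives $\frac{1}{2\pi}\int_0^{2\pi}\log|g(re^{it})|\,dt\ge\frac{c}{\log 2}\log\frac{1}{1-r}+O(1)$, and Jensen's inequality then yields $M_q(r,g)\ge\exp\bigl(\frac{1}{2\pi}\int_0^{2\pi}\log|g(re^{it})|\,dt\bigr)\gtrsim(1-r)^{-c/\log 2}$. Hence $\|g\|_{A^q}^q\gtrsim\int_0^1(1-r)^{-cq/\log 2}\,dr=\infty$, because $cq/\log 2\ge1$; so no nonzero $g\in A^q$ vanishes on $Z$.

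The substantial half is to verify $f\in A^p$, and this is the step I expect to be the main obstacle. I would split each factor $|1-(z/\rho_k)^{m_k}|$ according to whether $\rho_k\ge|z|$ or $\rho_k<|z|$. The factors with $\rho_k\ge|z|$ form a product bounded uniformly on $\D$ (since $\sum_{k:\rho_k\ge|z|}(|z|/\rho_k)^{m_k}$ is bounded), while the factors with $\rho_k<|z|$ are treated by integrating over circles, using $\frac{1}{2\pi}\int_0^{2\pi}|1-te^{im_k\theta}|^p\,d\theta\asymp\max(1,t^p)$ together with Horowitz's estimate for the Bergman norm of such a product; the mechanism is that the frequencies $m_k$ are lacunary, so that on average these circular integrals essentially multiply, up to $\prod_{k:\rho_k<|z|}(|z|/\rho_k)^{m_kp}$. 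Summing over the dyadic annuli $\rho_k\le|z|<\rho_{k+1}$ --- each of area $\asymp 2^{-k}$, on which the last product is $\asymp e^{cpk}$ --- then gives $\|f\|_{A^p}^p\lesssim\sum_k e^{k(cp-\log 2)}<\infty$, exactly because $c<\frac{\log 2}{p}$. Turning the heuristic \lq\lq the circular averages multiply\rq\rq\ into a rigorous bound, i.e.\ Horowitz's product lemma, is the delicate point; the rest is bookkeeping. With such an $f$ in hand, both assertions of the theorem follow.
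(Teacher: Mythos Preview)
Your approach is essentially Horowitz's original construction and is correct as sketched; the only genuine technical debt is, as you say, the lacunary product inequality (Horowitz's lemma that for lacunary frequencies the $L^p$ circular averages of $\prod|1-c_ke^{im_k\theta}|$ are dominated by the product of the individual averages), and you identify this correctly as the substantive step.

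In the paper this lettertheorem is quoted from \cite{Horzeros} without proof. What the paper \emph{does} prove is the weighted analogue (Theorem~\ref{th:zerosqp}), and there the construction is genuinely different from yours. Instead of polynomial factors $1-(z/\rho_k)^{m_k}$, the paper uses the rational factors
\[
F_k(z)=\frac{1+a_k z^{2^k}}{1+a_k^{-1}z^{2^k}},
\]
with $a_k$ bounded between two constants $>1$. The payoff is that each $|F_k(z)|$ is bounded above and below by absolute constants away from the zeros, so the product can be controlled pointwise: one obtains directly an $M_\infty$ bound $|f(z)|\lesssim\widehat\om(|z|)^{-1/q}$, whence $f\in A^p_\om$ for all $p<q$ by a one-line integration. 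The zero-set side is handled not via Jensen plus a growth lower bound as you do, but by first proving the necessary condition $\prod_{k\le n}|z_k|^{-1}=o\bigl(\widehat\om(1-1/n)^{-1/p}\bigr)$ (Theorem~\ref{Theorem:ZerosBergman1}) and then checking by hand that the zeros of $f$ violate it at $p=q$.

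So: your route needs the Horowitz product lemma but stays with polynomials; the paper's route dodges that lemma entirely by passing to bounded rational factors, at the cost of a separate necessary-condition theorem. The paper's version also yields the slightly stronger conclusion $f\in\bigcap_{p<q}A^p_\om$ from a single construction, though your argument with $c=\log 2/q$ would give the same.
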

\par Some years later, a weak factorization result was obtained  in the context of Hardy spaces in several variables \cite{CoRoWe}.
\begin{lettertheorem}
If $f\in A^1$ function, then
$$f=\sum_{j=1}^\infty F_jG_j$$
and $\sum_{j=1}^\infty \|F_j\|_{A^2}\|G_j\|_{A^2}\le C\|f\|_{A^1}.$
\end{lettertheorem}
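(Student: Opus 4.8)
The plan is to read the weak factorization off the atomic decomposition of $A^1$ recalled above (\cite[Theorem~2.2]{Ro:de}), after observing that each normalized atom is literally a product of two functions in $A^2$ of comparable norm. First I would fix $b>2$ large enough that the atomic decomposition theorem applies to $A^1$: then there is a separated sequence $\{z_k\}\subset\D$ such that every $f\in A^1$ admits a representation
$$f(z)=\sum_{k=1}^\infty c_k\,a_k(z),\qquad a_k(z)=\frac{(1-|z_k|^2)^{b-2}}{(1-\overline{z_k}z)^{b}},$$
with convergence in $A^1$ and $\sum_{k}|c_k|\le C\|f\|_{A^1}$. Here the atoms $a_k$ are normalized so that $\|a_k\|_{A^1}\asymp1$, which uses the classical estimate $\int_\D|1-\overline{z_k}z|^{-b}\,dA(z)\asymp(1-|z_k|^2)^{2-b}$, valid since $b>2$ (see \cite{Zhu}).

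Next I would split each atom symmetrically by setting
$$F_k(z)=G_k(z)=\frac{(1-|z_k|^2)^{(b-2)/2}}{(1-\overline{z_k}z)^{b/2}},$$
so that $a_k=F_k\,G_k$. The same integral estimate with exponent $b>2$ gives
$$\|F_k\|_{A^2}^2=\|G_k\|_{A^2}^2=(1-|z_k|^2)^{b-2}\int_\D\frac{dA(z)}{|1-\overline{z_k}z|^{b}}\asymp(1-|z_k|^2)^{b-2}(1-|z_k|^2)^{2-b}=1,$$
so $\|F_k\|_{A^2}\asymp\|G_k\|_{A^2}\asymp1$ with constants independent of $k$.

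Finally I would set $\mathcal{F}_k=c_k F_k$ and $\mathcal{G}_k=G_k$; then $\mathcal{F}_k\mathcal{G}_k=c_k a_k$, and since $\sum_k\|c_k a_k\|_{A^1}\asymp\sum_k|c_k|<\infty$ the series $\sum_k\mathcal{F}_k\mathcal{G}_k$ converges to $f$ in $A^1$ (hence pointwise), while
$$\sum_{k=1}^\infty\|\mathcal{F}_k\|_{A^2}\|\mathcal{G}_k\|_{A^2}=\sum_{k=1}^\infty|c_k|\,\|F_k\|_{A^2}\|G_k\|_{A^2}\asymp\sum_{k=1}^\infty|c_k|\le C\|f\|_{A^1},$$
which is the asserted estimate. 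The only substantial ingredient here is the atomic decomposition itself; if one prefers not to quote it, the real obstacle moves to proving that a sufficiently fine sampling operator is invertible on $A^1$, which one handles by discretizing the reproducing formula $f(z)=\int_\D f(w)(1-\overline{w}z)^{-2}\,dA(w)$ over the lattice and running a Neumann series for the resulting operator on $A^1$.
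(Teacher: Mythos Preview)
The paper does not prove this statement: it is a \emph{lettertheorem}, quoted from the literature (attributed to \cite{CoRoWe}) as historical motivation for Horowitz's strong factorization, and no proof is supplied in the text. So there is nothing to compare against directly.

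Your argument is correct and is in fact the standard modern route to weak factorization of Bergman spaces: pull the atomic decomposition of $A^1$ from \cite{Ro:de}, observe that each atom $(1-|z_k|^2)^{b-2}/(1-\overline{z_k}z)^b$ is literally a square, and split it as $F_kG_k$ with $F_k=G_k$; the Forelli--Rudin estimate gives $\|F_k\|_{A^2}\asymp 1$, and the $\ell^1$ control on the coefficients finishes the job. One small cosmetic point: you set $\mathcal{F}_k=c_kF_k$ and $\mathcal{G}_k=G_k$, which is fine, but then $\|\mathcal{F}_k\|_{A^2}$ and $\|\mathcal{G}_k\|_{A^2}$ are not balanced; if you want the more symmetric statement one usually absorbs $\sqrt{|c_k|}$ into each factor (and a unimodular constant into one of them). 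Either way the inequality $\sum_k\|\mathcal{F}_k\|_{A^2}\|\mathcal{G}_k\|_{A^2}\le C\|f\|_{A^1}$ holds, which is all the theorem asserts.
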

Essentially at the same time, Horowitz \cite{HorFacto} improved this result, obtaining a strong factorization
of $A^p_\alpha$-functions.
\begin{lettertheorem}
Assume that $0<p<\infty$, $\alpha>-1$  and $p^{-1}=p_1^{-1}+p_2^{-1}$. If $f\in A^p_\alpha$, then
there exist $f_1\in A^{p_1}_\a$ and $f_2\in A^{p_2}_\a$
such that $f=f_1\cdot f_2$ and
\begin{equation*}
\|f_1\|_{A^{p_1}_\a}^p\cdot\|f_2\|_{A^{p_2}_\a}^p\le C\|f\|_{A^p_\a}^p
    \end{equation*}
for some constant $C=C(p_1,p_2,\a)>0$.
\end{lettertheorem}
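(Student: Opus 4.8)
The plan is to follow Horowitz's probabilistic argument. After the routine reduction of factoring out a power of $z$ (which costs only bounded constants, since $|z^m|\le 1$ on $\D$), I may assume $f(0)=1$. Let $\{a_n\}$ be the zeros of $f$, repeated according to multiplicity, and introduce the Horowitz elementary factors
\[
\Pi_a(z)=\frac{\bar a}{|a|}\,\varphi_a(z)\Bigl(2-\frac{\bar a}{|a|}\,\varphi_a(z)\Bigr),\qquad \varphi_a(z)=\frac{a-z}{1-\bar a z},
\]
and the infinite product $\Pi=\prod_n\Pi_{a_n}$. I would first record the basic pointwise facts: $\Pi_a$ vanishes only at $a$, $|\Pi_a|\le 3$ on $\overline{\D}$, and
\[
1-\Pi_a(z)=\Bigl(1-\tfrac{\bar a}{|a|}\varphi_a(z)\Bigr)^2,\qquad \Bigl|1-\tfrac{\bar a}{|a|}\varphi_a(z)\Bigr|=\frac{(1-|a|)\,\bigl||a|+\bar a z\bigr|}{|a|\,|1-\bar a z|}\le\frac{2(1-|a|)}{|a|(1-|z|)},
\]
so that $|1-\Pi_a(z)|\lesssim(1-|a|)^2/(1-|z|)^2$ for $|a|\ge\tfrac12$. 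Together with the known growth restrictions on the zero sets of $A^p_\alpha$-functions, this guarantees that $\Pi$ converges locally uniformly (with $\Pi_a$ replaced by a higher-order variant when $p$ is small), and then $g:=f/\Pi$ extends to a zero-free analytic function on $\D$, so that branches of $g^{\theta},g^{1-\theta}$ are holomorphic on $\D$.

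Next I would set up the randomization. Put $\theta=p/p_1$, so $1-\theta=p/p_2$. Let $\{\varepsilon_n\}$ be independent $\{0,1\}$-valued random variables with $\mathbb{P}(\varepsilon_n=1)=\theta$, set $S=\{n:\varepsilon_n=1\}$, and define
\[
f_1=g^{\theta}\prod_{n\in S}\Pi_{a_n},\qquad f_2=g^{1-\theta}\prod_{n\notin S}\Pi_{a_n},
\]
so that $f_1f_2=g\,\Pi=f$ identically and, in particular, $f_j\in\H(\D)$. Since $|f_1(z)|=|f(z)|^{\theta}\exp\bigl(\sum_n(\varepsilon_n-\theta)\log|\Pi_{a_n}(z)|\bigr)$, Fubini's theorem and independence give the explicit formula
\[
\mathbb{E}\,\|f_1\|_{A^{p_1}_\alpha}^{p_1}=\int_{\D}|f(z)|^{p}\,\prod_n\Bigl[\theta\,|\Pi_{a_n}(z)|^{p_1-p}+(1-\theta)\,|\Pi_{a_n}(z)|^{-p}\Bigr]\,dA_\alpha(z),
\]
together with the symmetric formula for $\mathbb{E}\,\|f_2\|_{A^{p_2}_\alpha}^{p_2}$ obtained by interchanging $p_1,\theta$ with $p_2,1-\theta$. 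By the weighted arithmetic--geometric mean inequality each bracket is $\ge 1$, so both expectations are at least $\|f\|_{A^p_\alpha}^{p}$.

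The crux — and the step I expect to be the main obstacle — is the matching upper bound $\mathbb{E}\,\|f_j\|_{A^{p_j}_\alpha}^{p_j}\le C(p_1,p_2,\alpha)\,\|f\|_{A^p_\alpha}^{p}$ for $j=1,2$. No pointwise comparison is available, since the integrand above does not vanish at the $a_n$ while $|f|^{p}$ does; instead I would partition $\D$ according to proximity to the zeros, controlling the product by $|\Pi_{a_n}|\le 3$ and by the estimate on $|1-\Pi_{a_n}(z)|$ on the part of $\D$ far from all the $a_n$, and exploiting the exact cancellation $|f(z)/\Pi_{a_n}(z)|=|g(z)|\prod_{m\neq n}|\Pi_{a_m}(z)|$ near $a_n$ together with the sub-mean-value property of $|g|^{p}$. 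The bookkeeping showing that these local contributions sum to $\lesssim\|f\|_{A^p_\alpha}^{p}$ uniformly over all zero configurations is the genuinely technical part, and it is exactly where one needs the specific shape of the Horowitz factor.

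Granting the two first-moment bounds, $\mathbb{E}\bigl[\tfrac{p}{p_1}\|f_1\|_{A^{p_1}_\alpha}^{p_1}+\tfrac{p}{p_2}\|f_2\|_{A^{p_2}_\alpha}^{p_2}\bigr]\le C\|f\|_{A^p_\alpha}^{p}$, so some realization of $\{\varepsilon_n\}$ satisfies $\tfrac{p}{p_1}\|f_1\|_{A^{p_1}_\alpha}^{p_1}+\tfrac{p}{p_2}\|f_2\|_{A^{p_2}_\alpha}^{p_2}\le C\|f\|_{A^p_\alpha}^{p}$. For that realization the elementary inequality $a^{\theta}b^{1-\theta}\le\theta a+(1-\theta)b$ applied to $a=\|f_1\|_{A^{p_1}_\alpha}^{p_1}$, $b=\|f_2\|_{A^{p_2}_\alpha}^{p_2}$ yields $\|f_1\|_{A^{p_1}_\alpha}^{p}\|f_2\|_{A^{p_2}_\alpha}^{p}\le C\|f\|_{A^p_\alpha}^{p}$, which is the assertion. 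The very same scheme, carried out in Section~\ref{sec:factorization}, gives the corresponding factorization of $A^p_\omega$-functions for the weights considered there.
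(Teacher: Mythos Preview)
Your scheme is essentially the paper's (which proves the more general Theorem~\ref{Thm:FactorizationBergman}; the lettertheorem itself is merely cited from~\cite{HorFacto}). Two organizational differences are worth noting. First, the paper uses the plain Blaschke factors $B_k=\tfrac{z_k}{|z_k|}\vp_{z_k}$ rather than the products $\Pi_a=B_a(2-B_a)$, and---more importantly---reduces at the outset to functions with \emph{finitely} many zeros via density of polynomials, passing to the general case only at the end by a normal-families limit; this sidesteps all convergence issues with the infinite product and makes your parenthetical about higher-order variants unnecessary (the need for those is governed by $\alpha$, not by $p$, incidentally). Second, the step you rightly flag as the crux, namely the upper bound on $\mathbb{E}\,\|f_j\|_{A^{p_j}_\alpha}^{p_j}$, is exactly Lemma~\ref{Lemma:factorization}, whose proof the paper also defers (to \cite[Lemma~3.3]{PelRat}); once that lemma is granted, the extraction of a good realization via $a^\theta b^{1-\theta}\le\theta a+(1-\theta)b$ is identical in both arguments.
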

Motivated by the study of integral operators, we are interested in finding out
a large class of
 weights $\om$ which allow a (strong) factorization of $A^p_\om$-functions.
 \par Throughout these notes, we shall use the following notation. For $a\in\D$, define $\vp_a(z)=(a-z)/(1-\overline{a}z)$.
The pseudohyperbolic distance from $z$ to $w$ is defined by
$\varrho(z,w)=|\vp_z(w)|$, and the pseudohyperbolic
disc of center $a\in\D$ and radius $r\in(0,1)$ is denoted by
$\Delta(a,r)=\{z:\varrho(a,z)<r\}$.

\medskip \par A careful inspection of Horowitz's techniques lead us to consider the following class of weights.
  A weight $\om$ (not necessarily radial
neither continuous) is called invariant,  $\om\in{\mathcal Inv}$, if  for each
$r\in(0,1)$ there exists a constant $C=C(r)\ge1$ such that
    \begin{equation}\label{Eq:InvariantWeightsOldDefinition}
    C^{-1}\om(a)\le\om(z)\le C\om(a),\quad\text{ $z\in\Delta(a,r)$.}
    \end{equation}
 We note that a radial weight $\om$ belongs to  ${\mathcal Inv}$ if and only if $\om$ does not have zeros  and $\om$ satisfies the property
\eqref{eq:r2}.
Therefore, $\R\cup\widetilde{\I}\subset {\mathcal Inv}$.
 Moreover, by using results in \cite{AlCo} it is not difficult to prove that a differentiable weight $\om$
is invariant whenever
    \begin{equation*}\label{Pesos:AlemanConstantin}
    |\nabla\om(z)|(1-|z|^2)\le C\om(z),\quad z\in\D.
    \end{equation*}
\par The following result is based on the additivity of the hyperbolic distance on  geodesics.

\begin{lemma}\label{Lemma:InvariantWeights}
If $\om\in{\mathcal Inv}$, then there exists a function
$C:\D\to[1,\infty)$ such that
    \begin{equation}\label{Eq:InvariantWeightsOldDefinition}
    \om(u)\le C(z)\om(\vp_u(z)),\quad u,z\in\D,
    \end{equation}
and
    \begin{equation}\label{Eq:InvariantWeightsOldDefinitionIntegral}
    \int_\D\log C(z)\,dA(z)<\infty.
    \end{equation}

Conversely, if $\om$ is a weight does not have zeros,  satisfying
\eqref{Eq:InvariantWeightsOldDefinition} and the function $C$ is
uniformly bounded in compact subsets of $\D$, then
$\om\in{\mathcal Inv}$.
\end{lemma}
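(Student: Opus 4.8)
The plan is to deduce the pointwise inequality \eqref{Eq:InvariantWeightsOldDefinition} from a chain along a hyperbolic geodesic, quantifying the number of pseudohyperbolic discs of a fixed radius needed to cover that geodesic, and then to integrate. First I would reduce matters to the case $z$ fixed and $u$ ranging over $\D$, and observe that $\vp_u(z)=\vp_z(u)$ up to a rotation of modulus one, so that $|\vp_u(z)|=\varrho(z,u)$; the Möbius invariance of the pseudohyperbolic metric is what makes this symmetry available. Pick the standard radius, say $r=1/2$, and let $N=N(z,u)$ be the least number of points $u=w_0,w_1,\dots,w_N=z$ on the hyperbolic segment $[u,z]$ with consecutive points at pseudohyperbolic distance at most $r$. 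Since $\om\in\Inv$, \eqref{eq:r2}-type control along the chain gives
\begin{equation*}
\om(u)=\om(w_0)\le C(r)\om(w_1)\le\cdots\le C(r)^N\om(w_N)=C(r)^N\om(z).
\end{equation*}

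Next I would estimate $N(z,u)$. The additivity of the hyperbolic distance $\rho_h$ on geodesics gives $\rho_h(u,z)=\sum_{j=1}^N\rho_h(w_{j-1},w_j)$, and by choosing the $w_j$ greedily one may assume each step has hyperbolic length comparable to the fixed constant $\delta$ corresponding to pseudohyperbolic radius $r$, except possibly the last. Hence $N(z,u)\le 1+\rho_h(u,z)/\delta$. Translating back to the pseudohyperbolic (equivalently Euclidean) scale, $\rho_h(u,z)\asymp\log\frac{1}{1-\varrho(u,z)}\lesssim \log\frac{1}{1-|\vp_u(z)|}$, and a further standard estimate bounds this by a constant times $\log\frac{e}{(1-|u|)(1-|z|)}$ or, more simply for what we need, by a quantity depending on $|\vp_u(z)|$ and hence, after using $\varrho(u,z)=|\vp_z(u)|$, we get
\begin{equation*}
C(z):=\sup_{u\in\D}\frac{\om(u)}{\om(\vp_u(z))}\le C(r)^{\,1+\rho_h(0,z)/\delta+O(1)},
\end{equation*}
so $\log C(z)\lesssim 1+\rho_h(0,z)\asymp 1+\log\frac{1}{1-|z|}$. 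The key point is that the geodesic from $u$ to $z$, after applying $\vp_u$, becomes a geodesic from $0$ to $\vp_u(z)$, so comparing $\om(u)$ with $\om(\vp_u(z))$ only costs the hyperbolic length $\rho_h(0,\vp_u(z))=\rho_h(u,z)$, and the worst case over $u$ is governed by $|z|$ alone.

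Then I would verify the integrability \eqref{Eq:InvariantWeightsOldDefinitionIntegral}: since $\log C(z)\lesssim 1+\log\frac{1}{1-|z|}$, we have
\begin{equation*}
\int_\D\log C(z)\,dA(z)\lesssim \int_0^1\left(1+\log\frac{1}{1-s}\right)s\,ds<\infty,
\end{equation*}
which is finite because $\log\frac{1}{1-s}$ is integrable on $[0,1)$. For the converse, assume $\om$ has no zeros, satisfies \eqref{Eq:InvariantWeightsOldDefinition}, and $C$ is bounded on compact subsets; given $r\in(0,1)$ and $a\in\D$, for $z\in\Delta(a,r)$ write $u=\vp_a(w)$ appropriately so that \eqref{Eq:InvariantWeightsOldDefinition} applied with the roles of the points chosen using the identity $\vp_a\circ\vp_a=\mathrm{id}$ yields $\om(z)\le C(\zeta)\om(a)$ and $\om(a)\le C(\zeta')\om(z)$ for some $\zeta,\zeta'$ in the compact set $\overline{\Delta(0,r)}$, so $\sup_{\overline{\Delta(0,r)}}C$ serves as the constant in \eqref{eq:r2}, giving $\om\in\Inv$. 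The main obstacle I anticipate is the bookkeeping in the direct implication: one must be careful that the chain $w_0,\dots,w_N$ can be taken with \emph{uniform} step size (so that the constant $C(r)$ from the definition applies at each step) while simultaneously keeping $N$ controlled by the hyperbolic length, and that the reduction via $\vp_u$ correctly turns the estimate into one depending only on $z$; the additivity of the hyperbolic metric on geodesics is exactly the tool that makes both of these work cleanly.
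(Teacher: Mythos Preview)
Your overall strategy --- chain along a hyperbolic geodesic using the $\Inv$ property at each step, control the number of steps by the hyperbolic length, and then check integrability of $\log C(z)$ --- is exactly the paper's approach. But your execution of the chain argument is tangled, and as written it does not prove the inequality you want.

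The issue is the endpoints. You set up the chain $u=w_0,\dots,w_N=z$ along the geodesic from $u$ to $z$, which yields $\om(u)\le C(r)^N\om(z)$ with $N$ controlled by $\rho_h(u,z)$. But the target inequality compares $\om(u)$ with $\om(\vp_u(z))$, not with $\om(z)$; and $\rho_h(u,z)$ depends on both $u$ and $z$, so it cannot produce a bound $C(z)$ uniform in $u$. Your ``key point'' sentence then asserts that comparing $\om(u)$ with $\om(\vp_u(z))$ costs $\rho_h(0,\vp_u(z))=\rho_h(u,z)$, and that ``the worst case over $u$ is governed by $|z|$ alone'' --- but this last claim is simply false for the quantity you have written (take $u\to\partial\D$ with $z$ fixed).

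The correct computation, which is what the paper does, is to chain from $u$ to $\vp_u(z)$ directly. Since $\vp_u$ is an isometric involution,
\[
\rho_h(u,\vp_u(z))=\rho_h(\vp_u(u),\vp_u(\vp_u(z)))=\rho_h(0,z)=\tfrac12\log\tfrac{1+|z|}{1-|z|},
\]
which depends only on $|z|$. Then the chain with step $\delta$ gives $N\le 1+\rho_h(0,z)/\delta$ and hence $C(z)=C(r)^{1+\rho_h(0,z)/\delta}$, so $\log C(z)\lesssim 1+\log\frac{1}{1-|z|}$, which is integrable over $\D$. Your converse argument is essentially correct and matches the paper's: for $a\in\D$ and $b\in\Delta(a,r)$, apply \eqref{Eq:InvariantWeightsOldDefinition} with $u=a$, $z=\vp_a(b)$ (so $|z|<r$) to get $\om(a)\le C(\vp_a(b))\om(b)$, and symmetrically with the roles of $a,b$ swapped; boundedness of $C$ on $\overline{D(0,r)}$ finishes it.
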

\begin{proof}
Let first $\om\in{\mathcal Inv}$. Then there exists a constant
$C\ge1$ such that
    \begin{equation}\label{111}
    C^{-1}\om(a)\le\om(z)\le C\om(a),\quad z\in \Delta_h(a,1).
    \end{equation}
     For each
$z,u\in\D$, the hyperbolic distance between $u$ and $\vp_u(z)$ is
    $$
    \varrho_h(u,\vp_u(z))=\frac12\log\frac{1+|z|}{1-|z|}.
    $$
By the additivity of the hyperbolic distance on the geodesic
joining $u$ and $\vp_u(z)$, and \eqref{111} we deduce
    $$
    \om(u)\le C^{E(\varrho_h(u,\vp_u(z)))+1}\om(\vp_u(z))\le C\left(\frac{1+|z|}{1-|z|}\right)^{\frac{\log
    C}{2}}\om(\vp_u(z)),
    $$
where $E(x)$ is the integer such that $E(x)\le x<E(x)+1$. It
follows that \eqref{Eq:InvariantWeightsOldDefinition} and
\eqref{Eq:InvariantWeightsOldDefinitionIntegral} are satisfied.

Conversely, let $\om$ be a weight satisfying
\eqref{Eq:InvariantWeightsOldDefinition} such that the function
$C$ is uniformly bounded in compact subsets of $\D$. Then, for
each $r\in(0,1)$, there exists a constant $C=C(r)>0$ such that
$\om(u)\le C(r)\om(z)$ whenever $|\vp_u(z)|<r$. Thus
$\om\in{\mathcal Inv}$.
\end{proof}

The next result plays an important role in the proof of
our factorization theorem. The proof is technical, see \cite[Lemma $3.3$]{PelRat}.

\begin{lemma}\label{Lemma:factorization}
Let $0<p<q<\infty$ and $\omega\in{\mathcal Inv}$. Let $\{z_k\}$ be
the zero set of $f\in A^p_\omega$, and let
    $$
    g(z)=|f(z)|^p\prod_{k}\frac{1-\frac{p}{q}+\frac{p}{q}|\vp_{z_k}(z)|^q}{|\vp_{z_k}(z)|^p}.
    $$
Then there exists a constant $C=C(p,q,\omega)>0$ such that
\begin{equation}\label{constante}
\|g\|_{L^1_\omega}\le C\|f\|_{A^p_\omega}^p.
\end{equation}
Moreover, the constant $C$ has the following properties:
    \begin{enumerate}
    \item[\rm(i)] If $0<p<q\le 2$, then $C=C(\omega)$, that is, $C$ is independent of $p$ and $q$.
    \item[\rm(ii)] If $2<q<\infty$ and $\frac{q}{p}\ge1+\epsilon>1$, then $C=C_1qe^{C_1q}$, where $C_1=C_1(\epsilon,\omega)$.
    \end{enumerate}
\end{lemma}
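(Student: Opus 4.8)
The plan is to adapt Horowitz's probabilistic subproduct method \cite{HorFacto} to the class $\Inv$. Write $s=p/q\in(0,1)$, so that $p=sq$. Applying the weighted arithmetic--geometric mean inequality to $|\vp_{z_k}(z)|^q$ and $1$ with weights $s$ and $1-s$ gives $|\vp_{z_k}(z)|^{p}=\bigl(|\vp_{z_k}(z)|^q\bigr)^{s}\le s\,|\vp_{z_k}(z)|^q+(1-s)$, so every factor of the product defining $g$ is at least $1$. Hence $g\ge|f|^p$ and, after fixing an enumeration of the zeros, the partial products
\[
g_N(z)=|f(z)|^p\prod_{k=1}^{N}\frac{1-s+s\,|\vp_{z_k}(z)|^q}{|\vp_{z_k}(z)|^p}
\]
increase pointwise to $g$. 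By the monotone convergence theorem it is then enough to prove $\|g_N\|_{L^1_\om}\le C\|f\|_{A^p_\om}^p$ with $C$ independent of $N$ and with the dependence on $p,q,\om$ as asserted.

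For the finite products I would pass to a probabilistic representation. With the zeros enumerated as $z_1,\dots,z_N$, set $\widetilde f=f\big/\prod_{k=1}^{N}\vp_{z_k}$, which is analytic in $\D$, and on a probability space $(\Omega,\mathbb P)$ take independent Bernoulli variables $X_1,\dots,X_N$ with $\mathbb P(X_k=1)=s$. Consider the random subproduct $f_\sigma$ of modulus $|f_\sigma(z)|=|\widetilde f(z)|^{\,s}\prod_{k:\,X_k(\sigma)=1}|\vp_{z_k}(z)|$; equivalently $|f_\sigma(z)|^q=|f_{T_\sigma}(z)|^p\prod_{k\in T_\sigma}|\vp_{z_k}(z)|^{q-p}$, where $T_\sigma=\{k:X_k(\sigma)=1\}$ and $f_{T_\sigma}=f\big/\prod_{k\notin T_\sigma}\vp_{z_k}$ is analytic, so that $|f_\sigma|^q$ is subharmonic. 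Independence of the $X_k$ gives
\[
\mathbb E\bigl[|f_\sigma(z)|^q\bigr]=|\widetilde f(z)|^p\prod_{k=1}^{N}\bigl(1-s+s\,|\vp_{z_k}(z)|^q\bigr)=g_N(z),
\]
whence, by Fubini's theorem, $\|g_N\|_{L^1_\om}=\mathbb E\bigl[\|f_\sigma\|_{A^q_\om}^q\bigr]$. Everything is thereby reduced to the uniform estimate $\mathbb E\bigl[\|f_\sigma\|_{A^q_\om}^q\bigr]\le C\|f\|_{A^p_\om}^p$.

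I expect this last estimate to be the main obstacle, and it is here that the hypothesis $\om\in\Inv$ is used. A realization-by-realization bound is hopeless, since for a fixed $\sigma$ the function $f_\sigma$ is essentially $f$ with a random subset of its zeros deleted and may have an arbitrarily large $A^q_\om$--norm; the estimate depends on genuine cancellation in the average over the Bernoulli selection, and on not discarding the decay furnished by the retained factors $|\vp_{z_k}|^{q-p}$. Following Horowitz, I would bound $|f_\sigma(z)|^q$ pointwise by a pseudohyperbolic average of $|f|^p$ over a disc whose radius is dictated by the zeros of $f$ near $z$, transport such discs to the origin by the corresponding M\"obius automorphisms, and use Lemma~\ref{Lemma:InvariantWeights} to absorb the attendant distortion of $\om$ (the error factor $C(\cdot)$ being harmless because $\int_\D\log C<\infty$), so that each resulting piece is controlled by a multiple of $\int_\D|f|^p\om\,dA$; summing the ensuing binomial series then yields the estimate.

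It remains to track the constants. For $0<p<q\le 2$ the concavity of $t\mapsto t^{q/2}$ makes the relevant moments of the Bernoulli sums subadditive and the binomial series above sum with a constant depending only on $\om$, which gives (i). For $q>2$ the same scheme loses a factor governed by the moments of order $q$ of sums of the $X_k$; under the extra assumption $q/p\ge 1+\epsilon$, optimising the parameters leaves a constant of the form $C_1qe^{C_1q}$ with $C_1=C_1(\epsilon,\om)$, which is (ii). A complete, and rather technical, execution of this scheme is carried out in \cite[Lemma~3.3]{PelRat}.
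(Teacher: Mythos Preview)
Your proposal is correct and follows the same approach as the paper: the paper itself does not prove this lemma but simply cites \cite[Lemma~3.3]{PelRat}, and your sketch outlines precisely that argument (Horowitz's random subproduct identity $g_N=\mathbb E[|f_\sigma|^q]$, reduction via Fubini, and the invariant-weight estimate through Lemma~\ref{Lemma:InvariantWeights}), deferring the technical core to the same reference. Your added detail---the AM--GM monotonicity, the Bernoulli setup, and the discussion of how the constants split in cases (i) and (ii)---is accurate and matches the structure of the cited proof.
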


\par Now, we prove our main result in this section.

\begin{theorem}\label{Thm:FactorizationBergman}
Let $0<p<\infty$ and $\omega\in{\mathcal Inv}$ such that the
polynomials are dense in $A^p_\om$. Let $f\in A^p_\omega$, and let
$0<p_1,p_2<\infty$ such that $p^{-1}=p_1^{-1}+p_2^{-1}$. Then
there exist $f_1\in A^{p_1}_\omega$ and $f_2\in A^{p_2}_\omega$
such that $f=f_1\cdot f_2$ and
    \begin{equation}\label{Eq:NormEstimateForFactorization}
   \|f_1\|_{A^{p_1}_\omega}^p\cdot\|f_2\|_{A^{p_2}_\omega}^p\le\frac{p}{p_1}\|f_1\|_{A^{p_1}_\omega}^{p_1}+\frac{p}{p_2}\|f_2\|_{A^{p_2}_\omega}^{p_2}\le C\|f\|_{A^p_\omega}^p
    \end{equation}
for some constant $C=C(p_1,p_2,\omega)>0$.
\end{theorem}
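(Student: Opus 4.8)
The plan is to carry out Horowitz's probabilistic construction, with Lemma~\ref{Lemma:factorization} playing the role of the decisive quantitative estimate, and then to remove the restriction to polynomials by means of the density hypothesis. First I would prove the statement for a polynomial $h\in\H(\D)$, $h\not\equiv0$. Let $\{z_1,\dots,z_N\}$ be the zeros of $h$ in $\D$, repeated according to multiplicity. Using $z-z_k=-(1-\overline{z_k}z)\vp_{z_k}(z)$ I would factor $h=H\cdot\prod_{k=1}^{N}\vp_{z_k}$, where $H$ is a polynomial without zeros in $\D$ (it absorbs the leading coefficient, the factors $1-\overline{z_k}z$, and any zeros of $h$ in $\overline{\D}\setminus\D$); since $\D$ is simply connected and $H$ omits $0$ there, $H^{t}$ is a well-defined analytic function on $\D$ for every $t>0$, with $|H^{t}|=|H|^{t}$. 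Put $\theta=p/p_1\in(0,1)$, so that $1-\theta=p/p_2$, let $X_1,\dots,X_N$ be independent Bernoulli variables with $\mathbb{P}(X_k=1)=\theta$, and for a realization $X=(X_k)$ set
\[
h_1^{X}=H^{\theta}\prod_{k\,:\,X_k=1}\vp_{z_k},\qquad
h_2^{X}=H^{1-\theta}\prod_{k\,:\,X_k=0}\vp_{z_k}.
\]
These are analytic on $\D$ and satisfy $h_1^{X}h_2^{X}=h$.

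The key point is that the expected norms of the factors are governed by Lemma~\ref{Lemma:factorization}. Since $p_1\theta=p$, one has $|h_1^{X}(z)|^{p_1}=|H(z)|^{p}\prod_{X_k=1}|\vp_{z_k}(z)|^{p_1}$; averaging over $X$, using independence and $|H(z)|^{p}=|h(z)|^{p}\prod_k|\vp_{z_k}(z)|^{-p}$, I would obtain
\[
\mathbb{E}\big[\,|h_1^{X}(z)|^{p_1}\,\big]
=|h(z)|^{p}\prod_{k}\frac{1-\theta+\theta|\vp_{z_k}(z)|^{p_1}}{|\vp_{z_k}(z)|^{p}},\qquad z\in\D,
\]
which is precisely the function $g$ of Lemma~\ref{Lemma:factorization} with $q=p_1$ (recall $\theta=p/q$). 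Integrating against $\omega\,dA$, interchanging the integral and the expectation by Fubini, and invoking the lemma gives $\mathbb{E}\,\|h_1^{X}\|_{A^{p_1}_\omega}^{p_1}\le C_1\|h\|_{A^p_\omega}^{p}$. The same computation with $X_k$ replaced by $1-X_k$ and $q=p_2$ (so that $p/q=1-\theta$ and $p_2(1-\theta)=p$) yields $\mathbb{E}\,\|h_2^{X}\|_{A^{p_2}_\omega}^{p_2}\le C_2\|h\|_{A^p_\omega}^{p}$. Therefore
\[
\mathbb{E}\Big[\tfrac{p}{p_1}\|h_1^{X}\|_{A^{p_1}_\omega}^{p_1}+\tfrac{p}{p_2}\|h_2^{X}\|_{A^{p_2}_\omega}^{p_2}\Big]\le C\|h\|_{A^p_\omega}^{p}
\]
with $C=\tfrac{p}{p_1}C_1+\tfrac{p}{p_2}C_2=C(p_1,p_2,\omega)$, so some realization produces a factorization $h=h_1h_2$, $h_1\in A^{p_1}_\omega$, $h_2\in A^{p_2}_\omega$, with $\tfrac{p}{p_1}\|h_1\|_{A^{p_1}_\omega}^{p_1}+\tfrac{p}{p_2}\|h_2\|_{A^{p_2}_\omega}^{p_2}\le C\|h\|_{A^p_\omega}^{p}$; this is the theorem for polynomials.

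Next I would pass to a general $f\in A^p_\omega$. By the density hypothesis choose polynomials $h_n\to f$ in $A^p_\omega$ and factor $h_n=h_{n,1}h_{n,2}$ as above, so that $\tfrac{p}{p_1}\|h_{n,1}\|_{A^{p_1}_\omega}^{p_1}+\tfrac{p}{p_2}\|h_{n,2}\|_{A^{p_2}_\omega}^{p_2}\le C\|h_n\|_{A^p_\omega}^{p}$ stays bounded. Since $\omega\in{\mathcal Inv}$, point evaluations are bounded on $A^{p_1}_\omega$ and on $A^{p_2}_\omega$ (subharmonicity of $|g|^{p_j}$ on a pseudohyperbolic disc $\Delta(z,r)$ together with $\omega\asymp\omega(z)$ there), so $\{h_{n,1}\}$ and $\{h_{n,2}\}$ are locally uniformly bounded; passing to a subsequence I would get $h_{n,1}\to f_1$ and $h_{n,2}\to f_2$ uniformly on compact subsets, and then $f_1f_2=f$ because $h_n\to f$ locally uniformly as well. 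By Fatou's lemma the bound survives in the limit, giving $\tfrac{p}{p_1}\|f_1\|_{A^{p_1}_\omega}^{p_1}+\tfrac{p}{p_2}\|f_2\|_{A^{p_2}_\omega}^{p_2}\le C\|f\|_{A^p_\omega}^{p}$, and the first inequality in \eqref{Eq:NormEstimateForFactorization} then follows from the weighted arithmetic--geometric mean inequality applied with the exponents $p/p_1$ and $p/p_2$, whose sum is $1$.

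The main obstacle is entirely absorbed into Lemma~\ref{Lemma:factorization}, namely the uniform bound $\|g\|_{L^1_\omega}\le C\|f\|_{A^p_\omega}^{p}$ together with the delicate dependence of $C$ on $p$ and $q$ (the $C_1qe^{C_1q}$ behaviour when $q>2$), which is what dictates the precise form of the random factors. Granting that lemma, everything else---the pointwise identity for $\mathbb{E}\,|h_j^{X}|^{p_j}$, the Fubini interchange, the elementary selection of a good realization, and the normal-families limit---is routine.
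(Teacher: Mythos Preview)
Your proposal is correct and follows essentially the same route as the paper: Horowitz's probabilistic splitting of the zero set, the expectation computation reducing to the function $g$ of Lemma~\ref{Lemma:factorization}, selection of a good realization, and then a normal-families limit via the density of polynomials. The only cosmetic differences are that the paper phrases the intermediate step for ``functions with finitely many zeros'' (writing $f=gB$ with $B$ a finite Blaschke product) rather than for polynomials, and obtains the local uniform boundedness from the integral-means estimate \eqref{20NEW} instead of your pseudohyperbolic sub-mean-value argument; both are equivalent here since $\omega\in\mathcal{Inv}$ forces $\omega>0$ on compact sets.
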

\begin{proof}
Let $0<p<\infty$ and $\omega\in{\mathcal Inv}$ such that the
polynomials are dense in $A^p_\om$, and let $f\in A^p_\omega$.
Assume first that $f$ has finitely many zeros only. Such functions
are of the form $f=gB$, where $g\in A^p_\omega$ has no zeros and
$B$ is a finite Blaschke product. Let $z_1,\ldots,z_m$ be the
zeros of $f$ so that $B=\prod_{k=1}^mB_k$, where
$B_k=\frac{z_k}{|z_k|}\vp_{z_k}$. Write $B= B^{(1)}\cdot B^{(2)}$,
where the factors $B^{(1)}$ and $B^{(2)}$ are random subproducts
of $B_0,B_1,\ldots,B_m$, where $B_0\equiv1$. Setting
$f_j=\left(\frac{f}{B}\right)^\frac{p}{p_j}B^{(j)}$, we have
$f=f_1\cdot f_2$. We now choose $B^{(j)}$ probabilistically. For a
given $j\in\{1,2\}$, the factor $B^{(j)}$ will contain each $B_k$
with the probability $p/p_j$. The obtained $m$ random variables
are independent, so the expected value of $|f_j(z)|^{p_j}$ is
    \begin{equation}\begin{split}\label{eq:esp2}
    E(|f_j(z)|^{p_j})&=\left|\frac{f(z)}{B(z)}\right|^p\prod_{k=1}^m
    \left(1-\frac{p}{p_j}+\frac{p}{p_j}|\vp_{z_k}(z)|^{p_j}\right)\\
    &=\left|f(z)\right|^p\prod_{k=1}^m
    \frac{\left(1-\frac{p}{p_j}\right)+\frac{p}{p_j}|\vp_{z_k}(z)|^{p_j}}{|\vp_{z_k}(z)|^p}
    \end{split}\end{equation}
for all $z\in\D$ and $j\in\{1,2\}$. Now, bearing in mind
\eqref{eq:esp2} and Lemma~\ref{Lemma:factorization}, we find a
constant $C_1=C_1(p, p_1,\omega)>0$ such that
   \begin{equation*}\begin{split}
   \left\|E\left(f_1^{p_1}\right)\right\|_{L^{1}_\omega}
    & =
    \int_\D\left[
   \left|f(z)\right|^p\prod_{k=1}^m
    \frac{\left(1-\frac{p}{p_1}\right)+\frac{p}{p_1}|\vp_{z_k}(z)|^{p_1}}{|\vp_{z_k}(z)|^p}\right]\,\om(z)dA(z)
    \\ & =
    \int_\D\left[
   \left|f(z)\right|^p\prod_{k=1}^m
    \frac{\frac{p}{p_2}+\left(1-\frac{p}{p_2}\right)|\vp_{z_k}(z)|^{p_1}}{|\vp_{z_k}(z)|^p}\right]\,\om(z)dA(z)\le  C_{1}\|f\|_{A^p_\omega}^p.
   \end{split}\end{equation*}
Analogously, by \eqref{eq:esp2} and
Lemma~\ref{Lemma:factorization} there exists a constant
$C_2=C_{2}(p,p_2,\om)>0$ such that
    \begin{equation*}\begin{split}
   \left\|E\left(f_2^{p_2}\right)\right\|_{L^{1}_\omega}
    & =
    \int_\D\left[
   \left|f(z)\right|^p\prod_{k=1}^m
    \frac{\left(1-\frac{p}{p_2}\right)+\frac{p}{p_2}|\vp_{z_k}(z)|^{p_2}}{|\vp_{z_k}(z)|^p}\right]\,\om(z)dA(z)\le
    C_2\|f\|_{A^p_\omega}^p.
      \end{split}\end{equation*}
By combining the two previous inequalities, we obtain
 \begin{equation}\begin{split}\label{eq:lfn3}
    \left\|E\left(\frac{p}{p_1}f_1^{p_1}\right)\right\|_{L^{1}_\omega}
    +\left\|E\left(\frac{p}{p_2}f_2^{p_2}\right)\right\|_{L^{1}_\omega}
    \le\left(\frac{p}{p_1}C_{1}+\frac{p}{p_2}C_{2}\right)\|f\|_{A^p_\omega}^p.
      \end{split}\end{equation}
On the other hand,
      \begin{equation}\begin{split}\label{eq:lfn4}
   &  \left\|E\left(\frac{p}{p_1}f_1^{p_1}\right)\right\|_{L^{1}_\omega}
   +\left\|E\left(\frac{p}{p_2}f_2^{p_2}\right)\right\|_{L^{1}_\omega}
   \\ & =\frac{p}{p_1}\int_\D \left|\frac{f(z)}{B(z)}\right|^p
   \prod_{k=1}^m
    \left(\frac{p}{p_2}+\left(1-\frac{p}{p_2}\right)|\vp_{z_k}(z)|^{p_1}\right)\,\om(z)dA(z)
    \\ &   \quad+\frac{p}{p_2}\int_\D \left|\frac{f(z)}{B(z)}\right|^p
   \prod_{k=1}^m
    \left(\left(1-\frac{p}{p_2}\right)+\frac{p}{p_2}|\vp_{z_k}(z)|^{p_2}\right)
   \,\om(z)dA(z)\\
   &=\int_\D I(z)\omega(z)\,dA(z),
      \end{split}\end{equation}
where
    \begin{equation*}\begin{split}
    I(z)=&\left|\frac{f(z)}{B(z)}\right|^p\Bigg[\frac{p}{p_1}\cdot\prod_{k=1}^m
    \left(\frac{p}{p_2}+\left(1-\frac{p}{p_2}\right)|\vp_{z_k}(z)|^{p_1}\right)
    \\
    & \quad +\frac{p}{p_2}\cdot\prod_{k=1}^m
    \left(\left(1-\frac{p}{p_2}\right)+\frac{p}{p_2}|\vp_{z_k}(z)|^{p_2}\right)\Bigg].
    \end{split}\end{equation*}
It is clear that the $m$ zeros of $f$ must be distributed to the
factors $f_1$ and $f_2$, so if $f_1$ has $n$ zeros, then $f_2$ has
the remaining $(m-n)$ zeros. Therefore
 \begin{equation}\label{idez}
    I(z)=\sum_{f_{l_1}\cdot f_{l_2}=f} \left(\left(1-\frac{p}{p_2}\right)^n\left(\frac{p}{p_2}\right)^{m-n}\left[\frac{p}{p_1}
|f_{l_1}(z)|^{p_1}+\frac{p}{p_2} |f_{l_2}(z)|^{p_2}
\right]\right).
 \end{equation}
This sum consists of $2^m$ addends, $f_{l_1}$ contains
$\left(\frac{f}{B}\right)^\frac{p}{p_1}$ and $n$ zeros of $f$, and
$f_{l_2}$ contains $\left(\frac{f}{B}\right)^\frac{p}{p_2}$ and
the remaining $(m-n)$ zeros of $f$, and thus $f=f_{l_1}\cdot
f_{l_2}$. Further, for a fixed $n=0,1,\ldots,m$, there are
$({m\atop n})$ ways to choose $f_{l_1}$ (once $f_{l_1}$ is chosen,
$f_{l_2}$ is determined). Consequently,
    \begin{equation}\begin{split}\label{eq:lfn5}
    \sum_{f_{l_1}\cdot f_{l_2}=f} \left(1-\frac{p}{p_2}\right)^n\left(\frac{p}{p_2}\right)^{m-n}
    =\sum_{n=0}^m\left({m\atop n}\right)\left(1-\frac{p}{p_2}\right)^n\left(\frac{p}{p_2}\right)^{m-n}=1.
    \end{split}\end{equation}
Now, by joining \eqref{eq:lfn3}, \eqref{eq:lfn4} and \eqref{idez},
we deduce
    \begin{equation*}\begin{split}
    &\sum_{f_{l_1}\cdot f_{l_2}=f}
    \left(1-\frac{p}{p_2}\right)^n\left(\frac{p}{p_2}\right)^{m-n}
    \left[\frac{p}{p_1}\|f_{l_1}\|_{A^{p_1}_\om}^{p_1}+\frac{p}{p_2}
    \|f_{l_2}\|_{A^{p_2}_\om}^{p_2} \right]\\
    &\quad\le\left(\frac{p}{p_1}C_{1}+\frac{p}{p_2}C_{2}\right)\|f\|_{A^p_\omega}^p.
    \end{split}\end{equation*}
This together with \eqref{eq:lfn5} shows that there must exist a
concrete factorization $f=f_1\cdot f_2$ such that
    \begin{equation}
    \begin{split}
    \label{eq:lfn6}
    \frac{p}{p_1}\|f_{1}\|_{A^{p_1}_\om}^{p_1}+\frac{p}{p_2}
    \|f_{2}\|_{A^{p_2}_\om}^{p_2} \le
    C(p_1,p_2,\om)\|f\|_{A^p_\omega}^p.
    \end{split}
    \end{equation}
By combining this with the inequality
     $$
     x^\alpha\cdot y^\beta\le \alpha x+\beta y,\quad x,y\ge 0,\quad \alpha+\beta=1,
     $$
we finally obtain \eqref{Eq:NormEstimateForFactorization} under
the hypotheses that $f$ has finitely many zeros only.

To deal with the general case, we first prove that every
norm-bounded family in $A^p_\omega$ is a normal family of analytic
functions. If $f\in A^p_\omega$, then
    \begin{equation}\label{Eq:RadialGrowthNEW}
    \begin{split}
    \|f\|_{A^p_\omega}^p&\ge\int_{D(0,\frac{1+\rho}{2})\setminus
    D(0,\rho)}|f(z)|^p\omega(z)\,dA(z)\\
    &\gtrsim
    M_p^p(\rho,f)\left(\min_{|z|\le\frac{1+\rho}{2}}\om(z)\right),\quad 0\le\rho<1,
    \end{split}
    \end{equation}
from which the well-known relation $M_\infty(r,f)\lesssim
M_p(\frac{1+r}{2},f)(1-r)^{-1/p}$ yields
    \begin{equation}\label{20NEW}
    M^p_\infty(r,f)
    \lesssim\frac{\|f\|_{A^p_\omega}^p}{(1-r)\left(\min_{|z|\le\frac{3+r}{4}}\om(z)\right)},\quad 0\le r<1.
    \end{equation}
Therefore every norm-bounded family in $A^p_\omega$ is a normal
family of analytic functions by Montel's theorem.

Finally, assume that $f\in A^p_\omega$ has infinitely many zeros.
Since polynomials are dense in $A^p_\omega$ by the assumption, we
can choose a sequence $f_l$ of functions with finitely many zeros
that converges to $f$ in norm, and then, by the previous argument,
we can factorize each $f_l=f_{l,1}\cdot f_{l,2}$ as earlier. Now,
since every norm-bounded family in~$A^p_\omega$ is a normal family
of analytic functions, by passing to subsequences of $\{f_{l,j}\}$
with respect to $l$ if necessary, we have $f_{l,j}\to f_j$, where
the functions $f_j$ form the desired bounded factorization $f=
f_1\cdot f_2$ satisfying \eqref{Eq:NormEstimateForFactorization}.
This finishes the proof.
\end{proof}

\par At first glance the next result might seem a bit artificial.
However, it turns out to be a key ingredient in the proof of Proposition~\ref{PropSmallIndeces} (below) where we get the
uniform boundedness of a certain family of integral operators, which  is usually established by using interpolation theorems.

\begin{corollary}\label{cor:FactorizationBergman}\index{factorization}
Let $0<p<2$ and $\omega\in{\mathcal Inv}$ such that the
polynomials are dense in $A^p_\om$.\index{${\mathcal
Inv}$}\index{invariant weight} Let $0<p_1\le2<p_2<\infty$ such
that $\frac1p=\frac1{p_1}+\frac1{p_2}$ and $p_2\ge2p$. If $f\in
A^p_\omega$, then there exist $f_1\in A^{p_1}_\omega$ and $f_2\in
A^{p_2}_\omega$ such that $f=f_1\cdot f_2$ and
    \begin{equation}\label{Eqco:NormEstimateForFactorization}
    \|f_1\|_{A^{p_1}_\omega}\cdot \|f_2\|_{A^{p_2}_\omega}\le C\|f\|_{A^p_\om}
    \end{equation}
for some constant $C=C(p_1,\om)>0$.
\end{corollary}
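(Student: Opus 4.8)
The plan is to re-run the probabilistic construction in the proof of Theorem~\ref{Thm:FactorizationBergman}, but to keep the estimates for the two factors separate instead of combining them as in~\eqref{eq:lfn6}. The point is that the constant furnished by Theorem~\ref{Thm:FactorizationBergman} involves the quantity $C_2$ of Lemma~\ref{Lemma:factorization}(ii), which grows like $p_2e^{C_1(1,\omega)p_2}$; invoking that theorem as a black box would therefore give a bound on $\|f_1\|_{A^{p_1}_\omega}\|f_2\|_{A^{p_2}_\omega}$ that blows up as $p_2\to\infty$. Taking the $p_2$-th root of the $f_2$-estimate \emph{before} multiplying the two factors is what absorbs this exponential.

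First I would reduce to the case in which $f$ has finitely many zeros; the passage to the general case is word for word the last part of the proof of Theorem~\ref{Thm:FactorizationBergman} (density of polynomials, the normal family estimate~\eqref{20NEW}, and Fatou's lemma), the norm-boundedness of $\{f_{l,j}\}$ in $A^{p_j}_\omega$ needed there being supplied by the one-sided bounds obtained below. For $f=gB$ with $B$ a finite Blaschke product I would choose the random subproducts $B^{(1)},B^{(2)}$ and set $f_j=(f/B)^{p/p_j}B^{(j)}$ exactly as there, so that~\eqref{eq:esp2} gives
$$
E\bigl(|f_j(z)|^{p_j}\bigr)=|f(z)|^p\prod_{k=1}^m\frac{\bigl(1-\tfrac{p}{p_j}\bigr)+\tfrac{p}{p_j}|\varphi_{z_k}(z)|^{p_j}}{|\varphi_{z_k}(z)|^p},\qquad z\in\D,\ j=1,2 .
$$
I would then apply Lemma~\ref{Lemma:factorization} with the exponent pair $(p,p_j)$. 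Since $\tfrac1{p_1}=\tfrac1p-\tfrac1{p_2}<\tfrac1p$ one has $p<p_1\le2$, so part~(i) gives $\|E(|f_1|^{p_1})\|_{L^1_\omega}\le C_1(\omega)\|f\|_{A^p_\omega}^p$; and $2<p_2<\infty$ with $p_2/p\ge2$ by hypothesis, so part~(ii) with $\epsilon=1$ gives $\|E(|f_2|^{p_2})\|_{L^1_\omega}\le C_1(1,\omega)\,p_2\,e^{C_1(1,\omega)p_2}\|f\|_{A^p_\omega}^p$.

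Assuming $f\not\equiv0$ and writing $A=C_1(\omega)\|f\|_{A^p_\omega}^p$ and $B=C_1(1,\omega)p_2e^{C_1(1,\omega)p_2}\|f\|_{A^p_\omega}^p$, Fubini's theorem gives $E(\|f_1\|_{A^{p_1}_\omega}^{p_1})\le A$ and $E(\|f_2\|_{A^{p_2}_\omega}^{p_2})\le B$, so $E\bigl(\tfrac1{2A}\|f_1\|_{A^{p_1}_\omega}^{p_1}+\tfrac1{2B}\|f_2\|_{A^{p_2}_\omega}^{p_2}\bigr)\le1$ and hence some realization $f=f_1\cdot f_2$ satisfies $\|f_1\|_{A^{p_1}_\omega}^{p_1}\le2A$ and $\|f_2\|_{A^{p_2}_\omega}^{p_2}\le2B$ simultaneously. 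Taking the $p_1$-th and $p_2$-th roots, multiplying, and using $\tfrac{p}{p_1}+\tfrac{p}{p_2}=1$, I would arrive at
$$
\|f_1\|_{A^{p_1}_\omega}\,\|f_2\|_{A^{p_2}_\omega}\le\bigl(2C_1(\omega)\bigr)^{1/p_1}\bigl(2C_1(1,\omega)p_2\bigr)^{1/p_2}e^{C_1(1,\omega)}\,\|f\|_{A^p_\omega}.
$$
Here $(2C_1(\omega))^{1/p_1}$ depends only on $p_1$ and $\omega$, and $t\mapsto(2C_1(1,\omega)t)^{1/t}$ is bounded on $[2,\infty)$ (it tends to $1$ as $t\to\infty$), so the remaining factor is bounded by a constant depending only on $\omega$; this yields~\eqref{Eqco:NormEstimateForFactorization} when $f$ has finitely many zeros, and then in general. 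The main obstacle is exactly this bookkeeping of constants: one must resist citing Theorem~\ref{Thm:FactorizationBergman} directly and instead re-extract the two one-sided estimates, so that the $p_2$-dependent exponential in Lemma~\ref{Lemma:factorization}(ii) is killed by the $p_2$-th root.
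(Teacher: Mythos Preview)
Your proposal is correct and follows essentially the same approach indicated by the paper: re-run the probabilistic construction of Theorem~\ref{Thm:FactorizationBergman}, but track the constants from Lemma~\ref{Lemma:factorization} separately for the two factors so that the $p_2$-dependent exponential from part~(ii) is killed by the $p_2$-th root. Your normalization trick (choosing a realization where $\tfrac{1}{2A}\|f_1\|^{p_1}+\tfrac{1}{2B}\|f_2\|^{p_2}\le1$ to obtain simultaneous one-sided bounds) is a clean way to organize this, slightly different in form from the Young-inequality combination in~\eqref{eq:lfn6} but achieving exactly the goal the paper alludes to.
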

\par It can be proved mimicking the proof of of Theorem~\ref{Thm:FactorizationBergman},
but  paying special attention to the constants coming
from Lemma~\ref{Lemma:factorization}, see \cite[Corollary~3.4]{PelRat} for details.
\par Before ending this section, let us observe that there are non-radial weights satisfying the hypotheses of our factorization result for $A^p_\om$.
\begin{lemma}
Let $f$ be a non-vanishing univalent function in $\D$, $0<\gamma<1$ and $\om=|f|^\g$. Then   the
polynomials are dense in $A^p_\om$ for all $p\ge 1$.
\end{lemma}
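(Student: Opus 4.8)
The plan is to linearise the weight. Since $f$ has no zeros in the simply connected disc $\D$, fix an analytic branch $h:=f^{\gamma/p}\in\H(\D)$; then $h$ is zero-free and $|h|^{p}=|f|^{\gamma}=\om$, so $\nm{gh}_{A^{p}}=\nm{g}_{A^{p}_{\om}}$ for every $g\in\H(\D)$, where $A^{p}:=A^{p}_{0}$ denotes the unweighted Bergman space. Because $1/h\in\H(\D)$, the map $g\mapsto gh$ is an isometric bijection of $A^{p}_{\om}$ onto $A^{p}$, carrying polynomials onto $\{hq:\ q\ \text{a polynomial}\}$. As the polynomials are dense in $A^{p}$, the claim is equivalent to saying that $h$ is a cyclic vector for the shift in $A^{p}$, i.e.\ the closed linear span of $\{h,zh,z^{2}h,\dots\}$ is all of $A^{p}$; equivalently, $1$ lies in the $A^{p}$-closure of $\{hq:\ q\ \text{a polynomial}\}$.

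Next I would record the properties of $h$ that force cyclicity. Both $f$ and $1/f$ are univalent in $\D$ (the latter is $f$ followed by a M\"obius map), hence each belongs to $\bigcap_{0<s<1/2}H^{s}$ and a fortiori to the Smirnov class $\mathcal N^{+}$; raising to the positive power $\gamma/p$ (legitimate, these functions being zero-free) gives $h\in\mathcal N^{+}$ and $1/h\in\mathcal N^{+}$, so $h$ is an outer function. Moreover, by the classical sharp bounds for the integral means of univalent functions, $\int_{\D}|f|^{s}\,dA<\infty$ and $\int_{\D}|1/f|^{s}\,dA<\infty$ for every $0<s<1$; taking $s=\gamma q/p$ yields $h,\ 1/h\in A^{q}$ for every $0<q<p/\gamma$, in particular $h\in A^{p}$ (which re-proves $\om\in L^{1}(\D)$). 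Finally, applying the Koebe distortion theorem and the Koebe $1/4$-theorem to $f$, together with $0\notin f(\D)$ (so $|f(z)|\ge\operatorname{dist}(f(z),\partial f(\D))\gtrsim(1-|z|^{2})|f'(z)|$), gives the pointwise lower bound $|f(z)|\gtrsim(1-|z|)^{2}$ and hence $|h(z)|\gtrsim(1-|z|)^{2\gamma/p}$ on $\D$.

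To conclude I would invoke the fact that an outer function in $A^{p}$ whose reciprocal lies in some Bergman space $A^{q}$ — equivalently, which is bounded below on $\D$ by a fixed positive power of $1-|z|$ — is cyclic in $A^{p}$. Applying this to $h$ finishes the proof, and unwinding the isometry $g\mapsto gh$ gives that the polynomials are dense in $A^{p}_{\om}$.

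The delicate point is the last step, and it is exactly there that univalence is used decisively. Being outer only rules out singular inner factors in the Hardy sense, and in the Bergman setting an outer function can still fail to be cyclic; it is the additional information that $1/f$ is itself univalent — so that $1/h$ lies in a Bergman space and $\om$ cannot decay at the boundary as fast as $|S|^{2}$ for a singular inner function $S$ — that forbids a nontrivial Bergman-space inner divisor of $h$. A purely soft argument (dilate $g$ and approximate the dilate by its Taylor polynomials) only settles the range $0<\gamma<1/2$, because for $\gamma\ge1/2$ neither $|f|^{s}$ nor $|1/f|^{s}$ is area-integrable for the exponents $s\ge1$ that such an argument would require; one genuinely needs the cyclicity input from Bergman-space function theory.
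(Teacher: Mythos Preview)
Your reduction is correct: writing $h=f^{\gamma/p}$ gives an isometric bijection $g\mapsto gh$ from $A^p_\om$ onto the unweighted $A^p$, so density of polynomials in $A^p_\om$ is equivalent to cyclicity of $h$ in $A^p$. Your verification that $h$ is outer and that $|h(z)|\gtrsim(1-|z|)^{2\gamma/p}$ via Koebe is also fine. (One small slip: ``$1/h\in A^q$'' and ``$|h|\gtrsim(1-|z|)^\alpha$'' are not equivalent --- the pointwise lower bound is strictly stronger --- but you have the stronger one, so no harm.)

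The paper, however, takes a much shorter path and avoids the cyclicity machinery entirely. It simply observes that since $f$ and $1/f$ are both univalent, they lie in $A^s$ for every $0<s<1$; choosing $\delta>0$ with $\gamma(1+\delta)<1$ gives $\om=|f|^\gamma\in L^{1+\delta}$ and $1/\om=|1/f|^\gamma\in L^{1+\delta}$, and then Hedberg's theorem (polynomials are dense in $A^p_\om$ whenever $\om,\,1/\om\in L^{1+\delta}$) finishes the proof in one line. No distortion estimates, no outer--inner discussion, no cyclicity criterion.

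Notice that the two approaches are more closely related than they first appear: pulled back through your isometry, Hedberg's theorem becomes precisely a cyclicity statement --- $h$ is cyclic in $A^p$ whenever $h,\,1/h\in A^{p(1+\delta)}$ --- and you have already checked this hypothesis (indeed $h,\,1/h\in A^q$ for all $q<p/\gamma$). So the black box you actually need is Hedberg's, which is classical (1968) and whose hypothesis you have verified more easily than the Koebe lower bound. The Bergman-space cyclicity criterion you invoke (``outer with polynomial lower bound implies cyclic'') is true, but it is a less elementary input than Hedberg and you do not cite it precisely; the paper's route bypasses this entirely.
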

\begin{proof}
Since $f$ is univalent and zero-free, so is $1/f$, and hence both
$f$ and $1/f$ belong to $A^p$ for all $0<p<1$. By choosing
$\delta>0$ such that $\gamma(1+\delta)<1$ we deduce that both
$\om$ and $\frac{1}{\om}$ belong to $L^{1+\delta}$. Therefore the
polynomials are dense in~$A^p_\om$ by \cite[Theorem~2]{Hedberg}.
\end{proof}
Finally, let us consider the class of weights that appears in a
paper by Abkar~\cite{Abkar1} concerning norm approximation by polynomials in  weighted Bergman spaces. A~function $u$ defined on $\D$ is said to be
\emph{superbiharmonic} if $\Delta^2u\ge
0$, where $\Delta$ stands for the Laplace
operator
    $$
    \Delta=\Delta_z=\frac{\partial^2}{\partial z\partial \overline{z}}=\frac{1}{4} \left(\frac{\partial^2}{\partial^2 x}
         +\frac{\partial^2}{\partial^2 y}
         \right)
    $$
in the complex plane $\mathbb{C}$. The superbihamonic weights play
an essential role in the study of invariant subspaces of the
Bergman space $A^p$.

\begin{lettertheorem}\label{th:Abkar}
Let $\om$ be a superbiharmonic weight such that
    \begin{equation}\label{Abkarii}
    \lim_{r\to 1^-}\int_{\T}\om(r\zeta)\,dm(\zeta)=0.
    \end{equation}
Then the polynomials are dense in~$A^p_\om$.
\end{lettertheorem}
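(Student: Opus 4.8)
The plan is to reduce the statement to the norm-convergence of dilations, which follows from the contractivity of the dilation operator on $A^p_\om$; superbiharmonicity of $\om$ and the boundary condition \eqref{Abkarii} enter only in the proof of that contractivity. For $f\in A^p_\om$ and $0<r<1$ put $f_r(z)=f(rz)$. Since $f\in\H(\D)$, the Cauchy estimates give $\sum_n\sabs{a_n}r^n<\infty$ for its Taylor coefficients, so the Taylor polynomials of $f_r=\sum_n a_nr^nz^n$ converge to $f_r$ uniformly on $\overline{\D}$, hence in $A^p_\om$ because $\om\in L^1(\D)$. Thus it suffices to show $f_r\to f$ in $A^p_\om$ as $r\to1^-$. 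Since $f_r\to f$ locally uniformly in $\D$, we have $\sabs{f_r}^p\om\to\sabs{f}^p\om$ a.e., so $\nm{f}_{A^p_\om}^p\le\liminf_{r\to1^-}\nm{f_r}_{A^p_\om}^p$ by Fatou's lemma; and if the \emph{dilation estimate}
\[
\int_\D\sabs{f(rz)}^p\om(z)\,dA(z)\le\int_\D\sabs{f(z)}^p\om(z)\,dA(z),\qquad 0<r<1,
\]
holds, then $\nm{f_r}_{A^p_\om}\to\nm{f}_{A^p_\om}$, and applying Fatou's lemma to the nonnegative functions $C(\sabs{f_r}^p+\sabs{f}^p)-\sabs{f_r-f}^p$, with $C=\max\{1,2^{p-1}\}$, yields $\nm{f_r-f}_{A^p_\om}\to0$ (the $L^p$-form of Scheffé's lemma, valid for every $0<p<\infty$). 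So the theorem reduces to the dilation estimate; writing $u=\sabs{f}^p$, a nonnegative subharmonic function, it reads $\int_\D u(rz)\om(z)\,dA(z)\le\int_\D u(z)\om(z)\,dA(z)$.

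For a radial weight this estimate is immediate: in polar coordinates it is $\int_0^1 M_u(r\rho)\om(\rho)\rho\,d\rho\le\int_0^1 M_u(\rho)\om(\rho)\rho\,d\rho$, where the circular mean $M_u(t)=\tfrac1{2\pi}\int_0^{2\pi}u(te^{i\t})\,d\t$ of the subharmonic function $u$ is non-decreasing in $t$, so $M_u(r\rho)\le M_u(\rho)$. The point of the hypotheses on $\om$ is to force the non-radial case to behave like this radial one, and this is where I expect the real work to be. I would use the potential-theoretic representation of a nonnegative superbiharmonic weight on $\D$,
\[
\om=G[\mu]+h,\qquad \mu=\Delta^2\om\ge0,
\]
where $G$ is the biharmonic Green function of $\D$ and $h$ is biharmonic with $h\ge0$: Boggio's explicit formula gives $G(z,w)\asymp\sabs{1-\overline{w}z}^2\,\psi\bigl(\varrho(z,w)\bigr)$ with $\psi\ge0$, so $G\ge0$, while $G(\cdot,w)$ vanishes on $\T$; then \eqref{Abkarii} forces the circular mean of $h$ to tend to $0$ at the boundary, which — since $\Delta h$ is harmonic — pins it to $h(0)(1-r^2)$, the radial profile of the standard weight $1-\sabs{z}^2$. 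By Tonelli,
\[
\int_\D u(rz)\om(z)\,dA(z)=\int_\D\Bigl(\int_\D u(rz)\,G(z,w)\,dA(z)\Bigr)d\mu(w)+\int_\D u(rz)\,h(z)\,dA(z),
\]
so the dilation estimate reduces to the same inequality for each weight $G(\cdot,w)$ and for $h$.

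The crux — and the step I expect to be the main obstacle — is this last reduction: proving $\int_\D u(rz)\,G(z,w)\,dA(z)\le\int_\D u(z)\,G(z,w)\,dA(z)$ for each $w\in\D$, together with the analogue for the non-radial biharmonic weight $h$. Here one has to upgrade the one-dimensional monotonicity of the circular means of the subharmonic function $u$ to a genuinely two-dimensional estimate against an explicit but non-radial kernel; this is the technical heart of Abkar's argument, and it is carried out by combining the sub-mean-value property of $u$ with the precise form and boundary behaviour of $G(\cdot,w)$ (equivalently, by first establishing a weighted sub-mean-value inequality for superbiharmonic functions and then specializing). Granting it, integration against $\mu$ and addition of the $h$-term give the dilation estimate, hence $\nm{f_r}_{A^p_\om}\to\nm{f}_{A^p_\om}$, hence $f_r\to f$ in $A^p_\om$, and therefore the polynomials are dense in $A^p_\om$.
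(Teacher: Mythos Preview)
This is a cited result, and the paper records only the mechanism of Abkar's proof: establish the \emph{pointwise} comparison $\om(z)\le C(\om)\,\om(rz)$ for $r_0\le r<1$ (this is \eqref{conditionrz}, stated immediately after the theorem), from which density follows by a routine uniform-integrability argument. Your route is different --- you aim for the exact integral contractivity $\int_\D u(rz)\,\om(z)\,dA\le\int_\D u(z)\,\om(z)\,dA$ for subharmonic $u=|f|^p$ and then invoke a Scheff\'e-type lemma --- and it has a genuine gap at the very point you yourself flag: you ``grant'' the dilation estimate for each kernel $G(\cdot,w)$ rather than prove it, and that step is the entire analytic content of the theorem. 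Your treatment of the biharmonic remainder $h$ is also not sufficient: writing $h=h_0+(1-|z|^2)h_1$ with $h_0,h_1$ harmonic, the boundary condition \eqref{Abkarii} forces only $h_0(0)=0$, so the \emph{circular means} of $h$ equal $h(0)(1-\rho^2)$, but $h$ itself need not be radial. Since $u=|f|^p$ is not radial either, knowing the radial profile of $h$ does not let you compare $\int_\D u(rz)\,h(z)\,dA$ with $\int_\D u(z)\,h(z)\,dA$; you would still need a genuine two-variable estimate, which you have not supplied. (Nor is $h\ge0$ obvious from $\om\ge0$ and $G[\mu]\ge0$.)

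Abkar's pointwise route bypasses all of this: one shows $\om(z)\le C\om(rz)$ directly from the explicit biharmonic Green function and the associated boundary kernels (this is where $\Delta^2\om\ge0$ and \eqref{Abkarii} enter), and then the change of variable $w=rz$ gives $\int_{|z|>R}|f_r(z)|^p\om(z)\,dA(z)\le Cr^{-2}\int_{|w|>r_0R}|f(w)|^p\om(w)\,dA(w)$ uniformly in $r\ge r_0$, so the tails of $|f_r|^p\om$ are uniformly small and $f_r\to f$ in $A^p_\om$ follows by dominated convergence on compacta plus this tail estimate, with no need for exact contractivity or Scheff\'e.
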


The proof of Theorem~\ref{th:Abkar} relies on showing that these
type of weights $\om$ satisfy \begin{equation}\label{conditionrz}
    \om(z)\le C(\om)\om(rz),\quad r_0\le r<1,\quad r_0\in (0,1),
    \end{equation}
which asserts that polynomials are dense on $A^p_\om$.
In \cite[Lemma $1.11$]{PelRat} it is proved the following.

\begin{lemma}
Every superbihamonic weight that satisfies $\lim_{r\to 1^-}\int_{\T}\om(r\zeta)\,dm(\zeta)=0$, is
invariant and the polynomials are dense in~$A^p_\om$.
\end{lemma}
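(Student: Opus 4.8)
The statement has two parts, of which the density of polynomials is immediate and the invariance carries all the weight. The hypotheses on $\om$ — superbiharmonic and $\lim_{r\to1^-}\int_{\T}\om(r\zeta)\,dm(\zeta)=0$ — are precisely those of Theorem~\ref{th:Abkar}, so the polynomials are dense in $A^p_\om$ without further ado; if one prefers a self-contained argument, it suffices to reprove that such weights obey \eqref{conditionrz} and then to note that \eqref{conditionrz} makes the dilations $f_r(z)=f(rz)$ uniformly bounded on $A^p_\om$ for $r$ near $1$, with $f_r\to f$ in $A^p_\om$.

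For the invariance, recall that $\om\in{\mathcal Inv}$ means that $\om$ is zero-free and that, for each $\rho\in(0,1)$, one has $\om(z)\asymp\om(a)$ whenever $z\in\Delta(a,\rho)$, with comparison constants depending only on $\rho$. The plan is to feed in Abkar's integral representation of positive superbiharmonic functions \cite{Abkar1}: under the present hypotheses there are a positive Borel measure $\mu$ on $\D$ and a positive Borel measure $\nu$ on $\T$ with
$$
\om(z)=\int_\D\Gamma(z,w)\,d\mu(w)+\int_\T\frac{(1-|z|^2)^2}{|1-\overline{\zeta}z|^2}\,d\nu(\zeta),\qquad z\in\D,
$$
where $\Gamma(z,w)=|z-w|^2\log\left|\frac{1-\overline{w}z}{z-w}\right|^2+(1-|z|^2)(1-|w|^2)$ is the biharmonic Green function of $\D$. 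Since $\Gamma>0$ on $\D\times\D$, the boundary kernel is positive, and $\om\not\equiv 0$, this representation already forces $\om>0$ on $\D$, so $\om$ has no zeros.

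The key elementary fact is that both kernels are comparable, with \emph{absolute} constants, to transparent quantities. Writing $t=\frac{(1-|z|^2)(1-|w|^2)}{|z-w|^2}$ and using $|1-\overline{w}z|^2=|z-w|^2+(1-|z|^2)(1-|w|^2)$, one finds $\Gamma(z,w)=(1-|z|^2)(1-|w|^2)\bigl(1+t^{-1}\log(1+t)\bigr)$, and since $0<t^{-1}\log(1+t)\le 1$,
$$
(1-|z|^2)(1-|w|^2)\le\Gamma(z,w)\le 2(1-|z|^2)(1-|w|^2),\qquad z,w\in\D .
$$
Now fix $\rho\in(0,1)$ and $z\in\Delta(a,\rho)$. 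The standard pseudohyperbolic estimates give $1-|z|^2\asymp1-|a|^2$ and $|1-\overline{\zeta}z|\asymp|1-\overline{\zeta}a|$ for all $\zeta\in\overline{\D}$, with constants depending only on $\rho$; hence $\Gamma(z,w)\asymp(1-|a|^2)(1-|w|^2)\asymp\Gamma(a,w)$ uniformly in $w\in\D$, and $\frac{(1-|z|^2)^2}{|1-\overline{\zeta}z|^2}\asymp\frac{(1-|a|^2)^2}{|1-\overline{\zeta}a|^2}$ uniformly in $\zeta\in\T$. Integrating these two comparisons against $d\mu(w)$ and $d\nu(\zeta)$ respectively and summing gives $\om(z)\asymp\om(a)$ with constants depending only on $\rho$, i.e.\ $\om\in{\mathcal Inv}$.

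The main obstacle — the one point that needs genuine care rather than bookkeeping — is the integral representation itself: one must check that Abkar's theorem applies (nonnegativity and the boundary mean decay are assumed, and a weight lies in $L^1(\D,dA)$, so that $\Delta^2\om$ makes sense as a positive measure) and that no extra boundary term appears, which is consistent with the hypothesis since $\int_\T\Gamma(r\zeta,w)\,dm(\zeta)\le 2(1-r^2)(1-|w|^2)\to 0$ for each fixed $w\in\D$, while $\int_\T\frac{(1-r^2)^2}{|1-\overline{\zeta_0}\,r\zeta|^2}\,dm(\zeta)=1-r^2\to0$ for each fixed $\zeta_0\in\T$. Everything after the representation is the elementary computation carried out above.
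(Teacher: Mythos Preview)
The survey does not actually prove this lemma; it merely records it and cites \cite[Lemma~1.11]{PelRat} for the proof. Your argument is correct: the density part is Theorem~\ref{th:Abkar} verbatim, and for invariance you invoke the integral representation for nonnegative superbiharmonic weights with vanishing boundary means (from \cite{Abkar1}), then reduce everything to the two-sided estimate $\Gamma(z,w)\asymp(1-|z|^2)(1-|w|^2)$ and the standard pseudohyperbolic stability of $1-|z|^2$ and $|1-\overline{\zeta}z|$. Your computation $\Gamma(z,w)=(1-|z|^2)(1-|w|^2)\bigl(1+t^{-1}\log(1+t)\bigr)$ with $t=\frac{(1-|z|^2)(1-|w|^2)}{|z-w|^2}$ is clean and gives the sharp constants $1$ and $2$.

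This is precisely the route taken in \cite[Lemma~1.11]{PelRat}: that proof also rests on the biharmonic Green representation from Abkar's work and the same kernel comparisons. One small remark: for the estimate $|1-\overline{\zeta}z|\asymp|1-\overline{\zeta}a|$ on $\Delta(a,\rho)$ with $\zeta\in\T$, the quickest justification (valid for all $\rho\in(0,1)$, not just small $\rho$) is to write $z=\vp_a(u)$ with $|u|<\rho$ and compute $\frac{1-\overline{\zeta}z}{1-\overline{\zeta}a}=\frac{1-u\,\overline{(\bar a-\bar\zeta)/(1-\overline{\zeta}a)}}{1-\overline{a}u}$, noting that the conjugated quotient has modulus~$1$ when $|\zeta|=1$; this yields the explicit bounds $\frac{1-\rho}{1+\rho}$ and $\frac{1+\rho}{1-\rho}$.
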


\section{Zero sets}\label{sec:zeros}
For a given space $X$ of analytic functions in $\D$, a sequence
$\{z_k\}$ is called an $X$-zero set, if there exists a
function $f$ in $X$ such that $f$ vanishes precisely on the points
$\{z_k\}$ and nowhere else.  A sequence $\{z_k\}$  is a $H^p$-zero set if and only if   satisfies the Blaschke condition
$\sum_k(1-|z_k|)<\infty$.
\subsection{The Bergman-Nevanlinna class}
\par Using Lemma~\ref{Lemma:weights-in-D-hat}, Jensen's formula and the elementary factors from the classical Weierstrass factorization for the theory of entire functions, it can be proved
the following \cite[Proposition $3.16$]{PelRat}. The weighted Bergman-Nevanlinna class
  consists of those analytic functions in $\D$ for
which
    $$
    \int_\D\log^+|f(z)|\om(z)\,dA(z)<\infty.
    $$
\begin{theorem}
Let $\om\in\DD$. Then $\{z_k\}$ is a zero set of the Bergman-Nevanlinna class
 if
and only if
    \begin{equation*}\label{70}
    \sum_k\left[(1-|z_k|)\widehat{\om}(z_k)\right]=\sum_k\left[(1-|z_k|)\int_{|z_k|}^1\om(s)\,ds\right]<\infty.
    \end{equation*}
\end{theorem}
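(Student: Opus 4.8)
\emph{Strategy.} The plan is to prove necessity via Jensen's formula and sufficiency by exhibiting an explicit Weierstrass-type product, in each case using Lemma~\ref{Lemma:weights-in-D-hat} to trade $\widehat{\om}$ for the smoother quantities $\om^\star$ and $\int_\D|1-\overline{\z}z|^{-(\lambda+1)}\om(z)\,dA(z)$. For necessity, let $f\not\equiv0$ lie in the Bergman--Nevanlinna class with zero set $\{z_k\}$; writing $f(z)=z^\ell g(z)$ with $g(0)\neq0$ we may assume $\ell=0$, since removing $z^\ell$ changes $\int_\D\log^+|f|\,\om\,dA$ by at most the finite amount $\ell\int_\D\log\tfrac1{|z|}\,\om(z)\,dA(z)$ and adds the finite term $\ell\widehat{\om}(0)$ to the series. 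For $0<r<1$ avoiding the moduli $|z_k|$, Jensen's formula gives $\sum_{|z_k|<r}\log\frac{r}{|z_k|}\le\frac1{2\pi}\int_0^{2\pi}\log^+|f(re^{it})|\,dt-\log|f(0)|$. Multiplying by $2r\om(r)$, integrating over $r\in(0,1)$, applying Tonelli on the left and using $\int_{|z_k|}^1 2r\log\frac{r}{|z_k|}\om(r)\,dr=2\om^\star(z_k)$ (with $\om^\star$ as in Theorem~\ref{ThmLittlewood-Paley}), we get $2\sum_k\om^\star(z_k)\le\int_\D\log^+|f|\,\om\,dA-\om(\D)\log|f(0)|<\infty$. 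Since $\om\in\DD$, Lemma~\ref{Lemma:weights-in-D-hat}(viii) yields $\om^\star(z)\asymp(1-|z|)\widehat{\om}(z)$ for $|z|\ge\frac12$, and as only finitely many $z_k$ lie in $\{|z|\le\frac12\}$ we conclude $\sum_k(1-|z_k|)\widehat{\om}(z_k)<\infty$.

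\emph{Sufficiency: the product.} Now assume $\sum_k(1-|z_k|)\widehat{\om}(z_k)<\infty$, with $z_k\neq0$ after the above reduction. Let $b_a(z)=\frac{\overline{a}}{|a|}\cdot\frac{a-z}{1-\overline{a}z}$ (normalised so $b_a(0)=|a|$); the identity $1-b_a(z)=\frac{(1-|a|)(|a|+\overline{a}z)}{|a|(1-\overline{a}z)}$ gives $|1-b_a(z)|\le2$ on $\D$ and $|1-b_a(z)|\le\frac{2(1-|a|)}{|1-\overline{a}z|}$. Let $E_N(w)=(1-w)\exp\!\big(w+\tfrac{w^2}{2}+\dots+\tfrac{w^N}{N}\big)$ be the classical Weierstrass elementary factor, so $|1-E_N(w)|\le|w|^{N+1}$ for $|w|\le1$, and fix an integer $N=N(\om)$ larger than the exponent $\b(\om)$ of Lemma~\ref{Lemma:weights-in-D-hat}(ii) and than the admissible exponent in Lemma~\ref{testfunctions1} (i.e. in Lemma~\ref{Lemma:weights-in-D-hat}(vii)). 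Put $w_k(z)=1-b_{z_k}(z)$ and $f(z)=\prod_kE_N(w_k(z))$; each factor has exactly one zero, simple, at $z_k$. On a compact $K\subset\D$ one has $|1-\overline{z_k}z|\ge1-\max_K|z|>0$, so $|w_k(z)|\le1$ for all but finitely many $k$ and then $|1-E_N(w_k(z))|\lesssim_K(1-|z_k|)^{N+1}$; since Lemma~\ref{Lemma:weights-in-D-hat}(ii) (with $r=0$) gives $\widehat{\om}(t)\gtrsim(1-t)^{\b}$ and $N\ge\b$, we get $\sum_k(1-|z_k|)^{N+1}\lesssim\sum_k(1-|z_k|)\widehat{\om}(z_k)<\infty$; hence the product converges locally uniformly and $f\in\H(\D)$ vanishes precisely on $\{z_k\}$.

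\emph{Sufficiency: the estimate.} From $\log^+|f(z)|\le\sum_k\log^+|E_N(w_k(z))|$, fix $z$ and split the indices into those with $|w_k(z)|\le1$, for which $\log^+|E_N(w_k(z))|\le\big|\,|E_N(w_k(z))|-1\,\big|\le|w_k(z)|^{N+1}$, and those with $|w_k(z)|>1$, for which $\log^+|E_N(w_k(z))|\le\sum_{j=1}^N\tfrac{2^j}{j}=:A_N$ (using $|b_{z_k}(z)|\le1$ and $|w_k(z)|\le2$); moreover $|w_k(z)|>1$ forces $|1-\overline{z_k}z|<2(1-|z_k|)$, a set of $\om$-measure $\lesssim(1-|z_k|)\widehat{\om}(z_k)$ uniformly in $k$ (by radiality, doubling, and \eqref{cero}). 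Therefore
\[
\int_\D\log^+|f(z)|\,\om(z)\,dA(z)\lesssim\sum_k\int_\D|w_k(z)|^{N+1}\om(z)\,dA(z)+A_N\sum_k(1-|z_k|)\widehat{\om}(z_k).
\]
Using $|w_k(z)|\le2(1-|z_k|)/|1-\overline{z_k}z|$ and $\int_\D|1-\overline{z_k}z|^{-(N+1)}\om(z)\,dA(z)\asymp\widehat{\om}(z_k)(1-|z_k|)^{-N}$ --- the comparison underlying Lemma~\ref{testfunctions1} and Lemma~\ref{Lemma:weights-in-D-hat}(vii), valid for the exponent $N+1$ by the choice of $N$ --- the first sum is $\lesssim\sum_k(1-|z_k|)\widehat{\om}(z_k)<\infty$. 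Hence $f$ lies in the Bergman--Nevanlinna class, which completes the proof.

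\emph{Main obstacle.} Necessity is routine once Lemma~\ref{Lemma:weights-in-D-hat}(viii) is available. The hard part will be the sufficiency: one has to pick a \emph{single} genus $N$, independent of $k$, for which the Weierstrass product converges locally uniformly \emph{and} $\int_\D\log^+|f|\,\om\,dA$ is dominated by $\sum_k(1-|z_k|)\widehat{\om}(z_k)$. Both demands are calibrated precisely by the equivalences of Lemma~\ref{Lemma:weights-in-D-hat} --- above all the power estimate $\int_\D|1-\overline{\z}z|^{-(N+1)}\om\,dA\asymp\widehat{\om}(\z)(1-|\z|)^{-N}$ of part (vii) --- and this is exactly where the doubling hypothesis $\om\in\DD$ is indispensable: for a weight outside $\DD$ that integral average bears no fixed power-law relation to $\widehat{\om}(\z)$, and the construction breaks down.
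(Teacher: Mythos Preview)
Your proof is correct and follows essentially the same approach the paper indicates: Jensen's formula for necessity and Weierstrass elementary factors for sufficiency, with Lemma~\ref{Lemma:weights-in-D-hat} supplying the weight estimates (part~(viii) for $\om^\star\asymp(1-|z|)\widehat{\om}$ in the necessity, and part~(vii) together with Lemma~\ref{testfunctions1} for the kernel bound $\int_\D|1-\overline{z_k}z|^{-(N+1)}\om\,dA\asymp\widehat{\om}(z_k)(1-|z_k|)^{-N}$ in the sufficiency). This matches the strategy the paper attributes to \cite[Proposition~3.16]{PelRat}.
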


As far as we know, it is still an open
problem to find a complete description of zero sets of functions
in the Bergman spaces $A^p=A^p_0$, but the gap between the known
necessary and sufficient conditions is very small.  We refer
to~\cite[Chapter~4]{DurSchus}, \cite[Chapter~4]{HKZ} and \cite{Kor,Lzeros96,S1,S2}.
The analogous question is also unsolved for classical Dirichlet spaces $\mathcal{D}^2_\alpha$, $0\le \alpha<1$, of $f\in \H(\D)$ such that
$$\|f\|^2_{\mathcal{D}^2_\alpha}=|f(0)|^2+\int_\D|f'(z)|^2(1-|z|)^\a\,dA(z)<\infty.$$
The most important results are the ones given by
Carleson in \cite{Ctesis}, \cite{C}, and by Shapiro and Shields in
\cite{SS}. Some progress was achieved in \cite{PPzeros}.
\subsection{$A^p_\om$ zeros sets}
\par Our
results on zeros set of $A^p_\om$ follow the line of those due to
Horowitz~\cite{Horzeros,Horzeros1,Horzeros2}. Roughly speaking we
will study basic properties of unions, subsets and the dependence
on $p$ of the zero sets of functions in $A^p_\om$.  By using ideas and estimates obtained in the
proof of Theorem \ref{Thm:FactorizationBergman} we get our first result in this section, see \cite[Theorem $3.5$]{PelRat}.

\begin{theorem}
Let $0<p<\infty$ and $\omega\in{\mathcal Inv}$. Let $\{z_k\}$ be an arbitrary
subset of the zero set of $f\in A^p_\omega$, and let
    $$
    H(z)=\prod_{k}B_{k}(z)(2-B_{k}(z)),\quad
    B_k=\frac{z_k}{|z_k|}\vp_{z_k},
    $$
with the convention $z_k/|z_k|=1$ if $z_k=0$. Then there exists a
constant $C=C(\omega)>0$ such that $\|f/H\|_{A^p_\omega}^p\le
C\|f\|_{A^p_\omega}^p$. In particular, each subset of an
$A^p_\omega$-zero set is an $A^p_\omega$-zero set.
\end{theorem}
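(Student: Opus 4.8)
The plan is to prove the quantitative inequality $\|f/H\|_{A^p_\omega}^p\le C\|f\|_{A^p_\omega}^p$, from which the final assertion is immediate: each factor $2-B_k$ is zero-free in $\D$ (since $|B_k|<1<2$), so $H$ vanishes in $\D$ precisely at the points $z_k$ with their multiplicities, $f/H$ is analytic, and its zero set is exactly the set of zeros of $f$ lying outside $\{z_k\}$. Hence, given an $A^p_\omega$-zero set $Z$ and a subset $Z'\subset Z$, one picks $f\in A^p_\omega$ with zero set $Z$, applies the estimate to the subset $\{z_k\}:=Z\setminus Z'$, and obtains a function $f/H\in A^p_\omega$ whose zero set is exactly $Z'$.

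First I would reduce to a finite set $\{z_1,\dots,z_m\}$. If $\{z_k\}$ is infinite, fix an enumeration, set $H_N=\prod_{k=1}^N B_k(2-B_k)$, and note $f/H_N$ is analytic in $\D$. Once the estimate is available for finite sets with a constant $C$ independent of $m$ and of the location of the points, the family $\{f/H_N\}_N$ is norm-bounded in $A^p_\omega$, hence a normal family by the growth estimate \eqref{20NEW} obtained in the proof of Theorem~\ref{Thm:FactorizationBergman} together with Montel's theorem; a locally uniformly convergent subsequence has a limit, which plays the role of $f/H$, and Fatou's lemma gives $\|f/H\|_{A^p_\omega}^p\le\liminf_N\|f/H_N\|_{A^p_\omega}^p\le C\|f\|_{A^p_\omega}^p$. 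No density hypothesis enters here, in contrast with Theorem~\ref{Thm:FactorizationBergman}, because we truncate the zero set rather than approximating the function.

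For a finite set the estimate rests on the pointwise identity
\[
\bigl|f(z)/H(z)\bigr|^p=|f(z)|^p\prod_{k=1}^m\frac{1}{|\varphi_{z_k}(z)|^p\,|2-B_k(z)|^p},\qquad z\in\D\setminus\{z_k\},
\]
together with the elementary bound $|2-B_k(z)|\ge 2-|B_k(z)|=2-|\varphi_{z_k}(z)|$. Writing $t_k=|\varphi_{z_k}(z)|\in[0,1)$ and choosing $q=q(p)>p$ so large that the one-variable inequality $(2-t)^p\bigl(1-\frac pq(1-t^q)\bigr)\ge 1$ holds on $[0,1]$ (a routine calculus estimate), one obtains a \emph{factor-by-factor} comparison with comparison constant exactly $1$,
\[
\prod_{k=1}^m\frac{1}{(2-t_k)^p}\le\prod_{k=1}^m\Bigl(1-\frac pq+\frac pq\,t_k^{\,q}\Bigr),
\]
so that $\bigl|f/H\bigr|^p\le g$ pointwise, where $g$ is the function of Lemma~\ref{Lemma:factorization} for the pair $p<q$. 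That lemma gives $\|g\|_{L^1_\omega}\le C\|f\|_{A^p_\omega}^p$, which completes the proof. The crux — and the step I expect to be the main obstacle — is that the comparison constant must be exactly $1$ per factor, for otherwise the product of $m$ factors deteriorates as $m\to\infty$; this forces $q$ to stay bounded away from $p$, and the delicate bookkeeping is then to apply Lemma~\ref{Lemma:factorization} with a permissible $q$ and to control the resulting constant through its two regimes, parts (i) and (ii).

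Finally, I would note an alternative to the bound $|2-B_k|\ge 2-|B_k|$, closer to the probabilistic method behind Theorem~\ref{Thm:FactorizationBergman}: expand $(2-B_k)^{-1}=\sum_{n\ge0}2^{-n-1}B_k^{\,n}$, read the resulting product as an average over independent geometric random variables, and apply Jensen's inequality when $p\ge1$; this replaces $2-t_k$ by $2-t_k^{\,p}$ and permits the choice $q=3p$, for which the sharp inequality is the algebraic identity $(2-u)(2+u^3)-3=(1-u)^3(1+u)\ge0$ on $[0,1]$.
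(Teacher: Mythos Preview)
Your approach is correct and is precisely the one the paper has in mind: the paper does not give a self-contained proof but refers to \cite[Theorem~3.5]{PelRat} and says it follows ``by using ideas and estimates obtained in the proof of Theorem~\ref{Thm:FactorizationBergman}'', which is exactly your reduction to the pointwise comparison $|f/H|^p\le g$ via a one-variable inequality for $(2-t)^{-p}$ and an application of Lemma~\ref{Lemma:factorization}. Your handling of the passage from finite to infinite $\{z_k\}$ via normal families (using \eqref{20NEW}) also matches the intended argument, and your remark that no density hypothesis is needed here, unlike in Theorem~\ref{Thm:FactorizationBergman}, is on point.
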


Now we turn to work with radial weights. The first of them will be used to show that $A^p_\om$-zero
sets\index{$A^p_\om$-zero set} depend on $p$.

\begin{theorem}\label{Theorem:ZerosBergman1}
Let $0<p<\infty$ and let $\omega$ be a radial weight. Let $f\in
A^p_\omega$, $f(0)\ne0$, and let $\{z_k\}$ be its zero sequence
repeated according to multiplicity and ordered by increasing
moduli. Then
    \begin{equation}\label{j10}
    \prod_{k=1}^n\frac{1}{|z_k|}=\op\left(\left(\int_{1-\frac{1}n}^1\omega(r)\,dr\right)^{-\frac1p}\right),\quad
    n\to \infty.
    \end{equation}
\end{theorem}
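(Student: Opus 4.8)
\emph{Proof proposal.}
The plan is to combine Jensen's formula with the elementary growth estimate that membership in $A^p_\om$ imposes on the integral means $M_p(\cdot,f)$. As a first step I record a consequence of Jensen's formula: since $f(0)\neq0$, for every $\rho\in(0,1)$ one has
    $$
    \frac{1}{2\pi}\int_0^{2\pi}\log|f(\rho e^{i\theta})|\,d\theta=\log|f(0)|+\sum_{|z_k|<\rho}\log\frac{\rho}{|z_k|},
    $$
and, since any zero with $|z_k|=\rho$ contributes the term $\log(\rho/\rho)=0$, for every index $m$ with $|z_m|\le\rho$ we may keep on the right-hand side only the first $m$ zeros. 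Applying the concavity of the logarithm to the probability measure $d\theta/2\pi$ then bounds $\tfrac1{2\pi}\int_0^{2\pi}\log|f(\rho e^{i\theta})|\,d\theta$ by $\log M_p(\rho,f)$, whence
    \begin{equation}\label{eq:jf}
    \prod_{k=1}^{m}\frac{1}{|z_k|}\le\frac{M_p(\rho,f)}{|f(0)|\,\rho^{m}},\qquad |z_m|\le\rho<1 .
    \end{equation}

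Next I would split the product $\prod_{k=1}^n|z_k|^{-1}=P_nQ_n$ at radius $1-\tfrac1n$, where $P_n$ collects the factors with $|z_k|\le1-\tfrac1n$ and $Q_n$ those with $|z_k|>1-\tfrac1n$. Each factor in $Q_n$ is at most $\tfrac{n}{n-1}$ and there are at most $n$ of them, so $Q_n\le(\tfrac{n}{n-1})^n=\og(1)$. Since the zeros are ordered by increasing moduli, $P_n=\prod_{k=1}^{n'}|z_k|^{-1}$ for some $n'\le n$ with $|z_{n'}|\le1-\tfrac1n$; applying \eqref{eq:jf} with $\rho=1-\tfrac1n$ and using $(1-\tfrac1n)^{-n'}\le(1-\tfrac1n)^{-n}\le4$ for $n\ge2$ gives $P_n\lesssim M_p(1-\tfrac1n,f)$, with the implied constant depending only on $f(0)$. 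Hence $\prod_{k=1}^n|z_k|^{-1}\lesssim M_p(1-\tfrac1n,f)$ for $n\ge2$.

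Finally I would feed in the $A^p_\om$-bound. Since $M_p(\cdot,f)$ is nondecreasing and $\|f\|_{A^p_\om}^p=2\int_0^1 M_p^p(s,f)\,s\,\om(s)\,ds$, for $n\ge2$ one gets
    $$
    M_p^p\left(1-\tfrac1n,f\right)\int_{1-\frac1n}^1\om(s)\,ds\le2\int_{1-\frac1n}^1 M_p^p(s,f)\,s\,\om(s)\,ds=:\tau_n ,
    $$
and $\tau_n\to0$, being the tail of a convergent integral. Combining the last two displays yields
    $$
    \left(\prod_{k=1}^n\frac{1}{|z_k|}\right)^p\int_{1-\frac1n}^1\om(s)\,ds\lesssim\tau_n ,
    $$
and since $\tau_n\to0$ as $n\to\infty$ this is exactly \eqref{j10}.

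The only genuine obstacle is that the zeros may accumulate near $\partial\D$, so that the $n$-th zero lies outside the disc of radius $1-\tfrac1n$ and \eqref{eq:jf} cannot be applied directly with $m=n$ and $\rho=1-\tfrac1n$; this is precisely what the splitting $P_nQ_n$ takes care of, the point being that a factor $|z_k|^{-1}$ coming from a zero very close to the unit circle is itself close to $1$. The step that upgrades the resulting $\og$ to the required $\op$ is the convergence $\tau_n\to0$, i.e.\ the integrability built into $f\in A^p_\om$. (If $f$ has only finitely many zeros the statement is trivial, since $\int_{1-1/n}^1\om\to0$ forces the right-hand side of \eqref{j10} to infinity while the left-hand side stabilizes.)
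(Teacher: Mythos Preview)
Your proof is correct and follows the same strategy as the paper: Jensen's formula yields an upper bound for $\prod_{k=1}^n|z_k|^{-1}$ in terms of $M_p(r,f)$, and then the tail $\int_r^1 M_p^p(s,f)\om(s)\,ds\to0$ converts this to the little-$\op$ statement. The one redundancy is the splitting $P_nQ_n$: your inequality \eqref{eq:jf} actually holds for \emph{every} $m$, with no restriction $|z_m|\le\rho$, because any zero with $|z_k|>\rho$ contributes a non-positive term $\log(\rho/|z_k|)$ to the left side of Jensen's formula; the paper simply uses this observation and takes $m=n$, $\rho=1-\tfrac1n$ directly.
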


\begin{proof}
Let $f\in A^p_\om$ and $f(0)\ne0$. By multiplying Jensen's formula
\begin{equation}\label{Eq:Jensen-Formula}
    \log|f(0)|+\sum_{k=1}^n\log\frac{r}{|z_k|}=\frac{1}{2\pi}\int_0^{2\pi}\log|f(re^{i\t})|d\t,\quad0<r<1,
    \end{equation}
     by~$p$, and
applying the arithmetic-geometric mean inequality, we obtain
    \begin{equation}\label{44}
    |f(0)|^p\prod_{k=1}^n\frac{r^p}{|z_k|^p}\le M^p_p(r,f)
    \end{equation}
for all $0<r<1$ and $n\in\N$. Moreover,
    \begin{equation}\label{j20}
    \lim_{r\to 1^-}M^p_p(r,f)\int_r^1\om(s)\,ds\le \lim_{r\to 1^-}
    \int_r^1M^p_p(s,f)\om(s)\,ds=0,
    \end{equation}
so taking $r=1-\frac{1}{n}$ in~\eqref{44}, we deduce
    $$
    \prod_{k=1}^n\frac{1}{|z_k|}\lesssim M_p\left(1-\frac{1}{n},f\right)
    =\op\left(\left(\int_{1-\frac{1}n}^1\omega(r)\,dr\right)^{-\frac1p}\right),\quad
    n\to \infty,
    $$
as desired.
\end{proof}

The next result  shows that
condition~\eqref{j10} is a sharp necessary condition for $\{z_k\}$
to be an $A^p_\om$-zero set.

\begin{theorem}\label{th:zerosqp}
Let $0<q<\infty$ and $\om\in\DD$. Then there exists
$f\in\cap_{p<q}A^p_\om$ such that its zero sequence $\{z_k\}$,
repeated according to multiplicity and ordered by increasing
moduli, does not satisfy~\eqref{j10} with $p=q$. In particular,
there is a $\cap_{p<q} A^p_\om$-zero set which is not an
$A^q_\om$-zero set.
\end{theorem}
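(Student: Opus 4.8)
The plan is to mimic Horowitz's construction of sharp zero sets for standard Bergman spaces~\cite{Horzeros}: place the zeros on a very lacunary sequence of circles and use Lemma~\ref{Lemma:weights-in-D-hat} to convert the doubling of $\widehat{\om}$ into usable information on the radii. First I would fix, by Lemma~\ref{Lemma:weights-in-D-hat}(ii), a constant $K=K(\om)$ large enough that the sequence $\rho_n=\rho_n(\om,K)$ defined by $\widehat{\om}(\rho_n)=\widehat{\om}(0)K^{-n}$ satisfies $1-\rho_{n+1}\le\tfrac12(1-\rho_n)$ for every $n$ (apply (ii) with $r=\rho_n$, $t=\rho_{n+1}$), keeping also the quantitative doubling $\widehat{\om}(r)\le C\big(\tfrac{1-r}{1-t}\big)^{\beta}\widehat{\om}(t)$, $r\le t$, in hand. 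Then I would choose a rapidly increasing sequence of indices $n_1<n_2<\cdots$ (e.g. $n_j\asymp(j!)^{2}$, so that $\sum_{i<j}n_i=\mathrm{o}(n_j)$ while $\log n_j$ stays of order $j\log j$), set $A_j=\tfrac{\log K}{q}+\tfrac1j$, and put
$$
f(z)=\prod_{j\ge 1}\Big(1-\big(z/\rho_{n_j}\big)^{m_j}\Big),\qquad m_j=\Big\lceil \frac{A_j\,n_j}{-\log\rho_{n_j}}\Big\rceil .
$$
Since for each fixed $r<1$ the factors with $\rho_{n_j}>r$ are $1+\mathrm{O}(e^{-cn_j})$ on $\{|z|\le r\}$, the product converges locally uniformly, so $f\in\H(\D)$, $f(0)=1$, and its zero set $\{z_k\}$, listed by increasing moduli, consists of the $m_j$ equally spaced points of modulus $\rho_{n_j}$; note $0\notin\{z_k\}$ and, by the ceiling, $\prod_{j\le J}\rho_{n_j}^{-m_j}=\exp\big(\sum_{j\le J}m_j(-\log\rho_{n_j})\big)\ge e^{A_Jn_J}$.

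For membership, fix $p<q$. Using monotonicity of $M_p(\cdot,f)$ and $\widehat{\om}(\rho_n)-\widehat{\om}(\rho_{n+1})\asymp K^{-n}$ one has $\|f\|_{A^p_\om}^p\lesssim\sum_{n}M_\infty(\rho_n,f)^p\,K^{-n}$. On $|z|=\rho_n$ write $f=P_nQ_n$, where $P_n$ collects the factors with $\rho_{n_j}\le\rho_n$ and $Q_n$ the rest. Since $n_j>n$ forces $\rho_{n_j}-\rho_n\ge\tfrac12(1-\rho_{n_j})$, each outer factor satisfies $(\rho_n/\rho_{n_j})^{m_j}\le e^{-c'm_j(1-\rho_{n_j})}\le e^{-c''n_j}$, whence $\sup_{|z|=\rho_n}|Q_n(z)|\le\exp\big(\sum_j e^{-c''n_j}\big)\le e^{C_0}$; and for the inner part $M_\infty(\rho_n,P_n)\le 2^{J(n)}\exp\big(\sum_{j\le J(n)}m_j(-\log\rho_{n_j})\big)$, where $J(n)=\#\{j:\rho_{n_j}\le\rho_n\}$. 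The key point is that the error in $m_j(-\log\rho_{n_j})=A_jn_j+\mathrm{O}(1)$ is \emph{additive}, so by the rapid growth of $\{n_j\}$ one gets $\sum_{j\le J(n)}m_j(-\log\rho_{n_j})\le(A_{J(n)}+\e_{J(n)})n_{J(n)}$ with $\e_J\to0$; hence $M_\infty(\rho_n,f)\le e^{C_0}e^{(A_{J(n)}+2\e_{J(n)})n_{J(n)}}$. Summing over the block $n_J\le n<n_{J+1}$ gives $\|f\|_{A^p_\om}^p\lesssim\sum_J e^{n_J(p(A_J+2\e_J)-\log K)}$, and since $A_J+2\e_J\to\tfrac{\log K}{q}$ and $p<q$, the exponent is eventually $\le-\tfrac12(1-p/q)(\log K)\,n_J<0$, so the series converges. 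As $p<q$ was arbitrary, $f\in\bigcap_{p<q}A^p_\om$.

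It remains to see that $\{z_k\}$ violates~\eqref{j10} at $p=q$. Put $N_J=\sum_{j\le J}m_j$; then $N_J\asymp A_Jn_J/(-\log\rho_{n_J})$, so $N_J(1-\rho_{n_J})\lesssim A_Jn_J$ and $1-1/N_J\in(\rho_{n_J},1)$, and the quantitative doubling of Lemma~\ref{Lemma:weights-in-D-hat}(ii) (with $r=\rho_{n_J}$, $t=1-1/N_J$) gives $\widehat{\om}\big(1-1/N_J\big)\gtrsim\widehat{\om}(\rho_{n_J})\,(A_Jn_J)^{-\beta}=\widehat{\om}(0)\,K^{-n_J}(A_Jn_J)^{-\beta}$. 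Combining this with $\prod_{k=1}^{N_J}\tfrac1{|z_k|}\ge e^{A_Jn_J}$ and $qA_J-\log K=q/J$,
$$
\Big(\prod_{k=1}^{N_J}\frac1{|z_k|}\Big)^{q}\,\widehat{\om}\Big(1-\frac1{N_J}\Big)\;\gtrsim\;\frac{e^{q n_J/J}}{(A_Jn_J)^{\beta}}\;\longrightarrow\;\infty\qquad(J\to\infty),
$$
because $n_J/J$ outgrows $\log n_J$ for our choice of $\{n_j\}$. Thus $\big(\prod_{k=1}^{n}\tfrac1{|z_k|}\big)\widehat{\om}(1-\tfrac1n)^{1/q}\not\to0$, i.e. $\{z_k\}$ fails~\eqref{j10} with $p=q$; since $0\notin\{z_k\}$, any $g\in A^q_\om$ with zero set exactly $\{z_k\}$ would have $g(0)\ne0$ and hence would contradict Theorem~\ref{Theorem:ZerosBergman1}. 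So $\{z_k\}$ is a $\bigcap_{p<q}A^p_\om$-zero set that is not an $A^q_\om$-zero set. The main obstacle, and what dictates both the choice $A_j\downarrow\tfrac{\log K}{q}$ and the need to control $\prod_{j\le J}\rho_{n_j}^{-m_j}$ by its last factor up to a factor $e^{\mathrm{o}(n_J)}$, is the simultaneous demand ``$f\in A^p_\om$ for \emph{every} $p<q$'' (which forces $pA_j<\log K$ eventually) versus ``the zeros beat $\widehat{\om}(\cdot)^{-1/q}$'' (which forces $qA_j>\log K$); these reconcile only because $p<q$ is strict.
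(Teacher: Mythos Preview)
Your argument is correct and complete, but it follows a genuinely different route from the paper's proof. The paper (following \cite{GNW,Horzeros1}) builds $f$ as an infinite product of \emph{rational} factors
\[
F_k(z)=\frac{1+a_kz^{2^k}}{1+a_k^{-1}z^{2^k}},\qquad a_k=\left(\frac{\widehat{\om}(1-2^{-k})}{\widehat{\om}(1-2^{-(k+1)})}\right)^{1/q},
\]
so that the partial products $\prod_{k\le n}a_k$ \emph{telescope} in $\widehat{\om}$; this yields directly a uniform pointwise bound $M_\infty(r,f)\lesssim\widehat{\om}(r)^{-1/q}$, and membership in $A^p_\om$ for $p<q$ then follows in one line from Lemma~\ref{le:RAp}(iii). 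The violation of~\eqref{j10} also drops out immediately from the telescoping. Your construction instead uses the simpler polynomial factors $1-(z/\rho_{n_j})^{m_j}$ with zeros placed on a super-lacunary subsequence of the circles $|z|=\rho_n$ coming from Lemma~\ref{Lemma:weights-in-D-hat}(xi); you then replace the telescoping by a block analysis of $M_\infty(\rho_n,f)$ and a carefully tuned threshold $A_j\downarrow(\log K)/q$. What the paper's approach buys is economy: the rational factors are uniformly bounded on each fixed circle and the parameters $a_k$ do all the work. What yours buys is a more elementary building block and a construction that makes the borderline nature of the exponent (the tension between $pA_j<\log K$ and $qA_j>\log K$) completely explicit; the price is a heavier bookkeeping in the $P_n/Q_n$ splitting and in the estimate $\widehat{\om}(1-1/N_J)\gtrsim(A_Jn_J)^{-\beta}K^{-n_J}$ via Lemma~\ref{Lemma:weights-in-D-hat}(ii). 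One small cosmetic remark: your bound on $P_n$ uses the crude inequality $(\rho_n/\rho_{n_j})^{m_j}\le\rho_{n_j}^{-m_j}$, which overshoots when $n$ is just above $n_J$, but this is harmless because you only need summability of $e^{p(A_J+\e_J)n_J}K^{-n_J}$ over $J$.
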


\begin{proof} The proof uses ideas
from~\cite[Theorem~3]{GNW}, see also \cite{Horzeros1,Horzeros2}.
Define
    \begin{equation}\label{zqp2}
    f(z)=\prod_{k=1}\sp\infty F_k(z),\quad z\in\D,
    \end{equation}
where
    $$
    F_k(z)=\frac{ 1+a_k z^{2^k}}{1+a^{-1}_k z^{2^k}},\quad z\in\D,\quad
    k\in\N,
    $$
and
    $$
    a_k=\left(\frac{\int_{1-2^{-k}}^1\om(s)\,ds}{\int_{1-2^{-(k+1)}}^1\om(s)\,ds}\right)^{1/q},\quad
    k\in\N.
    $$
By Lemma~\ref{Lemma:weights-in-D-hat} there exists a constant
$C_1=C_1(q,\om)>0$ such that
    \begin{equation}\label{zqp1}
    1<a_k\le C_1<\infty,\quad k\in\N.
    \end{equation}
Therefore $\limsup_{k\to \infty}(a_k-a_k^{-1})^{2^{-k}}\le
\limsup_{k\to \infty}a_k^{2^{-k}}=1$, and hence the product
in~\eqref{zqp2} defines an analytic function in $\D$. The zero set
of $f$ is the union of the zero sets of the functions $F_k$, so
$f$ has exactly $2^k$ simple zeros on the circle
$\left\{z:|z|=a_k^{-2^{-k}}\right\}$ for each $k\in\N$. Let
$\{z_j\}_{j=1}^\infty$ be the sequence of zeros of $f$ ordered by
increasing moduli, and denote $N_n=2+2^2+\cdots+2^n$. Then $2^n\le
N_n\le 2^{n+1}$, and hence
    \begin{equation*}
    \prod_{k=1}^{N_n}\frac{1}{|z_k|}\ge\prod_{k=1}^n a_k
    =\left(\frac{\int_{\frac12}^1\om(s)\,ds}{\int_{1-2^{-(n+1)}}^1\om(s)\,ds}\right)^{1/q}
    \ge\left(\frac{\int_{\frac12}^1\om(s)\,ds}{\int_{1-\frac{1}{N_n}}^1\om(s)\,ds}\right)^{1/q}.
    \end{equation*}
It follows that $\{z_j\}_{j=1}^\infty$ does not
satisfy~\eqref{j10}, and thus $\{z_j\}_{j=1}^\infty$ is not an
$A^q_\om$-zero set by Theorem~\ref{Theorem:ZerosBergman1}.

We turn to prove that the function $f$ defined in \eqref{zqp2}
belongs to $A^p_\om$ for all $p\in(0,q)$. Set $r_n=e\sp{-2^{-n}}$
for $n\in\N$, and observe that
    \begin{equation}\label{zqp4}
    |f(z)|=\left| \prod_{k=1}^n a_k \frac{a_k^{-1}+z^{2^k}}{1+a^{-1}_k z^{2^k}}\right| \left| \prod_{j=1}^\infty \frac{ 1+a_{n+j} z^{2^{n+j}}}{1+a^{-1}_{n+j}
    z^{2^{n+j}}}\right|.
    \end{equation}
The function $h_1(x)=\frac{\a+x}{1+\a x}$ is increasing on $[0,1)$
for each $\a\in[0,1)$, and therefore
    \begin{equation}\label{67}
    \begin{split}
    \left|\frac{ 1+a_{n+j} z^{2^{n+j}}}{1+a^{-1}_{n+j} z^{2^{n+j}}}\right|&=a_{n+j}
    \left|\frac{ a^{-1}_{n+j}+ z^{2^{n+j}}}{1+a^{-1}_{n+j} z^{2^{n+j}}}\right|
    \le a_{n+j} \frac{ a^{-1}_{n+j}+ |z|^{2^{n+j}}}{1+a^{-1}_{n+j} |z|^{2^{n+j}}}\\
    &\le\frac{1+a_{n+j}\left(\frac1e\right)^{2^{j}}}{1+a^{-1}_{n+j}
    \left(\frac1e\right)^{2^{j}}},\quad |z|\le r_n,\quad
    j,\,n\in\N.
    \end{split}
    \end{equation}
Since $h_2(x)=\frac{1+x\alpha}{1+x^{-1}\alpha}$ is increasing on
$(0,\infty)$ for each $\alpha\in(0,\infty)$, \eqref{zqp1} and
\eqref{67} yield
    \begin{equation}
    \begin{split}\label{zqp5}
    \left|\prod_{j=1}^\infty\frac{1+a_{n+j} z^{2^{n+j}}}{1+a^{-1}_{n+j} z^{2^{n+j}}}\right|
    &\le\prod_{j=1}^\infty\frac{ 1+a_{n+j} \left(\frac1e\right)^{2^{j}}}{1+a^{-1}_{n+j}\left(\frac1e\right)^{2^{j}}}
    \le\prod_{j=1}^\infty\frac{1+C_1\left(\frac1e\right)^{2^{j}}}{1+C_1^{-1}\left(\frac1e\right)^{2^{j}}}
    =C_2<\infty,
    \end{split}
    \end{equation}
whenever $|z|\le r_n$ and $n\in\N$. So, by using \eqref{zqp4},
\eqref{zqp5}, Lemma~\ref{Lemma:weights-in-D-hat} and the inequality
$e^{-x}\ge 1-x$, $x\ge0$, we obtain
    \begin{equation}
    \begin{split}\label{zqp6}
    |f(z)|&\le C_2\prod_{k=1}^n
    a_k\lesssim\left(\frac{1}{\int_{1-2^{-(n+1)}}^1\om(s)\,ds}\right)^{1/q}\lesssim\left(\frac{1}{\int_{1-2^{-n}}^1\om(s)\,ds}\right)^{1/q}\\
    &\le \left(\frac{1}{\int_{r_n}^1\om(s)\,ds}\right)^{1/q},\quad
    |z|\le r_n,\quad n\in\N.
    \end{split}
    \end{equation}
Let now $|z|\ge1/\sqrt{e}$ be given and fix $n\in\N$ such that
$r_n\le|z|<r_{n+1}$. Then \eqref{zqp6}, the inequality $1-x\le
e^{-x}\le 1-\frac{x}{2}$, $x\in[0,1]$, and Lemma~\ref{Lemma:weights-in-D-hat}
give
    \begin{equation*}
    \begin{split}
    |f(z)|&\le
    M_\infty(r_{n+1},f)\lesssim\left(\frac{1}{\int_{r_{n+1}}^1\om(s)\,ds}\right)^{1/q}\\
    &\le \left(\frac{1}{\int_{1-2^{-(n+2)}}^1\om(s)\,ds}\right)^{1/q}
    \lesssim\left(\frac{1}{\int_{1-2^{-n}}^1\om(s)\,ds}\right)^{1/q}\\
    &\le\left(\frac{1}{\int_{r_n}^1\om(s)\,ds}\right)^{1/q}\le \left(\frac{1}{\int_{|z|}^1\om(s)\,ds}\right)^{1/q},
    \end{split}
    \end{equation*}
and hence
    \begin{equation*}
    M_\infty(r,f)\lesssim\left(\frac{1}{\int_{r}^1\om(s)\,ds}\right)^{1/q},\quad 0<r<1.
    \end{equation*}
This and the identity
$\psi_{\widetilde{\om}}(r)=\frac{1}{1-\a}\psi_\om(r)$\index{$\widetilde{\om}(r)$}
of Lemma~\ref{le:RAp}(iii), with $\a=p/q<1$ and $r=0$, yield
    $$
    \|f\|^p_{A^p_\om}\lesssim\int_0^1\frac{\om(r)\,dr}{\left(\int_{r}^1\om(s)\,ds\right)^{p/q}}
    =\int_0^1\widetilde{\om}(r)\,dr=\frac{q}{q-p}\left(\int_{0}^1\om(s)\,ds\right)^{\frac{q-p}{q}}<\infty.
    $$
This finishes the proof.
\end{proof}

The proof of the above result implies that the union of two
$A^p_\omega$-zero sets is not an $A^p_\om$-zero set. Going further,
we obtain the following
result.

\begin{corollary}\label{co:unioneszeros}
Let $0<p<\infty$ and $\om\in\DD$. Then the union of two
$A^p_\omega$-zero sets is an $A^{p/2}_\om$-zero set. However,
there are two $\cap_{p<q} A^p_\om$-zero sets such that their union
is not an $A^{q/2}_\om$-zero set.
\end{corollary}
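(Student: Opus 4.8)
The first (positive) assertion is elementary and does not even use $\om\in\DD$. If $Z_1$ and $Z_2$ are $A^p_\om$-zero sets, pick $f,g\in A^p_\om$ vanishing exactly on $Z_1$ and $Z_2$, respectively; then $fg$ vanishes exactly on $Z_1\cup Z_2$, and the Cauchy--Schwarz inequality gives
\[
\|fg\|_{A^{p/2}_\om}^{p/2}=\int_\D|f(z)|^{p/2}|g(z)|^{p/2}\om(z)\,dA(z)\le\|f\|_{A^{p}_\om}^{p/2}\,\|g\|_{A^{p}_\om}^{p/2}<\infty,
\]
so $fg\in A^{p/2}_\om$ and $Z_1\cup Z_2$ is an $A^{p/2}_\om$-zero set.

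For the sharpness statement the plan is to reuse the extremal function built in the proof of Theorem~\ref{th:zerosqp}. Let $f=\prod_{k}F_{k}$ be that function for the exponent $q$; thus $f\in\cap_{p<q}A^{p}_\om$, $f(0)\neq0$, and $f$ has exactly $2^{k}$ simple zeros on the circle $|z|=a_{k}^{-2^{-k}}$, where, by telescoping, $\prod_{k=1}^{n}a_{k}=\bigl(\widehat{\om}(1/2)/\widehat{\om}(1-2^{-(n+1)})\bigr)^{1/q}$. Fix $\lambda\in\T$ that is not a root of unity and set $g(z)=f(\lambda z)$. Since $\om$ is radial, $\|g\|_{A^{p}_\om}=\|f\|_{A^{p}_\om}$ for every $p$, so the zero set $Z_{2}$ of $g$ is again a $\cap_{p<q}A^{p}_\om$-zero set; and since every zero of $f$ has modulus $a_{k}^{-2^{-k}}$ for some $k$ and argument a dyadic-rational multiple of $\pi$, one checks that the ratio of any two zeros is either a root of unity or has modulus $\neq1$, whence $Z_{2}$ is disjoint from the zero set $Z_{1}$ of $f$. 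Hence $Z_{1}$ and $Z_{2}$ are two $\cap_{p<q}A^{p}_\om$-zero sets, and $Z_{1}\cup Z_{2}$ carries exactly $2^{k+1}$ zeros on each circle $|z|=a_{k}^{-2^{-k}}$.

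It remains to see that $Z_{1}\cup Z_{2}$ is not an $A^{q/2}_\om$-zero set. Suppose it were, witnessed by $h\in A^{q/2}_\om$; then $h(0)\neq0$ because $0\notin Z_{1}\cup Z_{2}$, and Theorem~\ref{Theorem:ZerosBergman1} with $p=q/2$, applied to the zero sequence $\{\zeta_{j}\}$ of $h$ ordered by increasing moduli, yields $\prod_{j=1}^{m}|\zeta_{j}|^{-1}=\op\bigl(\widehat{\om}(1-\tfrac1m)^{-2/q}\bigr)$ as $m\to\infty$. Take $m=2N_{n}$ with $N_{n}=2+2^{2}+\dots+2^{n}=2^{n+1}-2$. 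Because each factor $|\zeta_{j}|^{-1}\ge1$, the product over the $2N_{n}$ zeros of smallest modulus is at least the product over the $2N_{n}$ zeros of $Z_{1}\cup Z_{2}$ that lie on the first $n$ circles, i.e.
\[
\prod_{j=1}^{2N_{n}}\frac{1}{|\zeta_{j}|}\ \ge\ \prod_{k=1}^{n}a_{k}^{2}\ =\ \widehat{\om}(1/2)^{2/q}\;\widehat{\om}\!\left(1-2^{-(n+1)}\right)^{-2/q}.
\]
Finally, $2^{n+1}\le 2N_{n}<2^{n+2}$, so $1-\tfrac1{2N_{n}}$ lies between $1-2^{-(n+1)}$ and $1-2^{-(n+2)}$, and a single use of the doubling of $\widehat{\om}$ gives $\widehat{\om}(1-2^{-(n+1)})\asymp\widehat{\om}(1-\tfrac1{2N_{n}})$; hence the displayed lower bound is $\gtrsim\widehat{\om}(1-\tfrac1{2N_{n}})^{-2/q}$ along the subsequence $m=2N_{n}\to\infty$, contradicting the preceding $\op$-estimate. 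I expect no serious obstacle here: everything rests on Theorems~\ref{th:zerosqp} and~\ref{Theorem:ZerosBergman1}, and the only points requiring care are this last comparison of tails of $\om$ and the verification that $Z_{1}\cap Z_{2}=\emptyset$.
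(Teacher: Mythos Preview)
Your argument is correct and matches the paper's intended approach: the paper gives no detailed proof of this corollary, only indicating that it follows from the construction in Theorem~\ref{th:zerosqp} together with Theorem~\ref{Theorem:ZerosBergman1}, which is precisely what you carry out. Your rotation by a non--root-of-unity $\lambda$ to produce \emph{disjoint} zero sets is a harmless refinement (one could equally take $Z_1=Z_2$ and read the union as doubling multiplicities, obtaining the same lower bound $\prod_{k=1}^{n}a_k^{2}$ and the same contradiction), and both of the points you flag---the disjointness check via the dyadic arguments of the zeros of $F_k$, and the comparison $\widehat{\om}(1-2^{-(n+1)})\asymp\widehat{\om}(1-\tfrac{1}{2N_n})$ from a single doubling---are handled correctly.
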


Since the angular distribution of zeros plays a role in a
description of the zero sets of functions in the classical
weighted Bergman space~$A^p_\alpha$, it is natural to expect that
the same happens also in $A^p_\om$, when $\om\in\DD$. However, we do
not venture into generalizing the theory, developed among others
by Korenblum~\cite{Kor}, Hedenmalm~\cite{HedStPeter} and
Seip~\cite{S1,S2}, and based on the use of densities defined in
terms of partial Blaschke sums, Stolz star domains and
Beurling-Carleson characteristic of the corresponding boundary
set.

\section{Integral operators}\label{sec:integral}
The main aim of this section is to characterize those symbols
$g\in\H(\D)$ such that the integral operator
    \begin{displaymath}
    T_g(f)(z)=\int_{0}^{z}f(\zeta)\,g'(\zeta)\,d\zeta,\quad
    z\in\D,
    \end{displaymath}
is bounded or compact from $A^p_\om$ to $A^q_\om$, when $\om\in\DD$. The choice
$g(z)=z$ gives the usual Volterra operator and the Ces\`{a}ro
operator is obtained when $g(z)=-\log(1-z)$. The bilinear operator $\left(f,g\right)\rightarrow \int f\,g'$  was introduced by  A. Calder\'on in harmonic analysis in the $60$'s for his research on commutators of singular integral operators \cite{Calderon65} which leads to the study of  \lq\lq paraproducts\rq\rq.
Regarding the complex function theory, Pommerenke  considered the operator $T_g$ \cite{Pom} to study the space $BMOA$ proving
that $T_g: H^2\to H^2$ is bounded if and only $g\in \BMOA$. We recall that $\BMOA$
consists of functions in the Hardy space $H^1$ that have
\emph{bounded mean oscillation}
on the boundary $\T$~\cite{Ba86(2),GiBMO}.
 We will use the norm given by
    $$
    \|g\|^2_{\BMOA}=\sup_{a\in\D}\frac{\int_{S(a)}|g'(z)|^2(1-|z|^2)\,dA(z)}{1-|a|}+|g(0)|^2.
    $$
Later, Aleman and Cima~\cite{AC} proved that $T_g:H^p\to H^p$ is bounded if and only if $g\in \BMOA$.
The analogue holds for $A^p_\om$, $\om\in\R$, if and only if $g\in\B$ ~\cite{AS}. Recently, the spectrum
of~$T_g$  has been studied on the Hardy space $H^p$~\cite{AlPe} and on the classical
weighted Bergman space $A^p_\alpha$~\cite{AlCo}. The following family of spaces
of analytic functions will appear in the description of those symbols $g$
such that $T_g:\,A^p_\om\to A^q_\om$ is bounded.
\subsection{Non-conformally Invariant Spaces}
We say that $g\in\H(\D)$ belongs to $\CC^{q,\,p}(\om^\star)$,
$0<p,q<\infty$, if the measure $|g'(z)|^2\om^\star(z)\,dA(z)$ is a
$q$-Carleson measure for $A^p_\om$.  If
$q\ge p$ and $\om\in\DD$, then Theorem~\ref{th:cm} shows that
these spaces only depend on the quotient $\frac{q}{p}$.
Consequently, for $q\ge p$ and $\om\in\DD$, we simply write
$\CC^{q/p}(\om^\star)$ instead of $\CC^{q,\,p}(\om^\star)$. Thus,
if $\alpha\ge 1$ and $\om\in\DD$, then
$\CC^{\alpha}(\om^\star)$ consists of those $g\in\H(\D)$ such
that
    \begin{equation}\label{calpha}
    \|g\|^2_{\CC^{\alpha}(\om^\star)}=|g(0)|^2+\sup_{I\subset\T}\frac{\int_{S(I)}|g'(z)|^2\om^\star(z)\,dA(z)}
    {\left(\om\left(S(I)\right)\right)^{\alpha}}<\infty.\index{$\CC^\a(\om^\star)$}
    \end{equation}

Unlike~$\B$, the space
$\CC^1(\om^\star)$ can not be described
by a simple growth condition on the maximum modulus of $g'$ if
$\om\in\DD$. This follows by Proposition~\ref{pr:blochcpp} (below) and the
fact that $\log(1-z)\in A^p_\om$ for all $\om\in\DD$.

The spaces $\BMOA$ and $\B$ are
conformally invariant. This property has been used, among other
things, in describing those symbols $g\in\H(\D)$ for which $T_g$
is bounded on $H^p$ or $A^p_\alpha$. However, the space
$\CC^1(\omega^\star)$ is not necessarily
conformally invariant, and therefore
different techniques must be employed in the case of $A^p_\om$
with $\om\in\DD$.

Recall that $h:\,[0,1)\to (0,\infty)$ is
essentially increasing on $[0,1)$ if there exists a constant $C>0$
such that $h(r)\le C h(t)$ for all $0\le r\le t<1$.

\begin{proposition}\label{pr:blochcpp}
\begin{itemize}
\item[\rm(A)] If $\om\in\DD$, then
$\CC^1(\omega^\star)\subset\cap_{0<p<\infty}A^p_\omega$.\index{$\CC^1(\om^\star)$}

\item[\rm(B)] If $\om\in\DD$, then $\BMOA\subset
\CC^1(\omega^\star)\subset\B$.

\item[\rm(C)] If $\om\in\R$, then $\CC^1(\omega^\star)=\B$.

\item[\rm(D)] If $\om\in\I$, then
$\CC^1(\omega^\star)\subsetneq\B$.

\item[\rm(E)] If $\om\in\I$ and both $\om(r)$ and
$\frac{\psi_\om(r)}{1-r}$ are essentially increasing on $[0,1)$,
then
$\BMOA\subsetneq\CC^1(\omega^\star)$.\index{$\BMOA$}\index{$\B$}
\end{itemize}
\end{proposition}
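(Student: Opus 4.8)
The plan is to prove the five assertions separately; the recurring tools are the Littlewood--Paley identities \eqref{HSB} and \eqref{eq:LP2}, the Carleson measure theorem (Theorem~\ref{th:cm}), the smoothing comparison $\om^\star(z)\asymp\om(S(z))\asymp\widehat{\om}(|z|)(1-|z|)$ for $\om\in\DD$ (from \eqref{3} and the proof of Lemma~\ref{Lemma:weights-in-D-hat}), and $\om(S(a))\asymp(1-|a|)\widehat{\om}(|a|)$ for radial $\om$. By \eqref{calpha}, $g\in\CC^1(\om^\star)$ means precisely that $\mu_g:=|g'|^2\om^\star\,dA$ satisfies $\mu_g(S(I))\lesssim\om(S(I))$ for all arcs $I$. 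For (B): $\CC^1(\om^\star)\subset\B$ follows from a sub-mean-value estimate — for $|a|\ge\frac12$ pick a Carleson box $S(\widetilde a)$ containing a Euclidean disc $D$ of radius $\asymp 1-|a|$ around $a$; on $D$ one has $\om^\star\asymp\om^\star(a)\asymp\om(S(\widetilde a))/(1-|a|)$ by Lemma~\ref{Lemma:weights-in-D-hat}(ii), so $|g'(a)|^2(1-|a|)^2\lesssim\om^\star(a)^{-1}\int_{S(\widetilde a)}|g'|^2\om^\star\,dA\lesssim1$, while $|a|<\frac12$ is handled via $\mu_g(\D)<\infty$ and $\om^\star>0$ there. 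For $\BMOA\subset\CC^1(\om^\star)$, set $d\nu=|g'|^2(1-|z|)\,dA$, so that $\nu(S(I))\lesssim|I|$ is exactly membership in $\BMOA$ with the stated norm, use $\om^\star(z)\lesssim\widehat{\om}(|z|)(1-|z|)$, and apply Fubini:
\[
\int_{S(a)}|g'|^2\om^\star\,dA\lesssim\int_{S(a)}\widehat{\om}(|z|)\,d\nu(z)=\int_{|a|}^1\om(t)\,\nu\bigl(\{z\in S(a):|z|<t\}\bigr)\,dt\le\nu(S(a))\,\widehat{\om}(|a|)\lesssim(1-|a|)\widehat{\om}(|a|)\asymp\om(S(a)).
\]

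For (A), let $g\in\CC^1(\om^\star)$. Then $\mu_g$ satisfies \eqref{eq:s1} with $q=p$; as this condition involves only the ratio $q/p$, Theorem~\ref{th:cm} gives that $\mu_g$ is an $s$-Carleson measure for $A^s_\om$ for \emph{every} $s>0$, i.e. $\int_\D|h|^s\,d\mu_g\lesssim\|h\|_{A^s_\om}^s$. Taking $h\equiv1$ gives $\mu_g(\D)\lesssim\om(\D)<\infty$, hence $g\in A^2_\om$ by \eqref{eq:LP2}. If $g\in A^s_\om$ for some $s\ge2$, then \eqref{HSB} with exponent $s+2$ and the $s$-Carleson property of $\mu_g$ give
\[
\|g\|_{A^{s+2}_\om}^{s+2}=(s+2)^2\int_\D|g|^{s}\,d\mu_g+\om(\D)|g(0)|^{s+2}\lesssim\|g\|_{A^s_\om}^{s}+1<\infty,
\]
so $g\in A^p_\om$ for all $p\ge 2$, while for $0<p<2$ one splits $\D=\{|g|\le1\}\cup\{|g|>1\}$ to obtain $\|g\|_{A^p_\om}^p\le\om(\D)+\|g\|_{A^2_\om}^2$. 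For (C), only $\B\subset\CC^1(\om^\star)$ is new: if $\om\in\R$ then \eqref{eq:r1} gives $\widehat{\om}(r)\asymp\om(r)(1-r)$, whence $\om^\star(z)\asymp\om(|z|)(1-|z|)^2$, so $g\in\B$ (i.e. $|g'(z)|^2(1-|z|)^2\lesssim1$) yields $\int_{S(a)}|g'|^2\om^\star\,dA\asymp\int_{S(a)}|g'|^2\om(|z|)(1-|z|)^2\,dA\lesssim\om(S(a))$.

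For (D), observe that $\om\in\I$ is precisely the failure of \eqref{eq:r1}, and that it forces
\[
\lim_{r\to1^-}\frac{1}{\widehat{\om}(r)}\int_r^1\frac{\widehat{\om}(\rho)}{1-\rho}\,d\rho=\infty:
\]
if this ratio stayed bounded, a separation-of-variables argument would make $\widehat{\om}$ (which is $\asymp\Phi(r):=\int_r^1(1-\rho)^{-1}\widehat{\om}(\rho)\,d\rho$ anyway) decay at least like a positive power of $1-r$, contradicting that $\om\in\I$ forces $\widehat{\om}$ to decay more slowly than every power of $1-r$ (via $\widehat{\om}(r)\ge\widehat{\om}(r_0)\bigl(\tfrac{1-r}{1-r_0}\bigr)^{1/K}$ for every $K$). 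Now assume $\B\subset\CC^1(\om^\star)$; by the closed graph theorem the inclusion is bounded, so $\om(S(a))^{-1}\int_{S(a)}|g'|^2\om^\star\,dA\lesssim\|g\|_\B^2$ for all $g$. Apply this to the randomized lacunary series $g_\e(z)=\sum_k\e_k z^{2^k}$ ($\e_k=\pm1$), whose Bloch norms are uniformly bounded, and average over $\e$; using $\int|g_\e'(z)|^2\,d\e\asymp(1-|z|)^{-2}$ this gives, for $a=1-2^{-n}$,
\[
\frac{1}{\om(S(a))}\int_{S(a)}\frac{\om^\star(z)}{(1-|z|)^2}\,dA(z)\asymp\frac{1}{\widehat{\om}(|a|)}\int_{|a|}^1\frac{\widehat{\om}(\rho)}{1-\rho}\,d\rho\lesssim1,
\]
contradicting the displayed limit; hence $\CC^1(\om^\star)\subsetneq\B$.

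For (E), since $\CC^1(\om^\star)\subset\B$, a witness of strict inclusion must be a Bloch, non-$\BMOA$ function; take a Hadamard lacunary series $g(z)=\sum_k a_k z^{2^k}$ with $\sup_k|a_k|<\infty$ (so $g\in\B$) and $\sum_k|a_k|^2=\infty$ (so $g\notin\BMOA$). By Theorem~\ref{th:cm}, $g\in\CC^1(\om^\star)$ iff $\mu_g$ is a Carleson measure for $A^2_\om$; decomposing a box $S(a)$, $|a|=1-2^{-n}$, into dyadic sub-boxes and using $\om^\star\asymp2^{-k}\widehat{\om}(1-2^{-k})$ at scale $2^{-k}$, one obtains $\int_{S(a)}|g'|^2\om^\star\,dA\asymp\om(S(a))+2^{-n}\sum_{k\ge n}|a_k|^2\widehat{\om}(1-2^{-k})$, so that $g\in\CC^1(\om^\star)$ iff $\sup_n\widehat{\om}(1-2^{-n})^{-1}\sum_{k\ge n}|a_k|^2\widehat{\om}(1-2^{-k})<\infty$. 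Choosing $|a_k|^2=\widehat{\om}(1-2^{-(k-1)})/\widehat{\om}(1-2^{-k})-1$ (bounded by the doubling of $\widehat{\om}$, and with $\sum_k|a_k|^2=\infty$ since $\widehat{\om}(1-2^{-k})\to0$) makes that sum telescope to $\widehat{\om}(1-2^{-(n-1)})\lesssim\widehat{\om}(1-2^{-n})$, so $g\in\CC^1(\om^\star)\setminus\BMOA$; here the essential monotonicity of $\om(r)$ and of $\psi_\om(r)/(1-r)$ enters to make the dyadic estimate and the coefficient choice go through. The main obstacle is thus in (D) and (E): turning the failure of \eqref{eq:r1} into an obstruction for $\B\subset\CC^1(\om^\star)$ (the probabilistic/closed graph argument plus the sharp asymptotics of $\int_r^1(1-\rho)^{-1}\widehat{\om}(\rho)\,d\rho$), and carrying out the dyadic Carleson box computation in (E); parts (A), (B), (C) are comparatively soft, resting on the Littlewood--Paley identities and the exponent-free self-improvement in Theorem~\ref{th:cm}.
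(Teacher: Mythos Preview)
Your proofs of (A)--(C) are correct and essentially match the paper's: the bootstrap in (A), the Fubini computation for $\BMOA\subset\CC^1(\omega^\star)$ in (B), and the direct estimate in (C) are the same ideas, with your sub-mean-value argument for $\CC^1(\omega^\star)\subset\B$ being a slightly more elementary alternative to the paper's route via the characterisation of $\B$ through $(1-|z|)^{\gamma}$-Carleson measures.

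For (D) your route is genuinely different. The paper uses the Ramey--Ullrich construction of $g_1,g_2\in\B$ with $|g_1'|+|g_2'|\ge(1-|z|)^{-1}$: assuming $\B\subset\CC^1(\omega^\star)$, each $\mu_{g_i}$ is Carleson for $A^2_\omega$, and adding yields $\int_\D|f|^2\omega^\star(z)(1-|z|)^{-2}\,dA\lesssim\|f\|_{A^2_\omega}^2$, after which $f\equiv1$ or $f=F_{a,2}$ plus l'H\^opital produce the contradiction. You reach the same core inequality by averaging random lacunary series, which avoids Ramey--Ullrich but needs the closed graph theorem; that is legitimate, though you should note that $\CC^1(\omega^\star)$ is complete (this follows from the continuous inclusion into $\B$ in (B) together with Fatou's lemma). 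Your argument only shows $\Phi/\widehat{\omega}$ is \emph{unbounded}, not $\to\infty$, but that is exactly what is needed since closed graph would give boundedness for every $a$; alternatively, l'H\^opital gives $\lim\Phi/\widehat{\omega}=\lim\psi_\omega/(1-r)=\infty$ in one line, as the paper does.

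Part (E) has a genuine gap. The paper defers the construction to \cite[Proposition~5.2]{PelRat}, and your sketch does not fill it. Your telescoping choice $|a_k|^2=\widehat{\omega}(1-2^{-(k-1)})/\widehat{\omega}(1-2^{-k})-1$ is neat (bounded by doubling, and $\sum|a_k|^2\ge\sum\log(b_{k-1}/b_k)=\infty$ since $b_k\to0$), but the claimed formula
\[
\int_{S(a)}|g'|^2\omega^\star\,dA\ \asymp\ \omega(S(a))+2^{-n}\sum_{k\ge n}|a_k|^2\widehat{\omega}(1-2^{-k})
\]
is unproved. The Carleson condition must hold for \emph{all} $a$, and since $|g'(re^{i\theta})|^2$ is not radial, the box integral depends on $\arg a$; replacing the angular integral over $I_a$ by $|I_a|\,M_2^2(r,g')$ is an equidistribution statement for lacunary series on short arcs that requires justification. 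It can be done (split $g$ at frequency $2^n$, bound the low-frequency part pointwise by $\lesssim 2^n$, and for the tail expand $|Q'|^2$ and control the off-diagonal via $\bigl|\int_{I_a}e^{i(2^j-2^l)\theta}\,d\theta\bigr|\lesssim 2^{-\max(j,l)}$), but this is real work that you omit. You also do not indicate where the essential monotonicity of $\omega$ and of $\psi_\omega/(1-r)$ is actually used.
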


\begin{proof} (A). Let $g\in\CC^1(\omega^\star)$.\index{$\CC^1(\om^\star)$} Theorem~\ref{th:cm} shows that
$|g'(z)|^2\omega^{\star}(z)\,dA(z)$ is a $p$-Carleson measure for
$A^p_\omega$ for all $0<p<\infty$. In particular,
$|g'(z)|^2\omega^{\star}(z)\,dA(z)$ is a finite measure and hence
$g\in A^2_\omega$ by \eqref{eq:LP2}. Therefore \eqref{HSB} yields
    \begin{equation*}
    \|g\|_{A^4_\omega}^4=4^2\int_{\D}|g(z)|^{2}|g'(z)|^2\omega^\star(z)\,dA(z)+|g(0)|^4\lesssim\|g\|_{A^2_\omega}^2+|g(0)|^4,
    \end{equation*}
and thus $g\in A^4_\omega$. Continuing in this fashion, we deduce
$g\in A^{2n}_\omega$ for all $n\in\N$, and the assertion follows.

(B). If $g\in\BMOA$,\index{$\BMOA$} then
$|g'(z)|^2\log\frac{1}{|z|}\,dA(z)$ is a classical Carleson
measure~\cite{Garnett1981} (or \cite[Section~8]{GiBMO}), that is,
    $$
    \sup_{I\subset\T}\frac{\int_{S(I)}|g'(z)|^2\log\frac{1}{|z|}\,dA(z)}{|I|}<\infty.
    $$
Therefore
    \begin{equation*}
    \begin{split}
    \int_{S(I)}|g'(z)|^2\om^\star(z)\,dA(z)
    &\le\int_{S(I)}|g'(z)|^2\log\frac{1}{|z|}\left(\int_{|z|}^1\om(s)s\,ds\right)\,dA(z)\\
    &\le\left(\int_{1-|I|}^1\om(s)s\,ds\right)\int_{S(I)}|g'(z)|^2\log\frac{1}{|z|}\,dA(z)\\
    &\lesssim\left(\int_{1-|I|}^1\om(s)s\,ds\right)|I|\asymp\om\left(S(I)\right),
    \end{split}
    \end{equation*}
which together with Theorem~\ref{th:cm} gives
$g\in\CC^1(\omega^\star)$ for all
$\om\in\DD$.\index{$\CC^1(\om^\star)$}

Let now $g\in\CC^1(\omega^\star)$ with $\om\in\DD$. It is
well known that $g\in\H(\D)$ is a Bloch function if and only if
    $$
    \int_{S(I)}|g'(z)|^2(1-|z|^2)^\gamma\,dA(z)\lesssim|I|^\gamma,\quad
    I\subset\T,
    $$
for some (equivalently for all) $\gamma>1$. Fix
$\b=\b(\omega)>0$ and $C=C(\b,\om)>0$ as in
Lemma~\ref{Lemma:weights-in-D-hat}(ii). Then \eqref{3} and
Lemma~\ref{Lemma:weights-in-D-hat}(ii) yield
    \begin{equation*}
    \begin{split}
    \int_{S(I)}|g'(z)|^2(1-|z|)^{\b+1}\,dA(z)
    &=\int_{S(I)}|g'(z)|^2\om^\star(z)\frac{(1-|z|)^{\b+1}}{\om^\star(z)}\,dA(z)\\
    &\asymp\int_{S(I)}|g'(z)|^2\om^\star(z)\frac{(1-|z|)^{\b}}{\int_{|z|}^1\om(s)s\,ds}\,dA(z)\\
    &\lesssim\frac{|I|^\b}{\int_{1-|I|}^1\om(s)s\,ds}\int_{S(I)}|g'(z)|^2\om^\star(z)\,dA(z)\\
    &\lesssim |I|^{\b+1},\quad |I|\le\frac12,
    \end{split}\index{$\CC^1(\om^\star)$}
    \end{equation*}
and so $g\in\B$.

(C). By Part (B) it suffices to show that
$\B\subset\CC^1(\omega^\star)$ for $\om\in\R$. To see
this, let $g\in\B$ and $\om\in\R$. Let us consider the weight
$\tilde{\om}(r)=\frac{\widehat{\om}(r)}{1-r}$. Since $\om\in\R$, $\tilde{\om}(r)$ is a continuous weight such that
$$C_1\le \frac{\psi_{\tilde{\om}}(r)}{1-r}\le C_2,\quad 0<r<1.$$
 A calculation shows that
$\tilde{h}(r)=\frac{\int_r^1\tilde{\om}(s)\,ds}{(1-r)^\a}$, $\a=\frac{1}{C_2}$, is decreasing on
$[0,1)$. So, $h(r)=\frac{\int_r^1\om(s)\,ds}{(1-r)^\a}$ is essentially decreasing on
$[0,1)$
 This together with \eqref{3} gives
    \begin{equation*}\index{$\CC^1(\om^\star)$}
    \begin{split}
    \int_{S(I)}|g'(z)|^2\om^\star(z)\,dA(z)
    &=\int_{S(I)}|g'(z)|^2\frac{\om^\star(z)}{(1-|z|)^{\alpha+1}}(1-|z|)^{\alpha+1}\,dA(z)\\
    &\asymp\int_{S(I)}|g'(z)|^2\frac{\int_{|z|}^1\om(s)s\,ds}{(1-|z|)^{\alpha}}(1-|z|)^{\alpha+1}\,dA(z)\\
    &\lesssim\frac{\int_{1-|I|}^1\om(s)\,ds}{|I|^\alpha}\int_{S(I)}|g'(z)|^2(1-|z|)^{\alpha+1}\,dA(z)\\
    &\lesssim\om\left(S(I)\right),\quad |I|\le\frac12,
    \end{split}
    \end{equation*}
and therefore $g\in\CC^1(\omega^\star)$.

(D). Let $\om\in\I$, and assume on the contrary to the assertion
that $\B\subset\CC^1(\omega^\star)$. Ramey and
Ullrich~\cite[Proposition~5.4]{RU} constructed
$g_1,g_2\in\B$\index{$\B$} such that
$|g'_1(z)|+|g'_2(z)|\ge(1-|z|)^{-1}$ for all $z\in\D$. Since
$g_1,g_2\in\CC^1(\omega^\star)$ by the antithesis,
\eqref{3} yields
    \begin{equation}\label{56}\index{$\CC^1(\om^\star)$}
    \begin{split}
    \|f\|_{A^2_\om}^2&\gtrsim\int_{\D}|f(z)|^2\left(|g'_1(z)|^2+|g'_2(z)|^2\right)\om^\star(z)\,dA(z)\\
    &\ge\frac{1}{2}\int_{\D}|f(z)|^2\left(\vert g'_1(z)\vert +\vert
    g'_2(z)\vert\right)^2\om^\star(z)\,dA(z)\\
    &\ge\frac{1}{2}\int_{\D}|f(z)|^2\frac{\om^\star(z)}{(1-|z|)^2}\,dA(z)
    \asymp\int_{\D}|f(z)|^2\frac{\int_{|z|}^1\om(s)\,ds}{(1-|z|)}\,dA(z)\\
    &=\int_{\D}|f(z)|^2\frac{\psi_\om(|z|)}{1-|z|}\,\om(z)\,dA(z)
    \end{split}
    \end{equation}
for all $f\in\H(\D)$. If
$\int_{\D}\frac{\psi_\om(|z|)}{1-|z|}\,\om(z)\,dA(z)=\infty$, we
choose $f\equiv1$ to obtain a contradiction. Assume now that
    $
    \int_{\D}\frac{\psi_\om(|z|)}{1-|z|}\,\om(z)\,dA(z)<\infty,
    $
and replace $f$ in \eqref{56} by the test function $F_{a,2}$ from
Lemma~\ref{testfunctions1}. Then \eqref{eq:tf2} and
Lemma~\ref{Lemma:weights-in-D-hat} yield
    \begin{equation*}
    \om^\star(a)\gtrsim\int_0^1\frac{(1-|a|)^{\gamma+1}}{(1-|a|r)^\gamma}\frac{\psi_\om(r)}{1-r}\,\om(r)\,dr
    \gtrsim(1-|a|)\int_{|a|}^1\frac{\psi_\om(r)}{1-r}\,\om(r)\,dr,
    \end{equation*}
and hence
    $$
    \int_{|a|}^1\frac{\psi_\om(r)}{1-r}\,\om(r)\,dr\lesssim\int_{|a|}^1\om(r)\,dr,\quad
    a\in\D.
    $$
By letting $|a|\to1^-$, Bernouilli-l'H\^{o}pital theorem and the
assumption $\om\in\I$ yield a contradiction.
\par (E) Recall that
$\BMOA\subset\CC^1(\om^\star)$\index{$\CC^1(\om^\star)$}
by Part~(B). See \cite[Proposition $5.2$]{PelRat} for the remaining inclusion. In fact, there is
constructed a lacunary series $g\in \CC^1(\om^\star)\setminus H^2$.
\end{proof}

\begin{proposition}\label{PropConformallyInvariant}
Let $\om\in\I$ such that both $\om(r)$ and
$\frac{\psi_\om(r)}{1-r}$ are essentially increasing on $[0,1)$,
and
    \begin{equation}\label{41}
    \int_r^1\omega(s)s\,ds\lesssim\int_{\frac{2r}{1+r^2}}^1\omega(s)s\,ds,\quad0\le
    r<1.
    \end{equation}
Then $\CC^1(\omega^\star)$ is not conformally
invariant.
\end{proposition}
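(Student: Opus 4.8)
The plan is to argue by contradiction. Suppose $\CC^1(\om^\star)$ is conformally invariant; since $\CC^1(\om^\star)$ is a Banach space on which the point evaluations $g\mapsto g(0),\,g'(0)$ are bounded (by Proposition~\ref{pr:blochcpp}(A)), the closed graph theorem turns this into the existence of a constant $C=C(\om)\ge1$ with $\|g\circ\vp\|_{\CC^1(\om^\star)}\le C\|g\|_{\CC^1(\om^\star)}$ for every $g\in\CC^1(\om^\star)$ and every automorphism $\vp$ of $\D$. Under the present hypotheses, Proposition~\ref{pr:blochcpp}(E) (see also \cite[Proposition~5.2]{PelRat}) furnishes a lacunary series $g\in\CC^1(\om^\star)\setminus H^2$; recall that a lacunary series lies in $\BMOA$ exactly when it lies in $H^2$, so $g\notin\BMOA$. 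The goal is to contradict the comparability $\|g\circ\vp_a\|_{\CC^1(\om^\star)}\asymp\|g\|_{\CC^1(\om^\star)}$ for a suitable sequence $a=a_n\to\partial\D$.

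First I would recast the seminorm in a form suited to a change of variables: by \eqref{3}, \eqref{calpha} and Theorem~\ref{th:cm} with $p=q=1$,
\[
\|h\|_{\CC^1(\om^\star)}^2\asymp|h(0)|^2+\sup_{I\subset\T}\frac1{\om(S(I))}\int_{S(I)}|h'(z)|^2\om^\star(z)\,dA(z),\qquad h\in\H(\D).
\]
Applying this to $h=g\circ\vp_a$ and substituting $w=\vp_a(z)$, and using that $\vp_a\circ\vp_a=\mathrm{id}$, so that the chain rule gives $|\vp_a'(\vp_a(w))|\,|\vp_a'(w)|=1$ and the two Jacobian factors cancel, one gets
\[
\int_{S(I)}|(g\circ\vp_a)'(z)|^2\om^\star(z)\,dA(z)=\int_{\vp_a(S(I))}|g'(w)|^2\,\om^\star(\vp_a(w))\,dA(w),
\]
so the hypothesis becomes a Carleson-type bound for the transported measures $\om^\star(\vp_a(w))\,dA(w)$ over the Möbius images $\vp_a(S(I))$ of Carleson squares, uniform in $a$. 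The key then is the behaviour of $\om^\star$ under $\vp_a$: the identity $1-|\vp_a(w)|\asymp\frac{(1-|a|^2)(1-|w|)}{|1-\overline a w|^2}$ shows that $\vp_a$ composes pseudohyperbolic scales multiplicatively, so that, for $I$ near $1$ (allowed by the rotation invariance of $\CC^1(\om^\star)$) and $a$ chosen so that $\vp_a(S(I))$ is comparable to a Carleson square at a prescribed scale, the factor $\om^\star(\vp_a(w))$ is governed by $\widehat\om$ evaluated at a \emph{squared} scale. This is precisely what the three extra hypotheses address: condition \eqref{41}, which is equivalent to $\widehat\om(r)\lesssim\widehat\om\!\bigl(\tfrac{2r}{1+r^2}\bigr)$ with $1-\tfrac{2r}{1+r^2}\asymp(1-r)^2$, absorbs the passage from scale $1-r$ to scale $\asymp(1-r)^2$, while the doubling of $\widehat\om$ from Lemma~\ref{Lemma:weights-in-D-hat} together with the assumption that $\om(r)$ and $\psi_\om(r)/(1-r)$ be essentially increasing let one compare $\om^\star(\vp_a(w))/\om(S(I))$ with $\om^\star(w)/\om(S(J))$. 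Pushing this through, the hypothesized uniform bound would force
\[
\sup_{J\subset\T}\frac1{|J|}\int_{S(J)}|g'(w)|^2\log\frac1{|w|}\,dA(w)<\infty,
\]
i.e.\ $g\in\BMOA$, hence (as $g$ is lacunary) $g\in H^2$, contradicting the choice of $g$. Thus $\CC^1(\om^\star)$ is not conformally invariant.

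The main obstacle will be exactly this weight transfer: controlling $\om^\star(\vp_a(w))$ pointwise over the distorted region $\vp_a(S(I))$ and choosing $|a|$ as a function of $|J|$ so that the resulting quotient of $\widehat\om$-values stays bounded below — which is where \eqref{41} is indispensable — and then keeping the lacunary error terms in the final estimate small enough to land back in $\BMOA$. By contrast, the change of variables and the reduction to a statement about Carleson squares are routine.
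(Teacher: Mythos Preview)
Your overall plan---take the lacunary $g\in\CC^1(\om^\star)\setminus H^2$ from Proposition~\ref{pr:blochcpp}(E) and show that the family $g\circ\vp_a$ cannot stay in $\CC^1(\om^\star)$---is exactly the paper's strategy. However, your proposal stops precisely at the point where the argument has content: the ``weight transfer'' you flag as the main obstacle is never carried out, and your route via M\"obius images of Carleson squares makes that step harder than it needs to be, since $\vp_a(S(I))$ is not comparable to a Carleson square in any useful uniform way. As written, the proposal is a plan, not a proof.

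The paper avoids this difficulty by first replacing the Carleson-square definition \eqref{calpha} with the equivalent kernel characterization
\[
\|h\|^2_{\CC^1(\om^\star)}\asymp|h(0)|^2+\sup_{b\in\D}\frac{(1-|b|)^2}{\om(S(b))}\int_\D\frac{|h'(z)|^2}{|1-\overline{b}z|^2}\,\om(S(z))\,dA(z),
\]
which behaves well under the change of variables $z=\vp_a(\zeta)$. Choosing $b=a$ in the supremum and substituting gives, after the algebraic identity $1-\overline a\vp_a(\zeta)=(1-|a|^2)/(1-\overline a\zeta)$,
\[
\|g\circ\vp_a\|^2_{\CC^1(\om^\star)}\gtrsim\int_{D(0,|a|)}|g'(\zeta)|^2(1-|\zeta|)\left(\frac{\om(S(\vp_a(\zeta)))}{\om(S(a))}\cdot\frac{|1-\overline a\zeta|^2}{1-|\zeta|}\right)dA(\zeta).
\]
For $|\zeta|\le|a|$ one has $|\vp_a(\zeta)|\le\frac{2|a|}{1+|a|^2}$, so the bracketed factor is $\gtrsim\widehat\om\!\left(\frac{2|a|}{1+|a|^2}\right)\big/\widehat\om(|a|)\gtrsim1$ by \eqref{41}. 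Hence $\|g\circ\vp_a\|^2_{\CC^1(\om^\star)}\gtrsim\int_{D(0,|a|)}|g'(\zeta)|^2(1-|\zeta|)\,dA(\zeta)\to\infty$ as $|a|\to1^-$, since $g\notin H^2$. In particular $g\circ\vp_a\notin\CC^1(\om^\star)$ for $|a|$ close enough to $1$.

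Two further simplifications relative to your outline: the closed-graph argument is unnecessary, since the computation shows the norm of $g\circ\vp_a$ is actually \emph{infinite} for suitable $a$; and the natural target is $g\in H^2$ (via the Littlewood--Paley identity), not $g\in\BMOA$. You reach the same contradiction either way because $g$ is lacunary, but aiming for $\BMOA$ makes the estimate look harder than it is.
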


\begin{proof}
Let $\omega\in\I$ be as in the assumptions. An standard calculation and Lemma~\ref{Lemma:weights-in-D-hat} gives that
$g\in\CC^1(\omega^\star)$ if and only if
    $$
    \sup_{b\in\D}\frac{(1-|b|)^2}{\omega(S(b))}\int_\D\frac{|g'(z)|^2}{|1-\overline{b}z|^2}\omega(S(z))\,dA(z)<\infty.
    $$ Let
$g\in\CC^1(\omega^\star)\setminus H^2$ be the function constructed
in the proof of Proposition~\ref{pr:blochcpp}(E). Then
    \begin{equation}\label{43}\index{$\CC^1(\om^\star)$}
    \begin{split}
    &\sup_{b\in\D}\frac{(1-|b|)^2}{\omega(S(b))}\int_\D\frac{|(g\circ\vp_a)'(z)|^2}{|1-\overline{b}z|^2}\omega(S(z))\,dA(z)\\
    &\ge\frac{(1-|a|)^2}{\omega(S(a))}\int_\D\frac{|g'(\zeta)|^2}{|1-\overline{a}\vp_a(\zeta)|^2}
    \omega(S(\vp_a(\zeta)))\,dA(\z)\\
    &\ge\int_{D(0,|a|)}|g'(\z)|^2(1-|\z|)\left(\frac{\omega(S(\vp_a(\zeta)))}{\omega(S(a))}\frac{|1-\overline{a}\z|^2}{1-|\z|}\right)dA(\z),
    \end{split}
    \end{equation}
where
    \begin{equation*}
    \begin{split}
    \frac{\omega(S(\vp_a(\zeta)))}{\omega(S(a))}\frac{|1-\overline{a}\z|^2}{1-|\z|}
    &=\frac{(1-|\vp_a(\zeta)|)\int_{|\vp_a(\zeta)|}^1\omega(s)s\,ds}{(1-|a|)\int_{|a|}^1\omega(s)s\,ds}
    \frac{|1-\overline{a}\z|^2}{1-|\z|}\\
    &\gtrsim\frac{\int_{\frac{2|a|}{1+|a|^2}}^1\omega(s)s\,ds}{\int_{|a|}^1\omega(s)s\,ds}\gtrsim1,\quad
    |\z|\le|a|,
    \end{split}
    \end{equation*}
by Lemma~\ref{Lemma:weights-in-D-hat} and \eqref{41}. Since $g\not\in
H^2$, the assertion follows by letting $|a|\to1^-$ in \eqref{43}.
\end{proof}
\subsection{Boundedness of the integral operator. Case $\mathbf{q=p}$}
We shall use the following preliminary result.
\begin{lemma}\label{le:tgminfty}
Let $0<p,q<\infty$ and $\omega\in\DD$.
 If $T_g:A^p_\omega\to A^q_\omega$ is bounded, then
\begin{equation}\label{Eq:Radialqp}
    M_\infty(r,g')\lesssim\frac{(\omega^\star(r))^{\frac{1}{p}-\frac{1}{q}}}{1-r},\quad
    0<r<1.
    \end{equation}
\end{lemma}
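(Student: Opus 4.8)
The plan is to exploit the boundedness of $T_g:A^p_\om\to A^q_\om$ by feeding it the standard test functions $F_{a,p}$ from Lemma~\ref{testfunctions1} and then using the Littlewood--Paley identity~\eqref{HSB} to convert the $A^q_\om$-norm of $T_g(F_{a,p})$ into an integral involving $|g'|^2\om^\star$. First I would fix $a\in\D$ with $|a|\ge\frac12$ (the range $0<r<\frac12$ being trivial since $M_\infty(r,g')$ is bounded there by subharmonicity and $g\in A^q_\om$, which follows from $T_g$ mapping the constants into $A^q_\om$). Writing $h=T_g(F_{a,p})$, we have $h(0)=0$ and $h'=F_{a,p}\,g'$, so \eqref{HSB} (or rather its Hardy--Stein--Spencer proof specialized appropriately) gives
\[
\|T_g(F_{a,p})\|_{A^q_\om}^q = q^2\int_\D |h(z)|^{q-2}|h'(z)|^2\om^\star(z)\,dA(z).
\]
This is slightly awkward because of the $|h|^{q-2}$ factor, so instead I would argue more directly: since $|F_{a,p}(z)|\asymp 1$ on $S(a)$ by \eqref{eq:tf1}, the key point is to bound $|T_g(F_{a,p})|$ from below near the point $a$ in terms of $|g'(a)|$ and then invoke the norm inequality $\|T_g(F_{a,p})\|_{A^q_\om}\lesssim \|F_{a,p}\|_{A^p_\om}\asymp \om(S(a))^{1/p}$ from \eqref{eq:tf2}.

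The cleaner route, which I would actually carry out, is the following. Apply \eqref{HSB} to $h=T_g(F_{a,p})$ but first replace the lower bound estimate by restricting the integral to a pseudohyperbolic disc $\Delta(a,\frac14)$: on that disc $|F_{a,p}(z)|\asymp 1$, and $|T_g(F_{a,p})(z)|$ can be bounded below by a fixed multiple of $(1-|a|)|g'(a)|$ using that $T_g(F_{a,p})'=F_{a,p}g'$ together with the subharmonicity/mean-value comparison for $g'$ and a standard estimate for $\int_0^z$ along a path inside the disc. Alternatively, and more robustly, I would use \eqref{normacono} of Theorem~\ref{th:normacono}: since $(T_g(F_{a,p}))'=F_{a,p}g'$ and $|F_{a,p}|\asymp1$ on $S(a)\supset\Gamma(u)$ for $u$ near $a$, one gets
\[
\|T_g(F_{a,p})\|_{A^q_\om}^q \gtrsim \int_{\Delta(a,\frac14)} \left(\int_{\Gamma(u)}|g'(z)|^2\,dA(z)\right)^{q/2}\om(u)\,dA(u)
\gtrsim \left((1-|a|)^2|g'(a)|^2\right)^{q/2}\om(S(a)),
\]
where in the last step I used that $g'$ is (the derivative of an) analytic function, so $|g'|^2$ has the sub-mean-value property, giving $\int_{\Gamma(u)}|g'|^2\,dA\gtrsim (1-|a|)^2|g'(a)|^2$ for $u\asymp a$, and that $\om(\Delta(a,\frac14))\asymp\om(S(a))$ for $\om\in\DD$.

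Combining this lower bound with the upper bound $\|T_g(F_{a,p})\|_{A^q_\om}^q\lesssim \|F_{a,p}\|_{A^q_\om}^q\cdots$ — wait, the operator maps into $A^q_\om$ from $A^p_\om$, so $\|T_g(F_{a,p})\|_{A^q_\om}\le \|T_g\|\,\|F_{a,p}\|_{A^p_\om}\asymp \om(S(a))^{1/p}$ — yields
\[
(1-|a|)^q|g'(a)|^q\,\om(S(a)) \lesssim \om(S(a))^{q/p},
\]
hence $(1-|a|)|g'(a)| \lesssim \om(S(a))^{\frac1p-\frac1q}$. Finally, using \eqref{3}, namely $\om(S(a))\asymp\om^\star(a)$ for $\om\in\DD$, and replacing $|g'(a)|$ by $M_\infty(|a|,g')$ — which is legitimate since the above holds for every $a$ on the circle of radius $r=|a|$, and $\om(S(a))$, $\om^\star(a)$ depend only on $|a|$ — gives \eqref{Eq:Radialqp}. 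The main obstacle is the lower bound step: one must be careful that the exponent $q/2$ in the area-function norm interacts correctly with the sub-mean-value estimate for $|g'|^2$, and that the constants from $\om(\Delta(a,r))\asymp\om(S(a))$ (which requires $\om\in\DD$, via Lemma~\ref{Lemma:weights-in-D-hat}) are uniform in $a$; handling the case $q<2$ versus $q\ge2$ in the pointwise-to-area passage needs a short separate remark but no new idea.
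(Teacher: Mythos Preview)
Your overall strategy---test against $F_{a,p}$ and extract a pointwise lower bound for $|g'(a)|$---is sound, but the execution via the area-function norm has two genuine gaps.

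First, the inclusion $\Gamma(u)\subset S(a)$ for $u$ near $a$ is false: the lens region $\Gamma(u)$ reaches from near the origin out to $u$, and on most of it $|F_{a,p}(z)|\asymp(1-|a|)^{(\lambda+1)/p}$, not $\asymp1$. So you cannot drop $|F_{a,p}|^2$ from the inner integral as written; you must first restrict that inner integral to a Euclidean disc $D(a,c(1-|a|))$ before invoking \eqref{eq:tf1}. Second, and more seriously, the claim $\om(\Delta(a,\tfrac14))\asymp\om(S(a))$ fails for general $\om\in\DD$: the doubling condition on $\widehat\om$ prevents the tail from decaying too fast but says nothing about how the mass is distributed on the scale $(1-|a|)$, and in fact $\om$ may vanish on the annulus containing $\Delta(a,\tfrac14)$ while $\widehat\om(a)>0$. (The estimate you want holds for $\om\in\R$, not for $\om\in\DD$.) The fix is to integrate the outer variable $u$ over a set like $\{u\in T(a):|u|>|a|+c_0(1-|a|)\}$, whose $\om$-measure is $\asymp(1-|a|)\widehat\om(|a|+c_0(1-|a|))\asymp\om(S(a))$ by doubling, and for which $\Gamma(u)$ genuinely contains a fixed disc about $a$.

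The paper bypasses the area function entirely and argues more directly. Normalizing $f_{a,p}=F_{a,p}/\om(S(a))^{1/p}$ so that $\|f_{a,p}\|_{A^p_\om}\asymp1$, one has $|f_{a,p}(a)|\asymp\om^\star(a)^{-1/p}$ by \eqref{3}, hence $|g'(a)|\asymp\om^\star(a)^{1/p}\bigl|(T_g f_{a,p})'(a)\bigr|$. Then the elementary chain $M_\infty(r,h')\lesssim(1-r)^{-1/q}M_q(\tfrac{1+r}{2},h')\lesssim(1-r)^{-1-1/q}M_q(\tfrac{3+r}{4},h)$ together with the trivial bound $M_q^q(\rho,h)\lesssim\|h\|_{A^q_\om}^q/\widehat\om(\rho)$ and doubling gives $\bigl|(T_g f_{a,p})'(a)\bigr|\lesssim(1-|a|)^{-1}\om^\star(a)^{-1/q}$, which is exactly \eqref{Eq:Radialqp}. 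This route needs no case distinction on $q$ and no lower bound on $\om$ over pseudohyperbolic discs.
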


\begin{proof}
 Let $0<p,q<\infty$ and $\omega\in\DD$, and assume that
$T_g:A^p_\omega\to A^q_\omega$ is bounded. Consider the functions
    $$
    f_{a,p}(z)=\frac{(1-|a|)^{\frac{\gamma+1}{p}}}{(1-\overline{a}z)^{\frac{\gamma+1}{p}}\om\left(S(a)\right)^{\frac1p}},\quad
    a\in\D.
    $$
By Lemma \ref{testfunctions1} there is
$\gamma>0$ such that
$\sup_{a\in\D}\|f_{a,p}\|_{A^p_\om}<\infty$. Since
    \begin{equation*}
    \begin{split}
    \|h\|_{A^q_\omega}^q&\ge\int_{\D\setminus
    D(0,r)}|h(z)|^q\omega(z)\,dA(z)
    \gtrsim M_q^q(r,h)\int_r^1\omega(s)\,ds,\quad r\ge\frac12,
    \end{split}
    \end{equation*}
for all $h\in A^q_\omega$, we obtain
    \begin{equation*}\index{$f_{a,p}$}
    \begin{split}
    M^q_q(r,T_g(f_{a,p}))&\lesssim\frac{\|T_g(f_{a,p})\|_{A^q_\omega}^q}{\int_r^1\omega(s)\,ds}
    \le\frac{\|T_g\|^q_{(A^p_\om,A^q_\om)} \cdot\left(\sup_{a\in\D}\|f_{a,p}\|^q_{A^p_\om}\right)}{\int_r^1\omega(s)\,ds}\\
    &\lesssim\frac{1}{\int_r^1\omega(s)\,ds},\quad r\ge\frac12,
    \end{split}
    \end{equation*}
for all $a\in\D$. This together with the well-known relations
$M_\infty(r,f)\lesssim M_q(\rho,f)(1-r)^{-1/q}$ and
$M_q(r,f')\lesssim M_q(\rho,f)/(1-r)$, $\rho=(1+r)/2$,
Lemma~\ref{Lemma:weights-in-D-hat} and \eqref{3} yield
    \begin{equation*}
    \begin{split}\index{$f_{a,p}$}
    |g'(a)|&\asymp(\omega^\star(a))^\frac1p|T_g(f_{a,p})'(a)|
    \lesssim(\omega^\star(a))^\frac1p\frac{M_q\left(\frac{1+|a|}{2},\left(T_g(f_{a,p})\right)'\right)}{\left(1-|a|\right)^{\frac1q}}\\
    &\lesssim (\omega^\star(a))^\frac1p\frac{M_q((3+|a|)/4,T_g(f_{a,p}))}{\left(1-|a|\right)^{1+\frac1q}}
    \asymp\frac{(\omega^\star(a))^{\frac1p-\frac1q}}{1-|a|},\quad |a|\ge\frac12.
    \end{split}
    \end{equation*}
The assertion follows from this inequality.

\end{proof}

\begin{theorem}\label{th:integralq=p}
 If $\om\in\DD$, $g\in\H(\D)$ and $0<p<\infty$, then $T_g:A^p_\om\to A^p_\om$ is bounded if and only if $g\in \CC^1(\om^\star)$.
\end{theorem}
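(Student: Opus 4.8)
The statement is a characterization, so both implications must be proved, and the plan is to route each through the Carleson‑measure theory of Section~\ref{sec:measures}: by Theorem~\ref{th:cm} applied with $q=p$, together with \eqref{3}, the condition $g\in\CC^1(\om^\star)$ says precisely that $d\mu_g(z):=|g'(z)|^2\om^\star(z)\,dA(z)$ is a $p$-Carleson measure for $A^p_\om$, i.e. $\sup_{I\subset\T}\mu_g(S(I))/\om(S(I))<\infty$. The bridge between $T_g$ and $\mu_g$ is the Hardy--Stein--Spencer identity of Theorem~\ref{ThmLittlewood-Paley}: since $(T_gf)'=fg'$ and $(T_gf)(0)=0$,
\begin{equation}\label{plan:HSB}
\|T_gf\|_{A^p_\om}^p=p^2\int_\D|T_gf(z)|^{p-2}|f(z)|^2\,d\mu_g(z),\qquad f\in\H(\D).
\end{equation}
For $p=2$ this already finishes the proof: by \eqref{eq:LP2}, \eqref{plan:HSB} reads $\|T_gf\|_{A^2_\om}^2=4\int_\D|f|^2\,d\mu_g$, so boundedness of $T_g$ on $A^2_\om$ is literally the statement that $\mu_g$ is a $2$-Carleson measure for $A^2_\om$.

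For the sufficiency when $p>2$, I would feed the hypothesis into \eqref{plan:HSB} through H\"older's inequality with exponents $\frac{p}{p-2}$ and $\frac p2$, getting $\|T_gf\|_{A^p_\om}^p\le p^2\bigl(\int_\D|T_gf|^p\,d\mu_g\bigr)^{\frac{p-2}{p}}\bigl(\int_\D|f|^p\,d\mu_g\bigr)^{\frac2p}$; since $g\in\CC^1(\om^\star)$ the Carleson embedding $\int_\D|h|^p\,d\mu_g\lesssim\|h\|_{A^p_\om}^p$ holds, and applying it to $h=f$ and to $h=T_gf$ yields the self‑improving bound $\|T_gf\|_{A^p_\om}^p\lesssim\|T_gf\|_{A^p_\om}^{p-2}\|f\|_{A^p_\om}^2$, whence the claim. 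The cancellation is licensed once $\|T_gf\|_{A^p_\om}<\infty$ is known; this I would arrange by running the argument for the dilations $(T_gf)_\rho=T_{g_\rho}(f_\rho)$ with $g_\rho(z)=g(\rho z)$ (for which the integrals are finite and the Carleson constants are uniform in $\rho$) and letting $\rho\to1^-$ by monotone convergence. When $0<p<2$ the factor $|T_gf|^{p-2}$ has negative exponent and this cancellation fails; here I would start instead from the square‑function norm of Theorem~\ref{th:normacono}, which turns $\|T_gf\|_{A^p_\om}^p$ into $\asymp\int_\D\bigl(\int_{\Gamma(u)}|f(z)|^2|g'(z)|^2\,dA(z)\bigr)^{p/2}\om(u)\,dA(u)$, and prove the Carleson embedding for this area functional by the stopping‑time/distribution‑function method — built on the weak‑type inequality \eqref{eq:s4}, the pointwise bound $|f|^2\le CM_\om(|f|^2)$ of Lemma~\ref{le:suf1}, and the Carleson condition on $\mu_g$ — i.e. the Bergman tent‑space machinery indicated in Section~\ref{sec:measures}.

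For the necessity, Lemma~\ref{le:tgminfty} with $q=p$ gives $M_\infty(r,g')\lesssim(1-r)^{-1}$, i.e. $g\in\B$, which I would use only for a priori control. To upgrade this to the Carleson condition, test the boundedness of $T_g$ on the functions $F_{a,p}$ of Lemma~\ref{testfunctions1}: $|F_{a,p}|\asymp1$ on $S(a)$, $\|F_{a,p}\|_{A^p_\om}^p\asymp\om(S(a))$, hence $\|T_gF_{a,p}\|_{A^p_\om}^p\lesssim\|T_g\|^p\om(S(a))$. For $0<p<2$ one observes that $T_gF_{a,p}$ is bounded on $S(a)$ by $\lesssim\|g\|_\B$ (the primitive of $F_{a,p}g'$ along a path into $S(a)$ only accumulates size near $a$), so $|T_gF_{a,p}|^{p-2}\gtrsim1$ there; then \eqref{plan:HSB} gives $\|T_gF_{a,p}\|_{A^p_\om}^p\gtrsim\int_{R}|g'|^2\om^\star\,dA$ for a fixed ``lower'' portion $R$ of $S(a)$, and a decomposition of $S(a)$ into radial shells combined with Lemma~\ref{Lemma:weights-in-D-hat}(ii) and the doubling of $\widehat\om$ (plus a truncation $|z|\le\rho<1$ to avoid self‑reference) upgrades this to $\int_{S(a)}|g'|^2\om^\star\,dA\lesssim\om(S(a))$. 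For $p\ge2$, where $|T_gF_{a,p}|^{p-2}$ cannot be bounded below, I would instead use the square‑function norm of $T_gF_{a,p}$, restrict the outer integral to a fixed fraction of $S(a)$ concentrated near $a$, and combine Jensen's inequality (convexity of $t\mapsto t^{p/2}$) with Fubini and the same shell/doubling argument.

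The main obstacle throughout is the factor $|T_gf|^{p-2}$ in \eqref{plan:HSB}: for $p\ge2$ it is manageable (H\"older and self‑improvement on the sufficiency side, Jensen on the necessity side), but for $0<p<2$ it must be sidestepped — in sufficiency by passing to the square‑function norm and invoking the tent‑space Carleson embedding, in necessity by the localization of $T_gF_{a,p}$ — and in every case one must bridge the gap between the ``full Carleson box'' estimate produced by a single test function and the localized Carleson condition, which is exactly what the harmonic‑analysis tools of Section~\ref{sec:measures} are designed to do.
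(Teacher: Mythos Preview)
Your sufficiency argument for $p\ge2$ via the Hardy--Stein--Spencer identity and self-improvement is correct and is in fact more elementary than the paper's route, which goes through the square-function norm of Theorem~\ref{th:normacono}, duality $L^{p/2}_\om\simeq(L^{p/(p-2)}_\om)^\star$, and the maximal operator of Theorem~\ref{co:maxbou}. For $0<p<2$ sufficiency the paper's argument is also simpler than the tent-space machinery you invoke: one writes $\bigl(\int_{\Gamma(u)}|f|^2|g'|^2\bigr)^{p/2}\le N(f)(u)^{p(2-p)/2}\bigl(\int_{\Gamma(u)}|f|^{p}|g'|^2\bigr)^{p/2}$, applies H\"older, then Fubini to recover $\int_\D|f|^p\,d\mu_g$.

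The genuine gap is in your necessity argument. Your plan is to test on $F_{a,p}$, obtain $\int_R|g'|^2\om^\star\,dA\lesssim\om(S(a))$ on a lower portion $R\subset S(a)$, and then ``upgrade'' to the full box by a dyadic shell decomposition. This upgrade fails for $\om\in\DD$: covering $S(a)$ by shells at heights $1-2^{-k}(1-|a|)$ produces $2^k$ boxes at level $k$, each with $\om$-measure $\asymp 2^{-k}(1-|a|)\widehat\om(1-2^{-k}(1-|a|))$, so the sum over $k$ is $\asymp(1-|a|)\sum_k\widehat\om(1-2^{-k}(1-|a|))$. For rapidly increasing weights such as $v_\alpha$ this series diverges (or gives a bound of order $\om(S(a))\log\frac{1}{1-|a|}$), because $\DD$ gives only a lower bound $\widehat\om(1-2^{-k}(1-|a|))\gtrsim 2^{-k\beta}\widehat\om(a)$ via Lemma~\ref{Lemma:weights-in-D-hat}(ii), not the reverse-doubling upper bound you would need. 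The truncation $|z|\le\rho$ does not help: the number of levels, hence the constant, still blows up as $\rho\to1^-$.

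The paper avoids this difficulty entirely by never localizing to a portion of $S(a)$. For $p>2$ it shows directly that $\int_\D|f|^p\,d\mu_g\lesssim\|f\|_{A^p_\om}^p$ for all $f$ (Fubini, H\"older against $N(f)^{p-2}$, and the assumed bound on $\|T_gf\|_{A^p_\om}$ via the square-function norm), which is the Carleson embedding itself. For $0<p<2$ the argument is more delicate: one applies H\"older with carefully chosen exponents $\alpha,\beta,\alpha',\beta'$ (with $\beta/\alpha=p/2$) to split $\int_{S(a)}|g'|^2\om^\star\,dA$ into a factor controlled by $\|T_g(F_{a,p})\|_{A^p_\om}$ and a factor involving the auxiliary operator $S_g(\chi_{S(a)})(u)=\int_{\Gamma(u)\cap S(a)}|g'|^2\,dA$; the latter is then estimated by duality and Theorem~\ref{co:maxbou}, producing a self-referential inequality that closes once one replaces $g$ by $g_r$ and verifies the uniform bound \eqref{eq:dilatadas}. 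The key point is that this scheme yields the full Carleson quotient $\mu_g(S(a))/\om(S(a))$ on the left from the outset, so no shell summation is needed.
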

\begin{proof}
 If $p=2$ the equivalence follows from
Theorem~\ref{ThmLittlewood-Paley}, the definition of
$\CC^1(\om^\star)$\index{$\CC^1(\om^\star)$} and \eqref{calpha}. The rest of the proof
 is divided in four cases.

\medskip
Let $p>2$ and assume that
$g\in\CC^1(\omega^\star)$. Since
 $L^{p/2}_\omega
\backsimeq \left(L^{\frac{p}{p-2}}_\omega\right)^\star$,
Theorem~\ref{th:normacono} shows that $T_g:A^p_\omega\to
A^p_\omega$ is bounded if and only if
    \begin{equation*}
    \left|
    \int_\D\,h(u)\left(\int_{\Gamma(u)}|f(z)|^2|g'(z)|^2\,dA(z)\right)\omega(u)\,dA(u)
    \right|\lesssim\|h\|_{L^{\frac{p}{p-2}}_\omega}\|f\|^2_{A^p_\om},\quad\text{ $h\in L^{\frac{p}{p-2}}_\omega$.}
    \end{equation*}
Bearing in mind Theorems~\ref{th:cm} and \ref{co:maxbou},
    \begin{equation*}
    \begin{split}
    &\left|
    \int_\D\,h(u)\left(\int_{\Gamma(u)}|f(z)|^2|g'(z)|^2\,dA(z)\right)\omega(u)\,dA(u)\right|
   \\ & \le\int_\D|f(z)|^2|g'(z)|^2\left(\int_{T(z)}|h(u)|\omega(u)\,dA(u)\right)\,dA(z)\\
    &\asymp\int_\D|f(z)|^2|g'(z)|^2\omega^\star(z)\left(\frac{1}{\omega\left(S(z)\right)}\int_{T(z)}|h(u)|\omega(u)
    \,dA(u)\right)\,dA(z)
    \\&  \lesssim\int_\D|f(z)|^2|g'(z)|^2\omega^\star(z)M_\om(|h|)(z)\,dA(z)
     \\ &  \lesssim \left(\int_\D|f(z)|^{p}|g'(z)|^2\omega^\star(z)\,dA(z)\right)^{\frac{2}{p}}
   \cdot\left(\int_\D\left(M_\om(|h|)(z)\right)^{\frac{p}{p-2}}|g'(z)|^2\omega^\star(z)\,dA(z)\right)^{\frac{p-2}{p}}
   \\ &\lesssim \|f\|^2_{A^p_\om}\|h\|_{L^{\frac{p}{p-2}}_\omega},
    \end{split}
    \end{equation*}
  so  $T_g:A^p_\omega\to
A^p_\omega$ is bounded.
    \medskip
\par Reciprocally, let  $p>2$  and assume that $T_g:A^p_\omega\to A^p_\omega$ is
bounded. By  Theorem~\ref{th:normacono}
this is equivalent to
    \begin{equation*}
    \|T_g(f)\|_{A^p_\omega}^p
    \asymp\int_\D\left(\int_{\Gamma(u)}|f(z)|^2|g'(z)|^2\,dA(z)\right)^\frac{p}2\omega(u)\,dA(u)
    \lesssim\|f\|_{A^p_\omega}^p
    \end{equation*}
for all $f\in A^p_\om$. By using this together with
\eqref{3}, Fubini's theorem, H\"older's
inequality and Lemma~\ref{le:funcionmaximalangular}, we obtain
    \begin{eqnarray*}\index{$N(f)$}\index{non-tangential maximal function}
    &&\int_{\D}|f(z)|^p|g'(z)|^2\om^\star(z)\,dA(z)
    \asymp\int_{\D}|f(z)|^p|g'(z)|^2\om(T(z))\,dA(z)\\
    &&=\int_\D\int_{\Gamma(u)}|f(z)|^p|g'(z)|^2dA(z)\om(u)\,dA(u)\\
    &&\le\int_\D N(f)(u)^{p-2}\int_{\Gamma(u)}|f(z)|^2|g'(z)|^2\,dA(z)\om(u)\,dA(u)\\
    &&\le\left(\int_\D
    N(f)(u)^{p}\om(u)\,dA(u)\right)^\frac{p-2}{p}\\
    &&\quad\cdot\left(\int_\D\left(\int_{\Gamma(u)}|f(z)|^2|g'(z)|^2dA(z)\right)^\frac{p}{2}\om(u)\,dA(u)\right)^\frac{2}{p}
    \lesssim\|f\|_{A^p_\om}^p
    \end{eqnarray*}
for all $f\in A^p_\om$. Therefore $|g'(z)|^2\omega^\star(z)dA(z)$
is a $p$-Carleson measure for $A^p_\omega$, and thus
$g\in\CC^1(\om^\star)$ by the
definition.
\smallskip\par
Let now $0<p<2$, and assume that
$g\in\CC^1(\omega^\star)$. Then
\begin{equation*}
    \begin{split}
    \|T_g(f)\|_{A^p_\omega}^p
    &\asymp \int_\D\left(\int_{\Gamma(u)}|f(z)|^2|g'(z)|^2\,dA(z)\right)^\frac{p}2\omega(u)\,dA(u)\\
    &\le\int_\D N(f)(u)^{\frac{p(2-p)}{2}}\cdot\left(\int_{\Gamma(u)}|f(z)|^{p}|g'(z)|^2\,dA(z)\right)^\frac{p}2\omega(u)\,dA(u)\\
    &\le\left(\int_\D
    N(f)(u)^p\omega(u)\,dA(u)\right)^\frac{2-p}{2}\\
    &\quad\cdot\left(\int_\D\int_{\Gamma(u)}|f(z)|^{p}|g'(z)|^2\,dA(z)\omega(u)\,dA(u)\right)^\frac{p}2\\
    &\lesssim\|f\|_{A^p_\omega}^{\frac{p(2-p)}{2}}\left(\int_\D|f(z)|^{p}|g'(z)|^2\omega(T(z))\,dA(z)\right)^\frac{p}2\\
    &\asymp\|f\|_{A^p_\omega}^{\frac{p(2-p)}{2}}\left(\int_{\D}|f(z)|^{p}|g'(z)|^2\omega^\star(z)\,dA(z)\right)^\frac{p}2
    \lesssim\|f\|_{A^p_\omega}^p.
    \end{split}
    \end{equation*}

Let now $0<p<2$, and assume that $T_g:A^p_\omega\to A^p_\omega$ is
bounded. Then Lemma~\ref{le:tgminfty} and its proof imply
$g\in\mathcal{B}$ and
    \begin{equation}\label{eq:tgbloch}
    \|g\|_{\mathcal{B}}\lesssim\|T_g\|.
    \end{equation}
Choose $\gamma>0$ large enough, and consider the functions
$F_{a,p}=\left(\frac{1-|a|^2}{1-\overline{a}z}\right)^{\frac{\gamma+1}{p}}$
of Lemma~\ref{testfunctions1}.\index{$F_{a,p}$} Let
$1<\a,\b<\infty$ such that $\b/\a=p/2<1$, and let $\a'$ and $\b'$
be the conjugate indexes of $\a$ and $\b$. Then
\eqref{3}, Fubini's theorem, H\"older's
inequality, \eqref{eq:tf1} and \eqref{normacono} yield
    \begin{equation}\index{$F_{a,p}$}
    \begin{split}\label{eq:tgb1}
    &\int_{S(a)}|g'(z)|^2\omega^\star(z)\,dA(z)\\
    &\asymp\int_\D\left(\int_{S(a)\cap\Gamma(u)}|g'(z)|^2|F_{a,p}(z)|^2\,dA(z)\right)^{\frac1\a+\frac1{\a'}}\omega(u)\,dA(u)\\
    &\le\left(\int_\D\left(\int_{\Gamma(u)}|g'(z)|^2|F_{a,p}(z)|^2\,dA(z)\right)^\frac{\b}{\a}\omega(u)\,dA(u)\right)^\frac1\b\\
    &\quad\cdot\left(\int_\D\left(\int_{\Gamma(u)\cap
    S(a)}|g'(z)|^2\,dA(z)\right)^\frac{\b'}{\a'}\omega(u)\,dA(u)\right)^\frac1{\b'}\\
    &\asymp\|T_g(F_{a,p})\|_{A^p_\omega}^\frac{p}{\b}\|S_g(\chi_{S(a)})\|_{L_\omega^\frac{\b'}{\a'}}^\frac{1}{\a'},\quad
    a\in\D,
    \end{split}
    \end{equation}
where
    \begin{equation*}
    S_g(\varphi)(u)=\int_{\Gamma(u)}|\varphi(z)|^2|g'(z)|^2\,dA(z),\quad
    u\in\D\setminus\{0\},
    \end{equation*}
for any bounded function $\varphi$ on $\D$. Since $\b/\a=p/2<1$,
we have $\frac{\b'}{\a'}>1$ with the conjugate exponent
$\left(\frac{\b'}{\a'}\right)'=\frac{\b(\a-1)}{\a-\b}>1$.
Therefore
      \begin{equation}
      \begin{split}\label{eq:tgb2}
      \|S_g(\chi_{S(a)})\|_{L_\omega^\frac{\b'}{\a'}}
      =\sup_{\|h\|_{L_\omega^{\frac{\b(\a-1)}{\a-\b}}}\le1}
      \left|\int_\D h(u)S_g(\chi_{S(a)})(u)\omega(u)\,dA(u)\right|.
      \end{split}
      \end{equation}
By using Fubini's theorem, \eqref{3},
H\"older's inequality and Theorem~\ref{co:maxbou}, we deduce
     \begin{equation}
     \begin{split}\label{eq:tgb3}
     &\left|\int_\D h(u)S_g(\chi_{S(a)})(u)\omega(u)\,dA(u)\right|\\
     &\le\int_\D|h(u)|\int_{\Gamma(u)\cap
     S(a)}|g'(z)|^2\,dA(z)\,\omega(u)\,dA(u)\\
     &\lesssim\int_{S(a)}M_\omega(|h|)(z)|g'(z)|^2\omega^\star(z)\,dA(z)\\
     &\le\left(\int_{S(a)}|g'(z)|^2\omega^\star(z)\,dA(z)\right)^\frac{\a'}{\b'}\\
     &\quad\cdot\left(\int_\D
     M_\omega(|h|)^{\left(\frac{\b'}{\a'}\right)'}|g'(z)|^2\omega^\star(z)\,dA(z)\right)^{1-\frac{\a'}{\b'}}\\
     &\lesssim\left(\int_{S(a)}|g'(z)|^2\omega^\star(z)\,dA(z)\right)^\frac{\a'}{\b'}\\
     &\quad\cdot\left(\sup_{a\in\D}\frac{\int_{S(a)}|g'(z)|^2\omega^\star(z)\,dA(z)}{\omega(S(a))}\right)^{1-\frac{\a'}{\b'}}
     \|h\|_{L_\omega^{\left(\frac{\b'}{\a'}\right)'}}.
     \end{split}
     \end{equation}
By replacing $g(z)$ by $g_r(z)=g(rz)$, $0<r<1$, and combining
\eqref{eq:tgb1}--\eqref{eq:tgb3}, we obtain
    \begin{equation*}\index{$F_{a,p}$}
    \begin{split}
    \int_{S(a)}|g_r'(z)|^2\omega^\star(z)\,dA(z)&\lesssim\|T_{g_r}(F_{a,p})\|_{A^p_\omega}^\frac{p}{\b}
    \left(\int_{S(a)}|g_r'(z)|^2\omega^\star(z)\,dA(z)\right)^\frac{1}{\b'}\\
    &\quad\cdot\left(\sup_{a\in\D}\frac{\int_{S(a)}|g_r'(z)|^2\omega^\star(z)\,dA(z)}{\omega(S(a))}\right)^{\frac{1}{\a'}\left(1-\frac{\a'}{\b'}\right)}.
    \end{split}
    \end{equation*}
We now claim that there exists a constant $C=C(\om)>0$ such that
    \begin{equation}\label{eq:dilatadas}\index{$F_{a,p}$}
    \sup_{0<r<1}\|T_{g_r}(F_{a,p})\|_{A^p_\omega}^p\le
    C\|T_{g}\|_{A^p_\omega}^p\omega(S(a)),\quad a\in\D.
    \end{equation}
Taking this for granted for a moment, we deduce
    \begin{equation*}
    \left(\frac{\int_{S(a)}|g_r'(z)|^2\omega^\star(z)\,dA(z)}{\omega(S(a))}\right)^\frac{1}{\b}
    \lesssim\|T_{g}\|_{A^p_\omega}^\frac{p}{\b}
    \left(\sup_{a\in\D}\frac{\int_{S(a)}|g_r'(z)|^2\omega^\star(z)\,dA(z)}{\omega(S(a))}\right)^{\frac{1}{\a'}\left(1-\frac{\a'}{\b'}\right)}
    \end{equation*}
for all $0<r<1$ and $a\in\D$. This yields
    $$
    \frac{\int_{S(a)}|g_r'(z)|^2\omega^\star(z)\,dA(z)}{\omega(S(a))}\lesssim\|T_{g}\|^2,\quad
    a\in\D,
    $$
and so
    \begin{equation*}
    \sup_{a\in\D}\frac{\int_{S(a)}|g(z)|^2\omega^\star(z)\,dA(z)}{\omega(S(a))}\le\sup_{a\in\D}\liminf_{r\to
1^-}\left(\frac{\int_{S(a)}|g_r'(z)|^2\omega^\star(z)\,dA(z)}{\omega(S(a))}\right)
    \lesssim  \|T_{g}\|^2
    \end{equation*}
by Fatou's lemma. Therefore
$g\in\CC^1(\om^\star)$ by
Theorem~\ref{th:cm}.

It remains to prove~\eqref{eq:dilatadas}. To do this, let
$a\in\D$. If $|a|\le r_0$, where $r_0\in(0,1)$ is fixed, then the
inequality in~\eqref{eq:dilatadas} follows by Theorem~\ref{th:normacono}, the change of
variable $rz=\z$, the fact
    \begin{equation}\label{conos}
    \Gamma(ru)\subset \Gamma(u),\quad 0<r<1,
    \end{equation}
and the assumption that $T_g:A^p_\om\to A^p_\om$ is bounded. If
$a\in\D$ is close to the boundary, we consider two separate cases.

Let first $\frac12<|a|\le\frac1{2-r}$. Then
    $$
    |1-\overline{a}z|\le\left|1-\overline{a}\frac{z}{r}\right|+\frac{1-r}{2-r}\le
    2\left|1-\overline{a}\frac{z}{r}\right|,\quad |z|\le r.
    $$
Therefore Theorem~\ref{th:normacono}, \eqref{conos} and
\eqref{eq:tf2} yield
    \begin{equation}\label{78}\index{$F_{a,p}$}
    \begin{split}
    \|T_{g_{r}}(F_{a,p})\|_{A^p_\omega}^p
    &\asymp \int_\D \left(\int_{\Gamma(u)}r^2|g'(rz)|^2
    \left|F_{a,p}(z)\right|^2\,dA(z)\right)^{\frac{p}{2}}\om(u)\,du\\
    &=\int_\D \left(\int_{\Gamma(ru)}|g'(z)|^2
    \left|F_{a,p}\left(\frac{z}{r}\right)\right|^2
    \,dA(z)\right)^{\frac{p}{2}}\om(u)\,du\\
    &\le2^{\gamma+1}\int_\D \left(\int_{\Gamma(ru)}|g'(z)|^2
    \left|F_{a,p}(z)\right|^2
    \,dA(z)\right)^{\frac{p}{2}}\om(u)\,du\\
    &\le2^{\gamma+1}\int_\D \left(\int_{\Gamma(u)}|g'(z)|^2
    \left|F_{a,p}(z)\right|^2
    \,dA(z)\right)^{\frac{p}{2}}\om(u)\,du\\
    &\asymp\|T_{g}(F_{a,p})\|_{A^p_\omega}^p\lesssim\|T_{g}\|_{A^p_\omega}^p\omega(S(a)),
    \end{split}
    \end{equation}
and hence
    \begin{equation}\index{$F_{a,p}$}
    \begin{split}\label{eq:dil1}
    \|T_{g_{r}}(F_{a,p})\|_{A^p_\omega}^p\lesssim \|T_{g}\|_{A^p_\omega}^p\omega(S(a))
    ,\quad\frac12<|a|\le\frac1{2-r}.
    \end{split}
    \end{equation}

Let now $|a|>\max\{\frac1{2-r},\frac12\}$. Then, by
Theorem~\ref{th:normacono}, \eqref{eq:tgbloch} and
Lemma~\ref{testfunctions1}, we deduce
    \begin{equation}\index{$F_{a,p}$}
    \begin{split}\label{eq:dil2}
    \|T_{g_{r}}(F_{a,p})\|_{A^p_\omega}^p
    &\asymp\int_\D\left(\int_{\Gamma(u)}r^2|g'(rz)|^2
    \left|F_{a,p}(z)\right|^2\,dA(z)\right)^{\frac{p}{2}}\om(u)\,du\\
    &\le M_\infty(r,g')^p \int_\D \left(\int_{\Gamma(u)}\left|F_{a,p}(z)\right|^2\,dA(z)\right)^{\frac{p}{2}}\om(u)\,du
\\ &\lesssim
 M_\infty\left( 2-\frac{1}{|a|},g'\right)^p(1-|a|)^p\left\|\left(\frac{1-|a|^2}{1-\overline{a}z}\right)^{\frac{\gamma+1}{p}-1}\right\|_{A^p_\omega}^p
\\ & \lesssim \|g\|^p_{\mathcal{B}}\,\omega(S(a))\lesssim\|T_{g}\|_{A^p_\omega}^p\,\omega(S(a))
    \end{split}
    \end{equation}
for $\gamma>0$ large enough. This together with \eqref{eq:dil1}
gives \eqref{eq:dilatadas}. The proof of (i) is now complete.
\end{proof}
\subsection{Boundedness of the integral operator. Case $q\ge p$.}
If
$\alpha>1$ and $\om\in\DD$,  $g\in\CC^{\alpha}(\omega^\star)$
if and only if \cite[Proposition $4.7$]{PelRat}
    $$
    M_\infty(r,g')\lesssim\frac{(\omega^\star(r))^{\frac{\alpha-1}{2}}}{1-r},\quad
    0<r<1.   $$
    So using  analogous  ideas to those employed in the proof of Theorem~\ref{th:integralq=p}  we can prove the following.
    \begin{theorem}
    Let $0<p<q<\infty$, $\om\in\DD$ and $g\in\H(\D)$.
    \begin{enumerate}
    \item[\rm(i)]If $0<p<q$ and $\frac1p-\frac1q<1$, then the
following conditions are equivalent:
    \begin{enumerate}
   \item \,$T_g:A^p_\om\to A^q_\om$ is bounded;
    \item $\displaystyle M_\infty(r,g')\lesssim\frac{(\omega^\star(r))^{\frac1p-\frac1q}}{1-r},\quad
    r\to1^-;
    $
    \item $\displaystyle \sup_{I\subset\T}\frac{\int_{S(I)}|g'(z)|^2\om^\star(z)\,dA(z)}
    {\left(\om\left(S(I)\right)\right)^{\alpha}}<\infty.$
    \end{enumerate}
     \item[\rm(ii)] If $\frac1p-\frac1q\ge 1$, then $T_g:A^p_\om\to A^q_\om$ is bounded if and only if $g$ is constant.
     \end{enumerate}
    \end{theorem}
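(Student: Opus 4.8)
The plan is to prove part~(i) through the chain $(a)\Rightarrow(b)\Leftrightarrow(c)\Rightarrow(a)$ and to dispatch part~(ii) directly. Throughout set $\alpha=1+2(\frac1p-\frac1q)$; under the hypothesis $\frac1p-\frac1q<1$ this gives $1<\alpha<3$, and with this value of $\alpha$ condition~(c) is, by~\eqref{calpha}, precisely the assertion $g\in\CC^{\alpha}(\om^\star)$. The implication $(a)\Rightarrow(b)$ is then immediate from Lemma~\ref{le:tgminfty}, which holds for all $0<p,q<\infty$ and yields exactly $M_\infty(r,g')\lesssim(\om^\star(r))^{\frac1p-\frac1q}/(1-r)$. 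The equivalence $(b)\Leftrightarrow(c)$ is the cited \cite[Proposition~4.7]{PelRat}: for $\om\in\DD$ and $\alpha>1$ one has $g\in\CC^{\alpha}(\om^\star)$ if and only if $M_\infty(r,g')\lesssim(\om^\star(r))^{\frac{\alpha-1}{2}}/(1-r)$, and $\frac{\alpha-1}{2}=\frac1p-\frac1q$ by the choice of~$\alpha$.

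The substantial step is $(c)\Rightarrow(a)$, which I would obtain by mimicking the proof of Theorem~\ref{th:integralq=p}. If $q=2$ it is immediate: by~\eqref{eq:LP2} one has $\|T_gf\|_{A^2_\om}^2=4\int_{\D}|f(z)|^2|g'(z)|^2\om^\star(z)\,dA(z)$, and $g\in\CC^{\alpha}(\om^\star)=\CC^{2/p}(\om^\star)$ says exactly that $|g'|^2\om^\star\,dA$ is a $2$-Carleson measure for $A^p_\om$, so Theorem~\ref{th:cm} closes the case. For $q\neq2$ the starting point is the conical norm of Theorem~\ref{th:normacono}: since $(T_gf)'=fg'$ and $T_gf(0)=0$,
\begin{equation*}
\|T_gf\|_{A^q_\om}^q\asymp\int_{\D}\left(\int_{\Gamma(u)}|f(z)|^{2}|g'(z)|^{2}\,dA(z)\right)^{q/2}\om(u)\,dA(u),
\end{equation*}
and one splits into cases according to the position of $p$ and $q$ relative to~$2$. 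When $q>2$, dualise the $L^{q/2}_\om$-norm, pass the integral over $\Gamma(u)$ to one over the tent $T(z)$ by Fubini, use $\om(T(z))\asymp\om^\star(z)\asymp\om(S(z))$ (equation~\eqref{3}) to bring in the weighted maximal function $M_\om$, and finish with Hölder's inequality together with Theorems~\ref{th:cm} and~\ref{co:maxbou}. When $q<2$ (so $p<q<2$), instead extract a power of the non-tangential maximal function $N(f)$ from the square function, control it via Lemma~\ref{le:funcionmaximalangular}, and again apply Hölder's inequality and Theorem~\ref{th:cm}. The guiding principle is that, by Theorem~\ref{th:cm}, the property that $|g'|^2\om^\star\,dA$ is a $\beta$-Carleson measure for $A^p_\om$ is equivalent to $g\in\CC^{\beta/p}(\om^\star)$, so the exponents in the Hölder splittings must be chosen so that every Carleson-type condition reduces to the single hypothesis $g\in\CC^{\alpha}(\om^\star)$; the residual factors are handled with the pointwise bound $|g'(z)|\lesssim(\om^\star(z))^{\frac1p-\frac1q}/(1-|z|)$ coming from~(b) --- which, since $\om^\star(z)\to0$ and $\alpha>1$, already forces $g\in\B$ --- and the elementary estimate $|f(z)|\lesssim\|f\|_{A^p_\om}\,\om(S(z))^{-1/p}$, a consequence of Lemma~\ref{le:suf1}.

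I expect the main obstacle to be precisely this exponent bookkeeping. Unlike the diagonal case $q=p$ of Theorem~\ref{th:integralq=p}, the source index $p$ and the target index $q$ now differ, so the dualising exponent, the Hölder exponents, and the Carleson-measure exponents no longer coincide; one must check, case by case, that they can all be reconciled with $\alpha=1+2(\frac1p-\frac1q)$, and in particular that the cross-terms not directly absorbed by $g\in\CC^{\alpha}(\om^\star)$ are controlled by the growth bound on $g'$ in~(b). The use of $\om^\star\asymp\om(S(\cdot))$, valid only for $\om\in\DD$ (equation~\eqref{3} and Lemma~\ref{Lemma:weights-in-D-hat}), is what makes the threshold fall on $\frac1p-\frac1q$ rather than on a weight-dependent quantity, which is the manifestation here of the proximity of $A^p_\om$ to $H^p$.

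Part~(ii) is short. Suppose $T_g:A^p_\om\to A^q_\om$ is bounded and $\frac1p-\frac1q\ge1$. By Lemma~\ref{le:tgminfty} and $\om^\star(r)\asymp\widehat{\om}(r)(1-r)$ for $r\ge\frac12$ (Lemma~\ref{Lemma:weights-in-D-hat}(viii)),
\begin{equation*}
M_\infty(r,g')\lesssim\widehat{\om}(r)^{\frac1p-\frac1q}(1-r)^{(\frac1p-\frac1q)-1}\lesssim\widehat{\om}(r)^{\frac1p-\frac1q}\longrightarrow0,\qquad r\to1^-,
\end{equation*}
since $(\frac1p-\frac1q)-1\ge0$ and $\widehat{\om}(r)\to0$. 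By the maximum modulus principle $|g'(z_0)|\le M_\infty(r,g')$ for every $r\ge|z_0|$, whence $g'\equiv0$ and $g$ is constant; the converse is trivial since $T_g\equiv0$ when $g$ is constant.
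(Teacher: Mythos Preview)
Your approach coincides with the paper's: the paper gives no detailed argument here, stating only that the result follows ``using analogous ideas to those employed in the proof of Theorem~\ref{th:integralq=p}'', together with the cited \cite[Proposition~4.7]{PelRat} for the equivalence of the growth condition~(b) with membership in $\CC^{\alpha}(\om^\star)$. Your chain $(a)\Rightarrow(b)\Leftrightarrow(c)\Rightarrow(a)$ via Lemma~\ref{le:tgminfty}, Proposition~4.7, and an adaptation of the conical-norm/dualisation/maximal-function arguments from Theorem~\ref{th:integralq=p} is exactly this programme, and your treatment of part~(ii) via Lemma~\ref{le:tgminfty} combined with Lemma~\ref{Lemma:weights-in-D-hat}(viii) is correct.

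One remark on the value of~$\alpha$: your choice $\alpha=1+2(\frac1p-\frac1q)$ is the one forced by Proposition~4.7 and condition~(b), and a quick check with a standard weight confirms it (for $\om\equiv1$ one has $\om^\star(r)\asymp(1-r)^2$, $\om(S(I))\asymp|I|^2$, and the two conditions match precisely when $\alpha=1+2(\frac1p-\frac1q)$). The paper's later summary theorem writes $g\in\CC^{q/p}(\om^\star)$, which agrees with your value only when $q=2$; this appears to be a slip in the summary, and your reading of the unlabelled~$\alpha$ is the consistent one. Your caveat about the exponent bookkeeping in the case $p<q<2$ is well taken: the straightforward H\"older splitting from the diagonal proof yields a Carleson exponent exceeding~$\alpha$, so one does need to invoke the pointwise bound~(b) on part of $|g'|^2$ (as you indicate) rather than relying solely on the Carleson condition.
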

\subsection{Boundedness of the integral operator. Case $0<q<p$.}
We shall use Corollary~\ref{cor:FactorizationBergman}  on factorization of $A^p_\om$-functions  in order to study the remaining case.
\begin{theorem}
If $0<q<p<\infty$, $g\in\H(\D)$ and $\om\in\widetilde{\I}\cup\R$,
    $T_g:A^p_\om\to A^q_\om$ is bounded if and only if
     $g\in A^s_\omega$, where $\frac{1}{s}=\frac1q-\frac1p$.
\end{theorem}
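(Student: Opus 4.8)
The plan is to read off the boundedness of $T_g$ from the conical (square function) description of the $A^q_\om$-norm given in Theorem~\ref{th:normacono}, which is available for \emph{every} radial weight and therefore bypasses the distortion-function Littlewood--Paley formula of Theorem~\ref{th:LPformula} (which genuinely fails for $\om\in\widetilde{\I}$). Throughout I will use three facts: since $\widetilde{\I}\cup\R\subset\Inv$, the weight $\om$ is essentially constant on pseudohyperbolic discs by \eqref{eq:r2}; $\om^\star(z)\asymp\om\left(S(z)\right)\asymp\om(T(z))$ by \eqref{3}; and, $\om$ being radial, polynomials are dense in $A^p_\om$, so that the factorizations of Theorem~\ref{Thm:FactorizationBergman} and Corollary~\ref{cor:FactorizationBergman} are available. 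Write $s=\frac{pq}{p-q}$, so that $\frac1s=\frac1q-\frac1p$.

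\emph{Sufficiency.} Assume $g\in A^s_\om$. Since $T_g(f)(0)=0$ and $(T_gf)'=fg'$, Theorem~\ref{th:normacono} gives
\[
\|T_g(f)\|_{A^q_\om}^q\asymp\int_\D\left(\int_{\Gamma(u)}|f(z)|^2|g'(z)|^2\,dA(z)\right)^{q/2}\om(u)\,dA(u).
\]
When $q>2$ (so $p>2$), I first apply Corollary~\ref{cor:FactorizationBergman} to split $f=f_1f_2$ with $f_1\in A^{p_1}_\om$, $p_1\le2$, $f_2\in A^{p_2}_\om$, $\|f_1\|_{A^{p_1}_\om}\|f_2\|_{A^{p_2}_\om}\lesssim\|f\|_{A^p_\om}$; estimating $|f_2(z)|\le N(f_2)(u)$ inside $\Gamma(u)$, pulling $N(f_2)$ out by H\"older's inequality and Lemma~\ref{le:funcionmaximalangular}, and using the conical formula once more reduces the problem to the same statement with source exponent $p_1\le2$. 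Hence one may assume $0<q\le2$. Then, inside each cone, $|f|^2=|f|^q|f|^{2-q}$ with $|f(z)|^{2-q}\le N(f)(u)^{2-q}$; H\"older in $u$, Fubini, Lemma~\ref{le:funcionmaximalangular} and \eqref{3} reduce matters to a Luecking-type tent-space inequality for the sublinear map $f\mapsto\big(\int_{\Gamma(\cdot)}|f|^q|g'|^2\,dA\big)^{1/2}$, the delicate point being to keep the exponents sharp. By the tent-space machinery behind Theorem~\ref{Theorem:CarlesonMeasures} this inequality is governed by a weighted $L^{p/(p-q)}_\om$-integrability of a balayage of $|g'|^2\om^\star\,dA$; since $\om^\star\asymp\om(T(\cdot))$ and $\om$ is constant on pseudohyperbolic discs, this in turn is equivalent — by Theorem~\ref{th:normacono} applied to $g$ together with the pointwise growth estimate for $A^s_\om$-functions (via Cauchy's inequality and \eqref{3}) — to $g\in A^s_\om$, and the estimate $\|T_g(f)\|_{A^q_\om}\lesssim\|g\|_{A^s_\om}\|f\|_{A^p_\om}$ follows. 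For $\om\in\R$ one can shortcut: the Littlewood--Paley formula available for regular weights gives $\|T_g(f)\|_{A^q_\om}^q\asymp\int_\D|f|^q|g'|^q(1-|z|)^q\om\,dA$, and H\"older's inequality with exponents $\frac pq$ and $\frac{p}{p-q}$, combined with the same formula applied to $g$ (so that $\int_\D|g'|^s(1-|z|)^s\om\,dA\asymp\|g\|_{A^s_\om}^s$), yields the claim directly.

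\emph{Necessity.} Assume $T_g\colon A^p_\om\to A^q_\om$ is bounded. Testing $T_g$ against the functions $F_{a,p}$ of Lemma~\ref{testfunctions1} and using $\|h\|_{A^q_\om}^q\gtrsim M_q^q(r,h)\int_r^1\om(s)\,ds$, as in Lemma~\ref{le:tgminfty}, already gives $M_\infty(r,g')\lesssim(\om^\star(r))^{-1/s}(1-r)^{-1}$; but this radial estimate is not by itself sufficient for $g\in A^s_\om$, so a sharper, non-radial argument is needed. I would fix an $\om$-adapted separated lattice $\{a_k\}$, form the randomized test functions $f_t=\sum_k c_k\,r_k(t)\,F_{a_k,p}$ with $r_k$ the Rademacher functions, and use Khinchine's inequality together with \eqref{eq:tf1}--\eqref{eq:tf2} to compare $\int_0^1\|f_t\|_{A^p_\om}^p\,dt$ with a discrete $\ell$-sum; pairing this with the lower bound for $\|T_g(f_t)\|_{A^q_\om}^q$ obtained by restricting the conical integral to the boxes $S(a_k)$, and invoking $\ell^{p/q}$--$\ell^{(p/q)'}$ duality, produces
\[
\sum_k |g'(a_k)|^{s}(1-|a_k|)^{s}\,\om\left(S(a_k)\right)\lesssim\|T_g\|^{s},
\]
possibly after replacing $g$ by its dilates $g_r$ and letting $r\to1^-$ with Fatou's lemma, exactly as in the proof of Theorem~\ref{th:integralq=p}. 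Finally, because $|g'|^s(1-|z|)^s$ satisfies a submean-value inequality on pseudohyperbolic discs and $\om\in\Inv$ is constant there, the above sum dominates $\int_\D|g'(z)|^s(1-|z|)^s\om(z)\,dA(z)$ up to a constant, which equals $\|g\|_{A^s_\om}^s$ up to a constant by the Littlewood--Paley formula for $\om\in\R$, and by the tent-space identities behind Theorem~\ref{Theorem:CarlesonMeasures} (together with $\om^\star\asymp\om(S(\cdot))$) for $\om\in\widetilde{\I}$.

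\emph{Main obstacle.} I expect the technical heart to be the sufficiency in the $\widetilde{\I}$ case together with the matching necessity estimate: without a distortion-function Littlewood--Paley formula one is forced to work with the square function and the smoothed weight $\om^\star$, so the sharp exponent $s$ emerges only after a careful Luecking/tent-space computation (this is where the already-established $q$-Carleson measure description, Theorem~\ref{Theorem:CarlesonMeasures}, and the factorization Corollary~\ref{cor:FactorizationBergman} do the real work), and the randomization in the necessity must be set up so that exactly the exponents $p/q$ and $s$ appear. It is precisely the regularity of $\om$ on pseudohyperbolic discs — i.e.\ the hypothesis $\om\in\widetilde{\I}\cup\R\subset\Inv$, rather than merely $\om\in\DD$ — that makes the passage between the discrete and continuous forms of these conditions possible.
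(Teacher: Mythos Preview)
Your sufficiency argument is vastly overcomplicated and, as written, vague at the crucial points. The paper dispatches sufficiency in three lines, valid for \emph{any} radial weight: from the conical formula,
\[
\|T_g(f)\|_{A^q_\om}^q\asymp\int_\D\Bigl(\int_{\Gamma(u)}|f|^2|g'|^2\,dA\Bigr)^{q/2}\om(u)\,dA(u)
\le\int_\D N(f)(u)^q\Bigl(\int_{\Gamma(u)}|g'|^2\,dA\Bigr)^{q/2}\om(u)\,dA(u),
\]
and a single H\"older with exponents $\tfrac{p}{q}$ and $\tfrac{p}{p-q}=\tfrac{s}{q}$, together with Lemma~\ref{le:funcionmaximalangular} and Theorem~\ref{th:normacono} applied to $g$, gives $\|T_g(f)\|_{A^q_\om}\lesssim\|f\|_{A^p_\om}\|g\|_{A^s_\om}$. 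No factorization, no tent-space machinery, no case split on $q$.

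Your necessity argument has a genuine gap. You aim to produce the discrete estimate $\sum_k|g'(a_k)|^s(1-|a_k|)^s\,\om(S(a_k))<\infty$ and then pass to $g\in A^s_\om$. For $\om\in\R$ this last step is fine via the Littlewood--Paley formula, but for $\om\in\widetilde{\I}$ it fails: the paper explicitly proves (the Proposition right after Theorem~\ref{th:LPformula}) that for rapidly increasing weights there is \emph{no} relation of the form $\|g\|_{A^s_\om}^s\asymp\int_\D|g'|^s(1-|z|)^s\om\,dA$ when $s\ne2$, and your appeal to ``tent-space identities behind Theorem~\ref{Theorem:CarlesonMeasures}'' does not supply a substitute. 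Moreover, for $\om\in\widetilde{\I}$ the discretization itself is problematic: $\om(S(a_k))\asymp(1-|a_k|)\widehat\om(a_k)$ is \emph{not} comparable to $\om(\Delta(a_k,\delta))\asymp(1-|a_k|)^2\om(a_k)$, so the passage from the sum to the integral you write does not hold.

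The paper's necessity argument takes a completely different route, borrowed from Aleman--Cima for $H^p$. It first establishes (Proposition~\ref{PropSmallIndeces}, using the factorization of Corollary~\ref{cor:FactorizationBergman} with \emph{controlled constants}) that boundedness of $T_g:A^p_\om\to A^q_\om$ propagates to $T_g:A^{\hat p}_\om\to A^{\hat q}_\om$ for all smaller exponents with the same gap $\tfrac1{\hat q}-\tfrac1{\hat p}=\tfrac1s$, with a uniform norm bound as $\hat p\to p^-$. Then one sets $t^*=\sup\{t:g\in A^t_\om\}$, notes $t^*\ge q$ since $g=T_g(1)$, and bootstraps via the identity $g^{m+1}=(m+1)T_g(g^m)$ to push $t^*$ up to $s$. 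The factorization is thus used in the \emph{necessity}, not the sufficiency, and what it buys is precisely the uniform control of $\|T_g\|_{(\hat p,\hat q)}$ needed to close the bootstrap --- a role your randomization scheme does not fill.
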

\begin{proof}
The sufficiency can be proved arguing as in Proposition \ref{pr:cmqmenorp} and it is valid for any radial weight $\om$.
Let first $g\in A^s_{\omega}$, where $s=\frac{pq}{p-q}$. Then
Theorem~\ref{th:normacono}, H\"older's inequality and
Lemma~\ref{le:funcionmaximalangular} yield
    \begin{equation}\label{58}\index{$N(f)$}\index{non-tangential maximal function}
    \begin{split}
    \|T_g(f)\|_{A^q_\omega}^q&\asymp\int_\D\left(\int_{\Gamma(u)}|f(z)|^2|g'(z)|^2\,dA(z)\right)^\frac{q}{2}\omega(u)\,dA(u)\\
    &\le\int_\D(N(f)(u))^q\left(\int_{\Gamma(u)}|g'(z)|^2\,dA(z)\right)^\frac{q}{2}\omega(u)\,dA(u)\\
    &\le\left(\int_\D(N(f)(u))^p\omega(u)\,dA(u)\right)^\frac{q}{p}\\
    &\quad\cdot\left(\int_\D\left(\int_{\Gamma(u)}|g'(z)|^2\,dA(z)\right)^\frac{pq}{2(p-q)}\omega(u)\,dA(u)\right)^\frac{p-q}{p}\\
    &\le C_1^{q/p}C_2(p,q,\om)\|f\|_{A^p_\omega}^q\|g\|_{A^s_\omega}^q.
    \end{split}
    \end{equation}
Thus $T_g:A^p_\omega\to A^q_\omega$ is bounded.
\medskip
\par In order to prove the converse we
we will use ideas
from~\cite[p.~170--171]{AC}, where~$T_g$ acting on Hardy spaces is
studied. We begin with the following result whose proof relies on
Corollary~\ref{cor:FactorizationBergman}.

\begin{proposition}\label{PropSmallIndeces}
Let $0<q<p<\infty$ and $\omega\in\widetilde{\I}\cup\R$, and let
$T_g:A^p_\omega\to A^q_\omega$ be bounded. Then $T_g: A^{
\hat{p}}_\omega\to A^{ \hat{q}}_\omega$ is bounded for any $
\hat{p}<p$ and $ \hat{q}<q$ with $\frac{1}{ \hat{q}}-\frac{1}{
\hat{p}}=\frac{1}{q}-\frac{1}{p}$. Further, if $0<p\le 2$, then
there exists $C=C(p,q,\omega)>0$ such that
    \begin{equation}\label{eq:tqq<p1}
    \limsup_{ \hat{p}\to p^-}\|T_g\|_{\left(A^{ \hat{p}}_\om,A^{ \hat{q}}_\om\right)}\le C\|T_g\|_{\left(A^{p}_\om,A^{q}_\om\right)}.
    \end{equation}
\end{proposition}

\begin{proof} Theorem~\ref{Thm:FactorizationBergman} shows that for any $f\in
A^{\hat{p}}_\om$, there exist $f_1\in A^p_\om$ and $f_2\in
A^{\frac{ \hat{p}p}{p- \hat{p}}}_\om$ such that
    \begin{equation}\label{eq:fact}
    f=f_1f_2\quad\text{and}\quad\|f_1\|_{A^p_\om}\cdot\|f_2\|_{A^{\frac{ \hat{p}p}{p- \hat{p}}}_\om}\le C_3\|f\|_{A^{ \hat{p}}_\om}
    \end{equation}
for some constant $C_3=C_3(p, \hat{p},\om)>0$. We observe that
$T_g(f)=T_F(f_2)$, where $F=T_g(f_1)$. Since $T_g:A^p_\omega\to
A^q_\omega$ is bounded,
    \begin{equation}\label{eq:fact2}
    \|F\|_{ A^q_\omega}=\|T_g(f_1)\|_{
    A^q_\omega}\le\|T_g\|_{\left(A^{p}_\om,A^{q}_\om\right)}\|f_1\|_{A^p_\om}<\infty,
    \end{equation}
and hence $F\in A^q_\omega$. Then \eqref{58} and the identity
$\frac{1}{q}=\frac{1}{ \hat{q}}-\frac{1}{\frac{ \hat{p}p}{p-
\hat{p}}}$ yield
    \begin{equation*}
    \|T_g(f)\|_{ A^{ \hat{q}}_\omega}=\|T_F(f_2)\|_{ A^{ \hat{q}}_\omega}\le C_1^{\frac{1}{ \hat{p}}-\frac{1}{p}}C_2
    \|f_2\|_{A^{\frac{ \hat{p}p}{p- \hat{p}}}_\om}\|F\|_{ A^q_\omega},
    \end{equation*}
where $C_2=C_2(q,\om)>0$. This together with \eqref{eq:fact} and
\eqref{eq:fact2} gives
    \begin{equation}\label{j11}
    \begin{split}
    \|T_g(f)\|_{A^{ \hat{q}}_\omega}&\le
    C_1^{\frac{1}{ \hat{p}}-\frac{1}{p}}C_2\|T_g\|_{\left(A^{p}_\om,A^{q}_\om\right)}\|f_1\|_{A^p_\om}
    \cdot\|f_2\|_{A^{\frac{ \hat{p}p}{p- \hat{p}}}_\om}\\
    &\le C_1^{\frac{1}{ \hat{p}}-\frac{1}{p}}C_2\,C_3\,\|T_g\|_{\left(A^{p}_\om,A^{q}_\om\right)}\|f\|_{A^{ \hat{p}}_\om}.
    \end{split}
    \end{equation}
Therefore $T_g: A^{ \hat{p}}_\omega\to A^{ \hat{q}}_\omega$ is
bounded.

To prove~\eqref{eq:tqq<p1}, let $0<p\le 2$ and let $0< \hat{p}<2$
be close enough to $p$ such that
    $$
    \min\left\{\frac{p}{p- \hat{p}}, \frac{ \hat{p}p}{p- \hat{p}}\right\}>2.
    $$
If $f\in A^{ \hat{p}}_\om$, then
Corollary~\ref{cor:FactorizationBergman} shows that
\eqref{eq:fact} holds with $C_3=C_3(p,\omega)$. Therefore the
reasoning in the previous paragraph  and \eqref{j11} give
\eqref{eq:tqq<p1}.
\end{proof}

With this result in hand, we are ready to prove
(aiv)$\Rightarrow$(biv). Let $0<q<p<\infty$ and
$\omega\in\widetilde{\I}\cup\R$, and let $T_g: A^{p}_\omega\to
A^{q}_\omega$ be bounded. Denote
$\frac{1}{s}=\frac{1}{q}-\frac{1}{p}$. By the first part of
Proposition~\ref{PropSmallIndeces}, we may assume that $p\le2$. We
may also assume, without loss of generality, that $g(0)=0$. Define
$t^*=\sup\{t:g\in A^t_\om\}$. Since the constant function $1$
belongs to $A^p_\om$, we have $g=T_g(1)\in A^q_\om$, and hence
$t^*\ge q>0$. Fix a positive integer $m$ such that
$\frac{t^*}{m}<p$. For each $t<t^*$, set $ \hat{p}=
\hat{p}(t)=\frac{t}{m}$, and define $ \hat{q}= \hat{q}(t)$ by the
equation $\frac{1}{s}=\frac{1}{ \hat{q}}-\frac{1}{ \hat{p}}$. Then
$ \hat{p}<p$,  $ \hat{q}<q$ and $T_g: A^{ \hat{p}}_\omega\to A^{
\hat{q}}_\omega$ is bounded by Proposition~\ref{PropSmallIndeces}.
Since $g^m=g^{\frac{t}{ \hat{p}}}\in A^{ \hat{p}}_\omega$, we have
$g^{m+1}=(m+1)T_g(g^m)\in A^{ \hat{q}}_\om$ and
    $$
    \|g^{m+1}\|_{ A^{ \hat{q}}_\omega}\le
    (m+1)\|T_g\|_{\left(A^{ \hat{p}}_\om,A^{ \hat{q}}_\om\right)}\|g^m\|_{A^{ \hat{p}}_\om},
    $$
that is,
    \begin{equation}\label{eq:tgq<p2}
    \|g\|^{m+1}_{ A^{(m+1) \hat{q}}_\omega}\le (m+1)\|T_g\|_{\left(A^{ \hat{p}}_\om,A^{ \hat{q}}_\om\right)}\|g\|^m_{A^{t}_\om}.
    \end{equation}

Suppose first that for some $t<t^*$, we have
    $$
    t\ge (m+1) \hat{q}=\left(\frac{t}{ \hat{p}}+1 \right) \hat{q}= \hat{q}+t\left(1-\frac{ \hat{q}}{s}
    \right).
    $$
Then $s\le t<t^*$, and the result follows from the definition of
$t^*$. It remains to consider the case in which $t<(m+1) \hat{q}$
for all $t<t^*$. By H\"older's inequality, $\|g\|^m_{A^{t}_\om}\le
C_1(m,\om)\|g\|^m_{ A^{(m+1) \hat{q}}_\omega}$. This and
\eqref{eq:tgq<p2} yield
    \begin{equation}\label{j12}
    \|g\|_{ A^{(m+1) \hat{q}}_\omega}\le
    C_2(m,\om)\|T_g\|_{\left(A^{ \hat{p}}_\om,A^{ \hat{q}}_\om\right)},
    \end{equation}
where $C_2(m,\om)=C_1(m,\om)(m+1)$. Now, as $t$ increases to
$t^*$, $ \hat{p}$ increases to $\frac{t^*}{m}$ and $ \hat{q}$
increases to $\frac{t^*s}{t^*+ms}$, so by \eqref{j12} and
\eqref{eq:tqq<p1} we deduce
    \begin{equation*}
    \begin{split}
    \|g\|_{ A^{\frac{(m+1)t^*s}{t^*+ms}}_\omega}
    &\le\limsup_{t\to t^*}\|g\|_{ A^{(m+1) \hat{q}}_\omega}
    \le C_2(m,\om)\limsup_{ \hat{p}\to
    p^-}\|T_g\|_{\left(A^{ \hat{p}}_\om,A^{ \hat{q}}_\om\right)}\\
    &\le C(p,q,m,\om)\|T_g\|_{\left(A^{p}_\om,A^{q}_\om\right)}<\infty.
    \end{split}
    \end{equation*}
The definition of $t^*$ implies $\frac{(m+1)t^*s}{t^*+ms}\le t^*$,
and so $t^*\ge s$. This finishes the proof.
\end{proof}

\par The main results of this section are gathered here.

\begin{theorem}
Let $0<p,q<\infty$, $\om\in\DD$ and $g\in\H(\D)$.
    \begin{itemize}
    \item[\rm(i)] The following conditions are equivalent:
    \begin{enumerate}
    \item[\rm(ai)]\, $T_g:A^p_\om\to A^p_\om$ is bounded;
    \item[\rm(bi)]\,
    $g\in\CC^1(\om^\star)$.
    \end{enumerate}
    \item[\rm(ii)] If $0<p<q$ and $\frac1p-\frac1q<1$, then the
following conditions are equivalent:
    \begin{enumerate}
    \item[\rm(aii)]\,$T_g:A^p_\om\to A^q_\om$ is bounded;
    \item[\rm(bii)] $\displaystyle M_\infty(r,g')\lesssim\frac{(\omega^\star(r))^{\frac1p-\frac1q}}{1-r},\quad
    r\to1^-;
    $
    \item[\rm(cii)]
    \,
    $g\in\CC^{q/p}(\om^\star)$.
    \end{enumerate}
    \item[\rm(iii)] If $\frac1p-\frac1q\ge 1$, then $T_g:A^p_\om\to A^q_\om$ is bounded if and only if $g$ is constant.
    \item[\rm(iv)] If $0<q<p<\infty$ and $\om\in\widetilde{\I}\cup\R$,
    then the following conditions are equivalent:
    \begin{enumerate}
    \item[\rm(aiv)]\,$T_g:A^p_\om\to A^q_\om$ is bounded;
    \item[\rm(biv)] $g\in A^s_\omega$, where $\frac{1}{s}=\frac1q-\frac1p$.
    \end{enumerate}
     \end{itemize}
\end{theorem}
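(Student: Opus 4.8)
This theorem collects, in a single statement, the results established one by one throughout Section~\ref{sec:integral}; the plan is to verify its four parts by quoting the corresponding statements above. Part~(i), the equivalence (ai)$\Leftrightarrow$(bi), is exactly Theorem~\ref{th:integralq=p}, which was proved by splitting into ranges of $p$: for $p=2$ it is immediate from the Hardy--Stein--Spencer identity \eqref{eq:LP2} and the definition \eqref{calpha} of $\CC^1(\om^\star)$; for $p>2$ one rewrites $\|T_g(f)\|_{A^p_\om}$ through the area-function norm of Theorem~\ref{th:normacono}, tests against $h\in L^{p/(p-2)}_\om$, and controls the resulting pairing via Theorems~\ref{th:cm} and~\ref{co:maxbou} on Carleson measures and the weighted maximal function; for $0<p<2$ the sufficiency again uses Theorem~\ref{th:normacono} together with Lemma~\ref{le:funcionmaximalangular}, while the necessity goes through the test functions $F_{a,p}$ of Lemma~\ref{testfunctions1} and the dilation estimate \eqref{eq:dilatadas}.

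Parts~(ii) and~(iii) form the content of the theorem opening the subsection on the case $q\ge p$. For $\alpha=q/p>1$ and $\om\in\DD$ the equivalence (bii)$\Leftrightarrow$(cii) follows from the description (see \cite[Proposition~4.7]{PelRat}) that $g\in\CC^\alpha(\om^\star)$ holds if and only if $M_\infty(r,g')\lesssim(\om^\star(r))^{(\alpha-1)/2}/(1-r)$; the implication (aii)$\Rightarrow$(bii) is Lemma~\ref{le:tgminfty}; and (cii)$\Rightarrow$(aii) is obtained by the same Carleson-measure and maximal-function estimates as in the case $q=p$ together with the embedding $A^p_\om\hookrightarrow A^q_\om$. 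For~(iii) one notes that when $\frac1p-\frac1q\ge1$ the bound in (bii) forces $M_\infty(r,g')\to0$ as $r\to1^-$ (using $\om^\star(r)\asymp\widehat{\om}(r)(1-r)$ from Lemma~\ref{Lemma:weights-in-D-hat} and $\widehat{\om}(r)\to0$), hence $g$ is constant by the maximum principle, while conversely $T_g\equiv0$ is trivially bounded.

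Part~(iv) is the theorem opening the subsection on the case $0<q<p$ with $\om\in\widetilde{\I}\cup\R$. Sufficiency, valid for any radial weight, is the chain of inequalities \eqref{58}: pass to the area-function norm of Theorem~\ref{th:normacono}, apply H\"older with the conjugate exponents $p/q$ and $p/(p-q)$, bound the non-tangential maximal function by Lemma~\ref{le:funcionmaximalangular}, and recognize the surviving factor as $\|g\|_{A^s_\om}^q$. Necessity first invokes Proposition~\ref{PropSmallIndeces}---which rests on the factorization Theorem~\ref{Thm:FactorizationBergman} and Corollary~\ref{cor:FactorizationBergman}---to reduce to small indices, and then runs the bootstrap on $t^*=\sup\{t:g\in A^t_\om\}$ using the identities $g^{m+1}=(m+1)T_g(g^m)$ and H\"older's inequality, letting $\hat p\uparrow p$. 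The step I expect to be the main obstacle, were one to prove these results from scratch rather than quoting them, is the uniform bound \eqref{eq:tqq<p1} inside Proposition~\ref{PropSmallIndeces}: it needs the full factorization machinery with careful tracking of the constants of Lemma~\ref{Lemma:factorization}, after which the limiting argument as $\hat p\to p^-$ is delicate. Within parts~(i)--(ii) the subtle point is the necessity for $0<p<2$, where \eqref{eq:dilatadas} must be established by splitting the analysis of $S(a)$ according to whether $|a|\le r_0$, $\frac12<|a|\le\frac1{2-r}$, or $|a|>\max\{\frac1{2-r},\frac12\}$.
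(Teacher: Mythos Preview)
Your approach matches the paper exactly: this theorem is stated at the end of Section~\ref{sec:integral} without proof, as a summary of the results established in the preceding subsections, and you correctly identify each component (Theorem~\ref{th:integralq=p} for~(i), the theorem of the $q\ge p$ subsection for~(ii)--(iii), and the theorem of the $q<p$ subsection for~(iv)). One small correction to your sketch of (cii)$\Rightarrow$(aii): there is no embedding $A^p_\om\hookrightarrow A^q_\om$ when $p<q$; the argument instead goes directly through the area-function norm of Theorem~\ref{th:normacono} and the $q$-Carleson measure characterization of Theorem~\ref{th:cm}, exactly as in the sufficiency direction of Theorem~\ref{th:integralq=p}.
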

\section{Composition operators}\label{sec:composition}
Each analytic self-map $\vp$ of $\D$ induces the composition operator $C_\vp(f)= f\circ\vp$
acting on $\H(\D)$. With regard to the theory of composition operators, we refer to~\cite{CowenMac95,Shapiro93,Zhu}.

Let $\z\in\vp^{-1}(z)$ denote the set of the points
$\{\z_n\}$ in $\D$, organized by increasing moduli, such that
$\vp(\z_n)=z$ for all $n$, with each point repeated according to its multiplicity.
For a radial weight $\om$ and an analytic self-map $\vp$ of $\D$
we define the generalized Nevanlinna counting function as
    $$
    N_{\vp,\om^\star}(z)=\sum_{\z\in\vp^{-1}(z),}\om^\star\left(z\right),\quad z\in\D\setminus\{\vp(0)\}.
    $$
Using the characterization of the $q$-Carleson measures for $A^p_\om$ provided in Theorem~\ref{th:cm}, Theorem~\ref{Theorem:CarlesonMeasures}
and a description of bounded differentiation operators from $A^p_\om$ to $L^q_\mu$
 \cite{PelRatMathAnn}, it has recently been proved
the following result~\cite{PelRatToeplitz}.

\begin{theorem}\label{Theorem:introduction-bounded-composition-operators}
Let $0<p,q<\infty$, $\omega\in\DD$ and $v$ be a radial weight, and let $\vp$ be an
analytic self-map of $\D$.
\begin{itemize}
\item[\rm(a)] If $p>q$, then the following assertions are equivalent:
\begin{enumerate}
\item[\rm(i)] $C_\vp:A^p_\om\to A^q_v$ is bounded;
\item[\rm(ii)] $C_\vp:A^p_\om\to A^q_v$ is compact;
\item[\rm(iii)] $N\left(\frac{N_{\vp,v^\star}}{\om^\star}\right)\in L^\frac{p}{p-q}_\om$.
\end{enumerate}
\item[\rm(b)] If $q\ge p$, then $C_\vp:A^p_\om\to A^q_v$ is bounded if and only if
    $$
    \limsup_{|z|\to1^-}\frac{N_{\vp,v^\star}(z)}{\om^\star(z)^\frac{q}{p}}<\infty.
    $$
\item[\rm(c)] If $q\ge p$, then $C_\vp:A^p_\om\to A^q_v$ is compact if and only if
    $$
    \lim_{|z|\to1^-}\frac{N_{\vp,v^\star}(z)}{\om^\star(z)^\frac{q}{p}}=0.
    $$
\end{itemize}
\end{theorem}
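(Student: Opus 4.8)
The plan is to combine a Littlewood--Paley identity for $\|\cdot\|_{A^q_v}$ with the non-univalent change of variables, and then to invoke the Carleson-measure descriptions of Theorems~\ref{th:cm} and~\ref{Theorem:CarlesonMeasures} together with the description of bounded differentiation operators on $A^p_\om$ from~\cite{PelRatMathAnn}. The basic bridge is: for every measurable $h\ge0$,
\[
\int_\D h(\vp(w))\,|\vp'(w)|^{2}\,v^{\star}(w)\,dA(w)=\int_\D h(\z)\,N_{\vp,v^{\star}}(\z)\,dA(\z),
\]
together with the relation $v(T(w))\asymp v^{\star}(w)$ for $|w|\ge\tfrac12$, valid for any radial weight $v$. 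Applying the Hardy--Stein--Spencer formula~\eqref{HSB} (or~\eqref{eq:LP2} when $q=2$) to $C_\vp f=f\circ\vp$ and changing variables gives the clean identity
\[
\|C_\vp f\|_{A^q_v}^{q}=q^{2}\int_\D|f(\z)|^{q-2}|f'(\z)|^{2}\,N_{\vp,v^{\star}}(\z)\,dA(\z)+v(\D)\,|f(\vp(0))|^{q},
\]
in which the last term is harmless since $|f(\vp(0))|\lesssim\|f\|_{A^p_\om}$. One may equally note, directly from the definitions, that $\|C_\vp f\|_{A^q_v}^{q}=\int_\D|f|^{q}\,d\mu_\vp$ for the push-forward measure $\mu_\vp$ given by $\int_\D g\,d\mu_\vp=\int_\D(g\circ\vp)\,v\,dA$, so that $C_\vp\colon A^p_\om\to A^q_v$ is bounded (compact) if and only if $\mu_\vp$ is a ($q$-)Carleson measure for $A^p_\om$ (respectively a vanishing one).

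For the case $q\ge p$ I would apply Theorem~\ref{th:cm} and its compactness analogue to $\mu_\vp$: boundedness is equivalent to $\sup_{a\in\D}\mu_\vp(S(a))/\om(S(a))^{q/p}<\infty$ and compactness to $\lim_{|a|\to1^{-}}\mu_\vp(S(a))/\om(S(a))^{q/p}=0$. It then remains to identify $\mu_\vp(S(a))$, up to constants and an irrelevant change of aperture, with $N_{\vp,v^{\star}}(z)$ for $z$ close to $a$: one direction follows from Littlewood's inequality --- the sub-mean-value property of the generalized Nevanlinna counting function --- which dominates $N_{\vp,v^{\star}}(z)$ by an average of $\mu_\vp$ over Carleson boxes containing $z$, and the reverse from a Schwarz--Pick/change-of-variables estimate. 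Replacing $\om(S(z))$ by $\om^{\star}(z)$ via~\eqref{3} and Lemma~\ref{Lemma:weights-in-D-hat} (here $\om\in\DD$ is used) turns these two conditions into $\limsup_{|z|\to1^{-}}N_{\vp,v^{\star}}(z)/\om^{\star}(z)^{q/p}<\infty$ and its vanishing counterpart, which are parts~(b) and~(c).

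For $q<p$ I would apply Theorem~\ref{Theorem:CarlesonMeasures} instead: $\mu_\vp$ is a $q$-Carleson measure for $A^p_\om$ if and only if $B_{\mu_\vp}\in L^{p/(p-q)}_\om$, where $B_{\mu_\vp}(z)=\int_{\Gamma(z)}d\mu_\vp(\z)/\om(T(\z))$. Using the change of variables, the definition of $N_{\vp,v^{\star}}$, the relations $v(T(w))\asymp v^{\star}(w)$ and $\om(T(\z))\asymp\om^{\star}(\z)\asymp\om(S(\z))$, and again Littlewood's inequality to pass from averages of $N_{\vp,v^{\star}}$ to its pointwise values, one shows that $B_{\mu_\vp}$ is pointwise comparable to $N\!\left(N_{\vp,v^{\star}}/\om^{\star}\right)$; this gives (i)$\Leftrightarrow$(iii). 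The equivalence (i)$\Leftrightarrow$(ii) is then automatic: membership in $L^{p/(p-q)}_\om$ with the \emph{finite} exponent $p/(p-q)$ forces the boundary tails of $N\!\left(N_{\vp,v^{\star}}/\om^{\star}\right)$ to be small, and a routine normal-families argument upgrades boundedness to compactness --- this is the familiar reason why ``bounded $=$ compact'' in the range $q<p$.

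The hard part, in my view, is precisely the dictionary between $\mu_\vp=v\circ\vp^{-1}$ and $N_{\vp,v^{\star}}$: the Carleson quantity $\mu_\vp(S(a))$ involves $v$ with no derivative, whereas $N_{\vp,v^{\star}}$ carries the weight $v^{\star}|\vp'|^{2}$, so the comparison cannot be formal and must go through Littlewood's inequality, the non-univalent change of variables, and the differentiation-operator estimates of~\cite{PelRatMathAnn}. A secondary difficulty is the asymmetry of the hypotheses: since $v$ is only radial (not in $\DD$), neither the weighted maximal function $M_v$ nor the doubling estimates of Lemma~\ref{Lemma:weights-in-D-hat} are available on the $v$-side, so all of the harmonic analysis must be performed on the $\om$-side and the $v$-side handled throughout by the smoother quantity $v^{\star}$ and the relation $v(T(w))\asymp v^{\star}(w)$.
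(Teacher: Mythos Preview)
Your outline identifies the same three ingredients the paper cites for this result (the paper does not prove the theorem itself but refers to~\cite{PelRatToeplitz}, pointing to Theorem~\ref{th:cm}, Theorem~\ref{Theorem:CarlesonMeasures}, and the differentiation-operator characterization of~\cite{PelRatMathAnn}). The Littlewood--Paley/change-of-variables identity you wrote,
\[
\|C_\vp f\|_{A^q_v}^{q}=q^{2}\int_\D|f(\z)|^{q-2}|f'(\z)|^{2}\,N_{\vp,v^{\star}}(\z)\,dA(\z)+v(\D)\,|f(\vp(0))|^{q},
\]
is exactly the starting point used in the paper (see~\eqref{eq:qmenorp}), and your remarks on why only $\om\in\DD$ (not $v$) is assumed are on target.

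Where your write-up is shakier is the detour through the push-forward $\mu_\vp$. You assert that $\mu_\vp(S(a))$ can be identified ``up to constants'' with $N_{\vp,v^\star}(z)$, and that $B_{\mu_\vp}$ is pointwise comparable to $N\!\left(N_{\vp,v^{\star}}/\om^{\star}\right)$. These are two different objects: $\mu_\vp(S(a))=\int_{\vp^{-1}(S(a))}v\,dA$ carries the weight $v$ with no Jacobian, whereas $\int_{S(a)}N_{\vp,v^\star}\,dA=\int_{\vp^{-1}(S(a))}|\vp'|^{2}v^\star\,dA$ carries $|\vp'|^{2}v^\star$. A direct box-by-box comparison between them is not available (and Littlewood's sub-mean-value inequality gives you control of $N_{\vp,v^\star}$ by its own averages, not by $\mu_\vp$). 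You yourself flag this as ``the hard part'', which is honest, but as written the argument does not close.

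The cleaner route---and the one the paper's sketch actually points to---bypasses $\mu_\vp$ entirely: stay with the displayed identity above and regard boundedness of $C_\vp$ as the statement that the measure $d\nu=N_{\vp,v^\star}\,dA$ satisfies $\int_\D|f|^{q-2}|f'|^{2}\,d\nu\lesssim\|f\|_{A^p_\om}^{q}$. This is precisely the kind of inequality characterized by the results on bounded differentiation operators $A^p_\om\to L^q_\mu$ in~\cite{PelRatMathAnn}; those results, combined with the subharmonicity of $N_{\vp,v^\star}$ (Lemma~\ref{Nsubharmonic}) and~\eqref{3}, yield the conditions in~(a)--(c) directly in terms of $N_{\vp,v^\star}/\om^\star$. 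So keep your first identity, drop the $\mu_\vp$ paragraph, and invoke~\cite{PelRatMathAnn} for the derivative-Carleson step rather than Theorems~\ref{th:cm} and~\ref{Theorem:CarlesonMeasures} applied to $\mu_\vp$.
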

We observe that  condition~(iii) in the classical case $C_\vp:A^p_\a\to A^q_\b$ gives a characterization of bounded (and compact) operators that differs from the one in the existing literature~\cite{SmithYang98}. Here we shall prove an extension of this last result to the class of regular weights.

\begin{theorem}\label{th:psmallerq}Let $0<q<p<\infty$, $\omega\in\R$ and $v$ be a radial weight, and let $\vp$ be an
analytic self-map of $\D$. Then the following assertions are equivalent:
\begin{itemize}
\item[\rm(i)] $C_\vp: A^p_\om\to A^q_v$ is bounded;
\item[\rm(ii)] $C_\vp: A^p_\om\to A^q_v$ is compact;
\item[\rm(iii)] The function
\begin{equation*}
z\mapsto\frac{N_{\vp,v^\star}(z)}{\om^\star(z)}
\end{equation*}
belongs to $ L^{\frac{p}{p-q}}_\om$; \item[\rm(iv)] The function
\begin{equation*}
z\mapsto\frac{\int_{\Delta(z,r)}\frac{N_{\vp,v^\star}(\zeta)}{\om^\star(\zeta)}\,dA(\zeta)}{(1-|z|)^2}
\end{equation*}
belongs to $ L^{\frac{p}{p-q}}_\om$ for some (equivalently for
all) fixed $r\in(0,1)$.
\end{itemize}
\end{theorem}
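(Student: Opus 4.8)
The plan is to derive Theorem~\ref{th:psmallerq} from Theorem~\ref{Theorem:introduction-bounded-composition-operators}(a), which applies here since $\R\subset\DD$. Put $s=\frac{p}{p-q}$ (so $s>1$, as $0<q<p$) and $h=\frac{N_{\vp,v^\star}}{\om^\star}$. Part~(a) of Theorem~\ref{Theorem:introduction-bounded-composition-operators} already gives that (i), (ii) and the condition $N(h)\in L^s_\om$ are mutually equivalent, so it remains to prove
$$N(h)\in L^s_\om\quad\Longleftrightarrow\quad\text{(iii)}\quad\Longleftrightarrow\quad\text{(iv)}.$$
A fact used throughout is that, since $\om$ is \emph{regular} (not merely doubling), the quantities $\om^\star(z)$, $\om(S(z))$, $\om(\Delta(z,r))$ and $\om(z)(1-|z|)^2$ are all comparable, and each of them, together with $1-|z|$, varies slowly along pseudohyperbolic discs; this is a combination of \eqref{3}, Lemma~\ref{Lemma:weights-in-D-hat} and \eqref{eq:r2}.

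I would first dispatch the easy implications. Since $\om^\star$ is continuous and $N_{\vp,v^\star}$ is lower semicontinuous, $h$ is lower semicontinuous, and as $u\in\overline{\Gamma(u)}$ one gets $N(h)(u)\ge h(u)$ for all $u$; hence $\|h\|_{L^s_\om}\le\|N(h)\|_{L^s_\om}$, which is $N(h)\in L^s_\om\Rightarrow\text{(iii)}$. For (iii)$\Rightarrow$(iv), set $(Ah)(z)=\frac{1}{|\Delta(z,r)|}\int_{\Delta(z,r)}h\,dA\asymp\frac{1}{(1-|z|)^2}\int_{\Delta(z,r)}h\,dA$; since $s>1$, Jensen's inequality gives $(Ah)(z)^s\le\frac{1}{|\Delta(z,r)|}\int_{\Delta(z,r)}h^s\,dA$, and integrating against $\om\,dA$, switching the order of integration, using $\{z:\zeta\in\Delta(z,r)\}=\Delta(\zeta,r)$ together with the slow variation of $\om$ and $1-|\cdot|$ on pseudohyperbolic discs yields $\|Ah\|_{L^s_\om}\lesssim\|h\|_{L^s_\om}$. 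This proves (iii)$\Rightarrow$(iv) for every $r\in(0,1)$.

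The core of the argument is a local sub-mean value estimate for the generalized Nevanlinna counting function: for each fixed $r\in(0,1)$ there is $C=C(r,\vp)>0$ with
$$N_{\vp,v^\star}(z)\le\frac{C}{|\Delta(z,r)|}\int_{\Delta(z,r)}N_{\vp,v^\star}(\zeta)\,dA(\zeta),\qquad \varrho(z,\vp(0))\ge\tfrac12,$$
the weighted analogue of Shapiro's classical averaging lemma (available from \cite{PelRatToeplitz}). Dividing by $\om^\star(z)$ and using $\om^\star(\zeta)\asymp\om^\star(z)$ on $\Delta(z,r)$ gives $h(z)\lesssim(Ah)(z)$ away from $\vp(0)$; on the fixed set $\{\varrho(\cdot,\vp(0))<\tfrac12\}$ all quantities are bounded, so this pointwise bound holds up to a harmless additive term, which does not affect $L^s_\om$-integrability. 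In particular, if (iv) holds for some $r$, then $h\lesssim Ah\in L^s_\om$, i.e.\ (iii); combined with the previous paragraph this yields (iii)$\Leftrightarrow$(iv) and shows the condition in (iv) is independent of $r$.

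Finally, (iii)$\Rightarrow N(h)\in L^s_\om$. For $u\in\D$ and $z\in\Gamma(u)$ with $\varrho(z,\vp(0))\ge\tfrac12$, the sub-mean value bound and the comparability $\om(\Delta(z,r))\asymp\om(S(z))$ (here the regularity of $\om$ is essential) give $h(z)\lesssim\frac{1}{\om(\Delta(z,r))}\int_{\Delta(z,r)}h\,\om\,dA$; standard cone/Carleson-square geometry furnishes $I\subset\T$ with $u\in S(I)$, $\Delta(z,r)\subset S(I)$ and $\om(S(I))\asymp\om(S(z))\asymp\om(\Delta(z,r))$, so that $h(z)\lesssim\frac{1}{\om(S(I))}\int_{S(I)}h\,\om\,dA\le M_\om(h)(u)$. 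Taking the supremum over $z\in\Gamma(u)$ (the contribution of $z$ near $\vp(0)$ being bounded) gives $N(h)\lesssim M_\om(h)$ up to a harmless bounded term. Since $s>1$, Theorem~\ref{co:maxbou} with $\alpha=1$ and $\mu=\om\,dA$ shows $M_\om$ is bounded on $L^s_\om$, whence $\|N(h)\|_{L^s_\om}\lesssim\|h\|_{L^s_\om}<\infty$ by (iii), closing the chain. The main obstacle is the sub-mean value inequality itself: $N_{\vp,v^\star}$ is not literally subharmonic, so — as in Shapiro's lemma — one must exploit the non-univalent change of variables and Littlewood-type identities, now carrying the weight $v^\star$; the rest is a careful passage between pointwise values, pseudohyperbolic averages and Carleson-square averages, and it is exactly there that one needs $\om\in\R$ rather than only $\om\in\DD$.
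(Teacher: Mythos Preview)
Your route is genuinely different from the paper's. The paper does \emph{not} invoke Theorem~\ref{Theorem:introduction-bounded-composition-operators}; instead it argues (iv)$\Rightarrow$(iii)$\Rightarrow$(i)$\Rightarrow$(iv)$\Rightarrow$(ii)$\Rightarrow$(i) directly. For (iii)$\Rightarrow$(i) it uses the Littlewood--Paley identity (Theorem~\ref{ThmLittlewood-Paley}), a change of variable giving $N_{\vp,v^\star}$, the \emph{Hardy--Littlewood} maximal function on $\D$, and Proposition~\ref{pr:compinv} (factorization) to reduce to $q\ge 2$. For (i)$\Rightarrow$(iv) it uses atomic decomposition (Proposition~\ref{pr:atomicdec}) plus Rademacher functions/Khinchine. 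Your reduction to Theorem~\ref{Theorem:introduction-bounded-composition-operators} and the sub-mean-value/$M_\om$ machinery is considerably shorter and more conceptual; what the paper's approach buys is self-containment and an explicit display of where each hypothesis enters.

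There is, however, a real gap. When $\vp(0)\ne 0$, the function $h=N_{\vp,v^\star}/\om^\star$ has a logarithmic singularity at $\vp(0)$: the preimage $\zeta(z)\to 0$ as $z\to\vp(0)$, and $v^\star(\zeta)\asymp\log\frac{1}{|\zeta|}\to\infty$ there, while $\om^\star(\vp(0))$ is finite. So your claim that ``all quantities are bounded'' on $\{\varrho(\cdot,\vp(0))<\tfrac12\}$ is false. This is harmless for (iii) and (iv) (a logarithm is in $L^s_{\rm loc}$), but it is fatal for your implication (iii)$\Rightarrow N(h)\in L^s_\om$: for every $u$ with $\vp(0)\in\Gamma(u)$ (an open cone, hence containing a neighbourhood of $\vp(0)$) one gets $N(h)(u)=\sup_{z\in\Gamma(u)}h(z)=\infty$, and $T(\vp(0))$ has positive $\om$-measure. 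Thus $N(h)\notin L^s_\om$ whenever $\vp(0)\ne 0$, regardless of (iii). The bound $N(h)\lesssim M_\om(h)$ ``up to a harmless bounded term'' therefore cannot hold. This strongly suggests that Theorem~\ref{Theorem:introduction-bounded-composition-operators}(a)(iii) as quoted here carries an implicit normalisation (e.g.\ replacing $v^\star(\zeta)$ by $(1-|\zeta|)\widehat{v}(\zeta)$, which agrees with $v^\star$ for $|\zeta|\ge\tfrac12$ but stays bounded at $0$); you would need to make that explicit before the reduction can be carried through. The paper's own proof sidesteps the whole issue by never forming $N(h)$.
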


\subsection{Preliminary results}

A key result in the proof of Theorem~\ref{th:psmallerq} is the local good behavior of the generalized
Nevanlinna counting function \cite[Lemma~$14$]{PelRatToeplitz}.
\begin{lemma}\label{Nsubharmonic}
Let $\vp$ be an
analytic self-map of $\D$ and $v$ a radial weight. Then
     $N_{\vp,v^\star}$ is subharmonic on $\D\setminus\{\vp(0)\}$.
\end{lemma}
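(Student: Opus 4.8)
The plan is to fix a point $z_0\in\D\setminus\{\vp(0)\}$ and to verify directly, at $z_0$, the two defining features of subharmonicity: local boundedness from above together with the sub‑mean value inequality over small circles, and upper semicontinuity; I first record what I need about $v^\star$. If $\vp$ is constant then $N_{\vp,v^\star}\equiv 0$ near $z_0$, and if $v\equiv0$ the statement is trivial, so assume $\vp$ nonconstant and $v\not\equiv0$. Since $v^\star$ is radial, differentiating $v^\star(r)=\int_r^1 v(s)s\log\frac sr\,ds$ twice gives $r\,(v^\star)'(r)=-\int_r^1 v(s)s\,ds$ and hence $\big(r(v^\star)'\big)'(r)=r\,v(r)$, i.e. $\Delta v^\star=\tfrac14 v\ge0$ on $\D\setminus\{0\}$; as $v^\star$ is continuous there, it is subharmonic on $\D\setminus\{0\}$, but not across the origin (there $v^\star(\zeta)\gtrsim\log\frac1{|\zeta|}\to\infty$), which is precisely why $z=\vp(0)$ must be excluded. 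Moreover $v^\star(r)\le\frac{1-r}{r}\int_r^1 v(s)s\,ds\lesssim 1-r$ as $r\to1^-$, so $v^\star$ vanishes at $\partial\D$. Because $z_0\ne\vp(0)$, every $\zeta\in\vp^{-1}(z_0)$, and more generally every preimage of any $z$ near $z_0$, satisfies $|\zeta|\ge\e_0=\e_0(\vp,z_0)>0$; combining this with the two estimates above and the continuity of $v^\star$ on $\{\e_0\le|\zeta|<1\}$ produces a constant $C_0$ with $v^\star(\zeta)\le C_0\log\frac1{|\zeta|}$ for all such $\zeta$, whence $N_{\vp,v^\star}(z)\le C_0 N_\vp(z)$ near $z_0$, where $N_\vp(z)=\sum_{\zeta\in\vp^{-1}(z)}\log\frac1{|\zeta|}$ is the classical Nevanlinna counting function. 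Since $N_\vp$ is subharmonic (in particular finite and upper semicontinuous) on $\D\setminus\{\vp(0)\}$ — the standard consequence of Jensen's formula $N_\vp(z)=\log\frac1{|\vp(0)-z|}+\frac1{2\pi}\int_0^{2\pi}\log|\vp(\rho e^{it})-z|\,dt$ as $\rho\to1^-$ — it follows that $N_{\vp,v^\star}$ is locally bounded above near $z_0$ and that $N_{\vp,v^\star}(z_0)<\infty$.

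For the sub‑mean value inequality I would exploit the branched‑covering structure of $\vp$. Around each $\zeta_k\in\vp^{-1}(z_0)$, of multiplicity $m_k$, write $\vp(\zeta)-z_0=(\zeta-\zeta_k)^{m_k}g_k(\zeta)$ with $g_k(\zeta_k)\ne0$ and put $\chi_k(\zeta)=(\zeta-\zeta_k)g_k(\zeta)^{1/m_k}$, a local biholomorphism with $\vp=z_0+\chi_k^{m_k}$; shrinking, take the discs $D_k\ni\zeta_k$ pairwise disjoint, contained in $\D\setminus\{0\}$, and all mapped by $\vp$ onto a common neighbourhood $V$ of $z_0$. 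Then
\[
G_k(z):=\sum_{\zeta\in\vp^{-1}(z)\cap D_k} v^\star(\zeta)=\sum_{w^{m_k}=z-z_0}(v^\star\circ\chi_k^{-1})(w)
\]
is continuous on $V$ and subharmonic there: away from $z_0$ it is a finite sum of subharmonic functions $v^\star\circ(\text{holomorphic branch})$, and at $z_0$, substituting $w=\rho^{1/m_k}e^{i(\theta+2\pi j)/m_k}$ in the $j$‑th summand and letting $\theta$ run through $[0,2\pi)$,
\[
\frac1{2\pi}\int_0^{2\pi}G_k(z_0+\rho e^{i\theta})\,d\theta=\frac{m_k}{2\pi}\int_0^{2\pi}(v^\star\circ\chi_k^{-1})(\rho^{1/m_k}e^{is})\,ds\ge m_k\,(v^\star\circ\chi_k^{-1})(0)=G_k(z_0),
\]
by the sub‑mean value property of the subharmonic function $v^\star\circ\chi_k^{-1}$ at $0$. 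Since the $D_k$ exhaust $\vp^{-1}(z_0)$ and $v^\star\ge0$, one has $N_{\vp,v^\star}(z)\ge\sum_k G_k(z)$ for $z\in V$, with equality at $z_0$; integrating over a small circle and interchanging sum and integral by monotone convergence gives $\frac1{2\pi}\int_0^{2\pi}N_{\vp,v^\star}(z_0+\rho e^{i\theta})\,d\theta\ge\sum_k G_k(z_0)=N_{\vp,v^\star}(z_0)$ for all small $\rho>0$.

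It remains to establish upper semicontinuity at $z_0$. For $\rho\in(0,1)$ such that $\vp-z_0$ has no zeros on $\{|\zeta|=\rho\}$ (all but countably many $\rho$), split $N_{\vp,v^\star}=A_\rho+B_\rho$ with $A_\rho(z)=\sum_{\vp(\zeta)=z,\,|\zeta|<\rho} v^\star(\zeta)$ and $B_\rho(z)=\sum_{\vp(\zeta)=z,\,|\zeta|\ge\rho} v^\star(\zeta)\ge0$. Here $A_\rho$ is a sum of locally constantly many continuous terms and hence continuous at $z_0$, while $B_\rho(z)\le C_0\big(N_\vp(z)-N_\vp^{<\rho}(z)\big)$ with $N_\vp^{<\rho}(z)=\sum_{\vp(\zeta)=z,\,|\zeta|<\rho}\log\frac1{|\zeta|}$ also continuous at $z_0$. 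Thus, using that $N_\vp$ is upper semicontinuous, $\limsup_{z\to z_0}N_{\vp,v^\star}(z)\le A_\rho(z_0)+C_0\big(N_\vp(z_0)-N_\vp^{<\rho}(z_0)\big)$; letting $\rho\to1^-$, the second term tends to $0$ (tail of the convergent series $N_\vp(z_0)$) while $A_\rho(z_0)\uparrow N_{\vp,v^\star}(z_0)$, so $\limsup_{z\to z_0}N_{\vp,v^\star}(z)\le N_{\vp,v^\star}(z_0)$. Combined with the previous paragraph, this shows $N_{\vp,v^\star}$ is subharmonic at $z_0$, and since $z_0\in\D\setminus\{\vp(0)\}$ is arbitrary the lemma follows.

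I expect the main obstacle to be the infiniteness of $\vp^{-1}(z_0)$: $N_{\vp,v^\star}$ is genuinely an infinite sum of ``sheet contributions'' $G_k$, so one must control all branches near $\partial\D$ simultaneously, and the device that makes this work is the boundary decay $v^\star(\zeta)\lesssim 1-|\zeta|\asymp\log\frac1{|\zeta|}$, which reduces every tail estimate — finiteness, local boundedness, and upper semicontinuity — to the already understood classical counting function $N_\vp$. (Equivalently one could first write $v^\star=c\,G_\D(\cdot,0)-\int_\D G_\D(\cdot,w)\,d\tau(w)$ with $d\tau=\tfrac1{8\pi}v\,dA$, so that $N_{\vp,v^\star}=c\,N_\vp+N_{\vp,\tilde P}$ with $\tilde P$ subharmonic on all of $\D$, reducing to $N_\vp$ and to the branched‑covering argument applied to $\tilde P$; the direct route above avoids having to justify that representation.)
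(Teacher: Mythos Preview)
Your upper-semicontinuity argument and the reduction to $N_\vp$ for local boundedness are correct, but there is a real gap in the sub-mean-value step. You assert that the local discs $D_k\ni\zeta_k$ can be chosen pairwise disjoint and \emph{all mapped by $\vp$ onto a common neighbourhood $V$ of $z_0$}. When $\vp^{-1}(z_0)$ is infinite this need not be possible: if $\vp$ is, say, a Blaschke product with a hyperbolically separated zero sequence $\{a_k\}$ and $z_0=0$, then each $D_k$ has Euclidean radius $O(1-|a_k|)$, so $\vp(D_k)$ has radius $\asymp(1-|a_k|)\to0$ and $\bigcap_k\vp(D_k)=\{z_0\}$. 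Without a common $V$ your argument gives, for each finite $K$, only
\[
\frac{1}{2\pi}\int_0^{2\pi}N_{\vp,v^\star}(z_0+\rho e^{i\theta})\,d\theta\ \ge\ \sum_{k\le K}G_k(z_0)\qquad\text{for }\rho<\rho_K,
\]
with $\rho_K\to0$ as $K\to\infty$; passing to the limit yields merely $\liminf_{\rho\to0}$ of the circular mean $\ge N_{\vp,v^\star}(z_0)$, and upper semicontinuity together with this liminf inequality does \emph{not} imply the sub-mean-value inequality for all small $\rho$ (radial examples like $u(z)=|z|\sin(1/|z|)$ already show this fails). Your parenthetical Riesz-decomposition route has the same defect, since the branched-covering step for $\tilde P$ still confronts infinitely many sheets.

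The clean fix --- and essentially the argument in the reference the paper cites for this lemma --- is an integral representation that replaces the infinite sheet sum by an integral over radii. Interchanging sum and integral (everything is nonnegative),
\[
N_{\vp,v^\star}(z)=\sum_{\vp(\zeta)=z}\int_{|\zeta|}^1 v(s)\,s\log\frac{s}{|\zeta|}\,ds
=\int_0^1\Bigg(\sum_{\substack{\vp(\zeta)=z\\|\zeta|<s}}\log\frac{s}{|\zeta|}\Bigg)v(s)\,s\,ds
=\int_0^1 N_{\vp,s}(z)\,v(s)\,s\,ds,
\]
where $N_{\vp,s}$ is the classical Nevanlinna counting function of $\zeta\mapsto\vp(s\zeta)$, hence subharmonic on $\D\setminus\{\vp(0)\}$ by the very fact you already invoke. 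Fubini then gives the sub-mean-value inequality for $N_{\vp,v^\star}$ for \emph{every} small $\rho$, and since $0\le N_{\vp,s}\le N_\vp$ uniformly in $s$, reverse Fatou (dominated by the locally bounded $N_\vp$) yields upper semicontinuity in one line. This bypasses the branched-covering bookkeeping entirely.
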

Next, using  the subharmonicity of $|f|^p$, the definition of the class $\Inv$ and the fact that $\inf_{z\in K} \om(z)>0$ for any compact subset $K\subset\D$,
it can be deduced the following.
\begin{lemma}\label{le:uniInv}
Let $0<p<\infty$ and $\om\in\Inv$. Then the norm convergence in ${A^p_\om}$ implies the uniform convergence on compact subsets of $\D$.
\end{lemma}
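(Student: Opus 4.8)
The plan is to derive, for every compact $K\subset\D$ and every $h\in A^p_\om$, a pointwise estimate
$$
\sup_{z\in K}|h(z)|\le C(K,\om)\,\|h\|_{A^p_\om},
$$
and then apply it to the differences $h=f_n-f$; this turns $A^p_\om$-norm convergence into uniform convergence on compacta at once, using that $A^p_\om$ is a linear space (by $|a+b|^p\le 2^p(|a|^p+|b|^p)$).

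First I would fix $K\subset\D$ compact, choose $r_0\in(0,1)$ with $K\subset\overline{D(0,r_0)}$, and set $\rho=\frac{1-r_0}{2}$, so that $D(z,\rho)\subset\overline{D(0,\frac{1+r_0}{2})}\subset\D$ for every $z\in K$. Since $h\in\H(\D)$ and $0<p<\infty$, the function $|h|^p$ is subharmonic on $\D$, so the sub-mean value inequality over $D(z,\rho)$ gives
$$
|h(z)|^p\le\frac{1}{\rho^2}\int_{D(z,\rho)}|h(w)|^p\,dA(w),\qquad z\in K,
$$
where I used that $D(z,\rho)$ has $dA$-measure $\rho^2$ for the normalization $dA=dx\,dy/\pi$. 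Next I would bring in the weight: a weight in $\Inv$ is zero-free, and covering $\overline{D(0,\frac{1+r_0}{2})}$ by finitely many pseudohyperbolic discs $\Delta(a_j,r)$ of a fixed radius $r\in(0,1)$, the defining comparison of $\Inv$ forces $m:=\inf_{|w|\le\frac{1+r_0}{2}}\om(w)\ge C(r)^{-1}\min_j\om(a_j)>0$, whence
$$
|h(z)|^p\le\frac{1}{m\rho^2}\int_{D(z,\rho)}|h(w)|^p\om(w)\,dA(w)\le\frac{1}{m\rho^2}\,\|h\|_{A^p_\om}^p,\qquad z\in K,
$$
so that $\sup_{z\in K}|h(z)|\le(m\rho^2)^{-1/p}\|h\|_{A^p_\om}=:C(K,\om)\|h\|_{A^p_\om}$.

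Applying this with $h=f_n-f$ yields $\sup_{z\in K}|f_n(z)-f(z)|\le C(K,\om)\|f_n-f\|_{A^p_\om}\to0$, which is precisely uniform convergence on $K$. I do not expect a genuine obstacle here, since the argument is entirely soft; the only step deserving a word of care is the lower bound $m>0$, which is a consequence of the invariance inequality defining $\Inv$ together with the zero-freeness of such weights, exactly as indicated in the discussion preceding the statement.
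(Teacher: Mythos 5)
Your argument is correct and is essentially the approach the paper indicates (the paper only sketches it): subharmonicity of $|h|^p$ gives the sub-mean value estimate, and the defining comparison of $\Inv$ together with zero-freeness yields $\inf_K\om>0$, producing the pointwise bound $\sup_K|h|\lesssim\|h\|_{A^p_\om}$ which is then applied to $f_n-f$. No gaps; the only point needing care, the positivity of $m$, is handled exactly as in the paper's discussion.
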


\par We shall use the following result on composition operators acting on weighted Bergman spaces induced by weights that are not necessarily radial .

\begin{proposition}\label{pr:compinv}
Let $0<q,p<\infty$,  $\om\in\Inv$ such that the polynomials are
dense in $A^p_\om$, and   $v$ be a weight. If $n\in\N$,
then the following assertions are valid:
\begin{itemize}
\item[\rm(i)] $C_\vp: A^p_\om\to A^q_v$ is bounded if and only if
$C_\vp: A^{np}_\om\to A^{nq}_v$ is bounded. Moreover,
    $$
    \|C_\vp\|_{(A^{np}_\om,A^{nq}_v)}\asymp\|C_\vp\|^{1/n}_{(A^p_\om,A^q_v)}.
    $$
\item[\rm(ii)] If the norm convergence in $A^q_v$ implies the
uniform convergence on compact subsets of $\D$, then $C_\vp:
A^p_\om\to A^q_v$ is compact if and only if  $C_\vp: A^{np}_\om\to
A^{nq}_v$ is compact.
\end{itemize}
\end{proposition}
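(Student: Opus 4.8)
The plan is to exploit two operations that intertwine with $C_\vp$: raising an analytic function to its $n$-th power, which commutes with composition, and the strong factorization of Theorem~\ref{Thm:FactorizationBergman}. The easy half of~(i) is immediate: if $C_\vp\colon A^p_\om\to A^q_v$ is bounded and $h\in A^{np}_\om$, then $h^n\in A^p_\om$, $\|h^n\|_{A^p_\om}=\|h\|_{A^{np}_\om}^n$, and $C_\vp(h^n)=(C_\vp h)^n$, so
\begin{equation*}
\|C_\vp h\|_{A^{nq}_v}^n=\|C_\vp(h^n)\|_{A^q_v}\le\|C_\vp\|_{(A^p_\om,A^q_v)}\,\|h\|_{A^{np}_\om}^n,
\end{equation*}
which yields $\|C_\vp\|_{(A^{np}_\om,A^{nq}_v)}\le\|C_\vp\|_{(A^p_\om,A^q_v)}^{1/n}$.

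For the reverse implication in~(i), assume $C_\vp\colon A^{np}_\om\to A^{nq}_v$ is bounded and let $f\in A^p_\om$. Iterating Theorem~\ref{Thm:FactorizationBergman} $n-1$ times, at each step peeling off one factor in $A^{np}_\om$, one writes $f=f_1\cdots f_n$ with $f_j\in A^{np}_\om$ and $\prod_{j=1}^n\|f_j\|_{A^{np}_\om}\le C\|f\|_{A^p_\om}$, where $C=C(n,p,\om)$ and where one uses that for $\om\in\Inv$ the density of the polynomials is a property of $\om$ alone, hence valid in every intermediate Bergman space appearing in the iteration. Since $C_\vp f=\prod_{j=1}^n(C_\vp f_j)$, Hölder's inequality with exponent $n$ gives
\begin{equation*}
\begin{split}
\|C_\vp f\|_{A^q_v}^q=\int_\D\prod_{j=1}^n|C_\vp f_j(z)|^q\,v(z)\,dA(z)&\le\prod_{j=1}^n\|C_\vp f_j\|_{A^{nq}_v}^q\\
&\le\|C_\vp\|_{(A^{np}_\om,A^{nq}_v)}^{nq}\prod_{j=1}^n\|f_j\|_{A^{np}_\om}^q,
\end{split}
\end{equation*}
so $\|C_\vp\|_{(A^p_\om,A^q_v)}\le C\,\|C_\vp\|_{(A^{np}_\om,A^{nq}_v)}^n$. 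Combining with the previous paragraph settles~(i), including the norm comparison $\|C_\vp\|_{(A^{np}_\om,A^{nq}_v)}\asymp\|C_\vp\|_{(A^p_\om,A^q_v)}^{1/n}$.

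For~(ii) I would use the standard sequential characterization of compactness: a bounded operator between two spaces of analytic functions on $\D$ in which bounded sets are normal families and norm convergence implies uniform convergence on compact subsets is compact if and only if it maps every bounded sequence tending to $0$ uniformly on compact subsets to a norm-null sequence. Each $A^t_\om$ with $\om\in\Inv$ has the normal-family property by the growth estimate~\eqref{20NEW}, and norm convergence in $A^t_v$ implies uniform convergence on compact subsets: for $t=q$ this is the hypothesis, and for $t=nq$ it follows from the case $t=q$ by Hölder's inequality, since $v$ is integrable. Granting this, if $C_\vp\colon A^p_\om\to A^q_v$ is compact and $(h_k)\subset A^{np}_\om$ is bounded with $h_k\to0$ uniformly on compacta, then $(h_k^n)\subset A^p_\om$ is bounded and tends to $0$ uniformly on compacta, so $\|C_\vp h_k\|_{A^{nq}_v}^{nq}=\|C_\vp(h_k^n)\|_{A^q_v}^q\to0$ and $C_\vp\colon A^{np}_\om\to A^{nq}_v$ is compact. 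Conversely, assume $C_\vp\colon A^{np}_\om\to A^{nq}_v$ is compact, let $(f_k)\subset A^p_\om$ be bounded with $f_k\to0$ uniformly on compacta, and prove that every subsequence admits a further subsequence along which $\|C_\vp f_k\|_{A^q_v}\to0$. Factor $f_k=f_{k,1}\cdots f_{k,n}$ as in~(i) with $\sup_k\|f_{k,j}\|_{A^{np}_\om}<\infty$ for each $j$ (which the norm estimates in Theorem~\ref{Thm:FactorizationBergman} provide); by normality pass to a subsequence with $f_{k,j}\to g_j$ uniformly on compacta for every $j$. Then $\prod_j f_{k,j}\to\prod_j g_j$ uniformly on compacta, so $\prod_j g_j\equiv0$; since $\H(\D)$ has no zero divisors, $g_{j_0}\equiv0$ for some $j_0$, hence $\|C_\vp f_{k,j_0}\|_{A^{nq}_v}\to0$ by compactness of $C_\vp\colon A^{np}_\om\to A^{nq}_v$, the remaining factors stay bounded, and Hölder as in~(i) gives $\|C_\vp f_k\|_{A^q_v}\le\prod_j\|C_\vp f_{k,j}\|_{A^{nq}_v}\lesssim\|C_\vp f_{k,j_0}\|_{A^{nq}_v}\to0$.

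The main obstacle is precisely this last step of~(ii): the factorization $f_k=f_{k,1}\cdots f_{k,n}$ is not canonical, so one cannot expect the individual factors to tend to $0$ on compacta, and the argument is forced to extract normal-family limits of the factors and use the absence of zero divisors in $\H(\D)$ to make one of these limits vanish identically. The remaining ingredients --- the two Hölder estimates, the normal-family bounds, the identity $C_\vp(h^n)=(C_\vp h)^n$, and the propagation of polynomial density through the intermediate Bergman spaces used in iterating the factorization theorem --- are routine, though the density propagation deserves a careful word.
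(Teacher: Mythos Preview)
Your argument is correct and mirrors the paper's proof: both directions of (i) use the $n$-th power map together with the iterated factorization of Theorem~\ref{Thm:FactorizationBergman} and H\"older, and in (ii) both you and the paper factor the sequence, pass to subsequential limits of the factors, and use that $\H(\D)$ has no zero divisors to force one limit to vanish. The only cosmetic difference is that in (ii) you extract uniformly convergent subsequences of the factors on the domain side and then invoke the sequential compactness criterion, whereas the paper extracts norm-convergent subsequences of $C_\vp(g_{k,m})$ on the range side via compactness and uses the hypothesis on $A^q_v$ to identify the limits; the density-propagation wrinkle you flag is glossed over in the paper as well.
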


\begin{proof} (i) Let first $C_\vp: A^p_\om\to A^q_v$ be bounded and $f\in
A^{np}_\om$. Then $f^n\in A^p_\om$ and
    \begin{equation*}
    \begin{split}
    \|C_\vp(f)\|^{nq}_{A^{nq}_v}&=\int_\D |f\circ\vp(z)|^{nq}v(z)\,dA(z)=\int_\D |f^n\circ\vp(z)|^{q}v(z)\,dA(z)\\
    &=\|C_\vp(f^n)\|^{q}_{A^{q}_v}\le\|C_\vp\|^q_{(A^p_\om,A^q_v)}\|f^n\|^{q}_{A^p_\om}
    =\|C_\vp\|^q_{(A^p_\om,A^q_v)}\|f\|^{nq}_{A^{np}_\om},
    \end{split}
    \end{equation*}
so $C_\vp: A^{np}_\om\to A^{nq}_v$ is bounded and
$\|C_\vp\|_{(A^{np}_\om,A^{nq}_v)}\le
\|C_\vp\|^{1/n}_{(A^p_\om,A^q_v)}$.

Conversely, let $C_\vp: A^{np}_\om\to A^{nq}_v$ be bounded and
$f\in A^{p}_\om$. Now $n$ applications of
Theorem~\ref{Thm:FactorizationBergman} show that $f$ can be represented in the
form $f=\prod_{k=1}^nf_k$, where each $f_k\in A^{np}_\om$ and
    $$
    \prod_{k=1}^n\|f_k\|_{A^{np}_\om}\le C(n,p,\om)\|f\|_{A^p_\om}.
    $$
Therefore H\"{o}lder's inequality gives
    \begin{equation*}
    \begin{split}
    \|C_\vp(f)\|^{q}_{A^{q}_v}&=\int_\D
    \left|\left(\left(\prod_{k=1}^nf_k\right)\circ\vp\right)(z)\right|^{q}v(z)\,dA(z)\\
    &=\int_\D \prod_{k=1}^n|\left(f_k\circ\vp\right)(z)|^{q}v(z)\,dA(z)\le\prod_{k=1}^n\|C_\vp(f_k)\|^{q}_{A^{nq}_v}\\
    &\le\|C_\vp\|^{nq}_{(A^{np}_\om,A^{nq}_v)}\prod_{k=1}^n\|f_k\|^{q}_{A^{np}_\om}
    \le
    C(n,p,q,\om)\|C_\vp\|^{nq}_{(A^{np}_\om,A^{nq}_v)}\|f\|^q_{A^p_\om}.
    \end{split}
    \end{equation*}
So $C_\vp: A^{p}_\om\to A^{q}_v$ is bounded and
$\|C_\vp\|^{1/n}_{(A^{p}_\om,A^{q}_v)}\lesssim
\|C_\vp\|_{(A^{np}_\om,A^{nq}_v)}$.

(ii) We may assume that $\vp$ is not constant. Let first $C_\vp:
A^p_\om\to A^q_v$ be compact. To see that also $C_\vp:
A^{np}_\om\to A^{nq}_v$ is compact, take $\{f_j\}\subset
A^{np}_\om$ such that $\sup_{j}\|f_j\|_{A^{np}_\om}<\infty$. Since
$\om\in\Inv$ by the assumption, Lemma~\ref{le:uniInv} and Montel's
theorem imply the existence of a subsequence $\{f_{j_{k}}\}$ such
that $f_{j_{k}}$ converges uniformly on compact subsets of $\D$ to
some $f\in\H(\D)$, and further $f\in A^{np}_\om$ by Fatou's lemma.
Therefore the sequence $\{g_{k}\}=\{f_{j_{k}}-f\}$ converges
uniformly to $0$ on compact subsets of $\D$ and
$\sup_{k}\|g_k\|_{A^{np}_\om}<\infty$. Hence $\{g^n_k\}$ converges
uniformly to $0$ compact subsets of $\D$ and
$\sup_{k}\|g^n_k\|_{A^{p}_\om}<\infty$. Now, since $C_\vp:
A^p_\om\to A^q_v$ is compact there is a subsequence
$\{g^n_{k_m}\}$ and $G\in\H(\D)$ such that
    \begin{equation}\label{eq:j5}
    \|C_\vp(g^n_{k_m}-G)\|^{q}_{A^{q}_v}\to 0,\quad m\to\infty.
    \end{equation}
Now, by the hypotheses on $v$,
$g^n_{k_m}\circ\vp-G\circ\vp$ converges uniformly to $0$ on
compact subsets of $\D$, and since $\vp$ is not constant, this and
the uniform convergence of $\{g^n_k\}$ to zero imply $G\equiv0$.
So, by \eqref{eq:j5},
    $$
    \|C_\vp(g_{k_m})\|^{nq}_{A^{nq}_v}\to 0,\quad m\to\infty,
    $$
and hence $C_\vp: A^{np}_\om\to A^{nq}_v$ is compact.

Conversely, let $C_\vp: A^{np}_\om\to A^{nq}_v$ be compact and
take $\{f_j\}\subset A^{p}_\om$ such that
$\sup_{j}\|f_j\|_{A^{p}_\om}<\infty$. As earlier, since
$\om\in\Inv$, we may use Lemma~\ref{le:uniInv} and Montel's
theorem to find a subsequence $\{f_{j_{k}}\}$ such that
$f_{j_{k}}$ converges uniformly on compact subsets of $\D$ to some
$f\in\H(\D)$, that in fact belongs to $A^{p}_\om$ by Fatou's lemma.
Therefore $\{g_{k}\}=\{f_{j_{k}}-f\}$ convergence uniformly to $0$
on compact subsets of $\D$ and
$\sup_{k}\|g_k\|_{A^{p}_\om}<\infty$. By $n$ applications of
\cite[Theorem~3.1]{PelRat}, each function $g_k$ can be factorized
to $g_k=\prod_{m=1}^ng_{k,m}$, where each $g_{k,m}\in A^{np}_\om$
and
    $$
    \prod_{m=1}^n\|g_{k,m}\|_{A^{np}_\om}\le C(n,p,\om)\|g_k\|_{A^p_\om}.
    $$
Since $\sup_{k}\|g_k\|_{A^{p}_\om}<\infty$, this implies
$\sup_{k}\|g_{k,m}\|_{A^{np}_\om}<\infty$ for all
$m=1,2,\ldots,n$. Using that $C_\vp: A^{np}_\om\to A^{nq}_v$ is bounded, we
get functions $G_1,\dots, G_m\in
A^{np}_\om$ and subsequences $\{{g_{k_{l},m}}\}$ such that
\begin{equation}\label{eq:j22}
  \|{{g_{k_{l},m}}}\circ\vp-G_m\|_{A^{q}_v}\to0,\quad
  l\to\infty,\quad m=1,2,\ldots,n.
  \end{equation}
Since the norm convergence in $A^{q}_v$ implies the uniform
convergence on compact subsets of $\D$, and $\vp$ is not constant,
the uniform convergence of $g_k$ to zero and \eqref{eq:j22} imply
that at least one of the functions $G_1,\dots, G_m$ must be
identically zero. Without loss of generality, we may assume that
$G_1\equiv 0$. Then, by H\"{o}lder's inequality, we deduce
    \begin{equation*}
    \begin{split}
    \|C_\vp({g_k}_{l})\|^{q}_{A^{q}_v}
    &=\int_\D
    \left|\left(\left(\prod_{m=1}^n{{g_{k_{l},m}}}\right)\circ\vp\right)(z)\right|^{q}v(z)\,dA(z)\\
    &\le\prod_{k=1}^n\|C_\vp({{g_{k_{l},m}}})\|^{q}_{A^{nq}_v}
    \le\|C_\vp({{g_{k_{l},1}}})\|^{q}_{A^{nq}_v}\|C_\vp\|^{(n-1)q}_{(A^{np}_\om,A^{nq}_v)}\prod_{k=2}^n
    \|{{g_{k_{l},m}}}\|^{q}_{A^{p}_\om}\\
    &\le C(n,p,q,\om)\|C_\vp({{g_{k_{l},1}}})-G_1\|^{q}_{A^{nq}_v},
\end{split}\end{equation*}
which together with \eqref{eq:j22} finishes the proof.
\end{proof}

We also need an  atomic decomposition of $A^p_\om$-functions, $\om\in\R$.
Recall that $A=\{z_k\}_{k=0}^\infty\subset\D$ is
uniformly discrete if it
is separated in the hyperbolic metric, it is an $\e$-net if $\D=\bigcup_{k=0}^\infty \Delta(z_k,\e)$, and
finally, it is a
$\delta$-lattice if it is a $5\delta$-net and uniformly
discrete with constant $\gamma=\delta/5$.

\begin{proposition}\label{pr:atomicdec}
Let $1<p<\infty$, $\om\in\R$ and
$\{z_k\}_{k=1}^\infty\subset\D\setminus\{0\}$ be an
$\e$-net. Then the following assertions hold:
\begin{itemize}
\item[\rm(i)] If $f\in A^p_\om$, then there exist
$\{c_j\}_{j=1}^\infty\in l^p$ and $M=M(\om)>0$ such that
    \begin{equation}\label{eq:de1}
    f(z)=\sum_{j=1}^\infty\frac{c_j}
    {\left(\om\left(\Delta(z_j,\epsilon)\right)\right)^{1/p}}\left(\frac{1-|z_j|^2}{1-\overline{z}_jz}\right)^{M}
    \end{equation}
and $\|\{c_j\}_{j=1}^\infty\|_{l^p}\lesssim\|f\|_{A^p_\om}$.
\item[\rm(ii)] If $\{c_j\}_{j=1}^\infty\in l^p$, then there exists
$M=M(\om)>0$ such that the function defined by the infinite sum in
\eqref{eq:de1} converges uniformly on compact subsets of $\D$ to
an analytic function $f\in A^p_\om$ and
$\|f\|_{A^p_\om}\lesssim\|\{c_j\}_{j=1}^\infty\|_{l^p}$.
\end{itemize}
\end{proposition}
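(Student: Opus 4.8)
The plan is to prove Proposition~\ref{pr:atomicdec} by the Coifman--Rochberg/Luecking discretization scheme, adapted to $\om\in\R$ through the Bekoll\'e--Bonami property of Lemma~\ref{le:RAp}(i) and the kernel estimate of Lemma~\ref{Lemma:weights-in-D-hat}(vii). Fix once and for all $M=M(p,\om)$ large enough that, by (the proof of) Lemma~\ref{le:RAp}(i), the weight $\om(z)(1-|z|)^{-(M-2)}$ lies in $B_p(M-2)$, so that the Bergman-type projection $P_{M-2}h(z)=(M-1)\int_\D h(w)(1-|w|^2)^{M-2}(1-\overline wz)^{-M}\,dA(w)$ is bounded on $L^p_\om$; and large enough that $A^p_\om\subset A^1_{M-2}$, which holds because $A^p_\om$-functions grow at most polynomially by an estimate of the type \eqref{20NEW}. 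Since $P_{M-2}z^n=z^n$ and polynomials are dense in $A^p_\om$, boundedness of $P_{M-2}$ on $L^p_\om$ (the space containing $A^p_\om$ isometrically) yields the reproducing formula $f=P_{M-2}f$ for every $f\in A^p_\om$. I will also use constantly that for $\om\in\R$ one has $\om(\Delta(z_j,\e))\asymp\om(S(z_j))\asymp\om^\star(z_j)\asymp(1-|z_j|)\widehat{\om}(z_j)\asymp(1-|z_j|)^2\om(z_j)$, by \eqref{3}, Lemma~\ref{Lemma:weights-in-D-hat} and the definition of $\R$, and that $\om$ is comparable to $\om(z_j)$ throughout $\Delta(z_j,\e)$ by \eqref{eq:r2}; and — since otherwise part (ii) already fails — that the net may be assumed uniformly discrete, i.e. that one works with a sufficiently fine $\delta$-lattice.

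\textbf{Part (ii).} Put $\beta_j=\om(\Delta(z_j,\e))^{1/p}$ and, for $\{c_j\}\in\ell^p$, $a_j(z)=\beta_j^{-1}c_j\,\bigl((1-|z_j|^2)/(1-\overline z_jz)\bigr)^M$. For local uniform convergence of $\sum_ja_j$: on $|z|\le\rho$ one has $|a_j(z)|\lesssim_\rho|c_j|\,\beta_j^{-1}(1-|z_j|^2)^M\lesssim|c_j|\,(1-|z_j|^2)^{M-(\b+1)/p}$, where I used the lower bound $\widehat{\om}(r)\gtrsim(1-r)^\b$ that follows from Lemma~\ref{Lemma:weights-in-D-hat}(ii) on taking the smaller of the two variables there to be $0$, with $\b=\b(\om)$ as in that lemma; H\"older together with the separation of the net then makes this series summable once $M$ is large, so $f=\sum_ja_j$ is a well-defined analytic function. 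The norm bound $\|f\|_{A^p_\om}\lesssim\|\{c_j\}\|_{\ell^p}$ follows from the pointwise estimate $|f(z)|\le\sum_j|c_j|\beta_j^{-1}(1-|z_j|^2)^M|1-\overline z_jz|^{-M}$ by the usual two-parameter H\"older splitting of the $j$-sum, Fubini's theorem, the finite-overlap estimate for the resulting sum over a separated family, and the integral bound $\int_\D|1-\overline\z z|^{-(\lambda+1)}\om(z)\,dA(z)\asymp\widehat{\om}(\z)(1-|\z|)^{-\lambda}$ of Lemma~\ref{Lemma:weights-in-D-hat}(vii) for $\lambda\ge\lambda_0(\om)$; the normalization $\beta_j^p\asymp(1-|z_j|)\widehat{\om}(z_j)$ is precisely what makes the exponents balance, so this is the verbatim weighted analogue of the classical computation.

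\textbf{Part (i).} Let $f\in A^p_\om$ and choose a Borel partition $\D=\bigsqcup_jR_j$ with $z_j\in R_j\subset\Delta(z_j,\e)$ (possible since $\{z_j\}$ is an $\e$-net, e.g. $R_j=\Delta(z_j,\e)\setminus\bigcup_{i<j}\Delta(z_i,\e)$). Starting from $f=P_{M-2}f$ and freezing the kernel $k(w,z)=(1-|w|^2)^{M-2}(1-\overline wz)^{-M}$ at $z_j$ on each cell, write
\[
f=\Phi(\Psi f)+Ef,\qquad \Psi f=\{c_j(f)\}_j,\quad c_j(f)=(M-1)\,\beta_j\,(1-|z_j|^2)^{-2}\!\!\int_{R_j}\!\! f(w)\,dA(w),
\]
where $\Phi$ is the synthesis operator of Part (ii) and $Ef(z)=(M-1)\sum_j\int_{R_j}f(w)\,[\,k(w,z)-k(z_j,z)\,]\,dA(w)$. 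Since $|\Delta(z_j,\e)|\asymp(1-|z_j|^2)^2$ and $\om$ is comparable to $\om(z_j)$ on $\Delta(z_j,\e)$, one gets $|c_j(f)|\lesssim\om(\Delta(z_j,\e))^{-1/p'}\int_{R_j}|f|\,\om\,dA$, whence H\"older over $R_j$ and disjointness of the $R_j$ give $\sum_j|c_j(f)|^p\lesssim\|f\|_{A^p_\om}^p$; that is, $\Psi\colon A^p_\om\to\ell^p$ is bounded. For the error, the mean value estimate for the slowly (hyperbolically) varying kernel $k(\cdot,z)$ yields $|k(w,z)-k(z_j,z)|\lesssim M\e\,(1-|z_j|^2)^{M-2}|1-\overline z_jz|^{-M}$ for $w\in\Delta(z_j,\e)$, so repeating the estimate of Part (ii) gives $\|Ef\|_{A^p_\om}\lesssim M\e\,\|f\|_{A^p_\om}$. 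Hence, once the net is fine enough that $M\e<1$, $I-E$ is invertible on $A^p_\om$ and
\[
f=(I-E)(I-E)^{-1}f=\Phi\bigl(\Psi(I-E)^{-1}f\bigr);
\]
taking $\{c_j\}=\Psi(I-E)^{-1}f\in\ell^p$ gives \eqref{eq:de1} together with $\|\{c_j\}\|_{\ell^p}\lesssim\|f\|_{A^p_\om}$.

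\textbf{Main obstacle.} The two delicate points are: (a) obtaining a reproducing formula with the \emph{standard} kernels $(1-\overline z_jz)^{-M}$ in place of the (unavailable) reproducing kernel of $A^2_\om$ — this is forced through the projection $P_{M-2}$, and the boundedness of that projection on $L^p_\om$, hence the restriction $1<p<\infty$, rests entirely on the Bekoll\'e--Bonami property of Lemma~\ref{le:RAp}(i); and (b) the quantitative control of the error operator $E$, which forces the net to be sufficiently fine and has to be organized so that every series and integral involved is genuinely, not merely logarithmically, convergent — and here the exact matching $\om(\Delta(z_j,\e))\asymp\om^\star(z_j)$ together with the kernel estimate of Lemma~\ref{Lemma:weights-in-D-hat}(vii) is exactly what is needed. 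Everything else is a routine weighted transcription of the unweighted Coifman--Rochberg argument.
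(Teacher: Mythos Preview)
Your approach is correct and essentially the same as the paper's: both reduce to the Bekoll\'e--Bonami framework via Lemma~\ref{le:RAp}(i), after which the paper simply invokes \cite[Theorem~4.1]{LuecInd85} as a black box, whereas you reconstruct that very argument (reproducing formula from the boundedness of $P_{M-2}$, discretization, small-error operator, inversion of $I-E$) in outline. Your caveat that the net must be sufficiently fine and separated is accurate and is already implicit in the cited result of Luecking.
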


\begin{proof}
(i) Let $1<p<\infty$, $\om\in\R$ and $f\in A^p_\om$. Then $\om\in
B_p(\eta)$ for each $\eta=\eta(p,\om)$ large enough by Lemma~\ref{le:RAp}. That is,
$\frac{\om(z)}{(1-|z|)^\eta}$ satisfies \cite[(4.2)]{LuecInd85}
with  $\beta=0$, $\gamma=\left(1+\frac{p}{p'}\right)\eta$ and
$\alpha=\eta$. Consequently, \cite[Theorem~4.1]{LuecInd85} implies
the existence of $\{c_j\}_{j=1}^\infty\in l^p$ such that
    \begin{equation*}
    f(z)=\sum_{j=1}^\infty
    \frac{c_j}{\left(\om\left(\Delta(z_j,\epsilon)\right)\right)^{1/p}}\left(\frac{1-|z_j|^2}{1-\overline{z}_jz}\right)^{\eta+2}
    \end{equation*}
and $\|\{c_j\}_{j=1}^\infty\|_{l^p}\lesssim\|f\|_{A^p_{\om}}$.
Hence (i) is proved with $M=\eta+2$.

(ii) Let $\{c_j\}_{j=1}^\infty\in l^p$ be given. By the proof of
(i) we know that $\om\in B_p(\eta)$ for each $\eta$ large enough.
The assertion follows by \cite[Theorem~4.1]{LuecInd85}.
\end{proof}
 An atomic-decomposition for $A^p_\om$-functions, $0<p\le 1$ and $\om\in\R$, can also be obtained
 by using the results by Constantin~\cite{Cons:IE07} (see also \cite[Theorem $2.2$]{AlCo}). However,
 we do not get into this question for a matter of simplicity and because we are able to prove Theorem~\ref{th:psmallerq} just by using Proposition~\ref{pr:atomicdec}.

\subsection{Proof of Theorem~\ref{th:psmallerq}}
We shall prove (iv)$\Rightarrow$(iii)$\Rightarrow$(i)$\Rightarrow$(iv)$\Rightarrow$(ii)$\Rightarrow$(i).
\par (iv)$\Rightarrow$(iii). If $0<r<1$ is fixed, then
$N_{\vp,\om^\star}$ is subharmonic in each pseudohyperbolic disc
that is sufficiently close to the boundary by
Lemma~\ref{Nsubharmonic}. The implication follows by this fact
because $\om^\star$ is essentially constant on pseudohyperbolic discs.

(iii)$\Rightarrow$(i). Let first $2\le q<\infty$, and let $0<r<1$
be fixed. Then the function $|f|^{q-2}|f'|^2$ is subharmonic. By
using this and arguing as in the proof of
\cite[Lemma~2.4]{SmithYang98}, we deduce
    \begin{equation}\label{subharmonic}
    \begin{split}
    |f(\zeta)|^{q-2}|f'(\zeta)|^2
    &\lesssim \frac{1}{(1-|\zeta|)^2}\int_{\De(\zeta,r^2)}|f(z)|^{q-2}|f'(z)|^2\,dA(z)\\
    &\lesssim \frac{1}{(1-|\zeta|)^4}\int_{\De(\zeta,r)}|f(z)|^{q}\,dA(z),\quad \zeta\in\D.
    \end{split}
    \end{equation}
Now Theorem~\ref{ThmLittlewood-Paley}, a change of variable, \eqref{subharmonic} and
Fubini's theorem 
give
    \begin{equation}\label{eq:qmenorp}
    \begin{split}
    \|C_\vp(f)\|_{A^q_{v}}^q&\asymp\int_\D|f(\vp(z))|^{q-2}|f'(\vp(z))|^2|\vp'(z)|^2v^\star(z)\,dA(z)\\
    &\quad+v(\D)|f(\vp(0))|^q\\
    &=\int_\D|f(\zeta)|^{q-2}|f'(\zeta)|^2N_{\vp,v^\star}(\zeta)\,dA(\zeta)+v(\D)|f(\vp(0))|^q\\
    &\lesssim\int_\D\left(\frac{1}{(1-|\zeta|)^4}\int_{\De(\zeta,r)}|f(z)|^{q}\,dA(z)\right)
    N_{\vp,v^\star}(\zeta)\,dA(\zeta)\\
    &\quad+v(\D)|f(\vp(0))|^q\\
    &\asymp\int_\D\left(\frac{1}{(1-|z|)^2}\int_{\De(z,r)}\frac{N_{\vp,v^\star}(\zeta)}{(1-|\zeta|)^2}\,dA(\zeta)\right)
    |f(z)|^{q}\,dA(z)\\
    &\quad+v(\D)|f(\vp(0))|^q.
    \end{split}
    \end{equation}
Let $M[f]$ denote the Hardy-Littlewood maximal function defined by
    \begin{equation*}
    M[f](z)=\sup_{\delta>0}\frac{1}{A(\Delta(z,\delta))}\int_{\Delta(z,\delta)}|f(\zeta)|\,dA(\zeta),\quad
    z\in\D,
    \end{equation*}
for each $f\in L^1$. The maximal function $M[f]$ is bounded on
$L^p$ when $p>1$. Therefore \eqref{eq:qmenorp}, the assumption
$\om\in\R$, H\"older's inequality and \eqref{3} yield
    \begin{equation*}
    \begin{split}
    \|C_\vp(f)\|_{A^q_{v}}^q
    &\lesssim
    \int_\D\frac{1}{\om(z)^{\frac{p-q}{p}}}\left(\frac{1}{(1-|z|)^2}\int_{\De(z,r)}\frac{N_{\vp,v^\star}(\zeta)}{(1-|\zeta|)^2\om^{q/p}(\zeta)}\,dA(\zeta)
    \right)\\
    &\quad\cdot|f(z)|^{q}\om(z)\,dA(z)+v(\D)|f(\vp(0))|^q\\
    &\lesssim \int_\D\frac{1}{\om(z)^{\frac{p-q}{p}}}M\left[  \frac{N_{\vp,v^\star}}{(1-|\zeta|)^2\om^{q/p}}    \right](z)|f(z)|^{q}\om(z)\,dA(z)\\
    &\quad+v(\D)|f(\vp(0))|^q\\
    &\lesssim \|f\|^q_{A^p_\om}\left\Vert M\left[  \frac{N_{\vp,v^\star}}{(1-|\zeta|)^2\om^{q/p}}    \right] \right\Vert^{\frac{p}{p-q}}_{L^{\frac{p}{p-q}}}\\
    &\lesssim \|f\|^q_{A^p_\om}\left\Vert  \frac{N_{\vp,v^\star}}{(1-|\zeta|)^2\om^{q/p}}     \right\Vert^{\frac{p}{p-q}}_{L^{\frac{p}{p-q}}}\\
    &\asymp\|f\|^q_{A^p_\om}\left\Vert \frac{N_{\vp,v^\star}}{\om^{\star}}     \right\Vert^{\frac{p}{p-q}}_{L^{\frac{p}{p-q}}_\om}
    \lesssim\|f\|^q_{A^p_\om}.
    \end{split}
    \end{equation*}
Thus $C_\vp: A^p_\om\to A^q_v$ is bounded provided $q\ge 2$. If
$0<q<2$, we may choose $n\in\N$ such that $nq\ge2$. Then $C_\vp:
A^{np}_\om\to A^{nq}_v$ is bounded by the previous argument and so
is $C_\vp: A^{p}_\om\to A^{q}_v$ by
Proposition~\ref{pr:compinv}.

(i)$\Rightarrow$(iv). By Proposition~\ref{pr:compinv}(i) we may
assume that $q\ge2$. Next, bearing in mind
Proposition~\ref{pr:atomicdec}, we pick up an $\epsilon$-net
$\{z_k\}_{k=0}^\infty\subset\D\setminus\{0\}$ and $M=M(\om)>0$
large enough such that for any $\{c_j\}_{j=1}^\infty\in l^p$ the
function $f$ defined by (\ref{eq:de1}) converges uniformly on
compact subsets of $\D$ to an analytic function $f\in A^p_\om$
such that $\|f\|_{A^p_\om}\lesssim\|\{c_j\}_{j=1}^\infty\|_{l^p}$.
For simplicity, let us write
$h_j(z)={\left(\om\left(\Delta(z_j,\epsilon)\right)\right)^{-1/p}}\left(\frac{1-|z_j|^2}{1-\overline{z}_jz}\right)^{M}$.
Let us consider the classical Rademacher functions $\{r_j(t)\}$ and set
$f_t(z)=\sum_{j=1}^\infty r_j(t)c_jh_j(z)$. Since $C_\vp:
A^p_\om\to A^q_v$ is bounded by the assumption,
    $$
    \|C_\vp(f_t)\|^q_{A^q_v}\le\|C_\vp\|_{(A^p_\om,A^q_v)}^q\|f_t\|^q_{A^p_\om}
    \lesssim\|C_\vp\|_{(A^p_\om,A^q_v)}^q\|\{c_j\}_{j=1}^\infty\|^q_{l^p},
    $$
from which an integration with respect to $t$ gives
    $$
    \sum_{j=1}^\infty|c_j|^q\|C_\vp(h_j)\|^q_{A^q_v}
    \le\left\|\left(\sum_{j=1}^\infty|c_j|^2|C_\vp(h_j)|^2\right)^{1/2}\right\|^q_{L^q_v}
    \lesssim\|\{c_j\}_{j=1}^\infty\|^q_{l^p},
    $$
where in the first inequality we used the hypothesis $q\ge 2$.
Therefore Theorem~\ref{ThmLittlewood-Paley} and a change of
variable give
    $$
    \sum_{j=1}^\infty|c_j|^q\int_\D|h_j(\zeta)|^{q-2}|h'_j(\zeta)|^2N_{\vp,v^\star}(\zeta)\,dA(\zeta)
    \lesssim \|\{c_j\}_{j=1}^\infty\|^q_{l^p}.
    $$
Now, a calculation shows that
    $$
    \sum_{j=1}^\infty\frac{|c_j|^q}{\left(\om\left(\Delta(z_j,\epsilon)\right)\right)^{q/p}(1-|z_j|^2)^2}\int_{\Delta(z_j,r)}
    N_{\vp,v^\star}(\zeta)\,dA(\zeta)
    \lesssim\|\{c_j\}_{j=1}^\infty\|^q_{l^p}
    $$
for any fixed $0<r<1$, and so the sequence
    $$
    \left\{\frac{\int_{\Delta(z_j,r)}
    N_{\vp,v^\star}(\zeta)\,dA(\zeta)}{\left(\om\left(\Delta(z_j,\epsilon)\right)\right)^{q/p}(1-|z_j|^2)^2}\right\}_{j=1}^\infty
    $$
belongs to $\left(l^{p/q}\right)^\star$. Since $\om\in\R$, this is
equivalent to
    $$
    \sum_{j=1}^\infty\left(\frac{\int_{\Delta(z_j,r)}
    \frac{N_{\vp,v^\star}(\zeta)}{\om^\star(\zeta)}\,dA(\zeta)}{(1-|z_j|^2)^2}
    \right)^{\frac{p}{p-q} } \om\left(\Delta(z_j,\epsilon)\right)<\infty.
    $$
Finally, for a given $0<s<1$, by choosing $0<\ep<s<r<1$
appropriately, we deduce
    \begin{equation*}
    \begin{split}
    &\int_\D\left(\frac{1}{(1-|z|)^2}\int_{\De(z,s)}
    \frac{N_{\vp,v^\star}(\zeta)}{\om^\star(\zeta)}\,dA(\zeta)\right)^{\frac{p}{p-q}}\om(z)\,dA(z)\\
    &\le\sum_{j=1}^\infty\int_{\Delta(z_j,\ep)}
    \left(\frac{1}{(1-|z|)^2}\int_{\De(z,s)}
    \frac{N_{\vp,v^\star}(\zeta)}{\om^\star(\zeta)}\,dA(\zeta)\right)^{\frac{p}{p-q}}\om(z)\,dA(z)\\
    &\lesssim\sum_{j=1}^\infty  \left(
    \frac{\int_{\Delta(z_j,r)}
    \frac{N_{\vp,v^\star}(\zeta)}{\om^\star(\zeta)}\,dA(\zeta)}{(1-|z_j|^2)^2}
    \right)^{\frac{p}{p-q}}\om\left(\Delta(z_j,\epsilon)\right)<\infty,
    \end{split}
    \end{equation*}
and thus (iv) is satisfied.

Since trivially (ii)$\Rightarrow$(i), it suffices to show
(iv)$\Rightarrow$(ii) to complete the proof.
 By  Proposition~\ref{pr:compinv}(ii) we may assume that $q\ge2$.
 Since $\om\in\R$, it is enough to prove that for
each $\{f_n\}\in A^p_\om$ that converges to $0$ uniformly on
compact subsets of $\D$ and $K=\sup_n\|f_n\|_{A^p_\om}<\infty$ we
have
   \begin{equation}\label{eq:j3}
    \lim_{n\to\infty}\|C_\vp f_n\|_{A^q_v}=0.
    \end{equation}
To see this, let $\ep>0$. Choose $r_0$ such that $\vp(0)\in
D(0,r_0)$ and
   $$
   \left(\int_{r_0<|z|<1}\left(\frac{1}{(1-|z|)^2}\int_{\Delta(z,r)}
   \frac{N_{\vp,v^\star}(\zeta)}{\om^\star(\zeta)}\right)^{\frac{p}{p-q}}\om(z)\,dA(z)\right)^{(p-q)/p}<\ep.
   $$
Further, let $n_0\in\N$ such that $|f_n(z)|<\ep^{1/q}$ for all
$n\ge n_0$ and $z\in\overline{D(0,r_0)}$. Then \eqref{eq:qmenorp}
shows that for all $n\ge n_0$ we have
    \begin{equation*}
    \begin{split}
    \|C_\vp(f_n)\|_{A^q_{v}}^q
    &\lesssim \int_\D\left(\frac{1}{(1-|z|)^2}\int_{D(z,r)}
    \frac{N_{\vp,v^\star}(\zeta)}{(1-|\zeta|)^2}\,dA(\zeta)\right)|f(z)|^{q}\,dA(z)\\
    &\quad+v(\D)|f(\vp(0))|^q\\
    &\lesssim\int_\D\left(\frac{1}{(1-|z|)^2}\int_{D(z,r)}
    \frac{N_{\vp,v^\star}(\zeta)}{\om^\star(\zeta)}\,dA(\zeta)\right)|f_n(z)|^{q}\om(z)\,dA(z)+\ep v(\D)\\
    &\lesssim\ep\int_{D(0,r_0)}\left(\frac{1}{(1-|z|)^2}\int_{D(z,r)}
    \frac{N_{\vp,v^\star}(\zeta)}{\om^\star(\zeta)}\,dA(\zeta)\right)\om(z)\,dA(z)+\|f_n\|^q_{A^p_\om}\\
    &\quad\cdot\left(\int_{r_0<|z|<1}
    \left(\frac{1}{(1-|z|)^2}\int_{D(z,r)}
    \frac{N_{\vp,v^\star}(\zeta)}{\om^\star(\zeta)}\,dA(\zeta)\right)^{\frac{p}{p-q}}\om(z)\,dA(z)\right)^{(p-q)/p}\\
    &\quad+\ep v(\D)\lesssim \ep,
    \end{split}
    \end{equation*}
which gives \eqref{eq:j3} and thus completes the proof.

\subsection*{Acknowledgements}
\par These notes are related to the course \lq\lq Weighted Hardy-Bergman spaces\rq\rq I delivered in the
 the Summer School  \lq\lq Complex and Harmonic Analysis and Related Topics\rq\rq at the University
of Eastern Finland, June $2014$, that is essentially based on several joint projects together with Jouni~R\"atty\"a. I would  like to thank the organizers for inviting me to participate in the meeting
and for their great hospitality. I am also very grateful to  all the participants for the nice research environment
we enjoyed throughout  these days.
 
 \end{document}